\renewcommand*{\baselinestretch}{1.25}
\newtheorem{theorem}{Theorem}[section]
\newtheorem{lemma}{Lemma}[section]
\newtheorem{proposition}{Proposition}[section]
\newtheorem{corollary}{Corollary}[section]
\theoremstyle{definition}
\newtheorem*{rmk*}{Remark}
\newtheorem{rmk}{Remark}[section]
\renewcommand*\proofname{\upshape{\bfseries{Proof}}}
\numberwithin{equation}{section}
    \renewcommand*{\section}{\@startsection{section}{1}{\z@}%
    {6pt}{3pt}{\reset@font\normalsize\bfseries}}
    \renewcommand*{\subsection}{\@startsection{subsection}{2}{\z@}%
    {6pt}{3pt}{\reset@font\normalsize\mdseries\itshape}}
    \renewcommand*{\subsubsection}{\@startsection{subsubsection}{3}{\z@}%
    {3pt}{3pt}{\reset@font\normalsize\mdseries\itshape}}
\def\@listi{\leftmargin\leftmargini
  \topsep=.5\baselineskip 
  \partopsep=0pt \parsep=0pt \itemsep=0pt}
\let\@listI\@listi
\def\@listii{\leftmargin\leftmarginii
  \labelwidth\leftmarginii \advance\labelwidth-\labelsep
  \topsep=0pt \partopsep=0pt \parsep=0pt \itemsep=0pt}
\def\@listiii{\leftmargin\leftmarginiii
  \labelwidth\leftmarginiii \advance\labelwidth-\labelsep
  \topsep=0pt \partopsep=0pt \parsep=0pt \itemsep=0pt}
\def\@listiv{\leftmargin\leftmarginiv
  \labelwidth\leftmarginiv \advance\labelwidth-\labelsep
  \topsep=0pt \partopsep=0pt \parsep=0pt \itemsep=0pt}
\newcommand{\opnorm}{\@ifstar\@opnorms\@opnorm}
\newcommand{\@opnorms}[1]{%
  \left|\mkern-1.5mu\left|\mkern-1.5mu\left|
   #1
  \right|\mkern-1.5mu\right|\mkern-1.5mu\right|
}
\newcommand{\@opnorm}[2][]{%
  \mathopen{#1|\mkern-1.5mu#1|\mkern-1.5mu#1|}
  #2
  \mathclose{#1|\mkern-1.5mu#1|\mkern-1.5mu#1|}
}
\renewenvironment{proof}[1][\proofname]{\par
  \pushQED{\qed}%
  \normalfont \topsep6\p@\@plus6\p@\relax
  \trivlist
  \item[\hskip\labelsep
        \bfseries
    #1\@addpunct{.}]\ignorespaces
}{%
  \popQED\endtrivlist\@endpefalse
}
\DeclareMathOperator{\vectorize}{vec}
\DeclareMathOperator{\diag}{diag}
\DeclareMathOperator{\domain}{Dom}
\DeclareMathOperator{\symm}{Sym}
\DeclareMathOperator{\FWER}{\mathbf{FWER}}
\DeclareMathOperator{\leb}{\mathsf{Leb}}
\newcommand{\tcr}[1]{\textcolor{black}{#1}}
\newcommand{\bs}[1]{\boldsymbol{#1}}
\newcommand{\ol}[1]{\overline{#1}}
\newcommand{\ul}[1]{\underline{#1}}
\newcommand{\wh}[1]{\widehat{#1}}
\newcommand*\patchAmsMathEnvironmentForLineno[1]{%
  \expandafter\let\csname old#1\expandafter\endcsname\csname #1\endcsname
  \expandafter\let\csname oldend#1\expandafter\endcsname\csname end#1\endcsname
  \renewenvironment{#1}%
     {\linenomath\csname old#1\endcsname}%
     {\csname oldend#1\endcsname\endlinenomath}}%
\newcommand*\patchBothAmsMathEnvironmentsForLineno[1]{%
  \patchAmsMathEnvironmentForLineno{#1}%
  \patchAmsMathEnvironmentForLineno{#1*}}%
\title{Mixed-normal limit theorems for multiple Skorohod integrals in high-dimensions, with application to realized covariance}
\author{Yuta Koike\thanks{Mathematics and Informatics Center and Graduate School of Mathematical Sciences, The University of Tokyo, 3-8-1 Komaba, Meguro-ku, Tokyo 153-8914 Japan}
\thanks{Department of Business Administration, Graduate School of Social Sciences, Tokyo Metropolitan University, Marunouchi Eiraku Bldg. 18F, 1-4-1 Marunouchi, Chiyoda-ku, Tokyo 100-0005 Japan}
\thanks{The Institute of Statistical Mathematics, 10-3 Midori-cho, Tachikawa, Tokyo 190-8562, Japan}
\thanks{CREST, Japan Science and Technology Agency}
}
\begin{document}


\maketitle

\begin{abstract}

This paper develops mixed-normal approximations for probabilities that vectors of multiple Skorohod integrals belong to random convex polytopes when the dimensions of the vectors possibly diverge to infinity. 
We apply the developed theory to establish the asymptotic mixed normality of the realized covariance matrix of a high-dimensional continuous semimartingale observed at a high-frequency, where the dimension can be much larger than the sample size. 
We also present an application of this result to testing the residual sparsity of a high-dimensional continuous-time factor model. 

\vspace{3mm}

\noindent \textit{Keywords}: Bootstrap; Chernozhukov-Chetverikov-Kato theory; High-dimensions; High-frequency data; Malliavin calculus; Multiple testing.


\end{abstract}

\section{Introduction}

Covariance matrix estimation of multiple assets is one of the most active research areas in high-frequency financial econometrics. 
Recently, many authors have been attacking the high-dimensionality in covariance matrix estimation from high-frequency data. 
A pioneering work on this topic is the paper by \citet{WZ2010}, where the regularization methods (banding and thresholding) proposed in \citet{BL2008band,BL2008th} have been applied to estimating high-dimensional quadratic covariation matrices from noisy and non-synchronous high-frequency data. 
Subsequently, their approach has been enhanced by several papers such as \cite{TWZ2013,KWZ2016,KKLW2018}. 
Meanwhile, such methods require a kind of sparsity of the target quadratic covariation matrix itself, which seems unrealistic in financial data in view of the celebrated factor structure such as the Fama-French three-factor model of \cite{3factor}. To overcome this issue, \citet{FFX2016} have proposed a covariance estimation method based on a continuous-time (approximate) factor model with observable factors, which can be seen as a continuous-time counterpart of the method introduced in \citet{FLM2011}. 
The method has been further extended in various directions such as situations with unobservable factor, noisy and non-synchronous observations, heavy-tail errors and so on; see \cite{AX2017,DLX2017,KLW2017,FK2017,Pelger2019} for details. 
As an alternative approach to avoid assuming the sparsity of the target matrix itself, \citet{BNS2018} have proposed applying the graphical Lasso, which imposes the sparsity on the inverse of the target matrix rather than the target matrix itself. 
On the empirical side, high-dimensional covariance matrix estimation from high-frequency financial data is particularly interesting in portfolio allocation. 
We refer to \cite{Ubukata2010,FLY2012,LSS2016} for illustrations of relevant empirical work on this topic, in addition to the empirical results reported in the papers cited above.  
\if0
Additionally, it would be worth mentioning that investigation of the spectral structure is also an issue in the context of high-dimensional covariance estimation. In high-frequency financial econometrics this was pioneered by \citet{ZL2011}, which investigates the problem of estimating the spectral distribution of the quadratic covariation estimation of a high-dimensional continuous It\^o semimartingale from its high-frequency observation data. 
A similar problem has been studied in \cite{HP2014} under a different assumption on the model. 
Another important topic related to this issue is principal component analysis (PCA). In a high-dimensional setting, \citet{KW2016} have studied PCA for integrated covariance matrices based on noisy and non-synchronous high-frequency data. 
Meanwhile, the recent work of \citet{CMZ2018} investigates alternative PCA for high-frequency data based on the concept of realized eigenvalues, which was introduced in \citet{AX2017jasa}, in a high-dimensional setting.  
\fi

To the best of the author's knowledge, however, there is no work to establish a statistical inference theory validating simultaneous hypothesis testing and construction of uniformly valid confidence regions for high-dimensional quadratic covariation estimation from high-frequency data. 
Such a theory is important in statistical applications as illustrated by the following example:  
Let $Y=(Y_t)_{t\in[0,1]}$ be a $d$-dimensional continuous semimartingale. We denote by $Y^i$ the $i$-th component of $Y$ for every $i=1,\dots,d$. 
If one attempts to apply a regularization procedure to estimating the quadratic covariation matrix $[Y,Y]_1=([Y^i,Y^j]_1)_{1\leq i,j\leq d}$ of $Y$, it is important to understand whether the target matrix is really sparse or not, and if so, how sparse it is. This amounts to evaluating the following series of the statistical hypotheses \textit{simultaneously}:
\begin{equation}\label{test:sparse}
H_{(i,j)}: [Y^i,Y^j]_1\equiv0,\qquad i,j=1,\dots,d\text{ such that }i<j.
\end{equation}
A natural way to construct test statistics for this problem is to estimate $[Y,Y]_1$ and test whether each of the entries is significantly away from 0 or not. Now suppose that $Y$ is observed at the equidistant times $t_h=h/n$, $h=0,1,\dots,n$. Then the most canonical estimator for $[Y,Y]_1$ would be the so-called \textit{realized covariance matrix}:
\begin{equation}\label{def:rc}
\widehat{[Y,Y]}^n_1:=\sum_{h=1}^n(Y_{t_h}-Y_{t_{h-1}})(Y_{t_h}-Y_{t_{h-1}})^\top.
\end{equation}
If one wants to test the null hypothesis such that all the hypotheses in \eqref{test:sparse} is true, it is natural to consider the maximum type statistic
\[
\max_{(i,j)\in\Lambda}\left|\widehat{[Y,Y]}^n_1\right|,
\]
where $\Lambda:=\{(i,j)\in\{1,\dots,d\}^2:i<j\}$. More generally, if one wants to control the family-wise error rate in multiple testing for the hypotheses \eqref{test:sparse}, it is enough to approximate the distribution of $\max_{(i,j)\in\mathcal{L}}|\widehat{[Y,Y]}^n_1|$ for any $\mathcal{L}\subset\Lambda$, with the help of the stepdown procedure illustrated in \citet{RW2005}. Hence the problem amounts to approximating the distributions of such maximum type statistics in an appropriate sense. 
Using the test statistics considered in \citet{BM2016}, this type of testing problem can be extended to the sparsity test for the residual processes of a continuous-time factor model with an observable factor \tcr{and} thus promising in applications to 
high-frequency financial data. 
In addition, such a problem will also be useful for covariance matrix modeling in a low-frequency setting because it often suffers from the curse of dimensionality due to the increase of the number of unknown parameters to be estimated, and thus it is a common practice to impose a certain structure on covariance matrices for reducing the number of unknown parameters in models. For example, \citet{TWYZ2011} have proposed fitting a matrix factor model to daily covariance matrices which are estimated from high-frequency data using the methodology of \cite{WZ2010}, while \citet{KO2016,KO2018} have introduced a dynamic (multiple-block) equicorrelation structure to multivariate stochastic volatility models. 
The afore-mentioned testing will be useful for examining the validity of such specification. 
If the dimension $d$ is fixed, the desired approximation can be obtained as a simple consequence of a multivariate mixed-normal limit theorem for $\sqrt{n}(\widehat{[Y,Y]}^n_1-[Y,Y]_1)$, which is well-studied in the literature and holds true under quite mild assumptions; see e.g.~Theorem 5.4.2 of \cite{JP2012}. 
The problem here is how to establish an analogous result when the dimension $d$ possibly diverges as $n$ tends to infinity. 

Indeed, even for the sum of independent random vectors, it is far from trivial to establish such a result in a situation where the dimension is possibly (much) larger than the sample size. 
This is not surprising because objective random vectors are typically not tight in the usual sense in such a high-dimensional setting, so any standard method to establish central limit theorems no longer works. 
A significant breakthrough in this subject was achieved by the seminal work of \citet*{CCK2013}, where a Gaussian approximation of the maxima of the sum of independent random vectors in terms of the Kolmogorov distance has been established under quite mild assumptions which allow the dimension is (possibly exponentially) larger than the sample size. With the help of the Gaussian comparison theorem by \citet{CCK2015}, it enables us to construct feasible statistical inference procedures based on the maximum type statistics. Their theory, which we call the \textit{Chernozhukov-Chetverikov-Kato theory}, or the \textit{CCK theory} for short, has been developed in the subsequent work by \citet{CCK2014,CCK2016} and \citet{CCK2017}: the first two papers have developed Gaussian approximation of the suprema of empirical processes, while the latter has extended the results of \cite{CCK2013} to a central limit theorem for hyperrectangles, or sparsely convex sets in more general. 
Extension of the CCK theory to statistics other than the sum of independent random vectors has also been studied in many articles: Weakening the independence assumption has been studied in e.g.~\cite{ZW2015,ZC2017,CCK2014c,CQYZ2018}; \citet{Chen2017} and \citet{CK2017,CK2017rand} have developed theories for $U$-statistics. 
Moreover, some authors have applied the CCK theory to statistical problems regarding high-frequency data; see \citet{KK2017} and \citet{Koike2017stein}.   
Nevertheless, none of the above studies is applicable to our problem due to its non-ergodic nature.  That is, the asymptotic covariance matrix is random and depends on the $\sigma$-filed of the original probability space, \tcr{so} the asymptotic distribution is essentially non-Gaussian.   


Meanwhile, inspection of the proofs of the CCK theory reveals that most the parts do not rely on \textit{any} structure of the underlying statistics. To be precise, let $S_n$ be the random vector corresponding to the objective statistic and suppose that we aim at approximating the distribution of $S_n$ by its Gaussian analog $S_n^\dagger$ which has the same mean and covariance matrix as those of $S_n$. In the proofs of the CCK theory, the fact that $S_n$ is the sum of independent random vectors is crucial only to obtain a good quantitative estimate for the quantities $|E[f(S_n)]-E[f(S_n^\dagger)]|$ for sufficiently smooth functions $f$. In the original CCK theory \cite{CCK2013,CCK2017}, such an estimate has been established by the so-called \textit{Stein's method}, especially \textit{Slepian's interpolation} (also known as the \textit{smart path method}) and \textit{Stein's leave-one-out method}. 
Although their approach is not directly applicable to our problem, it suggests that we might alternatively use \textit{Malliavin's integration by parts formula} because it can be viewed as an infinite-dimensional version of Stein's identity (cf.~\citet{SY2004}). In fact, the recent active research in probabilistic literature shows a beautiful harmony between Malliavin calculus and Stein's method, which is nowadays called the \textit{Malliavin-Stein method}; we refer to the monograph \cite{NP2012} for an introduction of this subject. Indeed, this idea has already been applied in \cite{Koike2017stein} to a situation where $S_n$ is a vector of smooth Wiener functionals (especially multiple Wiener-It\^o integrals) and $S_n^\dagger$ is Gaussian, which has produced several impressive results. 
Our plan here is to apply this idea to a situation where $S_n$ is a vector of multiple Skorohod integrals and $S_n^\dagger$ is conditionally Gaussian. 
In this regard, a relevant result has been given in Theorem 5.1 of \citet{NNP2016}. However, this result is not directly applicable to the current situation because it assumes that the components of $S_n^\dagger$ are conditionally independent, which is less interesting to statistical applications (and especially not the case in the problem illustrated above). To remove such a restriction from the result of \cite{NNP2016}, we employ the novel interpolation method introduced in \citet{NY2017}, instead of Slepian's interpolation used in \cite{NNP2016} and the original CCK theory. 

Another problem in the present context is validation of standardizing statistics by random variables. In a low-dimensional setting, this is typically achieved by proving the so-called \textit{stable convergence in law} (see e.g.~\cite{PV2010} for details). However, in a high-dimensional setting, the meaning of stable convergence is unclear and its na\"ive extension is not useful because of the lack of the continuous mapping theorem and the delta method (see Section \ref{sec:main} for a relevant discussion). So we also aim at developing a formulation appropriate to validating such an operation.  

The remainder of the paper is organized as follows. 
Section \ref{sec:preliminaries} is devoted to some preliminaries on notation and concepts used in the paper. 
Section \ref{sec:main} presents the main results obtained in this paper. 
In Section \ref{sec:rc} we apply the developed theory to establish the asymptotic mixed normality of realized covariance matrices in a high-dimensional setting and illustrate its application to testing the residual sparsity of a continuous-time factor model. 
Section \ref{sec:simulation} provides a small simulation study as well as an empirical illustration using real data. 
All the proofs are collected in the Appendix.  

\section{Preliminaries}\label{sec:preliminaries}

In this section we present some notation and concepts used throughout the paper. 

\subsection{Basic notation}\label{sec:basic}

We begin by introducing some basic notation which \tcr{is} more or less common in the literature. 
For a vector $x\in\mathbb{R}^d$, we write the $i$-th component of $x$ as $x^i$ for $i=1,\dots,d$. 
Also, we set $\min x:=\min_{1\leq i\leq d}x^i$. 
For two vectors $x,y\in\mathbb{R}^d$, the statement $x\leq y$ means $x^i\leq y^i$ for all $i=1,\dots,d$. 
For a vector $x\in\mathbb{R}^d$ and a scalar $a\in\mathbb{R}$, we set
\[
x\pm a:=(x^1\pm a,\dots,x^d\pm a)^\top.
\]
Here, $\top$ stands for the transpose of a matrix. 

For a matrix $A$, we write its $(i,j)$-th entry as $A^{ij}$. Also, $A^{i\cdot}$ and $A^{\cdot j}$ denote the $i$-th row vector and the $j$-th column vector, respectively. Here, we regard both the vectors $A^{i\cdot}$ and $A^{\cdot j}$ as column vectors. 
If $A$ is an $m\times d$ matrix, we denote by $\opnorm{A}_\infty$ the $\ell_\infty$-operator norm of $A$:
\[
\opnorm{A}_\infty=\max_{1\leq i\leq m}\sum_{j=1}^d|A^{ij}|.
\]
If $B$ is another $m\times d$ matrix, we denote by $A\cdot B$ the Frobenius inner product of $A$ and $B$. That is,
\[
A\cdot B:=\sum_{i=1}^m\sum_{j=1}^dA^{ij}B^{ij}.
\]
For a $d\times d$ matrix $A$, we denote by $\diag(A)$ the $d$-dimensional vector consisting of the diagonal entries of $A$, i.e.~$\diag(A)=(A^{11},\dots,A^{dd})^\top$.

For a random variable $\xi$ and a number $p>0$, we write $\|\xi\|_p=(E[|\xi|^p])^{1/p}$. 
We also use the notation $\|\xi\|_\infty$ to denote the essential supremum of $\xi$. 
We will denote by $L^{\infty-}$ the space of all random variables $\xi$ such that $\|\xi\|_p<\infty$ for every $p\in[1,\infty)$. 
The notation $\to^p$ stands for convergence in probability. 



If $V$ is a real Hilbert space, we denote by $\langle\cdot,\cdot\rangle_V$ and $\|\cdot\|_V$ the inner product and norm of $V$, respectively. 
Also, we denote by $L^p(\Omega;V)$ the set of all $V$-valued random variables $\xi$ such that $E[\|\xi\|_V^2]<\infty$. 

Given real Hilbert spaces $V_1,\dots,V_k$, we write their Hilbert space tensor product as $V_1\otimes\cdots\otimes V_k$. 
For a real Hilbert space $V$, we write the $k$th tensor power of $V$ as $V^{\otimes k}$, i.e.
\[
V^{\otimes k}:=\underbrace{V\otimes\cdots\otimes V}_{k}.
\]
\tcr{
Note that the Hilbert space tensor product is uniquely determined up to isomorphism, and we often select a convenient realization case by case. 
For example, we identify the tensor product $V\otimes\mathbb{R}^d$ with the Hilbert space $V^d$ equipped with the inner product $\langle(f_1,\dots,f_d),(g_1,\dots,g_d)\rangle_{V^d}=\sum_{i=1}^d\langle f_i,g_i\rangle_V$ for $f_1,\dots,f_d,g_1,\dots,g_d\in V$. 
This is possible because the latter is the Hilbert space tensor product of $V$ and $\mathbb{R}^d$ in the sense of Definition E.8 in \cite{Janson1997}. Namely, there is a bilinear map $T:V\times\mathbb{R}^d\to V^d$ such that the range of $T$ is total in $V^d$ and
\[
\langle T(f,a),T(g,b)\rangle_{V^d}=\langle f,g\rangle_{V}\langle a,b\rangle_{\mathbb{R}^d}
\]
for all $f,g\in V$ and $a,b\in\mathbb{R}^d$. In fact, we may define $T$ by 
\[
T(f,a)=(a_1f,\dots,a_df)\qquad(f\in V,~a=(a_1,\dots,a_d)^\top\in\mathbb{R}^d).
\]
Evidently, $T$ is bilinear and its range is total in $(H^{\otimes k})^d$. Moreover, for any $f,g\in V$ and $a=(a_1,\dots,a_d)^\top\in\mathbb{R}^d$, $b=(b_1,\dots,b_d)^\top\in\mathbb{R}^d$,
\begin{align*}
\langle T(f,a),T(g,b)\rangle_{V^d}
=\sum_{i=1}^d\langle a_if,b_ig\rangle_{V}
=\sum_{i=1}^da_ib_i\langle f,g\rangle_{V}
=\langle f,g\rangle_{V}\langle a,b\rangle_{\mathbb{R}^d}.
\end{align*}
}%
For an element $f\in V^{\otimes k}$, we write the (canonical) symmetrization of $f$ as $\symm(f)$. Namely, the map $V^{\otimes k}\ni f\mapsto \symm(f)\in V^{\otimes k}$ is characterized as the unique continuous linear operator on $V^{\otimes k}$ such that
\[
\symm(f_1\otimes\cdots\otimes f_k)=\frac{1}{k!}\sum_{\tau\in\mathcal{S}_k}f_{\tau(1)}\otimes\cdots\otimes f_{\tau(k)}
\]
for all $f_1,\dots,f_k\in V$, where $\mathcal{S}_k$ denotes the set of all permutations of $\{1,\dots,k\}$, i.e.~the symmetric group of degree $k$. 
An element $f\in V^{\otimes k}$ is said to be \textit{symmetric} if $\symm(f)=f$. 
We refer to Appendix E of \cite{Janson1997} for details on Hilbert space tensor products.  

\if0
\subsection{Tensors}

In this subsection we introduce some notation related to tensors (or multi-way arrays) which are necessary to state our main results. 

We denote by $\mathbb{K}$ the real field $\mathbb{R}$ or the complex field $\mathbb{C}$ and consider a vector space $V$ over $\mathbb{K}$. 
Let $q$ be a positive integer $q$. 
We denote by $\mathcal{T}^q_d(V)$ the set of all $V$-valued functions on $\{1,\dots,d\}^q$ ($V$-valued $d$-dimensional $q$-way tensors). For a tensor $T\in\mathcal{T}^q_d(V)$ and indices $i_1,\dots,i_q\in\{1,\dots,d\}$ we write $T(i_1,\dots,i_q)$ as $T_{i_1,\dots,i_q}$ and $T$ itself as $T=(T_{i_1,\dots,i_q})_{1\leq i_1,\dots,i_q\leq d}$. 
We identify $\mathcal{T}^1_d(V)$ with $V^d$ and $\mathcal{T}^2_d(V)$ with the set of all $V$-valued $d\times d$ matrices, respectively. When $V=\mathbb{K}$, $\mathcal{T}^q_d(\mathbb{K})$ is naturally identified with the Hilbert space tensor product $(\mathbb{K}^d)^{\otimes q}$. 

For two $\mathbb{K}$-valued tensors $S,T\in\mathcal{T}^q_d(\mathbb{K})$, we define their Hadamard-type product (i.e.~entry-wise product) by
\[
S\circ T:=(S_{i_1,\dots,i_q}T_{i_1,\dots,i_q})_{1\leq i_1,\dots,i_q\leq d}\in\mathcal{T}_d^q(\mathbb{K}). 
\]
Also, we set
\[
\|T\|_{\ell_1}:=\sum_{1\leq i_1,\dots,i_q\leq d}|T_{i_1,\dots,i_q}|,\qquad
\|T\|_{\ell_\infty}:=\max_{1\leq i_1,\dots,i_q\leq d}|T_{i_1,\dots,i_q}|.
\]

For $T\in \mathcal{T}^q_d(V)$ and $v\in V$, we define
\[
\langle T,v\rangle_V:=(\langle T_{i_1,\dots,i_q},v\rangle_V)_{1\leq i_1,\dots,i_q\leq d}\in\mathcal{T}^q_d(\mathbb{K}).
\]

Let $H_1,H_2$ be two real separable Hilbert space. For $S\in \mathcal{T}^{p}_d(H_1)$ and $T\in \mathcal{T}^{q}_d(H_2)$, we define
\[
S\otimes T=(S_{i_1,\dots,i_p}\otimes T_{j_1,\dots,j_q})_{1\leq i_1,\dots,i_p,j_1,\dots,j_q\leq d}\in\mathcal{T}^{p+q}_d(H_1\otimes H_2).
\]
\fi

\subsection{Multi-way arrays}\label{sec:array}

In this subsection we introduce some notation related to multi-way arrays (or tensors) which are necessary to state our main results. 

Given a positive integer $N$, we set $[N]:=\{1,\dots,N\}$ for short. 
We denote by $\mathbb{K}$ the real field $\mathbb{R}$ or the complex field $\mathbb{C}$ and consider a vector space $V$ over $\mathbb{K}$. 
Given $q$ positive integers $N_1,\dots,N_q$, we denote by $V^{N_1\times\cdots\times N_q}$ the set of all $V$-valued $N_1\times\cdots\times N_q$ arrays, i.e.~$V$-valued functions on $[N_1]\times\cdots\times[N_q]$. 
Note that $V^{N_1\times N_2}$ corresponds to the set of all $V$-valued $N_1\times N_2$ matrices. 
When $N_1=\cdots=N_q=N$, we call an element of $V^{N_1\times\cdots\times N_q}$ a $V$-valued $N$-dimensional $q$-way array. 
For an array $T\in V^{N_1\times\cdots\times N_q}$ and indices $i_k\in [N_k]$ ($k=1,\dots,q$), we write $T(i_1,\dots,i_q)$ as $T^{i_1,\dots,i_q}$ and $T$ itself as $T=(T^{i_1,\dots,i_q})_{(i_1,\dots,i_q)\in\prod_{k=1}^q[N_k]}$. 
When $V=\mathbb{K}$, $V^{N_1\times\cdots\times N_q}$ is naturally identified with the Hilbert space tensor product $\mathbb{K}^{N_1}\otimes\cdots\otimes\mathbb{K}^{N_q}$ by the unique linear isomorphism $\iota:\mathbb{K}^{N_1}\otimes\cdots\otimes\mathbb{K}^{N_q}\to\mathbb{K}^{N_1\times\cdots\times N_q}$ such that $\iota(x_1\otimes\cdots\otimes x_q)=(x_1^{i_1}\cdots x_q^{i_q})_{(i_1,\dots,i_q)\in\prod_{k=1}^q[N_k]}$ for $x_k\in\mathbb{K}^{N_k}$, $k=1,\dots,q$ (cf.~Example E.10 of \cite{Janson1997}). 

For two $\mathbb{K}$-valued arrays $S,T\in\mathbb{K}^{N_1\times\cdots\times N_q}$, we define their Hadamard-type product (i.e.~entry-wise product) by
\[
S\circ T:=(S^{i_1,\dots,i_q}T^{i_1,\dots,i_q})_{(i_1,\dots,i_q)\in\prod_{k=1}^q[N_k]}\in\mathbb{K}^{N_1\times\cdots\times N_q}.
\]
Also, we set
\[
\|T\|_{\ell_p}:=
\left\{\begin{array}{ll}
\displaystyle\left\{\sum_{(i_1,\dots,i_q)\in\prod_{k=1}^q[N_k]}|T^{i_1,\dots,i_q}|^p\right\}^{1/p} & \text{if }p\in(0,\infty),\\
\displaystyle\max_{(i_1,\dots,i_q)\in\prod_{k=1}^q[N_k]}|T^{i_1,\dots,i_q}| & \text{if }p=\infty.
\end{array}\right.
\]

Now suppose that $V$ is a real Hilbert space. 
For $T\in V^{N_1\times\cdots\times N_q}$ and $x\in V$, we define
\tcr{
\begin{equation}\label{array:inner-prod}
\langle T,x\rangle_V:=(\langle T^{i_1,\dots,i_q},x\rangle_V)_{(i_1,\dots,i_q)\in\prod_{k=1}^q[N_k]}\in\mathbb{R}^{N_1\times\cdots\times N_q}.
\end{equation}
}%
Let $m$ be a positive integer. For each $j=1,\dots,m$, let $V_j$ be a real Hilbert space, $p_j\in\mathbb{N}$, $N_1^{(j)},\dots,N_{p_j}^{(j)}\in\mathbb{N}$ and $T_j\in V_j^{N^{(j)}_1\times\cdots\times N^{(j)}_{p_j}}$. Then we define
\tcr{
\begin{align}
T_1\otimes\cdots\otimes T_m
&:=(T_1^{i_1,\dots,i_{p_1}}\otimes\cdots\otimes T^{i_{p_1+\cdots+p_{m-1}+1},\dots,i_{p_1+\cdots+p_m}})_{(i_1,\dots,i_{p_1+\cdots+p_m})\in\prod_{k=1}^{p_1+\cdots+p_m}[N_k]}\nonumber\\
&\in(V_1\otimes\cdots\otimes V_m)^{N_1^{(1)}\times\cdots\times N_{p_1}^{(1)}\times\cdots\times N_1^{(m)}\times\cdots\times N_{p_m}^{(m)}}.\label{array:tensor-prod}
\end{align}
}%
In particular, we write
\[
T^{\otimes m}:=\underbrace{T\otimes\cdots\otimes T}_{m}.
\]

\subsection{Malliavin calculus}\label{sec:malliavin}

This subsection introduces some notation and concepts from Malliavin calculus used throughout the paper. We refer to \citet{Nualart2006}, Chapter 2 of \citet{NP2012} and Chapter 15 of \citet{Janson1997} for further details on this subject. 

Given a probability space $(\Omega,\mathcal{F},P)$, let $\mathbb{W}=(\mathbb{W}(h))_{h\in H}$ be an isonormal Gaussian process over a real separable Hilbert space $H$. 

Let $V$ be another real separable Hilbert space. 
For any real number $p\geq1$ and any integer $k\geq1$, $\mathbb{D}_{k,p}(V)$ denotes the stochastic Sobolev space of $V$-valued random variables which are $k$ times differentiable in the Malliavin sense and the derivatives up to order $k$ have finite moments of order $p$. 
If $F\in\mathbb{D}_{k,p}(V)$, we denote by $D^kF$ the $k$th Malliavin derivative of $F$, which is a random variable taking its values in the space $L^p(\Omega;H^{\otimes k}\otimes V)$. We write $DF$ instead of $D^1F$ for short. 
We set $\mathbb{D}_{k,\infty}(V)=\bigcap_{p=1}^\infty\mathbb{D}_{k,p}(V)$. 
If $V=\mathbb{R}$, we simply write $\mathbb{D}_{k,p}(V)$ as $\mathbb{D}_{k,p}$. 

For a $d$-dimensional random vector $F\in\mathbb{D}_{k,p}(\mathbb{R}^d)$, we identify the $k$th Malliavin derivative $D^kF$ of $F$ as the $(H^{\otimes k})^d$-valued random variable $(D^kF^1,\dots,D^kF^d)$ \tcr{by identifying $H^{\otimes k}\otimes\mathbb{R}^d$ with $(H^{\otimes k})^d$ as in Section \ref{sec:basic}}. Similarly, for a $d\times d'$ matrix valued random variable $F\in\mathbb{D}_{k,p}(\mathbb{R}^{d\times d'})$, we identify $D^kF$ as the $(H^{\otimes k})^{d\times d'}$-valued random variable $(D^kF^{ij})_{(i,j)\in[d]\times[d']}$.


For a positive integer $q$, we denote by $\delta^q$ the $q$-th multiple Skorohod integral, which is the adjoint operator of the densely defined operator $L^2(\Omega)\supset\mathbb{D}_{q,2}\ni F\mapsto D^qF\in L^2(\Omega;H^{\otimes q})$. That is, the domain $\domain(\delta^q)$ of $\delta^q$ is defined as the set of all $H^{\otimes q}$-valued random variables $u$ such that there is a constant $C>0$ satisfying $|E[\langle u,D^qF\rangle_{H^{\otimes q}}]|\leq C\|F\|_2$ for all $F\in\mathbb{D}_{q,2}$, and the following duality formula holds for any $u\in\domain(\delta^q)$ and $F\in\mathbb{D}_{q,2}$:
\[
E[F\delta^q(u)]=E[\langle u,D^qF\rangle_{H^{\otimes q}}].
\]

\subsection{Multi-indices}

This subsection collects some notation related to multi-indices. 

Let $q$ be a positive integer. We denote by $\mathbb{Z}_+$ the set of all non-negative integers. 
We define
\[
\mathcal{A}(q):=\{\alpha\in\mathbb{Z}_+^q:\alpha_1+2\alpha_2+\cdots+q\alpha_q=q\}.
\]
For a multi-index $\alpha=(\alpha_1,\dots,\alpha_q)\in\mathbb{Z}_+^q$, we set $|\alpha|=\alpha_1+\cdots+\alpha_q$ as usual. 
Given another positive integer $r$, we define
\[
\mathcal{N}_r(\alpha):=\left\{\nu=(\nu_{ij})_{(i,j)\in[q]\times[r]}:\nu_{ij}\in\mathbb{Z},\sum_{j=1}^r\nu_{ij}=\alpha_i \right\}
\]
and
\[
\mathcal{N}^*_r(\alpha)=\{\nu=(\nu_{ij})\in\mathcal{N}_r(\alpha):\nu_{q1}=0\}.
\]
Moreover, we define
\[
\tcr{\ol{\mathcal{A}}(q):=\bigcup_{p=1}^q\mathcal{A}(p)\quad\text{and}\quad}
\ol{\mathcal{N}}^*_r(q):=\bigcup_{\alpha\in\tcr{\ol{\mathcal{A}}}(q)}\mathcal{N}^*_r(\alpha).
\]
Finally, for an element $\nu=(\nu_{ij})\in\mathcal{N}_4(\alpha)$, we set $|\nu|_*:=|\nu_{\cdot 1}|+2|\nu_{\cdot 2}|+|\nu_{\cdot 3}|$ and $|\nu|_{**}:=|\nu|_*+|\nu_{\cdot4}|$. 

\section{Main results}\label{sec:main}

Throughout the paper, we consider an asymptotic theory such that the parameter $n\in\mathbb{N}$ tends to infinity. For each $n\in\mathbb{N}$, we consider a probability space $(\Omega^{n},\mathcal{F}^{n},P^{n})$, and we suppose that all the random variables at stage $n$ are defined on $(\Omega^{n},\mathcal{F}^{n},P^{n})$. We also suppose that an isonormal Gaussian process $\mathbb{W}_n=(\mathbb{W}_n(h))_{h\in H_n}$ over a real separable Hilbert space $H_n$ is defined on $(\Omega^{n},\mathcal{F}^{n},P^{n})$. 
To keep the notation simple, we subtract the indices $n$ from $(\Omega^{n},\mathcal{F}^{n},P^{n})$, $\mathbb{W}_n$ and $H_n$, respectively. So we will write them simply as $(\Omega,\mathcal{F},P)$,  $\mathbb{W}$ and $H$, respectively. 
In particular, note that the spaces and the operators associated with $\mathbb{W}$ (which are introduced in Section \ref{sec:malliavin}) implicitly depend on $n$, although we do not attach the index $n$ to them. 

For each $n\in\mathbb{N}$, let $M_n$ be a $d$-dimensional random vector consisting of multiple Skorohod integrals:
\[
M_n^j=\delta^{q_j}(u_n^j),\qquad j=1,\dots,d,
\]
where $q_j$ is a positive integer and $u_n^j\in\domain(\delta^{q_j})$ for every $j$. 
Here, we assume that the dimension $d$ possibly depends on $n$ as $d=d_n$, while $q_j$'s do not depend on $n$. We also assume $d_n\geq\tcr{3}$ for every $n$ and $\ol{q}:=\sup_{j}q_j<\infty$. 
Our aim is to study mixed-normal limit theorems for the following functionals:
\[
Z_n=M_n+W_n,\qquad n=1,2,\dots,
\]
where $W_n$'s are $d$-dimensional random vectors which represent the uncentered part of the functionals. 
%

Let us introduce mixed-normal random vectors approximating the functionals $Z_n$ in law as follows:
\[
\mathfrak{Z}_n=\mathfrak{C}_n^{1/2}\zeta_n+W_n,\qquad n=1,2,\dots.
\]
Here, $\mathfrak{C}_n$ is a $d\times d$ symmetric positive semidefinite random matrix and $\zeta_n$ is a $d$-dimensional standard Gaussian vector independent of $\mathcal{F}$, which is defined on an extension of the probability space $(\Omega,\mathcal{F},P)$ if necessary. 
%

The main aim of this paper is to investigate reasonable regularity conditions under which the distribution of $Z_n$ is well-approximated by that of $\mathfrak{Z}_n$. 
To be precise, we are interested in the following type of result:
\[
\sup_{z\in\mathbb{R}^d}\left|P(Z_n\leq z)-P(\mathfrak{Z}_n\leq z)\right|\to0\qquad\text{as }n\to\infty.
\]
It is well-recognized in statistic literature, however, that this type of result is usually insufficient for statistical applications because it does not ensure standardization by a \textit{random} vector which is still random in the limit; such an operation is crucial for Studentization in the present context. In a low-dimensional setting, this issue is usually resolved by proving the stability of the convergence so that
\[
(Z_n,X)\to^{\mathcal{L}}(\mathfrak{Z}_n,X)\qquad\text{as }n\to\infty
\]
for any $m$-dimensional ($\mathcal{F}$-measurable) random variable $X$, where $\to^{\mathcal{L}}$ denotes the convergence in law. 
This statement is no longer meaningful in a high-dimensional setting such that $d\to\infty$ as $n\to\infty$, so we need to reformulate it appropriately. A na\"ive idea is to consider the following statement:
\begin{equation}\label{stable-kol}
\sup_{z\in\mathbb{R}^d,x\in\mathbb{R}^m}\left|P(Z_n\leq z,X\leq x)-P(\mathfrak{Z}_n\leq z,X\leq x)\right|\to0\qquad\text{as }n\to\infty.
\end{equation}
However, if $m$ depends also on $n$, this type of statement is not attractive neither theoretical nor practical points of view due to the following reasons: 
From a theoretical point of view, we need to assume a so-called \textit{anti-concentration inequality} for $X$ to prove this type of result by the CCK approach, but it is usually hard to check such an inequality for general random variables, especially when $m\to\infty$ as $n\to\infty$. 
Besides, from a practical point of view, it is still unclear whether the convergence \eqref{stable-kol} ensures the validity of standardization of $Z_n$ because no analog of the continuous mapping theorem has been established yet for high-dimensional central limit theorems of the form \eqref{stable-kol}. 
For these reasons we choose the way to directly prove convergence results for normalized statistics of $Z_n$. More formally, let $\Xi_n$ be an $m\times d$ random matrix, where $m=m_n\geq\tcr{3}$ possibly depends on $n$. Our aim is to establish
\begin{equation}\label{aim}
\sup_{y\in\mathbb{R}^m}|P(\Xi_nZ_n\leq y)-P(\Xi_n\mathfrak{Z}_n\leq y)|\to0
\end{equation}
as $n\to\infty$ under reasonable regularity conditions on $Z_n$ and $\Xi_n$. 
Mathematically speaking, given a vector $y\in\mathbb{R}^m$, the set $\{z\in\mathbb{R}^d:\Xi_nz\leq y\}$ is a finite intersection of hyperplanes in $\mathbb{R}^d$, i.e.~convex polytopes in $\mathbb{R}^d$, \tcr{so} the convergence \eqref{aim} can be considered as a high-dimensional central limit theorem for random convex polytopes. 
If we take $\Xi_n$ as the $d\times d$ diagonal matrix whose diagonals are the inverses of the ``standard errors'' of $Z_n$, the convergence \eqref{aim} does ensures the validity of (marginal) standardization of $Z_n$. 

Now, our main theorem is stated as follows: 
\begin{theorem}\label{thm:main}
Suppose that $M_n,W_n\in\mathbb{D}_{\overline{q},\infty}(\mathbb{R}^d)$ and $\mathfrak{C}_n\in\mathbb{D}_{\overline{q},\infty}(\mathbb{R}^{d\times d})$ and that $u_n^j$ is symmetric for all $n$ and $j$.  
Suppose also that $\Xi_n$ can be written as $\Xi_n=\Upsilon_n\circ\bs{X}_n$ with $\Upsilon_n$ being an $m\times d$ (deterministic) matrix such that $\opnorm{\Upsilon_n}_\infty\geq1$ and $\bs{X}_{n}\in\mathbb{D}_{\overline{q},\infty}(\mathbb{R}^{m\times d})$.  
Assume that the following convergences hold true:
\begin{equation}\label{qtan-conv}
\opnorm{\Upsilon_n}_\infty^2E\left[\|\bs{X}_n\|_{\ell_\infty}^2\|\Delta_n\|_{\ell_\infty}\right](\log m)^2\to0
\end{equation}
and
\begin{equation}\label{delta-conv}
\opnorm{\Upsilon_n}_\infty^{|\nu|_{**}+1}E\left[\left(1+\|\bs{X}_n\|_{\ell_\infty}^{|\nu|_*+1}\right)\left(1+\|Z_n\|_{\ell_\infty}^{|\nu_{\cdot 4}|}+\|\mathfrak{Z}_n\|_{\ell_\infty}^{|\nu_{\cdot 4}|}\right)\max_{1\leq j\leq d}\|\Delta_{n,j}(\nu)\|_{\ell_\infty}\right](\log m)^{\frac{3}{2}|\nu|_{**}+\frac{1}{2}}\to0
\end{equation}
as $n\to\infty$ for every $\nu\in\ol{\mathcal{N}}_4^*(\ol{q})$, where 
\begin{equation}\label{qtan}
\Delta_n=\left(\langle D^{q_j}M^i_n,u_n^j\rangle_{H^{\otimes q_j}}-\mathfrak{C}_n^{ij}\right)_{1\leq i,j\leq d}
\end{equation}
and
\begin{equation}\label{eq:delta}
\Delta_{n,j}(\nu):=\left\langle \bigotimes_{k=1}^{q_j}(D^kM_n)^{\otimes \nu_{k1}}\otimes(D^k\mathfrak{C}_n)^{\otimes \nu_{k2}}\otimes(D^kW_n)^{\otimes \nu_{k3}}\otimes(D^k\bs{X}_{n})^{\otimes \nu_{k4}},u_n^j\right\rangle_{H^{\otimes q_j}}
\end{equation}
\tcr{if $\nu\in\bigcup_{\alpha\in\mathcal{A}(q_j)}\mathcal{N}_4^*(\alpha)$ and $\Delta_{n,j}(\nu)=0$ otherwise.} 
Assume also that the following condition is satisfied:
\begin{equation}\label{diag-tight}
\lim_{b\downarrow0}\limsup_{n\to\infty}P(\min\diag(\Xi_n\mathfrak{C}_n\Xi_n^\top)<b)=0.
\end{equation}
Then we have \eqref{aim} as $n\to\infty$. 
\end{theorem}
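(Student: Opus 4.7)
The plan is to combine the Chernozhukov--Chetverikov--Kato (CCK) smoothing reduction with iterated Malliavin integration by parts, in the spirit of the Malliavin--Stein method, using the interpolation scheme of \cite{NY2017} in place of the Slepian/smart-path interpolation of \cite{NNP2016}; this avoids the requirement that the components of $\mathfrak{Z}_n$ be conditionally independent given $\mathcal{F}$, which is precisely the restriction one needs to break for the intended statistical applications.

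First I would reduce \eqref{aim} to a smooth-function comparison. For $y \in \mathbb{R}^m$ I would approximate $\mathbf{1}\{\Xi_n z \leq y\}$ by $g_y(\Xi_n z) = \Phi_\beta(\max_{i \leq m}((\Xi_n z)^i - y^i))$, where $\Phi_\beta$ is the standard CCK smoothed maximum at scale $\beta^{-1}$. The smoothing error is controlled through a Gaussian anti-concentration inequality applied to $\max_i(\Xi_n \mathfrak{Z}_n)^i$ conditionally on $\mathcal{F}$; this is where condition \eqref{diag-tight} enters, ensuring that the conditional variances $(\Xi_n\mathfrak{C}_n\Xi_n^\top)^{ii}$ are uniformly bounded below with high probability so that the anti-concentration rate is of order $\beta^{-1}\sqrt{\log m}$. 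The optimal choice $\beta \sim \sqrt{\log m}$ reduces the problem to bounding $|E[f(\Xi_n Z_n)] - E[f(\Xi_n \mathfrak{Z}_n)]|$ for smooth $f$ whose $k$-th derivative has sup-norm at most a constant multiple of $(\log m)^{k/2}$.

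Next I would apply the \cite{NY2017} interpolation to write
\[
E[f(\Xi_n Z_n)] - E[f(\Xi_n \mathfrak{Z}_n)] = \int_0^1 \frac{d}{dt} E[f(\Xi_n Z_n(t))] \, dt
\]
along a path $Z_n(t)$ joining $\mathfrak{Z}_n$ at $t=0$ to $Z_n$ at $t=1$. The key feature of this interpolation, as opposed to Slepian's, is that differentiation in $t$ produces an expression in which each Skorohod integral $\delta^{q_j}(u_n^j)$ appears multiplied by partial derivatives of $f$ evaluated at $\Xi_n Z_n(t)$ and weighted by $\mathcal{F}$-measurable factors, without requiring conditional independence of the Gaussian components. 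Iterating the duality $E[F\delta^{q_j}(u_n^j)] = E[\langle u_n^j, D^{q_j}F\rangle_{H^{\otimes q_j}}]$ with the Leibniz rule for $D^k$ transfers all derivatives onto the smooth composite $f(\Xi_n Z_n(t))$; expanding by how many $k$-th Malliavin derivatives land on $M_n$, $\mathfrak{C}_n$, $W_n$ and $\bs{X}_n$ respectively produces the sum over $\nu \in \overline{\mathcal{N}}_4^*(\overline{q})$ in the statement. The leading contraction $\langle u_n^j, D^{q_j} M_n^i\rangle_{H^{\otimes q_j}}$ encodes the effective conditional covariance of $M_n$ and pairs with a second derivative of $f$; matching it against $\mathfrak{C}_n^{ij}$ (the covariance generated by the interpolation on the Gaussian side) leaves exactly the residual $\Delta_n$ of \eqref{qtan}, while all remaining contractions are the arrays $\Delta_{n,j}(\nu)$ of \eqref{eq:delta}.

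Finally, I would bound each term in $\ell_\infty$ using the Hadamard structure $\Xi_n = \Upsilon_n \circ \bs{X}_n$, which yields estimates of the form $|(\Xi_n A \Xi_n^\top)^{ij}| \leq \opnorm{\Upsilon_n}_\infty^2 \|\bs{X}_n\|_{\ell_\infty}^2 \|A\|_{\ell_\infty}$ and an analogous multi-linear extension to the higher contractions; the operator-norm power $\opnorm{\Upsilon_n}_\infty^{|\nu|_{**}+1}$ and moment factor $\|\bs{X}_n\|_{\ell_\infty}^{|\nu|_*+1}$ in \eqref{delta-conv} arise in exactly this step, with $|\nu|_*$ counting total Malliavin derivatives that fall on $M_n, \mathfrak{C}_n, W_n$ and $|\nu_{\cdot 4}|$ counting those falling on $\bs{X}_n$. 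The derivative factors $(\log m)^{\frac{3}{2}|\nu|_{**}+\frac{1}{2}}$ and $(\log m)^{2}$ follow from collecting the $\beta^k \sim (\log m)^{k/2}$ bounds on successive derivatives of the smoothed maximum together with an extra $\sqrt{\log m}$ from the final union bound over the $m$ affine constraints. The main obstacle I anticipate is the combinatorial bookkeeping: correctly enumerating which iteration of Leibniz produces each multi-index $\nu$, isolating the unique term whose contraction pairs with $\mathfrak{C}_n$ (this is where the hypothesis that $u_n^j$ is symmetric is consumed, via the identity relating $\delta^{q_j}$ to iterated $\delta$), and tracking the exact powers of $\opnorm{\Upsilon_n}_\infty$ and $\log m$ through the Hadamard and composition-derivative structure so that no extraneous factor of $d$ or $m$ is incurred.
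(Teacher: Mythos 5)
Your plan follows the paper's architecture (CCK smoothing via $\Phi_\beta$, mixed-normal anti-concentration driven by \eqref{diag-tight}, interpolation plus Malliavin duality, Hadamard-structure bookkeeping), and you correctly identify that the point of the \cite{NY2017} scheme is to dispense with the conditional-independence restriction of \cite{NNP2016}. However, the way you phrase the interpolation step would not survive a literal execution, and this is more than a cosmetic issue. You write the interpolant as a path of random vectors $Z_n(t)$ running from $\mathfrak{Z}_n$ to $Z_n$; but $\mathfrak{Z}_n = \mathfrak{C}_n^{1/2}\zeta_n + W_n$, so any such state-space path necessarily carries the factor $\mathfrak{C}_n^{1/2}$, and the moment you try to iterate the duality $E[F\delta^{q_j}(u_n^j)] = E[\langle u_n^j, D^{q_j}F\rangle]$ on $F = \partial_{z^j}f(\Xi_n Z_n(t))$ via Fa\`a di Bruno, you need Malliavin derivatives of $\mathfrak{C}_n^{1/2}$, which the theorem does not assume (only $\mathfrak{C}_n\in\mathbb{D}_{\overline q,\infty}$). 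The paper avoids this by interpolating the (joint) characteristic functions through $\lambda_n(\theta;\mathsf z,\mathsf x)=\theta M_n[\mathsf{iz}]+\tfrac12(1-\theta^2)\mathfrak{C}_n[(\mathsf{iz})^{\otimes 2}]+W_n[\mathsf{iz}]+X_n[\mathsf{ix}]$, which makes $\mathfrak{C}_n$ appear linearly in the exponent (not through its square root), so that the Fa\`a di Bruno expansion of $D^{q_j}e^{\lambda_n}$ only ever produces $D^k\mathfrak{C}_n$; only \emph{after} Fourier inversion does the formula land on partial derivatives of $f$ along the mixed path. This is the substantive content of "frequency domain" in NY2017 and your sketch elides it. Two further omissions are worth noting. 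First, your reliance on the "Leibniz rule for $D^k$" is not quite the right tool: the paper needs a Malliavin Fa\`a di Bruno formula to compute $D^{q_j}e^{\lambda_n}$, and that chain-rule iteration — not the product rule — is what generates the $\mathcal{A}(q_j)$/$\mathcal{N}_4^*(\alpha)$ combinatorics and the arrays $\Delta_{n,j}(\nu)$. Second, the paper reduces at the outset to the case where all rows of $\bs X_n$ coincide (its Lemma A.2); without this the Hadamard identity $\Xi_n z = \Upsilon_n(z\circ X_n)$ you implicitly use for the derivative/$\opnorm{\cdot}_\infty$ bookkeeping is not available, and the reduction is what gives the clean $m\times d$ stacking that makes the $\opnorm{\Upsilon_n}_\infty$ powers come out as claimed.
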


\begin{rmk}
The variable $\Delta_n$ defined in \eqref{qtan} is called the \textit{quasi-tangent} in \cite{NY2017}. 
\end{rmk}
\begin{rmk}
The variable $\Delta_{n,j}(\nu)$ defined in \eqref{eq:delta} takes values in 
\[
\mathbb{R}^{\underbrace{\scriptstyle{d\times\cdots\times d}}_{|\nu|_{*}}\times\underbrace{\scriptstyle{m\times\cdots\times m}}_{|\nu_{\cdot4}|}
\times\underbrace{\scriptstyle{d\times\cdots\times d}}_{|\nu_{\cdot4}|}}
\]
when $\nu=(\nu_{kl})\in\bigcup_{\alpha\in\mathcal{A}(q_j)}\mathcal{N}_4^*(\alpha)$. 
To see this, let us recall that $D^kM_n$, $D^k\mathfrak{C}_n$, $D^kW_n$ and $D^k\bs{X}_n$ take values in $(H^{\otimes k})^d$, $(H^{\otimes k})^{d\times d}$, $(H^{\otimes k})^d$ and $(H^{\otimes k})^{m\times d}$, respectively (cf.~Section \ref{sec:malliavin}). 
Therefore, according to the notation defined by \eqref{array:tensor-prod}, the variable
\[
\bigotimes_{k=1}^{q_j}(D^kM_n)^{\otimes \nu_{k1}}\otimes(D^k\mathfrak{C}_n)^{\otimes \nu_{k2}}\otimes(D^kW_n)^{\otimes \nu_{k3}}\otimes(D^k\bs{X}_{n})^{\otimes \nu_{k4}}
\]
takes values in
\[
\left(H^{\otimes \sum_{k=1}^{q_j}k(\nu_{k1}+\nu_{k2}+\nu_{k3}+\nu_{k4})}\right)^{\underbrace{\scriptstyle{d\times\cdots\times d}}_{\sum_{k=1}^{q_j}(\nu_{k1}+2\nu_{k2}+\nu_{k3})}
\times\underbrace{\scriptstyle{m\times\cdots\times m}}_{\sum_{k=1}^{q_j}\nu_{k4}}
\times\underbrace{\scriptstyle{d\times\cdots\times d}}_{\sum_{k=1}^{q_j}\nu_{k4}}}
=\left(H^{\otimes q_j}\right)^{\underbrace{\scriptstyle{d\times\cdots\times d}}_{|\nu|_{*}}\times\underbrace{\scriptstyle{m\times\cdots\times m}}_{|\nu_{\cdot4}|}
\times\underbrace{\scriptstyle{d\times\cdots\times d}}_{|\nu_{\cdot4}|}},
\]
where the last identity follows from the relation $\sum_{k=1}^{q_j}k(\nu_{k1}+\nu_{k2}+\nu_{k3}+\nu_{k4})=q_j$. 
Hence, according to the notation defined by \eqref{array:inner-prod}, we obtain
\begin{align*}
\Delta_{n,j}(\nu)&=\left\langle \bigotimes_{k=1}^{q_j}(D^kM_n)^{\otimes \nu_{k1}}\otimes(D^k\mathfrak{C}_n)^{\otimes \nu_{k2}}\otimes(D^kW_n)^{\otimes \nu_{k3}}\otimes(D^k\bs{X}_{n})^{\otimes \nu_{k4}},u_n^j\right\rangle_{H^{\otimes q_j}}\\
&\in\mathbb{R}^{\underbrace{\scriptstyle{d\times\cdots\times d}}_{|\nu|_{*}}\times\underbrace{\scriptstyle{m\times\cdots\times m}}_{|\nu_{\cdot4}|}
\times\underbrace{\scriptstyle{d\times\cdots\times d}}_{|\nu_{\cdot4}|}}.
\end{align*}
\end{rmk}

\begin{rmk}
In Theorem \ref{thm:main}, we require all the variables appearing there to have finite moments of all orders just for simplicity. It would be enough for them to have finite moments up to order $p$ only, where $p$ would be a function of $\ol{q}$.  
\end{rmk}

\begin{rmk}[Quantitative bound]
As in the original CCK theory, it is possible to give a quantitative version of the convergence \eqref{aim}, but we do not implement it here to make the statement of the theorem simpler. 
\end{rmk}

\tcr{
Let us write down conditions \eqref{qtan-conv}--\eqref{delta-conv} in the special case that $q_j\in\{1,2\}$ for all $j$. In this case, setting $\mathcal{J}_q=\{j\in\{1,\dots,d\}:q_j=q\}$ for $q=1,2$, we can rewrite these conditions as follows: 
\begin{align*}
&\opnorm{\Upsilon_n}_\infty^2\max_{q=1,2}E\left[\|\bs{X}_n\|_{\ell_\infty}^2\max_{1\leq i\leq d}\max_{j\in\mathcal{J}_q}\left|\langle D^qM^i_n,u_n^j\rangle_{H}-\mathfrak{C}_n^{ij}\right|\right](\log m)^2\to0,\\
&\opnorm{\Upsilon_n}_\infty^{3}\max_{q=1,2}E\left[(1+\|\bs{X}_n\|_{\ell_\infty}^{3})\max_{1\leq i,j\leq d}\max_{k\in\mathcal{J}_q}\left|\langle D^q\mathfrak{C}_n^{ij},u_n^k\rangle_{H^{\otimes q}}\right|\right](\log m)^{\frac{7}{2}}\to0,\\
&\opnorm{\Upsilon_n}_\infty^{2}\max_{q=1,2}E\left[(1+\|\bs{X}_n\|_{\ell_\infty}^{2})\max_{1\leq i\leq d}\max_{j\in\mathcal{J}_q}\left|\langle D^qW^{i}_n,u_n^j\rangle_{H^{\otimes q}}\right|\right](\log m)^{2}\to0,\\
&\opnorm{\Upsilon_n}_\infty^{2}\max_{q=1,2}E\left[\left(1+\|Z_n\|_{\ell_\infty}+\|\mathfrak{Z}_n\|_{\ell_\infty}\right)\max_{1\leq i\leq m}\max_{1\leq j\leq d}\max_{k\in\mathcal{J}_q}\left|\langle D^q\bs{X}^{ij}_n,u_n^k\rangle_{H^{\otimes q}}\right|\right](\log m)^2\to0,\\
&\opnorm{\Upsilon_n}_\infty^{3}E\left[(1+\|\bs{X}_n\|_{\ell_\infty}^{3})\max_{1\leq i,j\leq d}\max_{k\in\mathcal{J}_2}\left|\langle DF^{i}_n\otimes DG_n^j,u_n^k\rangle_{H^{\otimes 2}}\right|\right](\log m)^{\frac{7}{2}}\to0,\\
&\opnorm{\Upsilon_n}_\infty^{5}E\left[(1+\|\bs{X}_n\|_{\ell_\infty}^{5})\max_{1\leq i,j,k,l\leq d}\max_{h\in\mathcal{J}_2}\left|\langle D\mathfrak{C}_n^{ij}\otimes D\mathfrak{C}_n^{kl},u_n^h\rangle_{H^{\otimes 2}}\right|\right](\log m)^{\frac{13}{2}}\to0,\\
&\opnorm{\Upsilon_n}_\infty^{3}E\left[\left(1+\|Z_n\|_{\ell_\infty}^2+\|\mathfrak{Z}_n\|_{\ell_\infty}^2\right)\max_{1\leq i,k\leq m}\max_{1\leq j,l\leq d}\max_{h\in\mathcal{J}_2}\left|\langle D\bs{X}^{ij}_n\otimes D\bs{X}_n^{kl},u_n^h\rangle_{H^{\otimes 2}}\right|\right](\log m)^{\frac{7}{2}}\to0,\\
&\opnorm{\Upsilon_n}_\infty^{4}E\left[(1+\|\bs{X}_n\|_{\ell_\infty}^{4})\max_{1\leq i,j,k\leq d}\max_{l\in\mathcal{J}_2}\left|\langle D\mathfrak{C}_n^{ij}\otimes DF_n^{k},u_n^l\rangle_{H^{\otimes 2}}\right|\right](\log m)^{5}\to0,\\
&\opnorm{\Upsilon_n}_\infty^{3}E\left[(1+\|\bs{X}_n\|_{\ell_\infty}^{2})\left(1+\|Z_n\|_{\ell_\infty}+\|\mathfrak{Z}_n\|_{\ell_\infty}\right)\max_{1\leq j\leq m}\max_{1\leq i,k\leq d}\max_{l\in\mathcal{J}_2}\left|\langle DF^{i}_n\otimes D\bs{X}_n^{jk},u_n^l\rangle_{H^{\otimes 2}}\right|\right](\log m)^{\frac{7}{2}}\to0,\\
&\opnorm{\Upsilon_n}_\infty^{4}E\left[(1+\|\bs{X}_n\|_{\ell_\infty}^{3})\left(1+\|Z_n\|_{\ell_\infty}+\|\mathfrak{Z}_n\|_{\ell_\infty}\right)\max_{1\leq k\leq m}\max_{1\leq i,j,l\leq d}\max_{h\in\mathcal{J}_2}\left|\langle D\mathfrak{C}_n^{ij}\otimes D\bs{X}_n^{kl},u_n^h\rangle_{H^{\otimes 2}}\right|\right](\log m)^{5}\to0,
\end{align*}
where $F_n,G_n\in\{M_n,W_n\}$.
}%
%
In particular, when $q_j=1$ for all $j$, they consist of the following convergences:
\begin{align*}
&\opnorm{\Upsilon_n}_\infty^2E\left[\|\bs{X}_n\|_{\ell_\infty}^2\max_{1\leq i,j\leq d}\left|\langle DM^i_n,u_n^j\rangle_{H}-\mathfrak{C}_n^{ij}\right|\right](\log m)^2\to0,\\
&\opnorm{\Upsilon_n}_\infty^{3}E\left[(1+\|\bs{X}_n\|_{\ell_\infty}^{3})\max_{1\leq i,j,k\leq d}\left|\langle D\mathfrak{C}_n^{ij},u_n^k\rangle_{H}\right|\right](\log m)^{\frac{7}{2}}\to0,\\
&\opnorm{\Upsilon_n}_\infty^{2}E\left[(1+\|\bs{X}_n\|_{\ell_\infty}^{2})\max_{1\leq i,j\leq d}\left|\langle DW^{i}_n,u_n^j\rangle_{H}\right|\right](\log m)^{2}\to0,\\
&\opnorm{\Upsilon_n}_\infty^{2}E\left[\left(1+\|Z_n\|_{\ell_\infty}+\|\mathfrak{Z}_n\|_{\ell_\infty}\right)\max_{1\leq i\leq m}\max_{1\leq j,k\leq d}\left|\langle D\bs{X}^{ij}_n,u_n^k\rangle_{H}\right|\right](\log m)^2\to0.
\end{align*}
\if0
\[
\mathcal{A}(2)=\{(0,1),(2,0)\}
\]

\begin{align*}
\mathcal{N}_4((0,1))=\left\{
\left(\begin{array}{cccc}
0 & 0 & 0 & 0\\
1 & 0 & 0 & 0
\end{array}\right),
\left(\begin{array}{cccc}
0 & 0 & 0 & 0\\
0 & 1 & 0 & 0
\end{array}\right),
\left(\begin{array}{cccc}
0 & 0 & 0 & 0 \\
0 & 0 & 1 & 0
\end{array}\right),
\left(\begin{array}{cccc}
0 & 0 & 0 & 0 \\
0 & 0 & 0 & 1
\end{array}\right)
\right\}
\end{align*}

\begin{align*}
\mathcal{N}_4((2,0))
&=\left\{
\left(\begin{array}{cccc}
2 & 0 & 0 & 0\\
0 & 0 & 0 & 0
\end{array}\right),
\left(\begin{array}{cccc}
0 & 2 & 0 & 0\\
0 & 0 & 0 & 0
\end{array}\right),
\left(\begin{array}{cccc}
0 & 0 & 2 & 0\\
0 & 0 & 0 & 0
\end{array}\right),
\left(\begin{array}{cccc}
0 & 0 & 0 & 2\\
0 & 0 & 0 & 0
\end{array}\right),
\right.\\
&\hphantom{=\{}
\left(\begin{array}{cccc}
1 & 1 & 0 & 0\\
0 & 0 & 0 & 0
\end{array}\right),
\left(\begin{array}{cccc}
1 & 0 & 1 & 0\\
0 & 0 & 0 & 0
\end{array}\right),
\left(\begin{array}{cccc}
1 & 0 & 0 & 1\\
0 & 0 & 0 & 0
\end{array}\right),
\left(\begin{array}{cccc}
0 & 1 & 1 & 0\\
0 & 0 & 0 & 0
\end{array}\right),\\
&\left.\hphantom{=\{}
\left(\begin{array}{cccc}
0 & 1 & 0 & 1\\
0 & 0 & 0 & 0
\end{array}\right),
\left(\begin{array}{cccc}
0 & 0 & 1 & 1\\
0 & 0 & 0 & 0
\end{array}\right)
\right\}
\end{align*}
\fi
\tcr{W}hen $q_j=2$ for all $j$, they consist of the following convergences:
\tcr{
\begin{align}
&\opnorm{\Upsilon_n}_\infty^2E\left[\|\bs{X}_n\|_{\ell_\infty}^2\max_{1\leq i,j\leq d}\left|\langle D^2M^i_n,u_n^j\rangle_{H^{\otimes 2}}-\mathfrak{C}_n^{ij}\right|\right](\log m)^2\to0,\label{main2:eq1}\\
&\opnorm{\Upsilon_n}_\infty^{3}E\left[(1+\|\bs{X}_n\|_{\ell_\infty}^{3})\max_{1\leq i,j,k\leq d}\left|\langle D^{2}\mathfrak{C}_n^{ij},u_n^k\rangle_{H^{\otimes 2}}\right|\right](\log m)^{\frac{7}{2}}\to0,\label{main2:eq2}\\
&\opnorm{\Upsilon_n}_\infty^{2}E\left[(1+\|\bs{X}_n\|_{\ell_\infty}^{2})\max_{1\leq i,j\leq d}\left|\langle D^{2}W^{i}_n,u_n^j\rangle_{H^{\otimes 2}}\right|\right](\log m)^{2}\to0,\label{main2:eq3}\\
&\opnorm{\Upsilon_n}_\infty^{2}E\left[\left(1+\|Z_n\|_{\ell_\infty}+\|\mathfrak{Z}_n\|_{\ell_\infty}\right)\max_{1\leq i\leq m}\max_{1\leq j,k\leq d}\left|\langle D^{2}\bs{X}^{ij}_n,u_n^k\rangle_{H^{\otimes 2}}\right|\right](\log m)^2\to0,\label{main2:eq4}\\
&\opnorm{\Upsilon_n}_\infty^{3}E\left[(1+\|\bs{X}_n\|_{\ell_\infty}^{3})\max_{1\leq i,j,k\leq d}\left|\langle DF^{i}_n\otimes DG_n^j,u_n^k\rangle_{H^{\otimes 2}}\right|\right](\log m)^{\frac{7}{2}}\to0,\label{main2:eq5}\\
&\opnorm{\Upsilon_n}_\infty^{5}E\left[(1+\|\bs{X}_n\|_{\ell_\infty}^{5})\max_{1\leq i,j,k,l,h\leq d}\left|\langle D\mathfrak{C}_n^{ij}\otimes D\mathfrak{C}_n^{kl},u_n^h\rangle_{H^{\otimes 2}}\right|\right](\log m)^{\frac{13}{2}}\to0,\label{main2:eq6}\\
&\opnorm{\Upsilon_n}_\infty^{3}E\left[\left(1+\|Z_n\|_{\ell_\infty}^2+\|\mathfrak{Z}_n\|_{\ell_\infty}^2\right)\max_{1\leq i,k\leq m}\max_{1\leq j,l,h\leq d}\left|\langle D\bs{X}^{ij}_n\otimes D\bs{X}_n^{kl},u_n^h\rangle_{H^{\otimes 2}}\right|\right](\log m)^{\frac{7}{2}}\to0,\label{main2:eq7}\\
&\opnorm{\Upsilon_n}_\infty^{4}E\left[(1+\|\bs{X}_n\|_{\ell_\infty}^{4})\max_{1\leq i,j,k,l\leq d}\left|\langle D\mathfrak{C}_n^{ij}\otimes DF_n^{k},u_n^l\rangle_{H^{\otimes 2}}\right|\right](\log m)^{5}\to0,\label{main2:eq8}\\
&\opnorm{\Upsilon_n}_\infty^{3}E\left[(1+\|\bs{X}_n\|_{\ell_\infty}^{2})\left(1+\|Z_n\|_{\ell_\infty}+\|\mathfrak{Z}_n\|_{\ell_\infty}\right)\max_{1\leq j\leq m}\max_{1\leq i,k,l\leq d}\left|\langle DF^{i}_n\otimes D\bs{X}_n^{jk},u_n^l\rangle_{H^{\otimes 2}}\right|\right](\log m)^{\frac{7}{2}}\to0,\label{main2:eq9}\\
&\opnorm{\Upsilon_n}_\infty^{4}E\left[(1+\|\bs{X}_n\|_{\ell_\infty}^{3})\left(1+\|Z_n\|_{\ell_\infty}+\|\mathfrak{Z}_n\|_{\ell_\infty}\right)\max_{1\leq k\leq m}\max_{1\leq i,j,l,h\leq d}\left|\langle D\mathfrak{C}_n^{ij}\otimes D\bs{X}_n^{kl},u_n^h\rangle_{H^{\otimes 2}}\right|\right](\log m)^{5}\to0,\label{main2:eq10}
\end{align}
}%
where $F_n,G_n\in\{M_n,W_n\}$.

As a special case of Theorem \ref{thm:main}, we can deduce a high-dimensional central limit theorem for multiple Skorohod integrals in hyperrectangles as follows. Let $\mathcal{A}^\mathrm{re}(d)$ be the set of all hyperrectangles in $\mathbb{R}^d$, i.e.~$\mathcal{A}^\mathrm{re}(d)$ consists of all sets $A$ of the form
\[
A=\{z\in\mathbb{R}^d:a_j\leq z^j\leq b_j\text{ for all }j=1,\dots,d\}
\]
for some $-\infty\leq a_j\leq b_j\leq\infty$, $j=1,\dots,d$. Taking $\Xi_n$ as
\[
\Xi_n=\left(\begin{array}{c}
\mathsf{E}_d\\
-\mathsf{E}_d
\end{array}\right)
\]
in Theorem \ref{thm:main}, where $\mathsf{E}_d$ denotes the identity matrix of size $d$, we obtain the following result (note that \eqref{aim} continues to hold true while $\mathbb{R}^d$ is replaced by $(-\infty,\infty]^d$):
\begin{corollary}\label{coro:main}
Suppose that $M_n,W_n\in\mathbb{D}_{\overline{q},\infty}(\mathbb{R}^d)$ and $\mathfrak{C}_n\in\mathbb{D}_{\overline{q},\infty}(\mathbb{R}^{d\times d})$ and that $u_n^j$ is symmetric for all $n$ and $j$. 
Assume that the following convergences hold true:
\begin{equation*}
E\left[\|\bs{X}_n\|_{\ell_\infty}^2\|\Delta_n\|_{\ell_\infty}\right](\log d)^2\to0
\end{equation*}
and
\begin{equation*}
E\left[\left(1+\|\bs{X}_n\|_{\ell_\infty}^{|\nu|_*+1}\right)\left(1+\|Z_n\|_{\ell_\infty}^{|\nu_{\cdot 4}|}+\|\mathfrak{Z}_n\|_{\ell_\infty}^{|\nu_{\cdot 4}|}\right)\max_{1\leq j\leq d}\|\Delta_{n,j}(\nu)\|_{\ell_\infty}\right](\log d)^{\frac{3}{2}|\nu|_{**}+\frac{1}{2}}\to0
\end{equation*}
as $n\to\infty$ for every $\nu\in\ol{\mathcal{N}}_4^*(\ol{q})$. 
Assume also that the following condition is satisfied:
\begin{equation*}
\lim_{b\downarrow0}\limsup_{n\to\infty}P(\min\diag(\mathfrak{C}_n)<b)=0.
\end{equation*}
Then we have
\[
\sup_{A\in\mathcal{A}^\mathrm{re}(d)}\left|P(Z_n\in A)-P(\mathfrak{Z}_n\in A)\right|\to0
\]
as $n\to\infty$. 
\end{corollary}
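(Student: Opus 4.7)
The plan is to apply Theorem~\ref{thm:main} to a carefully chosen deterministic pair $(\Upsilon_n,\bs{X}_n)$ that realises each hyperrectangle as a polytope of the form $\{z:\Xi_nz\le y\}$, and then to pass from finite to possibly infinite rectangle endpoints by a monotone-convergence argument.

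Concretely, I would set $m=2d$, take $\Xi_n=\Upsilon_n=\left(\begin{array}{c}\mathsf{E}_d\\-\mathsf{E}_d\end{array}\right)$, and let $\bs{X}_n$ be the deterministic $2d\times d$ array whose entries are all equal to $1$, so that $\Xi_n=\Upsilon_n\circ\bs{X}_n$. With this choice $\opnorm{\Upsilon_n}_\infty=1$, $\|\bs{X}_n\|_{\ell_\infty}=1$, and $D^k\bs{X}_n=0$ for every $k\ge1$; consequently $\Delta_{n,j}(\nu)=0$ whenever $|\nu_{\cdot4}|\ge1$, so among the multi-indices $\nu\in\ol{\mathcal{N}}_4^*(\ol{q})$ only those with $\nu_{\cdot4}=0$ contribute non-trivially, and for those $|\nu|_{**}=|\nu|_*$. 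Combined with the estimate $\log m=\log(2d)=O(\log d)$, this shows that the conditions \eqref{qtan-conv}--\eqref{delta-conv} of Theorem~\ref{thm:main} reduce exactly to the two displayed hypotheses of the corollary. Since each row of $\Xi_n$ equals $\pm e_i$ for some $i\le d$, a direct computation of $\Xi_n\mathfrak{C}_n\Xi_n^\top$ further gives $\min\diag(\Xi_n\mathfrak{C}_n\Xi_n^\top)=\min\diag(\mathfrak{C}_n)$, so condition \eqref{diag-tight} coincides with the anti-degeneracy assumption of the corollary. Theorem~\ref{thm:main} therefore yields
\[
\epsilon_n:=\sup_{y\in\mathbb{R}^{2d}}\left|P(\Xi_nZ_n\le y)-P(\Xi_n\mathfrak{Z}_n\le y)\right|\to0.
\]

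To finish, I would express an arbitrary $A=\{z\in\mathbb{R}^d:a\le z\le b\}\in\mathcal{A}^\mathrm{re}(d)$ as $\{z:\Xi_nz\le y\}$ with $y\in(-\infty,\infty]^{2d}$ whose first $d$ coordinates are those of $b$ and last $d$ coordinates are those of $-a$, and approximate such $y$ from below by truncations $y^{(K)}\in\mathbb{R}^{2d}$ defined by $y^{(K),i}:=\min(y^i,K)$. Since $\{\Xi_nZ_n\le y^{(K)}\}\uparrow\{\Xi_nZ_n\le y\}$ as $K\to\infty$, and likewise for $\mathfrak{Z}_n$, continuity of $P$ from below combined with the inequality $|P(\Xi_nZ_n\le y^{(K)})-P(\Xi_n\mathfrak{Z}_n\le y^{(K)})|\le\epsilon_n$ gives the same bound $\epsilon_n$ for every $y\in(-\infty,\infty]^{2d}$; taking the supremum over $A\in\mathcal{A}^\mathrm{re}(d)$ then delivers the assertion. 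The only mildly non-cosmetic step in this plan is the last monotone extension, but it is immediate from the uniformity in $y$ of the bound supplied by Theorem~\ref{thm:main}, so no genuine obstacle remains once that theorem is in hand.
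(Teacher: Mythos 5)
Your proposal is correct and follows exactly the route the paper takes: the paper obtains the corollary by taking $\Xi_n$ to be the $2d\times d$ matrix with blocks $\mathsf{E}_d$ and $-\mathsf{E}_d$ in Theorem \ref{thm:main} and observing that \eqref{aim} persists when $\mathbb{R}^d$ is replaced by $(-\infty,\infty]^d$. Your decomposition $\Xi_n=\Upsilon_n\circ\bs{X}_n$ with the all-ones deterministic $\bs{X}_n$, the observations $\opnorm{\Upsilon_n}_\infty=1$, $D^k\bs{X}_n=0$, $\min\diag(\Xi_n\mathfrak{C}_n\Xi_n^\top)=\min\diag(\mathfrak{C}_n)$, and the monotone truncation argument for infinite endpoints simply spell out the details that the paper leaves implicit.
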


\subsection*{Some related results for statistical applications}

In many applications, the objective variables are only approximately multiple Skorohod integrals. The following lemma is useful for such a situation. 
\begin{lemma}\label{lemma:approx}
For each $n\in\mathbb{N}$, let $Y_n,Y_n'$ be $m$-dimensional random vectors such that
\[
\sqrt{\log m}\|Y_n'-Y_n\|_{\ell_\infty}\to^p0
\]
and
\[
\sup_{y\in\mathbb{R}^m}|P(Y_n\leq y)-P(\Xi_n\mathfrak{Z}_n\leq y)|\to0
\]
as $n\to\infty$. Then we have
\[
\sup_{y\in\mathbb{R}^m}|P(Y'_n\leq y)-P(\Xi_n\mathfrak{Z}_n\leq y)|\to0
\]
as $n\to\infty$, provided that \eqref{diag-tight} holds true. 
\end{lemma}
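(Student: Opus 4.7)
The plan is a standard sandwich-and-anti-concentration argument. Since $\eta_n:=\sqrt{\log m}\,\|Y_n'-Y_n\|_{\ell_\infty}\to^p 0$, I can first select a deterministic sequence $\tilde\eta_n\to 0$ such that $P(\eta_n>\tilde\eta_n)\to 0$, and set $\epsilon_n:=\tilde\eta_n/\sqrt{\log m}$. Then $\epsilon_n\sqrt{\log m}=\tilde\eta_n\to 0$ while $P(\|Y_n'-Y_n\|_{\ell_\infty}>\epsilon_n)\to 0$.

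Next, for any $y\in\mathbb{R}^m$ I would exploit the elementary inclusions
\[
\{Y_n\leq y-\epsilon_n\}\setminus\{\|Y_n'-Y_n\|_{\ell_\infty}>\epsilon_n\}\subset\{Y_n'\leq y\}\subset\{Y_n\leq y+\epsilon_n\}\cup\{\|Y_n'-Y_n\|_{\ell_\infty}>\epsilon_n\}
\]
(using the notation $y\pm a$ from Section~\ref{sec:basic}) to obtain
\[
P(Y_n\leq y-\epsilon_n)-o(1)\leq P(Y_n'\leq y)\leq P(Y_n\leq y+\epsilon_n)+o(1),
\]
uniformly in $y$. The hypothesis then replaces $P(Y_n\leq y\pm\epsilon_n)$ by $P(\Xi_n\mathfrak{Z}_n\leq y\pm\epsilon_n)+o(1)$, again uniformly in $y$.

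The core step is then to show that $\sup_{y}|P(\Xi_n\mathfrak{Z}_n\leq y\pm\epsilon_n)-P(\Xi_n\mathfrak{Z}_n\leq y)|\to 0$. Conditionally on $\mathcal{F}$, the vector $\Xi_n\mathfrak{Z}_n=\Xi_n\mathfrak{C}_n^{1/2}\zeta_n+\Xi_n W_n$ is Gaussian with covariance matrix $\Xi_n\mathfrak{C}_n\Xi_n^\top$. Fixing any $b>0$ and applying Nazarov's anti-concentration inequality (in the form used in the CCK theory, e.g.~\cite{CCK2017}) conditionally on $\mathcal{F}$ gives, on the event $\{\min\diag(\Xi_n\mathfrak{C}_n\Xi_n^\top)\geq b\}$,
\[
P(\Xi_n\mathfrak{Z}_n\leq y+\epsilon_n\mid\mathcal{F})-P(\Xi_n\mathfrak{Z}_n\leq y\mid\mathcal{F})\leq C\,\epsilon_n\sqrt{\log m}\big/\sqrt{b}.
\]
Integrating in $\omega$ and using the complementary event, I obtain
\[
\sup_y|P(\Xi_n\mathfrak{Z}_n\leq y+\epsilon_n)-P(\Xi_n\mathfrak{Z}_n\leq y)|\leq C\tilde\eta_n/\sqrt{b}+P(\min\diag(\Xi_n\mathfrak{C}_n\Xi_n^\top)<b).
\]
Letting $n\to\infty$ makes the first term vanish, and then letting $b\downarrow 0$ invokes \eqref{diag-tight}; the same argument handles the shift by $-\epsilon_n$.

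The main (and essentially only) non-routine point is calibrating the shift $\epsilon_n$ so that the $\sqrt{\log m}$ factor produced by Nazarov's inequality is exactly absorbed by the $\sqrt{\log m}$-scaled closeness hypothesis; \eqref{diag-tight} is exactly the ingredient that legitimizes applying Nazarov's inequality with a uniform lower bound on the conditional variances. Apart from this, the argument is purely mechanical combination of the sandwich inequalities with the hypothesis and the anti-concentration bound.
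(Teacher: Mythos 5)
Your proposal is correct and follows essentially the same route as the paper's own proof: the sandwich inclusions around $Y_n'$, replacing $Y_n$ by $\Xi_n\mathfrak{Z}_n$ via the hypothesis, and then a mixed-normal Nazarov anti-concentration step (the paper's Lemma~\ref{mixed-nazarov}) on the event $\{\min\diag(\Xi_n\mathfrak{C}_n\Xi_n^\top)\geq b\}$ followed by $b\downarrow 0$. The only cosmetic difference is that you extract a deterministic shrinking scale $\epsilon_n$ up front, whereas the paper fixes $\varepsilon>0$, takes $\limsup_n$, and only then sends $\varepsilon\to0$; the two devices are interchangeable here.
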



In terms of statistical applications, the mixed-normal approximation given by Theorem \ref{thm:main} is often infeasible because the ``asymptotic'' covariance matrix $\mathfrak{C}_n$ usually contains unobservable quantities. In the following we give two auxiliary results bridging this gap. 
The first result ensures the validity of estimating the $\mathcal{F}$-conditional distribution of $\Xi_n\mathfrak{Z}_n$ while we replace $\mathfrak{C}_n,W_n$ and $\Xi_n$ by their estimators.  
\begin{proposition}\label{prop:comparison}
For each $n$, let $\wh{\mathfrak{C}}_n,\wh{W}_n$ and $\wh{\Xi}_n$ be a $d\times d$ symmetric positive semidefinite random matrix, a $d$-dimensional random vector and an $m\times d$ random matrix, respectively. Set $\widehat{\mathfrak{Z}}_n:=\widehat{\mathfrak{C}}_n^{1/2}\zeta_n+\wh{W}_n$. 
Suppose that
\begin{equation}\label{eq:consistent}
\sqrt{\log m}\|\wh{\Xi}_n\wh{W}_n-\Xi_nW_n\|_{\ell_\infty}\to^p0,\qquad
(\log m)^2\|\wh{\Xi}_n\widehat{\mathfrak{C}}_n\wh{\Xi}_n^\top-\Xi_n\mathfrak{C}_n\Xi_n^\top\|_{\ell_\infty}\to^p0
\end{equation}
as $n\to\infty$. Then we have
\[
\sup_{y\in\mathbb{R}^m}|P(\wh{\Xi}_n\widehat{\mathfrak{Z}}_n\leq y|\mathcal{F})-P(\Xi_n\mathfrak{Z}_n\leq y|\mathcal{F})|\to0
\]
as $n\to\infty$, provided that \eqref{diag-tight} holds true.
\end{proposition}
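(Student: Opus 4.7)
The plan is to condition on $\mathcal{F}$ and reduce the claim to a Kolmogorov-type comparison between two conditionally Gaussian distributions whose parameters are close in entry-wise senses. Since $\zeta_n$ is independent of $\mathcal{F}$ and the remaining quantities are $\mathcal{F}$-measurable, the conditional laws given $\mathcal{F}$ satisfy $\Xi_n\mathfrak{Z}_n\mid\mathcal{F}\sim N(\mu_n,\Sigma_n)$ and $\wh{\Xi}_n\wh{\mathfrak{Z}}_n\mid\mathcal{F}\sim N(\wh{\mu}_n,\wh{\Sigma}_n)$, where $\mu_n:=\Xi_nW_n$, $\Sigma_n:=\Xi_n\mathfrak{C}_n\Xi_n^\top$, $\wh{\mu}_n:=\wh{\Xi}_n\wh{W}_n$ and $\wh{\Sigma}_n:=\wh{\Xi}_n\wh{\mathfrak{C}}_n\wh{\Xi}_n^\top$. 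Writing $\mathcal{D}_n$ for the supremum appearing in the conclusion, $\mathcal{D}_n$ is an $\mathcal{F}$-measurable nonnegative random variable and the task becomes $\mathcal{D}_n\to^p 0$.

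Next I would localize onto a high-probability $\mathcal{F}$-measurable event on which (i) the two Gaussian parameter pairs are quantitatively close and (ii) the target covariance is non-degenerate. Fix arbitrary $\delta,b>0$ and, via \eqref{eq:consistent}, pick a sequence $\epsilon_n\downarrow 0$ slowly enough that $P(\sqrt{\log m}\|\wh{\Xi}_n\wh{W}_n-\Xi_nW_n\|_{\ell_\infty}>\epsilon_n)\to 0$ and $P((\log m)^2\|\wh{\Sigma}_n-\Sigma_n\|_{\ell_\infty}>\epsilon_n)\to 0$. Set
\[
E_n:=\{\min\diag\Sigma_n\geq b\}\cap\{\sqrt{\log m}\|\wh{\mu}_n-\mu_n\|_{\ell_\infty}\leq\epsilon_n\}\cap\{(\log m)^2\|\wh{\Sigma}_n-\Sigma_n\|_{\ell_\infty}\leq\epsilon_n\}.
\]
On $E_n$, entry-wise closeness of $\wh{\Sigma}_n$ and $\Sigma_n$ forces $\min\diag\wh{\Sigma}_n\geq b/2$ for all large $n$, giving a common lower bound on the component variances of both conditional Gaussians.

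On $E_n$, I would insert an intermediate Gaussian with parameters $(\mu_n,\wh{\Sigma}_n)$ and apply the triangle inequality:
\[
\mathcal{D}_n\leq\sup_y\bigl|P(N(\wh{\mu}_n,\wh{\Sigma}_n)\leq y)-P(N(\mu_n,\wh{\Sigma}_n)\leq y)\bigr|+\sup_y\bigl|P(N(\mu_n,\wh{\Sigma}_n)\leq y)-P(N(\mu_n,\Sigma_n)\leq y)\bigr|.
\]
The first (mean-shift) term I would bound by a Nazarov-type Gaussian anti-concentration inequality (cf.~\cite{CCK2017}), giving a rate of order $\|\wh{\mu}_n-\mu_n\|_{\ell_\infty}\sqrt{\log m}/\sqrt{b}$; the second (covariance-swap) term I would bound by the Gaussian-to-Gaussian comparison theorem of \cite{CCK2013,CCK2015} for one-sided rectangles, giving a rate of order $\|\wh{\Sigma}_n-\Sigma_n\|_{\ell_\infty}^{1/3}(\log m)^{2/3}$ up to a $b$-dependent constant. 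Noting that the hypothesis $(\log m)^2\|\wh{\Sigma}_n-\Sigma_n\|_{\ell_\infty}\to^p 0$ in \eqref{eq:consistent} is equivalent (by taking cube roots) to $(\log m)^{2/3}\|\wh{\Sigma}_n-\Sigma_n\|_{\ell_\infty}^{1/3}\to^p 0$, both contributions are $o(1)$ on $E_n$, whence $P(\mathcal{D}_n>\delta)\leq P(E_n^c)+o(1)$. Since the choice of $\epsilon_n$ makes $P(E_n^c)\to P(\min\diag\Sigma_n<b)$, letting $b\downarrow 0$ and invoking \eqref{diag-tight} closes the argument.

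The main obstacle I expect is assembling the precise conditional versions of the Nazarov anti-concentration inequality and the CCK Gaussian comparison theorem with the correct dependence on the diagonal lower bound; both are standard for \emph{deterministic} Gaussian parameters, and the conditioning step requires a routine measurability check but adds no substantive difficulty. Transferring the variance lower bound from $\Sigma_n$ to $\wh{\Sigma}_n$ on $E_n$ is automatic because the covariance discrepancy is controlled at scale $(\log m)^{-2}$, which is $\ll b$ for fixed $b>0$; this is precisely the role played by the exponent $(\log m)^2$ in \eqref{eq:consistent}. Beyond these bookkeeping points the proof is a direct conditional invocation of existing CCK comparison lemmas.
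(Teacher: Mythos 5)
Your proposal is correct and follows essentially the same route as the paper: condition on $\mathcal{F}$, localize on $\{\min\diag(\Xi_n\mathfrak{C}_n\Xi_n^\top)\geq b\}$, split the discrepancy into a mean-shift term handled by Nazarov's anti-concentration inequality and a covariance-swap term handled by a CCK-type Gaussian comparison, and finally let $b\downarrow 0$ using \eqref{diag-tight}. The only difference is that you invoke the Gaussian comparison theorem of \cite{CCK2015} as a black box to get a $\Delta^{1/3}(\log m)^{2/3}$ rate for the covariance swap, whereas the paper re-derives a cruder linear bound of order $\varepsilon^{-2}(\log m)\Delta$ via Stein interpolation along the smooth-max approximation $g(\varepsilon^{-1}\Phi_\beta(\cdot))$ followed by the optimization $\varepsilon\propto(\log m)^{-1/2}$; both routes yield exactly the required condition $(\log m)^2\Delta\to^p0$ from \eqref{eq:consistent}, where $\Delta=\|\wh{\Xi}_n\widehat{\mathfrak{C}}_n\wh{\Xi}_n^\top-\Xi_n\mathfrak{C}_n\Xi_n^\top\|_{\ell_\infty}$.
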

We remark that the above proposition only gives a way to estimate the $\mathcal{F}$-conditional distribution of $\Xi_n\mathfrak{Z}_n$ when we have appropriate estimators for relevant variables: It says nothing about how to estimate the \textit{unconditional} distribution of $\Xi_n\mathfrak{Z}_n$. 
Because of the non-ergodic nature of the problem, in general there seems no hope of consistently estimating the latter quantity even if we can consistently estimate unknown variables contained in $\Xi_n\mathfrak{Z}_n$. 
In a low-dimensional setting this issue is usually resolved by standardizing the objective statistic by a consistent estimator for its asymptotic covariance matrix, which is validated via the stability of convergence in law. 
In a high-dimensional setting, however, standardizing the (joint) distribution of the objective statistic is often difficult: Estimators for the conditional covariance matrix of the objective statistic are usually singular because the sample size is smaller than the dimension, and even if it is regular, computation of the inverse is typically time-consuming. 
Nevertheless, we can fortunately show that, in order to estimate quantiles of the unconditional distribution $\Xi_n\mathfrak{Z}_n$, it is sufficient to only estimate its $\mathcal{F}$-conditional distribution. 
We remark that this fact has already been known in high-frequency financial econometrics and typically been used to construct jump-related testing procedures; see \cite{JT2009,LTT2017} for example. 
Formally, we can prove the following result:
\begin{proposition}\label{prop:quantile}
For each $n\in\mathbb{N}$, let $T_n,T_n^\dagger,T_n^*$ be random variables defined on an extension of the probability space $(\Omega,\mathcal{F},P)$. Suppose that
\begin{align*}
&\sup_{x\in\mathbb{R}}\left|P(T_n\leq x)-P(T_n^\dagger\leq x)\right|\to0,
&\sup_{x\in\mathbb{R}}\left|P(T_n^*\leq x|\mathcal{F})-P(T_n^\dagger\leq x|\mathcal{F})\right|\to^p0
\end{align*}
as $n\to\infty$. 
Suppose also that there is a sequence $(E_n)$ of elements in $\mathcal{F}$ such that the $\mathcal{F}$-conditional distribution of $T_n^\dagger$ has the density on $E_n$ for every $n$ and $\lim_{n\to\infty}P(E_n)=1$. For each $n\in\mathbb{N}$, let $q_n^*$ be the $\mathcal{F}$-conditional quantile function of $T_n^*$:
\[
q_n^*(\alpha)=\inf\{x\in\mathbb{R}:P(T_n^*\leq x|\mathcal{F})\geq \alpha\},\qquad \alpha\in(0,1).
\]
Then we have
\[
P(T_n \leq q_n^*(\alpha))\to\alpha
\]
as $n\to\infty$ for all $\alpha\in(0,1)$. 
\end{proposition}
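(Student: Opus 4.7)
The plan is a two-stage argument. Stage 1 establishes $P(T_n^\dagger\leq q_n^*(\alpha))\to\alpha$ using the conditional Kolmogorov hypothesis and the continuity of the $\mathcal{F}$-conditional CDF of $T_n^\dagger$. Stage 2 transfers this to $P(T_n\leq q_n^*(\alpha))\to\alpha$ using the unconditional Kolmogorov hypothesis; this is the delicate step.

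\textbf{Stage 1.} Set $F_n^\dagger(x):=P(T_n^\dagger\leq x\mid\mathcal{F})$, which is continuous in $x$ on $E_n$ by the density hypothesis. For fixed $\delta>0$ introduce the $\mathcal{F}$-measurable event
\[
G_n^\delta:=\Bigl\{\sup_{x\in\mathbb{R}}\bigl|P(T_n^*\leq x\mid\mathcal{F})-F_n^\dagger(x)\bigr|\leq\delta\Bigr\}\cap E_n,
\]
which satisfies $P(G_n^\delta)\to1$ by assumption. On $G_n^\delta$, the defining inequalities of the conditional quantile, $P(T_n^*\leq q_n^*(\alpha)\mid\mathcal{F})\geq\alpha$ and $\lim_{x\uparrow q_n^*(\alpha)}P(T_n^*\leq x\mid\mathcal{F})\leq\alpha$, combine with the uniform $\delta$-closeness and the continuity of $F_n^\dagger$ to give $|F_n^\dagger(q_n^*(\alpha))-\alpha|\leq\delta$. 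Since $q_n^*(\alpha)\in\mathcal{F}$, one has $P(T_n^\dagger\leq q_n^*(\alpha))=E[F_n^\dagger(q_n^*(\alpha))]$; taking expectations and letting $\delta\downarrow0$ after $n\to\infty$ yields the claim.

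\textbf{Stage 2.} The target is $|P(T_n\leq q_n^*(\alpha))-P(T_n^\dagger\leq q_n^*(\alpha))|\to0$. Observe that the unconditional CDF $G_n^\dagger(x):=P(T_n^\dagger\leq x)=E[F_n^\dagger(x)]$ has jumps of size at most $P(E_n^c)\to0$, so it is asymptotically continuous. Given $\epsilon>0$, choose a deterministic grid $-\infty=x_0<x_1<\cdots<x_K=\infty$ such that $G_n^\dagger(x_{k+1})-G_n^\dagger(x_k)\leq\epsilon$ for every $k$. On the partition element $\{q_n^*(\alpha)\in[x_k,x_{k+1})\}$ use the sandwich
\[
\mathbf{1}\{T_n\leq x_k\}\leq\mathbf{1}\{T_n\leq q_n^*(\alpha)\}\leq\mathbf{1}\{T_n\leq x_{k+1}\},
\]
and the analogous bound for $T_n^\dagger$. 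Summing over $k$ and using the uniform Kolmogorov bound $\sup_x|P(T_n\leq x)-G_n^\dagger(x)|\to0$ at each deterministic grid point $x_k$, the leading terms cancel; the residual boundary mismatch is controlled by $\sum_k P(x_k<T_n^\dagger\leq x_{k+1},\,q_n^*(\alpha)\in[x_k,x_{k+1}))$, which is bounded by a multiple of the maximal cell width against the conditional density of $T_n^\dagger$ and thus can be made arbitrarily small. Combining Stages 1 and 2 gives $P(T_n\leq q_n^*(\alpha))\to\alpha$.

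\textbf{Main obstacle.} Stage 2 is the heart of the argument: the threshold $q_n^*(\alpha)$ is random and $\mathcal{F}$-measurable, while the Kolmogorov hypothesis only provides CDF closeness for \emph{deterministic} arguments. The slicing device above exchanges the random threshold for deterministic grid points, paying a boundary cost that must be absorbed by the near-continuity of $G_n^\dagger$---itself a consequence of the density assumption together with $P(E_n)\to1$. Calibrating the grid, the Kolmogorov remainder, and the exceptional set $G_n^\delta$ from Stage 1 jointly is what drives the convergence.
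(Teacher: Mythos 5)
Your Stage 1 is correct: the conditional Kolmogorov closeness of $T_n^*$ and $T_n^\dagger$, the continuity of $F_n^\dagger(\cdot)=P(T_n^\dagger\leq\cdot\mid\mathcal{F})$ on $E_n$, and the $\mathcal{F}$-measurability of $q_n^*(\alpha)$ do give $P(T_n^\dagger\leq q_n^*(\alpha))\to\alpha$.

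Stage 2, however, has a genuine gap, and it is precisely the step you flag as the main obstacle. After slicing you are comparing
$\sum_k P(T_n\leq x_{k+1},A_k)$ with $\sum_k P(T_n^\dagger\leq x_k,A_k)$, where $A_k=\{q_n^*(\alpha)\in[x_k,x_{k+1})\}\in\mathcal{F}$. You invoke the unconditional Kolmogorov hypothesis to claim "the leading terms cancel," but that hypothesis only bounds $|P(T_n\leq x_k)-P(T_n^\dagger\leq x_k)|$; it says nothing about $|P(T_n\leq x_k,A_k)-P(T_n^\dagger\leq x_k,A_k)|$, because $T_n$ and $T_n^\dagger$ can have completely different joint laws with $\mathcal{F}$ while sharing the same marginals. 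Controlling those joint terms would require a stable-convergence-type hypothesis (closeness on $\{T_n\leq x\}\cap A$ for all $A\in\mathcal{F}$), which is strictly stronger than what is assumed. The secondary "boundary mismatch" term $\sum_k P(x_k<T_n^\dagger\leq x_{k+1},A_k)$ is also not obviously small: there is no uniform bound on the conditional density of $T_n^\dagger$, and the cells were calibrated against the unconditional CDF, so summing over $k$ does not automatically yield something that vanishes.

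The paper's proof takes a different and more economical route. Via Theorem 9.2.2 of Dudley it first extracts a single quantitative sequence $\varepsilon_n\to0$ with $P(\mathcal{E}_n^c)\leq\varepsilon_n$ and $\sup_x|P(T_n\leq x)-P(T_n^\dagger\leq x)|\leq\varepsilon_n$, where $\mathcal{E}_n$ is the event that the conditional Kolmogorov distance of $T_n^*$ and $T_n^\dagger$ is at most $\varepsilon_n$. It then shows directly that on $\mathcal{E}_n\cap E_n$ one has $q_n^\dagger(\alpha-\varepsilon_n)\leq q_n^*(\alpha)\leq q_n^\dagger(\alpha+\varepsilon_n)$, where $q_n^\dagger$ is the $\mathcal{F}$-conditional quantile function of $T_n^\dagger$. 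Because $P(T_n^\dagger\leq q_n^\dagger(\alpha\pm\varepsilon_n)\mid\mathcal{F})=\alpha\pm\varepsilon_n$ exactly on $E_n$ by the density assumption, this sandwich replaces $q_n^*(\alpha)$ by thresholds at which the target probability for $T_n^\dagger$ is computed in one shot, and no cell decomposition or grid is ever introduced. The Kolmogorov bound for $T_n$ versus $T_n^\dagger$ is then applied at the single threshold $q_n^\dagger(\alpha\pm\varepsilon_n)$ rather than at every grid point intersected with an $\mathcal{F}$-measurable event. Your two-stage split (first replace $T_n$ by $T_n^\dagger$, then compute) and the slicing machinery are not in the paper; its argument is organized around the sandwich by shifted conditional quantiles of $T_n^\dagger$, which both absorbs the random-threshold issue more cleanly and sidesteps the joint-probability comparison your Stage 2 is missing.
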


\section{Application to realized covariance}\label{sec:rc}

In this section we assume that the probability space $(\Omega,\mathcal{F},P)$ admits the structure such that $\Omega=\Omega'\times\mathbf{W}$, $\mathcal{F}=\mathcal{F}'\otimes\mathbf{B}$ and $P=P'\times\mathbf{P}$ for some probability space $(\Omega',\mathcal{F}',P')$ and the $r$-dimensional Wiener space $(\mathbf{W},\mathbf{B},\mathbf{P})$ over time interval $[0,1]$, and consider the partial Malliavin calculus with respect to the $r$-dimensional Brownian motion $B=(B_t)_{t\in[0,1]}$ defined by $B_t(\omega',w)=w(t)$ for $\omega'\in\Omega'$, $w\in\mathbf{W}$ and $t\in[0,1]$ (cf.~Section 6.1 of \cite{Yoshida1997}). 
In this setting the Hilbert space $H$ coincides with the space $L^2([0,1];\mathbb{R}^r)$. 
We here allow the dimension $r=r_n$ to possibly depend on $n\in\mathbb{N}$, so $(\Omega,\mathcal{F},P)$ and $B$ may depend on $n$, but we subtract the index $n$ from the notation. 
Let $(\mathcal{B}_t)_{t\in[0,1]}$ \tcr{denote} the filtration generated by the canonical process on $\mathbf{W}$, and define the filtration $(\mathcal{F}_t)_{t\in[0,1]}$ of $\mathcal{F}$ by $\mathcal{F}_t:=\mathcal{F}'\otimes\mathcal{B}_t$ for $t\in[0,1]$. 
On the stochastic basis $(\Omega,\mathcal{F},(\mathcal{F}_t),P)$, we consider the $d$-dimensional continuous It\^o semimartingale $Y=(Y_t)_{t\in[0,1]}$ given by the following:
\[
Y_t=Y_0+\int_0^t\mu_sds+\int_0^t\sigma_sdB_s,\qquad t\in[0,1].
\]
Here, $\mu=(\mu_s)_{s\in[0,1]}$ is a $d$-dimensional $(\mathcal{F}_t)$-progressively measurable process and $\sigma=(\sigma_s)_{s\in[0,1]}$ is an $\mathbb{R}^{d\times r}$-valued $(\mathcal{F}_t)$-progressively measurable process such that 
\[
\int_0^1\left(\|\mu_s\|_{\ell_1}+\|\sigma_s\|_{\ell_2}^2\right)ds<\infty\quad\text{a.s.}
\]
We remark that the processes $\mu$ and $\sigma$ generally depend on $n$ because $d$ and $r$ may depend on $n$. However, following the custom of high-dimensional statistics, we subtract the index $n$ from the notation as above. 

We observe the process $Y$ at the discrete time points $t_h=t_h^n=h/n$, $h=0,1,\dots,n$. In such a setting, the discretized quadratic covariation matrix
\[
\widehat{[Y,Y]}^n_1:=\sum_{h=1}^n(Y_{t_h}-Y_{t_{h-1}})(Y_{t_h}-Y_{t_{h-1}})^\top,
\]
which is known as the \textit{realized covariance matrix} in high-frequency financial econometrics, is a natural estimator for the quadratic covariance matrix of $Y$:
\[
[Y,Y]_1=\int_0^1\Sigma_tdt,\qquad\Sigma_t:=\sigma_t\sigma_t^\top.
\]
The aim of this section is to establish the asymptotic mixed normality of the estimator $\widehat{[Y,Y]}^n_1$ in a high-dimensional setting such that the dimension $d$ is possibly (much) larger than the sample size $n$. 

Before stating the result\tcr{s}, we introduce some notation. 
First, for a random variable $F$ taking values in $\mathbb{R}^{N_1\times\cdots\times N_q}$ for some $N_1,\dots,N_q\in\mathbb{N}$, we set $\|F\|_{p,\ell_2}:=\|\|F\|_{\ell_2}\|_p$ for every $p\in(0,\infty]$. 
Next, for a positive integer $k$, we identify the space $H^{\otimes k}$ with $L^2([0,1]^k;(\mathbb{R}^r)^{\otimes k})$ in the canonical way \tcr{(cf.~Example E.10 in \cite{Janson1997})}. Therefore, if a \tcr{univariate} random variable $F$ is $k$ times differentiable in the Malliavin sense, the $k$th Malliavin derivative $D^kF$ of $F$ \tcr{takes values in} $L^2([0,1]^k;(\mathbb{R}^r)^{\otimes k})$, so we can consider the value $D^kF(t_1,\dots,t_k)$ in $(\mathbb{R}^r)^{\otimes k}$ evaluated at $(t_1,\dots,t_k)\in[0,1]^k$. We denote this value by $D_{t_1,\dots,t_k}F$. Moreover, for an index $(a_1,\dots,a_k)\in\{1,\dots,r\}^k$, we write the $(a_1,\dots,a_k)$-th entry of $D_{t_1,\dots,t_k}F$ as $D^{(a_1,\dots,a_k)}_{t_1,\dots,t_k}F$ (note that we identify $(\mathbb{R}^r)^{\otimes k}$ with $\mathbb{R}^{r\times\cdots\times r}$). 
We remark that the variable $D_{t_1,\dots,t_k}F$ is defined only a.e.~on $[0,1]^k\times\Omega$ with respect to the measure $\leb_k\times P$, where $\leb_k$ denotes the Lebesgue measure on $[0,1]^k$. Therefore, if $D_{t_1,\dots,t_k}F$ satisfies some property a.e.~on $[0,1]^k\times\Omega$ with respect to the measure $\leb_k\times P$, by convention we will always take a version of $D_{t_1,\dots,t_k}F$ satisfying that property everywhere on $[0,1]^k\times\Omega$ if necessary. 
\tcr{
Also, note that if a $d$-dimensional random vector $F$ is $k$ times differentiable in the Malliavin sense, the $k$th Malliavin derivative $D^kF$ is first identified with the $(H^{\otimes k})^d$-valued random variable $(D^kF^1,\dots,D^kF^d)$ according to the identification of $H^{\otimes k}\otimes\mathbb{R}^d$ with $(H^{\otimes k})^d$ (cf.~Sections \ref{sec:basic} and \ref{sec:malliavin}). 
Then, each $D^kF^j$ is identified with the $L^2([0,1]^k;(\mathbb{R}^r)^{\otimes k})$-valued random variable as above. 
}
\if0
If $F\in\mathbb{D}_{2,p}(\mathbb{R}^r)$ for some $p\geq1$, we define
\[
\opnorm{D_{s,t}F}_*:=\sup\left\{\sqrt{\sum_{a=1}^r\left(A\cdot D^2_{s,t}F^a\right)^2}:A\in\mathbb{R}^{r\times r},\|A\|_{\ell_2}\leq1\right\}
\]
for $s,t\in[0,1]$. Namely, $\opnorm{D_{s,t}F}_*$ is the operator norm of the random map $\mathbb{R}^{r\times r}\ni A\mapsto (A\cdot D_{s,t}F^1,\dots,A\cdot D_{s,t}F^r)^\top\in\mathbb{R}^r$. 
We can bound $\opnorm{D_{s,t}F}_*$ as
\[
\opnorm{D_{s,t}F}_*\leq\sqrt{\sum_{a,b,c=1}^r\left(D^{(a,b)}_{s,t}F^c\right)^2}=\|D_{s,t}F\|_{\ell_2}.
\]
We also set $\opnorm{D_{s,t}F}_{p,*}:=\|\opnorm{D_{s,t}F}_*\|_p$ for $p\in(0,\infty]$. 
\fi

We define the $d^2\times d^2$ random matrix $\mathfrak{C}_n$ by
\begin{align*}
\mathfrak{C}_n^{(i-1)d+j,(k-1)d+l}:=
n\sum_{h=1}^n\left\{\left(\int_{t_{h-1}}^{t_h}\Sigma_s^{ik}ds\right)\left(\int_{t_{h-1}}^{t_h}\Sigma_s^{jl}ds\right)
+\left(\int_{t_{h-1}}^{t_h}\Sigma_s^{il}ds\right)\left(\int_{t_{h-1}}^{t_h}\Sigma_s^{jk}ds\right)\right\},\\
i,j,k,l=1,\dots,d,
\end{align*}
which plays the role of the conditional covariance matrix of the approximating mixed-normal distribution in our setting. 
\begin{rmk}
In the fixed dimensional setting, $\mathfrak{C}_n$ converges in probability as $n\to\infty$ to the random matrix $\bar{\mathfrak{C}}$ defined by
\begin{align*}
\bar{\mathfrak{C}}^{(i-1)d+j,(k-1)d+l}:=
\int_0^1\left(\Sigma_t^{ik}\Sigma_t^{jl}+\Sigma_t^{il}\Sigma_t^{jk}\right)dt,\qquad
i,j,k,l=1,\dots,d
\end{align*}
under mild regularity assumptions, so $\bar{\mathfrak{C}}$ plays the role of the asymptotic covariance matrix in such a setting. 
However, in the high-dimensional setting the convergence rate of $\mathfrak{C}_n$ to $\bar{\mathfrak{C}}$ does matter and we usually need an additional condition like \eqref{sigma-modulus} to derive it. 
To avoid such an extra assumption, we use the ``intermediate version'' $\mathfrak{C}_n$ of $\bar{\mathfrak{C}}$ to state Theorem \ref{thm:rc} below.  
\end{rmk}
\if0
For a random variable $\xi$ and a positive number $\alpha$, we define
\[
\|\xi\|_{\psi_\alpha}:=\inf\{C>0:E[\psi_\alpha(|\xi|/C)]\leq1\},
\]
where we set $\psi_\alpha(x)=\exp(x^\alpha)-1$ for every $x\geq0$. 
\fi
\begin{theorem}\label{thm:rc}
Suppose that $\mu_t\in\mathbb{D}_{1,\infty}(\mathbb{R}^{d})$ and $\sigma_t\in\mathbb{D}_{2,\infty}(\mathbb{R}^{d\times r})$ for all $t\in[0,1]$. 
For every $n\in\mathbb{N}$, let $W_n\in\mathbb{D}_{2,\infty}(\mathbb{R}^{d^2})$, $\bs{X}_n\in\mathbb{D}_{2,\infty}(\mathbb{R}^{m\times d^2})$ and $\Upsilon_n$ be an $m\times d^2$ (deterministic) matrix such that $\opnorm{\Upsilon_n}_\infty\geq1$, where $m=m_n$ possibly depends on $n\in\mathbb{N}$.   
Define $\Xi_n:=\Upsilon_n\circ\bs{X}_n$ and assume
\begin{equation}\label{rc:diag}
\lim_{b\downarrow0}\limsup_{n\to\infty}P(\min\diag(\Xi_n\mathfrak{C}_n\Xi_n^\top)<b)=0.
\end{equation}
Then the following statements hold true:
\begin{enumerate}[label={\normalfont(\alph*)}]

\item Suppose that there is a constant $\varpi\in(0,\frac{1}{2})$ such that 
$\opnorm{\Upsilon_n}_\infty^5=O(n^\varpi)$ 
and
\begin{align}
&\sup_{n\in\mathbb{N}}
\max_{1\leq i\leq d^2}\left(
\|W^{i}_n\|_p
\tcr{+}\sup_{0\leq t\leq 1}\|D_tW_n^{i}\|_{p,\ell_2}
+\sup_{0\leq s,t\leq 1}\|D_{s,t}W_n^{i}\|_{p,\ell_2}
\right)
<\infty,\label{eq-W}\\
&\sup_{n\in\mathbb{N}}
\max_{1\leq i\leq m}\max_{1\leq j\leq d^2}\left(
\|\bs{X}^{ij}_n\|_p
\tcr{+}\sup_{0\leq t\leq 1}\|D_t\bs{X}_n^{ij}\|_{p,\ell_2}
+\sup_{0\leq s,t\leq 1}\|D_{s,t}\bs{X}_n^{ij}\|_{p,\ell_2}
\right)
<\infty,\label{eq-X}\\
&\sup_{n\in\mathbb{N}}
\max_{1\leq i\leq d}\sup_{0\leq t\leq 1}\left(
\|\mu_t^{i}\|_p
+\sup_{0\leq s,t\leq 1}\|D_s\mu_t^{i}\|_{p,\ell_2}
\right)
<\infty,\label{eq-mu}\\
&\sup_{n\in\mathbb{N}}
\max_{1\leq i\leq d}\sup_{0\leq t\leq 1}\left(
\|\Sigma_t^{ii}\|_p
+\sup_{0\leq s,t\leq 1}\|D_s\sigma_t^{i\cdot}\|_{p,\ell_2}
+\sup_{0\leq s,t,u\leq 1}\|D_{s,t}\sigma_u^{i\cdot}\|_{p,\ell_2}
\right)
<\infty\label{eq-sigma}
\end{align}
for all $p\in[1,\infty)$. 
\if0
Suppose also that there is a constant $\gamma\in(0,1]$ such that
\[
\max_{1\leq i,j\leq d}\max_{1\leq h\leq n}\sup_{s,t\in I_h}\left\|\Sigma_t^{ij}-\Sigma_s^{ij}\right\|_{p}=O(n^{-\gamma})
\]
as $n\to\infty$ for all $p\in[2,\infty)$. 
\fi
Suppose also that $d=O(n^\mathfrak{c})$ and $m=O(n^\mathfrak{c})$ as $n\to\infty$ for some $\mathfrak{c}>0$. 
Then we have
\begin{equation}\label{rc:result}
\sup_{y\in\mathbb{R}^m}\left|P\left(\Xi_n\left(S_n+W_n\right)\leq y\right)-P(\Xi_n(\mathfrak{C}_n^{1/2}\zeta_n+W_n)\leq y)\right|\to0
\end{equation}
as $n\to\infty$, where
\[
S_n:=\vectorize\left[\sqrt{n}\left(\widehat{[Y,Y]}^n_1-[Y,Y]_1\right)\right]
\]
and $\zeta_n$ is a $d^2$-dimensional Gaussian vector independent of $\mathcal{F}$. 

\if0
\item Suppose that there is a number $\alpha\in(0,2]$ such that $\opnorm{\Upsilon_n}_\infty^5(\log d)^{\frac{10}{\alpha}}\upsilon_n^{10}(\log m)^{\frac{13}{2}}=o(\sqrt{n})$ and 
\begin{align*}
&\max_{1\leq i\leq d^2}\left\{
\|W^{i}_n\|_{\psi_\alpha}+\|X^{i}_n\|_{\psi_\alpha}
+\sup_{0\leq t\leq 1}\left(\|D_tW_n^{i}\|_{\psi_\alpha,\ell_2}+\|D_tX^{i}_n\|_{\psi_\alpha,\ell_2}\right)\right.\\
&\left.\hphantom{\max_{1\leq i\leq d^2}\quad}+\sup_{0\leq s,t\leq 1}\left(\|D_{s,t}^2W_n^{i}\|_{\psi_\alpha,\ell_2}+\|D^2_{s,t}X^{i}_n\|_{\psi_\alpha,\ell_2}\right)
\right\}=O(\upsilon_n),\\
&\max_{1\leq i\leq d}\sup_{0\leq t\leq 1}\left(
\|\Sigma_t^{ii}\|_{\psi_\alpha}
+\sup_{0\leq s,t\leq 1}\|D_s\sigma_t^{i\cdot}\|_{\psi_\alpha,\ell_2}^2
+\sup_{0\leq s,t,u\leq 1}\|D_{s,t}\sigma_u^{i\cdot}\|_{\psi_\alpha,\ell_2}^2
\right)=O(\upsilon_n)
\end{align*}
as $n\to\infty$. Then we have \eqref{rc:result} as $n\to\infty$. 
\fi

\item Suppose that 
$\opnorm{\Upsilon_n}_\infty^5(\log dm)^{\frac{13}{2}}=o(\sqrt{n})$ 
as $n\to\infty$ and 
\eqref{eq-W}-\eqref{eq-sigma} are satisfied for $p=\infty$. 
\if0
\begin{align*}
\sup_{n\in\mathbb{N}}
\max_{1\leq i\leq d^2}\left(
\|W^{i}_n\|_\infty+
+\sup_{0\leq t\leq 1}\|D_tW_n^{i}\|_{\infty,\ell_2}
+\sup_{0\leq s,t\leq 1}\|D_{s,t}^2W_n^{i}\|_{\infty,\ell_2}
\right)
&<\infty,\\
\sup_{n\in\mathbb{N}}
\max_{1\leq i\leq m}\max_{1\leq j\leq d^2}\left(
\|\bs{X}^{ij}_n\|_\infty+
+\sup_{0\leq t\leq 1}\|D_t\bs{X}_n^{ij}\|_{\infty,\ell_2}
+\sup_{0\leq s,t\leq 1}\|D_{s,t}^2\bs{X}_n^{ij}\|_{\infty,\ell_2}
\right)
&<\infty,\\
\sup_{n\in\mathbb{N}}
\max_{1\leq i\leq d}\sup_{0\leq t\leq 1}\left(
\|\Sigma_t^{ii}\|_\infty
+\sup_{0\leq s,t\leq 1}\|D_s\sigma_t^{i\cdot}\|_{\infty,\ell_2}^2
+\sup_{0\leq s,t,u\leq 1}\|D_{s,t}\sigma_u^{i\cdot}\|_{\infty,\ell_2}^2
\right)
&<\infty.
\end{align*}
\fi
Then we have \eqref{rc:result} as $n\to\infty$. 

\end{enumerate}
\end{theorem}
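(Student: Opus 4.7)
The plan is to verify the hypotheses of Theorem~\ref{thm:main} with $\bar q = 2$ applied to $S_n + W_n$, after recasting $S_n$ as a vector of symmetric double Skorohod integrals up to a remainder that will be absorbed by Lemma~\ref{lemma:approx}. The definition of $\mathfrak{C}_n$ suggests identifying $M_n^{(i-1)d+j}$ with $\delta^2(u_n^{(i-1)d+j})$ for the symmetric kernel
\begin{equation*}
u_n^{(i-1)d+j}(s,t) := \tfrac{\sqrt n}{2} \sum_{h=1}^{n} \mathbf{1}_{(t_{h-1},t_h]}(s) \mathbf{1}_{(t_{h-1},t_h]}(t) \bigl(\sigma_s^{i\cdot}\otimes \sigma_t^{j\cdot} + \sigma_t^{i\cdot}\otimes \sigma_s^{j\cdot}\bigr)\in H^{\otimes 2},
\end{equation*}
which by direct computation satisfies $\langle u_n^{(i-1)d+j}, u_n^{(k-1)d+l}\rangle_{H^{\otimes 2}} = \mathfrak{C}_n^{(i-1)d+j,(k-1)d+l}$. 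An anticipating It\^o expansion of the centered increment $(Y^i_{t_h}-Y^i_{t_{h-1}})(Y^j_{t_h}-Y^j_{t_{h-1}}) - \int_{t_{h-1}}^{t_h}\Sigma_s^{ij}\,ds$ then gives $S_n = M_n + R_n$, with $M_n := \delta^2(u_n)$ and $R_n$ collecting drift-related cross-terms; BDG together with \eqref{eq-mu}--\eqref{eq-sigma} shows $\sqrt{\log m}\,\|\Xi_n R_n\|_{\ell_\infty} \to^p 0$, so that Lemma~\ref{lemma:approx} reduces the problem to proving \eqref{rc:result} with $M_n$ in place of $S_n$.

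The bulk of the work is then the verification of \eqref{main2:eq1}--\eqref{main2:eq10}. For \eqref{main2:eq1}, the Heisenberg-type commutation relation between $D^2$ and $\delta^2$ yields
\begin{equation*}
\langle D^2 M_n^i, u_n^j\rangle_{H^{\otimes 2}} = \langle u_n^i,u_n^j\rangle_{H^{\otimes 2}}\; +\; \langle \delta^2(D^2 u_n^i) + 2\delta(D u_n^i),\, u_n^j\rangle_{H^{\otimes 2}},
\end{equation*}
so the first term cancels $\mathfrak{C}_n^{ij}$ and the residual is a sum of inner products of Skorohod integrals of Malliavin derivatives of $\sigma$ against $u_n^j$. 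BDG and \eqref{eq-sigma} bound this by $O_p(n^{-1/2})$ in the $\ell_\infty$ norm uniformly over $(i,j)\in [d^2]^2$. A similar expansion of $D^k$ applied to $M_n$, $\mathfrak{C}_n$, $W_n$, $\bs X_n$ bounds every inner product appearing in \eqref{main2:eq2}--\eqref{main2:eq10} by $O_p(n^{-1/2})$ modulo polynomial factors of $\|\bs X_n\|_{\ell_\infty}$, $\|Z_n\|_{\ell_\infty}$ and $\|\mathfrak Z_n\|_{\ell_\infty}$. In case (a), the polynomial moment assumptions (arbitrary $p$) together with $d,m=O(n^{\mathfrak c})$ and $\opnorm{\Upsilon_n}_\infty^5 = O(n^\varpi)$, $\varpi<1/2$, close the estimates by H\"older, since $(\log m)^{13/2}$ is subpolynomial. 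In case (b), the $L^\infty$ bounds remove the need for H\"older and the condition $\opnorm{\Upsilon_n}_\infty^5(\log dm)^{13/2}=o(\sqrt n)$ matches directly the worst case \eqref{main2:eq6}. Assumption \eqref{diag-tight} is exactly \eqref{rc:diag}.

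The most delicate step is the Malliavin computation of $D^k M_n$ for anticipative $u_n$: iterating commutation produces several Skorohod-integral terms that must be organized so that only one pairing reproduces $\mathfrak{C}_n$ and the rest are $O_p(n^{-1/2})$ in $\ell_\infty$ over the full index set. A secondary but substantial bookkeeping task is to bound the inner products in \eqref{main2:eq2}--\eqref{main2:eq10} uniformly, since they involve up to four derivative factors tensored against $u_n^j$; here the gain of $n^{-1/2}$ per inner product comes systematically from the Lebesgue measure of one $[t_{h-1},t_h]$, and the uniform-in-index $L^p$ bounds follow by concatenating \eqref{eq-W}--\eqref{eq-sigma} through H\"older.
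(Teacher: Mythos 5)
You are missing the key technical device of the paper's proof: the block-predictable projection $\varsigma_t := E[\sigma_t\mid\mathcal{G}^n_t]$. The paper builds the kernel as $u_n^{ij}:=\sqrt{n}\sum_{h}\symm(\varsigma^{i\cdot}\otimes\varsigma^{j\cdot})\,1_{I_h\times I_h}$, not from the raw $\sigma$ as you propose, and this is not cosmetic. Because $\varsigma_t$ is $\mathcal{F}_{t_{h-1}}$-measurable for $t\in I_h$, Lemma \ref{lemma:double} shows that $\delta^2(u_n^{ij})$ coincides exactly with the iterated It\^o integral that arises from expanding the centered increment products by It\^o's formula, so $M_n^{ij}=\delta^2(u_n^{ij})$ is an honest reduction with no anticipative corrections. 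With the raw $\sigma$, the kernel is anticipative relative to the block filtration $\mathcal{G}^n$, and $\delta^2$ of it picks up Skorohod--It\^o (trace) corrections; these do \emph{not} belong to the class of ``drift-related cross-terms'' you assign to $R_n$. Controlling exactly that discrepancy, i.e.\ the gap between $\sigma$ and $\varsigma$, is the content of Lemma \ref{ocone} (Clark--Ocone plus a Bihari argument, giving $\|\sigma_t-\varsigma_t\|_{2p,\ell_2}\lesssim\sqrt{p/n}\,\sup_{u\le v}\|D_u\sigma_v\|_{2p,\ell_2}$) and of Lemma \ref{approx-M}, and it is precisely where the second Malliavin derivative of $\sigma$ demanded by \eqref{eq-sigma} is consumed. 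Your outline has no substitute for this step.

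Once $\varsigma$ replaces $\sigma$, the exact cancellation you rely on also breaks: $2\langle u_n^{ij},u_n^{kl}\rangle_{H^{\otimes2}}$ equals $\mathfrak{C}_n^{(i-1)d+j,(k-1)d+l}$ only up to replacing $\Sigma^{ik}_s$ by $\varsigma^{i\cdot}_s\cdot\varsigma^{k\cdot}_s$, and the $O_p(n^{-1/2})$ error is handled in \eqref{eq:3rd-term} of Lemma \ref{quasi-torsion}, again through Lemma \ref{ocone}. Two smaller slips in the formula you wrote: the correct normalization is $2\langle u_n^{ij},u_n^{kl}\rangle$ (the factor $2$ coming from the Heisenberg commutation for $\delta^2$, so that the term $2u_n^{ij}(s,t)$ in $D^2_{s,t}M_n^{ij}$ pairs to $\mathfrak{C}_n$), not $\langle u_n^{ij},u_n^{kl}\rangle$; and the kernel you display, $\sigma_s^{i\cdot}\otimes\sigma_t^{j\cdot}+\sigma_t^{i\cdot}\otimes\sigma_s^{j\cdot}$, is not the canonical symmetrization — it should be $\sigma_s^{i\cdot}\otimes\sigma_t^{j\cdot}+\sigma_s^{j\cdot}\otimes\sigma_t^{i\cdot}$, so that the pairing in the inner product produces only same-time products $\Sigma_s^{ik}\Sigma_t^{jl}$ and not mixed-time factors $(\sigma_s^{i\cdot}\cdot\sigma_t^{k\cdot})(\sigma_t^{j\cdot}\cdot\sigma_s^{l\cdot})$. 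The remaining steps you sketch — the Heisenberg commutation for $D^2M_n$, the $n^{-1/2}$ gain per inner product from the Lebesgue measure of one block, and closing by H\"older under the polynomial (case (a)) or $L^\infty$ (case (b)) moment assumptions — do match the strategy of Lemmas \ref{heisenberg}, \ref{M-deriv} and \ref{quasi-torsion}--\ref{lemma:1st-deriv}, but the argument is incomplete without the $\varsigma$-projection and the associated error control.
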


\begin{rmk}\label{rmk:rc}
We enumerate some remarks on the assumptions of Theorem \ref{thm:rc} in the following:
\begin{enumerate}[label=(\alph*),itemsep=5.5pt]

\item 
In typical applications of Theorem \ref{thm:rc}, we take $W_n\equiv0$ and $X_n$ a smooth functional of the volatility process $\sigma$. \tcr{H}ence only the assumptions on $\mu$ and $\sigma$ do matter (see also Section \ref{sec:factor}). 
The Malliavin differentiability conditions on $\mu$ and $\sigma$ are satisfied, for example, when $\mu$ and $\sigma$ are respectively solutions of stochastic differential equations (SDEs) with sufficiently regular coefficients; see e.g.~Section 2.2.2 of \cite{Nualart2006}. We remark that the (local) Malliavin differentiability has been known for solutions of some SDEs with irregular coefficients as well; see Section 4 of \citet{AE2008} and Lemma 5.9 of \citet{Naganuma2013} for example. 

\item 
A major restriction imposed by the assumptions of Theorem \ref{thm:rc} is that they require the arrays $(D_s^{(a)}\sigma_t^{ib})_{(a,b)\in[r]^2}$ and $(D_{s,t}^{(a,b)}\sigma_u^{ic})_{(a,b,c)\in[r]^2}$ are sufficiently ``sparse'' for all $s,t,u\in[0,1]$ so that 
\[
\sup_{0\leq s,t\leq 1}\|D_s\sigma_t^{i\cdot}\|_{p,\ell_2}\quad\text{and}\quad\sup_{0\leq s,t,u\leq 1}\|D_{s,t}\sigma_u^{i\cdot}\|_{p,\ell_2}
\] 
do not diverge as $n\to\infty$. This is a restriction because $r$ typically diverges as $n\to\infty$ in a high-dimensional setting. Such a condition is satisfied e.g.~when $Y^i$ and $(\sigma_t^{i\cdot})_{t\in[0,1]}$ depend on only finitely many components of $B$ for each $i$ (they may vary with $i$, though). Therefore, it is satisfied if the price and volatility processes have a certain factor structure, which seems realistic in financial applications.  

\item The Malliavin differentiability condition on $\mu$ in Theorem \ref{thm:rc} can be replaced by a continuity condition on $\mu$ analogous to \eqref{sigma-modulus}. In fact, it is used only to prove Lemma \ref{drift}, where it is only crucial that $\mu$ is well-approximated by a ``strongly predictable'' process.  

\item Assumptions on the second Malliavin derivatives of the volatility process $\sigma_t$ sometimes appear in high-frequency financial econometrics even for the fixed-dimensional case; see \cite{CG2011,CPTV2017} for example.  

\item The assumptions of Theorem \ref{thm:rc} do not rule out the possibility of the presence of jumps in the volatility process $\sigma$; see \citet{Fukasawa2011}. 

\item\label{rmk:moment} It would be enough in Theorem \ref{thm:rc}(a) to assume conditions \eqref{eq-W}--\eqref{eq-sigma} for some $p\in[1,\infty)$ only, where $p$ depends on the value of $\mathfrak{c}$, i.e.~the divergence rates of $d$ and $m$. 

\end{enumerate}
\end{rmk}

By an analogous discussion to the one before Corollary \ref{coro:main}, we can deduce a high-dimensional central limit theorem for realized covariance in hyperrectangles from Theorem \ref{thm:rc}:
\begin{corollary}\label{coro:rc}
Under the assumptions of Theorem \ref{thm:rc} with replacing \eqref{rc:diag} by
\[
\lim_{b\downarrow0}\limsup_{n\to\infty}P(\min\diag(\mathfrak{C}_n)<b)=0,
\]
we have
\[
\sup_{A\in\mathcal{A}^\mathrm{re}(d^2)}\left|P\left(S_n+W_n\in A\right)-P\left(\mathfrak{C}_n^{1/2}\zeta_n+W_n\in A\right)\right|\to0
\]
as $n\to\infty$. 
\end{corollary}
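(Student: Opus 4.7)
The plan is to apply Theorem \ref{thm:rc} with a particular deterministic choice of $\Xi_n$ that encodes hyperrectangle constraints, mirroring the derivation of Corollary \ref{coro:main} from Theorem \ref{thm:main}. Specifically, I set $\widetilde{m} := 2d^2$, take $\widetilde{\Upsilon}_n$ to be the deterministic $2d^2 \times d^2$ matrix
$$\widetilde{\Upsilon}_n := \begin{pmatrix} \mathsf{E}_{d^2} \\ -\mathsf{E}_{d^2} \end{pmatrix},$$
and let $\widetilde{\bs{X}}_n$ be the deterministic $2d^2 \times d^2$ matrix with every entry equal to $1$, so that $\widetilde{\Xi}_n := \widetilde{\Upsilon}_n \circ \widetilde{\bs{X}}_n = \widetilde{\Upsilon}_n$.

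The hypotheses of Theorem \ref{thm:rc} then all follow from the hypotheses of the corollary. Indeed, $\opnorm{\widetilde{\Upsilon}_n}_\infty = 1$, so the growth conditions on $\opnorm{\Upsilon_n}_\infty$ in both parts (a) and (b) are trivial (using $\widetilde{m} = 2d^2 = O(n^{2\mathfrak{c}})$ and $\log \widetilde{m} = O(\log d)$, respectively). Since $\widetilde{\bs{X}}_n$ is constant with entries bounded by $1$, its Malliavin derivatives vanish and condition \eqref{eq-X} is trivially satisfied, while the remaining moment conditions \eqref{eq-W}, \eqref{eq-mu}, \eqref{eq-sigma} are inherited directly from the corollary. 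A direct block computation shows that $\widetilde{\Xi}_n \mathfrak{C}_n \widetilde{\Xi}_n^\top$ has the $2\times 2$ block form $\bigl(\begin{smallmatrix} \mathfrak{C}_n & -\mathfrak{C}_n \\ -\mathfrak{C}_n & \mathfrak{C}_n \end{smallmatrix}\bigr)$, so $\diag(\widetilde{\Xi}_n \mathfrak{C}_n \widetilde{\Xi}_n^\top)$ consists of two copies of $\diag(\mathfrak{C}_n)$; hence $\min\diag(\widetilde{\Xi}_n \mathfrak{C}_n \widetilde{\Xi}_n^\top) = \min\diag(\mathfrak{C}_n)$, and the replacement hypothesis of the corollary coincides with \eqref{rc:diag} for this $\widetilde{\Xi}_n$.

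Theorem \ref{thm:rc} therefore yields
$$\sup_{y\in\mathbb{R}^{\widetilde{m}}}\left|P\left(\widetilde{\Xi}_n\left(S_n+W_n\right)\leq y\right) - P\left(\widetilde{\Xi}_n(\mathfrak{C}_n^{1/2}\zeta_n+W_n)\leq y\right)\right| \to 0.$$
Any hyperrectangle $A = \{z \in \mathbb{R}^{d^2} : a_j \leq z^j \leq b_j,\ j=1,\dots,d^2\} \in \mathcal{A}^{\mathrm{re}}(d^2)$ can be written as $\{z : \widetilde{\Xi}_n z \leq y\}$ with $y := (b_1,\dots,b_{d^2},-a_1,\dots,-a_{d^2})^\top \in (-\infty,\infty]^{\widetilde{m}}$, and conversely every such $y$ (with degenerate cases corresponding to the empty set, for which the probabilities vanish trivially) arises from some $A$. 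Since, as noted just before Corollary \ref{coro:main}, the above convergence extends from $\mathbb{R}^{\widetilde{m}}$ to $(-\infty,\infty]^{\widetilde{m}}$, taking the supremum over $A \in \mathcal{A}^{\mathrm{re}}(d^2)$ delivers the desired conclusion. The only point beyond bookkeeping is this extension to infinite-valued thresholds, which is handled by the remark preceding Corollary \ref{coro:main}; everything else reduces to checking that the specialized $(\widetilde{\Upsilon}_n, \widetilde{\bs{X}}_n)$ satisfies the theorem's assumptions, which is essentially automatic.
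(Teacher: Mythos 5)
Your proof is correct and follows exactly the paper's intended route, which the paper only sketches by pointing back to the discussion preceding Corollary~\ref{coro:main}: stack $\mathsf{E}_{d^2}$ and $-\mathsf{E}_{d^2}$ into a deterministic $\Upsilon_n$ (so $\opnorm{\Upsilon_n}_\infty=1$), take the all-ones $\bs{X}_n$ so that the Hadamard product recovers the stacked identity, verify that $\min\diag(\Xi_n\mathfrak{C}_n\Xi_n^\top)=\min\diag(\mathfrak{C}_n)$, and invoke the extension of \eqref{aim} to $(-\infty,\infty]^m$. You have merely made explicit the straightforward bookkeeping that the paper delegates to the reader.
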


In some situations, it is more convenient to consider a localized version of the assumptions of Theorem \ref{thm:rc} as follows:
\begin{theorem}\label{thm:rc-local}
For every $n\in\mathbb{N}$, let $W_n$ be a $d^2$-dimensional random vector, $\bs{X}_n$ be an $m\times d^2$ random matrix and $\Upsilon_n$ be an $m\times d^2$ (deterministic) matrix such that $\opnorm{\Upsilon_n}_\infty\geq1$, where $m=m_n$ possibly depends on $n\in\mathbb{N}$.   
Moreover, for every $\nu\in\mathbb{N}$, let $\Omega_n(\nu)\in\mathcal{F}$, $\mu(\nu)=(\mu(\nu)_t)_{t\in[0,1]}$ be a $d$-dimensional $(\mathcal{F}_t)$-progressively measurable process, $\sigma(\nu)=(\sigma(\nu)_t)_{t\in[0,1]}$ be an $\mathbb{R}^{d\times r}$-valued $(\mathcal{F}_t)$-progressively measurable process, $W_n(\nu)\in\mathbb{D}_{2,\infty}(\mathbb{R}^{d^2})$ and $\bs{X}_n(\nu)\in\mathbb{D}_{2,\infty}(\mathbb{R}^{m\times d^2})$, and suppose that the following conditions are satisfied: 
\begin{enumerate}[label={\normalfont(\roman*)}]

\item $\lim_{\nu\to\infty}\limsup_{n\to\infty}P(\Omega_n(\nu)^c)=0$.

\item For all $\nu\in\mathbb{N}$ and $t\in[0,1]$, $\mu_t=\mu(\nu)_t$ and $\sigma_t=\sigma(\nu)_t$ on $\Omega_n(\nu)$ as well as $\mu(\nu)_t\in\mathbb{D}_{1,\infty}(\mathbb{R}^{d})$ and $\sigma(\nu)_t\in\mathbb{D}_{2,\infty}(\mathbb{R}^{d\times r})$.  

\item For all $\nu\in\mathbb{N}$, $W_n=W_n(\nu)$ and $\bs{X}_n=\bs{X}_n(\nu)$ on $\Omega_n(\nu)$.

\item For all $\nu\in\mathbb{N}$, \eqref{rc:diag} holds true with replacing $\bs{X}_n$ and $\sigma$ by $\bs{X}_n(\nu)$ and $\sigma(\nu)$ respectively. 

\end{enumerate}
Then the following statements hold true:
\begin{enumerate}[label={\normalfont(\alph*)}]

\item Suppose that there are constants $\varpi\in(0,\frac{1}{2})$ and $\mathfrak{c}>0$ such that $\opnorm{\Upsilon_n}_\infty^5=O(n^\varpi)$, $d=O(n^\mathfrak{c})$ and $m=O(n^\mathfrak{c})$ as $n\to\infty$. 
Suppose also that, for all $\nu\in\mathbb{N}$, \eqref{eq-W}-\eqref{eq-sigma} are satisfied for all $p\in[1,\infty)$ with replacing $W_n,\bs{X}_n,\mu,\sigma$ by $W_n(\nu),\bs{X}_n(\nu),\mu(\nu),\sigma(\nu)$ respectively. 
Then we have \eqref{rc:result} as $n\to\infty$. 

\item Suppose that 
$\opnorm{\Upsilon_n}_\infty^5(\log dm)^{\frac{13}{2}}=o(\sqrt{n})$ 
as $n\to\infty$ and, for all $\nu\in\mathbb{N}$, 
\eqref{eq-W}-\eqref{eq-sigma} are satisfied for $p=\infty$ with replacing $W_n,\bs{X}_n,\mu,\sigma$ by $W_n(\nu),\bs{X}_n(\nu),\mu(\nu),\sigma(\nu)$ respectively. 
Then we have \eqref{rc:result} as $n\to\infty$. 

\end{enumerate}
\end{theorem}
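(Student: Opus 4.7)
The plan is a standard localization argument reducing Theorem \ref{thm:rc-local} to Theorem \ref{thm:rc}. First, for each fixed $\nu\in\mathbb{N}$ I would introduce the localized semimartingale
\[
Y(\nu)_t := Y_0 + \int_0^t \mu(\nu)_s\,ds + \int_0^t \sigma(\nu)_s\,dB_s,\qquad t\in[0,1],
\]
together with the localized statistic $S_n(\nu) := \vectorize[\sqrt{n}(\widehat{[Y(\nu),Y(\nu)]}^n_1 - [Y(\nu),Y(\nu)]_1)]$, the localized weighting matrix $\Xi_n(\nu) := \Upsilon_n \circ \bs{X}_n(\nu)$, and the localized conditional covariance $\mathfrak{C}_n(\nu)$ defined from $\Sigma(\nu)_t := \sigma(\nu)_t \sigma(\nu)_t^\top$ by the same formula that defines $\mathfrak{C}_n$. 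By hypothesis (ii) and the moment/Malliavin bounds imposed on $(\mu(\nu), \sigma(\nu), W_n(\nu), \bs{X}_n(\nu))$ in part (a) (resp.\ part (b)), together with the non-degeneracy supplied by (iv), the localized quadruple satisfies every hypothesis of Theorem \ref{thm:rc}(a) (resp.\ Theorem \ref{thm:rc}(b)). Applying that theorem I obtain, for each fixed $\nu$,
\[
\sup_{y\in\mathbb{R}^m}\left|P\bigl(\Xi_n(\nu)(S_n(\nu)+W_n(\nu))\leq y\bigr) - P\bigl(\Xi_n(\nu)(\mathfrak{C}_n(\nu)^{1/2}\zeta_n+W_n(\nu))\leq y\bigr)\right| \to 0
\]
as $n\to\infty$.

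Next I would transfer this convergence to the non-localized quantities. On $\Omega_n(\nu)$, condition (ii) gives $\mu_t = \mu(\nu)_t$ and $\sigma_t = \sigma(\nu)_t$ for every $t\in[0,1]$, so by the local property of It\^o and Lebesgue integration $Y_t = Y(\nu)_t$ on $\Omega_n(\nu)$, and hence $S_n = S_n(\nu)$ there. Condition (iii) then yields $\Xi_n = \Xi_n(\nu)$ and $W_n = W_n(\nu)$ on $\Omega_n(\nu)$, and since $\Sigma_t = \Sigma(\nu)_t$ pointwise on $\Omega_n(\nu)$ I obtain $\mathfrak{C}_n = \mathfrak{C}_n(\nu)$ there as well. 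Because $\zeta_n$ is independent of $\mathcal{F}$, replacing each localized quantity by its non-localized counterpart alters the joint law only on the $\mathcal{F}$-measurable set $\Omega_n(\nu)^c$, giving
\[
\left|P(\Xi_n(S_n+W_n)\leq y) - P(\Xi_n(\nu)(S_n(\nu)+W_n(\nu))\leq y)\right| \leq P(\Omega_n(\nu)^c),
\]
and the analogous bound with $S_n + W_n$ replaced by $\mathfrak{C}_n^{1/2}\zeta_n + W_n$ (and $\mathfrak{C}_n(\nu)$ in place of $\mathfrak{C}_n$ on the localized side). Applying the triangle inequality, sending $n\to\infty$ first (which kills the middle difference by the display above), and then sending $\nu\to\infty$ and invoking (i) yields \eqref{rc:result}.

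The only genuinely non-cosmetic step is the identification $Y = Y(\nu)$ on $\Omega_n(\nu)$, which requires the local property of stochastic integration with respect to an $\mathcal{F}$-measurable (rather than $\mathcal{F}_0$-measurable) event; I would invoke this in the standard form that any two predictable integrands coinciding on a measurable set $A\times[0,1]$ yield stochastic integrals coinciding on $A$ almost surely. Beyond this, verifying that the hypotheses required to apply Theorem \ref{thm:rc} to the localized quadruple are exactly those supplied by parts (a) and (b) of the present theorem, and that (iv) provides \eqref{rc:diag} for $\mathfrak{C}_n(\nu)$ and $\Xi_n(\nu)$, is routine bookkeeping; no new Malliavin-calculus estimates are needed, since all the substantive analysis has already been carried out in the proof of Theorem \ref{thm:rc}.
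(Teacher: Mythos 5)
Your proposal is correct and follows essentially the same localization argument as the paper: define the localized process $Y(\nu)$, identify $Y = Y(\nu)$ on $\Omega_n(\nu)$ by the local property of stochastic integrals, apply Theorem~\ref{thm:rc} to the localized quantities, bound the non-localized probabilities by the localized ones plus $P(\Omega_n(\nu)^c)$, and then let $n\to\infty$ followed by $\nu\to\infty$. No discrepancy to report.
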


To make Theorems \ref{thm:rc}--\ref{thm:rc-local} statistically feasible, we need to estimate the ``asymptotic'' covariance matrix $\mathfrak{C}_n$. 
We can construct a ``consistent'' estimator for $\mathfrak{C}_n$ in the same way as in the low-dimensional setting of \citet{BNS2004rc}: Define the $d^2$-dimensional random vectors $\chi_h$ by
\[
\chi_{h}:=\vectorize\left[(Y_{t_h}-Y_{t_{h-1}})(Y_{t_h}-Y_{t_{h-1}})^\top\right],\qquad
h=1,\dots,n.
\]
Then we set
\[
\widehat{\mathfrak{C}}_n:=n\sum_{h=1}^n\chi_h\chi_h^\top-\frac{n}{2}\sum_{h=1}^{n-1}\left(\chi_h\chi_{h+1}^\top+\chi_{h+1}\chi_{h}^\top\right).
\]

\begin{proposition}\label{prop:acov}
For all $n\in\mathbb{N}$ and $\nu\in\mathbb{N}$, let $\Omega_n(\nu)\in\mathcal{F}$, $\mu(\nu)=(\mu(\nu)_t)_{t\in[0,1]}$ be a $d$-dimensional $(\mathcal{F}_t)$-progressively measurable process and $\sigma(\nu)=(\sigma(\nu)_t)_{t\in[0,1]}$ be an $\mathbb{R}^{d\times r}$-valued $(\mathcal{F}_t)$-progressively measurable process, and suppose that the following conditions are satisfied: 
\begin{enumerate}[label={\normalfont(\roman*)}]

\item $\lim_{\nu\to\infty}\limsup_{n\to\infty}P(\Omega_n(\nu)^c)=0$.

\item For all $\nu\in\mathbb{N}$ and $t\in[0,1]$, $\mu_t=\mu(\nu)_t$ and $\sigma_t=\sigma(\nu)_t$ on $\Omega_n(\nu)$ as well as $\sigma(\nu)_t\in\mathbb{D}_{1,\infty}(\mathbb{R}^{d\times r})$.  

\item There is a constant $\gamma\in(0,\frac{1}{2}]$ such that
\begin{equation}\label{sigma-modulus}
\sup_{0< t\leq1-\frac{1}{n}}\left\|\max_{1\leq k,l\leq d}\left|\Sigma(\nu)^{kl}_{t+\frac{1}{n}}-\Sigma(\nu)^{kl}_t\right|\right\|_2=O(n^{-\gamma})
\end{equation}
as $n\to\infty$, where $\Sigma(\nu)_t:=\sigma(\nu)_t\sigma(\nu)_t^\top$. 

\end{enumerate}
Then the following statements hold true:
\begin{enumerate}[label={\normalfont(\alph*)}]

\item Suppose that
\begin{equation}\label{acov-moment}
\sup_{n\in\mathbb{N}}
\max_{1\leq i\leq d}\sup_{0\leq t\leq 1}\left(
\|\mu(\nu)_t^i\|_p
+\|\Sigma(\nu)_t^{ii}\|_p
+\sup_{0\leq s,t\leq 1}\|D_s\sigma(\nu)_t^{i\cdot}\|_{p,\ell_2}
\right)
<\infty
\end{equation}
for all $p\in[1,\infty)$ and $\nu\in\mathbb{N}$. Suppose also that $d=O(n^{\mathfrak{c}})$ as $n\to\infty$ for some $\mathfrak{c}>0$. Then we have 
$
\|\wh{\mathfrak{C}}_n-\mathfrak{C}_n\|_{\ell_\infty}=O_p(n^{-\varpi})
$ 
as $n\to\infty$ for any $\varpi\in(0,\gamma)$.  

\item Suppose that \eqref{acov-moment} is satisfied for $p=\infty$. Then we have 
$
\|\wh{\mathfrak{C}}_n-\mathfrak{C}_n\|_{\ell_\infty}=O_p(n^{-1/2}\log^2 d+n^{-\gamma})
$ 
as $n\to\infty$.

\end{enumerate}
\end{proposition}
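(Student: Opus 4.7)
The plan is to follow the standard Barndorff-Nielsen--Shephard scheme for estimating the asymptotic covariance of realized covariance, while tracking the dimension-dependence so that the $\ell_\infty$ rate comes out correctly in both cases (a) and (b). I would first localize via $(\Omega_n(\nu))$ to reduce to the global assumptions \eqref{acov-moment} and \eqref{sigma-modulus}, then remove the drift: writing $\Delta_h^n Y = A_h + M_h$ with $A_h := \int_{t_{h-1}}^{t_h}\mu_s ds$ and $M_h := \int_{t_{h-1}}^{t_h}\sigma_s dB_s$, the bounds $\|A_h^i\|_p = O(n^{-1})$ and $\|M_h^i\|_p = O(n^{-1/2})$ imply that any quadruple product in $\chi_h\chi_h^\top$ or $\chi_h\chi_{h\pm1}^\top$ involving at least one $A$-factor contributes $O_p(n^{-1/2})$ to $\|\wh{\mathfrak{C}}_n\|_{\ell_\infty}$ after the factor $n$ and summation, well within the target rate. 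Thus the problem reduces to analyzing the purely martingale version $\wh{\mathfrak{C}}_n^{M}$ obtained by replacing $\chi_h$ by $\vectorize(M_hM_h^\top)$.

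Next I would split $\wh{\mathfrak{C}}_n^M - \mathfrak{C}_n = \mathcal{B}_n + \mathcal{E}_n$, where $\mathcal{B}_n$ collects the $\mathcal{F}_{t_{h-1}}$-conditional expectations and $\mathcal{E}_n$ is the centered martingale-difference remainder. For the bias, a Malliavin-calculus argument produces the Isserlis-type identity
\[
E[M_h^iM_h^jM_h^kM_h^l\mid\mathcal{F}_{t_{h-1}}] = I_h^{ij}I_h^{kl} + I_h^{ik}I_h^{jl} + I_h^{il}I_h^{jk} + r_h^{(1)}
\]
with $I_h^{ab}:=\int_{t_{h-1}}^{t_h}\Sigma_s^{ab}ds$ and $\|r_h^{(1)}\|_p = O(n^{-2-\gamma})$ uniformly in $(i,j,k,l)$, the $n^{-\gamma}$ coming from replacing $\sigma_s$ by $\sigma_{t_{h-1}}$ on $[t_{h-1},t_h]$ via \eqref{sigma-modulus}, controlled in $L^p$ through the Malliavin-derivative bound in \eqref{acov-moment}. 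An analogous identity
\[
E[M_h^iM_h^jM_{h+1}^kM_{h+1}^l\mid\mathcal{F}_{t_{h-1}}] = I_h^{ij}I_{h+1}^{kl} + r_h^{(2)}
\]
holds with the same type of remainder. Substituting into the definition of $\wh{\mathfrak{C}}_n^M$, the self-products $I_h^{ij}I_h^{kl}$ arising from $n\sum_h M_h^{\otimes 4}$ cancel against $n\sum_h I_h^{ij}I_{h\pm1}^{kl}$ thanks to \eqref{sigma-modulus} (which gives $I_{h+1}^{kl} = I_h^{kl} + O_{L^2}(n^{-1-\gamma})$), leaving a per-entry bias of order $n^{-\gamma}$. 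Union-bounding over the $O(d^4)$ entries by Markov with a large enough moment $p$ (in part (a)) or by a sub-Gaussian maximal inequality exploiting the essential-sup bounds (in part (b)) then yields the required $\ell_\infty$ control of $\mathcal{B}_n$.

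For the martingale fluctuation, each entry of $\mathcal{E}_n$ is a sum of $n$ centered $(\mathcal{F}_{t_h})$-martingale differences whose $L^p$-norms are $O(n^{-1})$, so Burkholder--Davis--Gundy gives $\|\mathcal{E}_n^{ij,kl}\|_p = O(n^{-1/2})$ uniformly. In part (a) I would take $p$ large enough that $d^{4/p} = O(n^{\varepsilon})$ for $\varepsilon < \gamma-\varpi$, obtaining $\|\mathcal{E}_n\|_{\ell_\infty} = O_p(n^{-1/2+\varepsilon})$; combined with the bias this gives the claimed $O_p(n^{-\varpi})$ since $\gamma\le\tfrac12$. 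In part (b), the essential-sup assumptions render each martingale difference conditionally sub-Gaussian, so a standard sub-Gaussian maximal inequality delivers $\|\mathcal{E}_n\|_{\ell_\infty} = O_p(n^{-1/2}\log^2 d)$, matching the stated rate.

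The main obstacle is establishing the Isserlis identity with a remainder of sharp order $n^{-2-\gamma}$ uniformly in $(i,j,k,l)$: a crude It\^o expansion would call for time-derivatives of $\sigma$ that are not assumed to exist, so the argument must be routed through Malliavin integration by parts (invoking $D\sigma$, which \emph{is} bounded in $L^p$ by \eqref{acov-moment}) together with \eqref{sigma-modulus} to identify the leading-order cancellation. Once the identity is available, the remaining bookkeeping -- drift removal, union bounds, and BDG -- is routine.
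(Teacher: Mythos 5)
Your plan shares the same broad skeleton as the paper's proof (localize via Lemma~\ref{lemma:local}, strip the drift, reduce to quadruple products of martingale increments, then union-bound the $d^4$ entries using large-$p$ moments for part (a) and a maximal inequality exploiting polynomial moment growth for part (b)), but the route you take through the core estimates is genuinely different. You propose a conditional-Isserlis expansion of the fourth moments together with a bias/martingale-fluctuation split $\mathcal{B}_n+\mathcal{E}_n$. The paper instead never conditions: it repeatedly applies It\^o's formula to each product $\mathsf{M}^i(I_h)\mathsf{M}^j(I_h)\mathsf{M}^k(I_h)\mathsf{M}^l(I_h)$, writing it as iterated stochastic integrals, cross terms $\int L(h)\,d[\mathsf{M},\mathsf{M}]$, and products of brackets $[\mathsf{M}^\cdot,\mathsf{M}^\cdot](I_h)[\mathsf{M}^\cdot,\mathsf{M}^\cdot](I_\cdot)$ (Lemmas~\ref{lemma:V3dV}, \ref{l-qv}, \ref{qv-dl}, \ref{lemma:no-lag}, \ref{vv-lag}, \ref{lemma:lag}), then bounds each piece directly in $L^p$ with the sharp BDG inequality (Proposition~\ref{sharp-BDG}) and the Clark--Ocone estimate (Lemma~\ref{ocone}). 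Only at the very end does it compare $n\sum_h I_h^{ij}I_h^{kl}$ with the lag-one version, where \eqref{sigma-modulus} is used. The It\^o route avoids ever replacing $\sigma$ by its $\mathcal{G}^n$-conditional version inside the fourth moment; the Isserlis route is conceptually tidier but needs that replacement and a careful cross-term bookkeeping.

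Two details in your sketch need repair. First, you attribute the $n^{-\gamma}$ in $\|r_h^{(1)}\|_p$ to \eqref{sigma-modulus}, but the modulus condition plays no role in the Isserlis remainder: conditional Gaussianity of $M_h$ fails only by $\sigma_s-\varsigma_s$ on $I_h$, whose $L^{2p}$-norm is already $O(\sqrt{p/n})$ by the Clark--Ocone-type bound (Lemma~\ref{ocone}), giving $\|r_h^{(1)}\|_p=O(n^{-5/2})$ with no $\gamma$. The $n^{-\gamma}$ rate comes exclusively from comparing $n\sum_h I_h^{ij}I_h^{kl}$ with $\tfrac n2\sum_h(I_h^{ij}I_{h+1}^{kl}+I_{h+1}^{ij}I_h^{kl})$ after the Isserlis (or It\^o) identity has been applied; that is exactly where \eqref{sigma-modulus} is invoked in the paper's \eqref{proof:modulus}. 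Second, the lagged products $n\chi_h\chi_{h+1}^\top$ are $\mathcal{F}_{t_{h+1}}$-measurable and depend on two adjacent increments, so centering on $\mathcal{F}_{t_{h-1}}$ does not produce a one-step $(\mathcal{F}_{t_h})$-martingale-difference array; you'd have to condition in two stages or split into even/odd blocks before BDG applies directly. Relatedly, for part (b) the relevant bound is not a sub-Gaussian maximal inequality: the fourth powers of increments have $p^2$-growth of $L^p$-norms (sub-Weibull of order $1/2$), which is what produces $\log^2 d$ rather than $\sqrt{\log d}$; the paper invokes Lemma~A.7 and Proposition~A.1 of \cite{Koike2017stein} precisely for this polynomial-moment-to-Orlicz conversion. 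None of these is fatal, but they would have to be filled in for the argument to close.
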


\begin{rmk}
It is presumably possible to remove the (local) Malliavin differentiability assumption on $\sigma_t$ from Proposition \ref{prop:acov} if we impose an additional condition on $d$ and $n^{-\gamma}$ (such an additional assumption will be even unnecessary to prove the part (a) only, but we keep that condition to prove two claims in a unified way). 
\end{rmk}


When the dimension $d$ is very large, computation of $\wh{\mathfrak{C}}_n^{1/2}$ is practically challenging, \tcr{so} it is better to employ a (wild) bootstrap to generate random vectors having the same distributions as that of $\wh{\mathfrak{C}}_n^{1/2}\zeta_n$ as follows.  
Let $(e_h)_{h=1}^\infty$ be a centered Gaussian process independent of $\mathcal{F}$, which is defined on an extension of $(\Omega,\mathcal{F},P)$ if necessary. Then we define
\[
S_n^*:=\sqrt{n}\sum_{h=1}^ne_h\chi_h.
\]
The Gaussian process $(e_h)_{h=1}^\infty$ must have an appropriate covariance matrix so that the $\mathcal{F}$-conditional \tcr{covariance} matrix of $S_n^*$ mimics $\wh{\mathfrak{C}}_n$. As is well-known in the literature (see e.g.~\cite{Hounyo2014}), the standard i.i.d.~wild bootstrap fails to approximate the joint distributions of statistics in the present context.\footnote{It is also known that empirical bootstrap fails in the present context as well; see e.g.~\cite{DGM2013} for a discussion.} 
Alternatively, we assume that $(e_h)_{h=1}^\infty$ is stationary with auto-covariance function
\[
E[e_he_{h+\ell}]
=\left\{
\begin{array}{cl}
1  & \text{if }\ell=0,   \\
-\frac{1}{2}  & \text{if }\ell=1, \\
0  & \text{otherwise}. 
\end{array}
\right.
\]
Then we can easily check that the $\mathcal{F}$-conditional covariance matrix of $S_n^*$ is equal to $\wh{\mathfrak{C}}_n$, \tcr{so} $S_n^*$ has the same distribution as that of $\wh{\mathfrak{C}}_n^{1/2}\zeta_n$.  
We remark that such a sequence $(e_h)_{h=1}^\infty$ considered above can be generated by the following Gaussian MA(1) process:
\[
e_h=\eta^*_h-\eta^*_{h-1},\qquad h=1,\dots,n,
\]
where $(\eta^*_h)_{h=0}^n$ is a sequence of i.i.d.~centered Gaussian variables with variance $\frac{1}{2}$. Therefore, we can rewrite $S_n^*$ as 
\[
S_n^*=\sqrt{n}\sum_{h=1}^{n-1}\eta^*_h(\chi_h-\chi_{h+1})+\sqrt{n}(\eta^*_n\chi_n-\eta^*_0\chi_1).
\]
The second term on the right side of the above equation is usually asymptotically negligible, so the bootstrap procedure considered here is essentially the same as the \textit{wild blocks of blocks bootstrap} proposed in \citet{Hounyo2014}. 


\subsection{Testing the residual sparsity of a continuous-time factor model}\label{sec:factor}

As an application of the theory developed above, we consider the problem of testing the correlation structure of the residual process of a continuous-time factor model. This problem was investigated in Section 4 of \citet{BM2016} for the case of two assets, and we are aim at extending their analysis to a multiple assets situation. 
Specifically, we suppose that the $d$-th asset $Y^d$ is regarded as an observable factor and consider the following continuous-time factor model:
\begin{equation}\label{factor-model}
Y^j=\beta^jY^d+R^j,\qquad j=1,\dots,\underline{d}:=d-1.
\end{equation}
Here, $\beta^j$ is a constant and $R^j$ is a semimartingale such that $[R^j,Y^d]\equiv0$. 
Let us set $\Lambda_n:=\{(i,j):1\leq i<j\leq \underline{d}\}$. 
For each $(i,j)\in\Lambda_n$, we consider the following hypothesis testing problem:
\begin{equation}\label{rs-test}
H_0^{(i,j)}:[R^i,R^j]_1=0\quad\text{a.s.}\qquad
\text{vs}\qquad
H_1^{(i,j)}:[R^i,R^j]_1\neq0\quad\text{a.s.}
\end{equation}
Our aim is to test the hypothesis \eqref{rs-test} simultaneously for $(i,j)\in\Lambda_n$, but we start with constructing a test statistic for a fixed $(i,j)\in\Lambda_n$. 
For notational convenience, we construct the test statistic for every pair $(i,j)$ in $\{1,\dots,\ul{d}\}^2$. 

\begin{rmk}[Sparsity test of the quadratic covariation matrix itself]
Considering the case $Y^d\equiv0$, we have $R^i=Y^i$ for all $i=1,\dots,\ul{d}$. \tcr{H}ence the problem turns to multiple testing for the hypotheses \eqref{test:sparse}. 
\end{rmk}
 
We follow \cite{BM2016} and consider the following statistic
\[
\mathfrak{T}^{ij}:=[Y^i,Y^d]_1[Y^j,Y^d]_1-[Y^i,Y^j]_1[Y^d,Y^d]_1,
\]
which is zero under $H_0^{(i,j)}$.  
Therefore, it is natural to consider the estimated version of $\mathfrak{T}^{ij}$ as follows:
\[
\hat{\mathfrak{T}}^{ij}_n:=\widehat{[Y^i,Y^d]}^n_1\widehat{[Y^j,Y^d]}^n_1-\widehat{[Y^i,Y^j]}^n_1\widehat{[Y^d,Y^d]}^n_1.
\]
In order to make the test statistic scale invariant, we consider the Studentized version of $\hat{\mathfrak{T}}^{ij}_n$. According to \cite{BM2016}, the ``asymptotic variance'' of $\hat{\mathfrak{T}}^{ij}_n$ is given by the following statistic: 
\begin{align*}
\mathfrak{V}^{ij}_n&:=[Y^j,Y^d]_1^2\mathfrak{C}_n^{id,id}+[Y^i,Y^d]_1^2\mathfrak{C}_n^{jd,jd}
+[Y^i,Y^j]_1^2\mathfrak{C}_n^{d^2,d^2}+[Y^d,Y^d]_1^2\mathfrak{C}_n^{(i-1)d+j,(i-1)d+j}\\
&\quad+2[Y^d,Y^d]_1[Y^i,Y^j]_1\mathfrak{C}_n^{(i-1)d+j,d^2}
+2[Y^i,Y^d]_1[Y^j,Y^d]_1\mathfrak{C}_n^{id,jd}\\
&\quad-2[Y^i,Y^d]_1[Y^d,Y^d]_1\mathfrak{C}_n^{(i-1)d+j,jd}
-2[Y^j,Y^d]_1[Y^d,Y^d]_1\mathfrak{C}_n^{(i-1)d+j,id}\\
&\quad-2[Y^i,Y^j]_1[Y^i,Y^d]_1\mathfrak{C}_n^{jd,d^2}
-2[Y^i,Y^j]_1[Y^j,Y^d]_1\mathfrak{C}_n^{id,d^2}.
\end{align*}
Let us denote by $\hat{\mathfrak{V}}^{ij}_n$ the estimated version of $\mathfrak{V}^{ij}_n$, i.e.~we define $\hat{\mathfrak{V}}^{ij}_n$ by the right side of the above equation with replacing $[Y,Y]_1$ and $\mathfrak{C}_n$ by $\widehat{[Y,Y]}_1$ and $\wh{\mathfrak{C}}_n$, respectively. Then we define the test statistic by
\[
T_n^{(i,j)}:=\frac{\sqrt{n}\hat{\mathfrak{T}}^{ij}_n}{\sqrt{\hat{\mathfrak{V}}^{ij}_n}}.
\]
The statistic $T_n^{(i,j)}$ is generally uncentered unless the null hypothesis $H_0^{(i,j)}$ is true, and it is convenient to consider the centered version of $T_n^{(i,j)}$ in the general situation as follows:
\[
\tilde{T}_n^{(i,j)}:=\frac{\sqrt{n}\left(\hat{\mathfrak{T}}^{ij}_n-\mathfrak{T}^{ij}_n\right)}{\sqrt{\hat{\mathfrak{V}}^{ij}_n}}.
\]
Note that we can rewrite $\hat{\mathfrak{T}}^{ij}_n-\mathfrak{T}^{ij}_n$ as
\begin{multline*}
\hat{\mathfrak{T}}^{ij}_n-\mathfrak{T}^{ij}_n
=\left(\widehat{[Y^i,Y^d]}^n_1-[Y^i,Y^d]_1\right)\widehat{[Y^j,Y^d]}^n_1
+[Y^i,Y^d]_1\left(\widehat{[Y^j,Y^d]}^n_1-[Y^j,Y^d]_1\right)\\
-\left(\widehat{[Y^i,Y^j]}^n_1-[Y^i,Y^j]_1\right)\widehat{[Y^d,Y^d]}^n_1
-[Y^i,Y^j]_1\left(\widehat{[Y^d,Y^d]}^n_1-[Y^d,Y^d]_1\right).
\end{multline*}
Therefore, a bootstrapped version of $\tilde{T}_n^{(i,j)}$ is defined as
\[
T_{n,*}^{(i,j)}:=\frac{\sqrt{n}\hat{\mathfrak{T}}^{ij}_{n,*}}{\sqrt{\hat{\mathfrak{V}}^{ij}_n}},
\]
where
\[
\hat{\mathfrak{T}}^{ij}_{n,*}:=\widehat{[Y^i,Y^d]}^{n,*}_1\widehat{[Y^j,Y^d]}^n_1
+\widehat{[Y^i,Y^d]}^{n}_1\widehat{[Y^j,Y^d]}^{n,*}_1
-\widehat{[Y^i,Y^j]}^{n,*}_1\widehat{[Y^d,Y^d]}^{n}_1
-\widehat{[Y^i,Y^j]}^{n}_1\widehat{[Y^d,Y^d]}^{n,*}_1
\]
and
\[
\widehat{[Y^i,Y^j]}^{n,*}_1:=\sqrt{n}\sum_{h=1}^ne_h(Y^i_{t_h}-Y^i_{t_{h-1}})(Y^j_{t_h}-Y^j_{t_{h-1}}),\qquad
i,j=1,\dots,d.
\]

Set $\tilde{T}_n=(\tilde{T}_n^{(i,j)})_{1\leq i,j\leq\ul{d}}$ and $T_{n,*}=(T_{n,*}^{(i,j)})_{1\leq i,j\leq\ul{d}}$. 
We derive mixed-normal approximations for $\vectorize(\tilde{T}_n)$ and $\vectorize(T_{n,*})$ by applying the theory developed above. For this purpose we define the $\ul{d}^2\times d^2$ random matrix $\bs{X}_n$ by
\[
\bs{X}_n^{(i-1)\ul{d}+j,(k-1)d+l}=
\left\{
\begin{array}{cl}
~[Y^j,Y^d]_1/\sqrt{\mathfrak{V}_n^{ij}}  & \text{if }k=i,~l=d,  \\
~[Y^i,Y^d]_1/\sqrt{\mathfrak{V}_n^{ij}}  & \text{if }k=j,~l=d, \\
~-[Y^d,Y^d]_1/\sqrt{\mathfrak{V}_n^{ij}}  & \text{if }k=l=d,  \\
~-[Y^i,Y^j]_1/\sqrt{\mathfrak{V}_n^{ij}} & \text{if }k=i, l=j,\\
0 & \text{otherwise}   
\end{array}
\right.
\]
for $i,j=1,\dots,\ul{d}$ and $k,l=1,\dots,d$. Note that the statistics $\vectorize(\tilde{T}_n)$ and $\vectorize(T_{n,*})$ can be approximated by $\bs{X}_nS_n$ and $\bs{X}_nS_n^*$, respectively. 
We then obtain the following result. 
\begin{proposition}\label{prop:factor-test}
%
For all $n\in\mathbb{N}$ and $\nu\in\mathbb{N}$, let $\Omega_n(\nu)\in\mathcal{F}$, $\mu(\nu)=(\mu(\nu)_t)_{t\in[0,1]}$ be a $d$-dimensional $(\mathcal{F}_t)$-progressively measurable process and $\sigma(\nu)=(\sigma(\nu)_t)_{t\in[0,1]}$ be an $\mathbb{R}^{d\times r}$-valued $(\mathcal{F}_t)$-progressively measurable process, and suppose that the following conditions are satisfied: 
\begin{enumerate}[label={\normalfont(\roman*)}]

\item $\lim_{\nu\to\infty}\limsup_{n\to\infty}P(\Omega_n(\nu)^c)=0$.

\item For all $\nu\in\mathbb{N}$ and $t\in[0,1]$, $\mu_t=\mu(\nu)_t$ and $\sigma_t=\sigma(\nu)_t$ on $\Omega_n(\nu)$ as well as $\sigma(\nu)_t\in\mathbb{D}_{1,\infty}(\mathbb{R}^{d\times r})$.  

\item For all $p\in[1,\infty)$, it holds that
\begin{align*}
&\sup_{n\in\mathbb{N}}
\max_{1\leq i\leq d}\sup_{0\leq t\leq 1}\left(
\|\mu_t^{i}\|_p
+\sup_{0\leq s,t\leq 1}\|D_s\mu(\nu)_t^{i}\|_{p,\ell_2}
\right)
<\infty,\\
&\sup_{n\in\mathbb{N}}
\max_{1\leq i\leq d}\sup_{0\leq t\leq 1}\left(
\|\Sigma(\nu)_t^{ii}\|_p
+\sup_{0\leq s,t\leq 1}\|D_s\sigma(\nu)_t^{i\cdot}\|_{p,\ell_2}
+\sup_{0\leq s,t,u\leq 1}\|D_{s,t}\sigma(\nu)_u^{i\cdot}\|_{p,\ell_2}
\right)
<\infty,
\end{align*}
where $\Sigma(\nu)_t:=\sigma(\nu)_t\sigma(\nu)_t^\top$.

\item There is a constant $\gamma\in(0,\frac{1}{2}]$ such that \eqref{sigma-modulus} holds true as $n\to\infty$. 

\item  For all $p\in[1,\infty)$, it holds that
\begin{equation}\label{v-inv-moment}
\sup_{n\in\mathbb{N}}\max_{1\leq i,j\leq \ul{d}}E\left[\left(\mathfrak{V}_n(\nu)^{ij}\right)^{-p}\right]<\infty,
\end{equation}
where $\mathfrak{V}_n(\nu)$ is defined analogously to $\mathfrak{V}_n$ with replacing $\Sigma$ by $\Sigma(\nu)$. 

\end{enumerate}
Then we have
\[
\sup_{A\in\mathcal{A}^\mathrm{re}(\underline{d}^2)}\left|P\left(\vectorize\left(\tilde{T}_n\right)\in A\right)-P\left(\bs{X}_n\mathfrak{C}_n^{1/2}\zeta_n\in A\right)\right|\to0
\]
and
\[
\sup_{A\in\mathcal{A}^\mathrm{re}(\underline{d}^2)}\left|P\left(\vectorize\left(T_{n,*}\right)\in A|\mathcal{F}\right)-P\left(\bs{X}_n\mathfrak{C}_n^{1/2}\zeta_n\in A|\mathcal{F}\right)\right|\to^p0
\]
as $n\to\infty$, provided that $d=O(n^{\mathfrak{c}})$ as $n\to\infty$ for some $\mathfrak{c}>0$. 
\end{proposition}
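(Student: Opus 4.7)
The strategy is to recognize $\vectorize(\tilde{T}_n)$ and $\vectorize(T_{n,*})$ as linearized versions of $\bs{X}_n S_n$ and $\bs{X}_n S_n^*$ respectively, then apply Theorem \ref{thm:rc-local} together with Lemma \ref{lemma:approx} for the first claim and Propositions \ref{prop:acov}--\ref{prop:comparison} for the bootstrap claim.

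First, I would apply Theorem \ref{thm:rc-local}(a) with $W_n=0$, $m=\ul{d}^2$, and the specified $\bs{X}_n$. Since each row of $\bs{X}_n$ has at most four nonzero entries, I would choose $\Upsilon_n$ to be the $\{0,1\}$-matrix marking the support pattern, so that $\Xi_n=\Upsilon_n\circ\bs{X}_n=\bs{X}_n$ and $\opnorm{\Upsilon_n}_\infty\leq 4$. The Malliavin differentiability and moment bounds on the entries $\bs{X}_n^{ij,kl}$ then follow from (iii) and (v): each entry has the form $[Y^a,Y^b]_1/\sqrt{\mathfrak{V}_n^{ij}}$, where the numerator is a smooth functional of the Brownian motion with $L^p$-bounded derivatives (thanks to (iii)) and the denominator has arbitrarily large negative moments (thanks to \eqref{v-inv-moment}); the chain rule then yields \eqref{eq-X} on the localizing sets $\Omega_n(\nu)$ after truncating $1/\sqrt{\mathfrak{V}_n^{ij}}$ as a function of $\sigma(\nu)$. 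Condition \eqref{rc:diag} reduces to verifying that $\diag(\Xi_n\mathfrak{C}_n\Xi_n^\top)^{(i-1)\ul{d}+j}$ is close to the normalized variance $\mathfrak{V}_n^{ij}/\mathfrak{V}_n^{ij}=1$, which again uses \eqref{v-inv-moment}. This yields
\[
\sup_{A\in\mathcal{A}^\mathrm{re}(\ul{d}^2)}\left|P(\bs{X}_n S_n\in A)-P(\bs{X}_n\mathfrak{C}_n^{1/2}\zeta_n\in A)\right|\to 0.
\]

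Second, I would verify the linearization bound $\sqrt{\log\ul{d}}\,\|\vectorize(\tilde{T}_n)-\bs{X}_n S_n\|_{\ell_\infty}\to^p 0$ and apply Lemma \ref{lemma:approx}. The numerator $\hat{\mathfrak{T}}_n^{ij}-\mathfrak{T}_n^{ij}$ splits into a linear part—exactly $(\bs{X}_n)_{(i-1)\ul{d}+j,\cdot}$ times $S_n/\sqrt{\mathfrak{V}_n^{ij}}$ multiplied by $\sqrt{\mathfrak{V}_n^{ij}}$—plus a remainder that is a sum of products of two terms of order $O_p(n^{-1/2})$, hence $O_p(n^{-1})$ entrywise. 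The difference $\sqrt{\hat{\mathfrak{V}}_n^{ij}}-\sqrt{\mathfrak{V}_n^{ij}}$ can be controlled using Proposition \ref{prop:acov} (whose hypotheses are implied by (i)--(iv)), giving a multiplicative correction of order $O_p(n^{-\varpi})$ for some $\varpi\in(0,\gamma)$. Combined with $d=O(n^\mathfrak{c})$ and Markov's inequality using maximal moment bounds from (iii), this gives a uniform bound of order $O_p(n^{-1/2}\log^{C}\ul{d})$ on the remainder, which is sufficient.

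Third, for the bootstrap statement I would apply Proposition \ref{prop:comparison} with $\wh{\mathfrak{C}}_n$ the realized quarticity-type estimator, $\wh{W}_n=W_n=0$, and $\wh{\bs{X}}_n$ the plug-in estimator obtained by replacing each $[Y^a,Y^b]_1$ in $\bs{X}_n$ by $\widehat{[Y^a,Y^b]}_1^n$ and each $\mathfrak{V}_n^{ij}$ by $\hat{\mathfrak{V}}_n^{ij}$. The consistency rates required by \eqref{eq:consistent} follow from Proposition \ref{prop:acov}(a) combined with the uniform estimation rate $\max_{i,j}|\widehat{[Y^i,Y^j]}_1^n-[Y^i,Y^j]_1|=O_p(n^{-\varpi}\log^C d)$, which is standard from Burkholder-Davis-Gundy together with the moment assumption (iii). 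Finally I would verify $\sqrt{\log\ul{d}}\,\|\vectorize(T_{n,*})-\wh{\bs{X}}_n S_n^*\|_{\ell_\infty}\to^p 0$ conditionally; this is the exact bootstrap analogue of the linearization step, using that $\widehat{[Y^a,Y^b]}_1^{n,*}=O_p(n^{-1/2})$ conditionally and $\widehat{[Y^a,Y^b]}_1^n=O_p(1)$, so the cross-terms contribute $O_p(n^{-1/2})$ conditionally, and conclude via a conditional version of Lemma \ref{lemma:approx}.

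The main obstacle is the linearization bound in the second and fourth steps: it requires uniform control, over $(i,j)\in\Lambda_n$, of fourth-order products of increments of $Y$ (and their bootstrapped analogs) divided by $\sqrt{\hat{\mathfrak{V}}_n^{ij}}$, and the division forces one to combine maximal moment bounds for the numerator with the negative moment control \eqref{v-inv-moment} for the denominator in a way compatible with the $\sqrt{\log\ul{d}}$ rate. Once this bound is established, the rest is assembling existing results from Sections \ref{sec:main}--\ref{sec:rc}.
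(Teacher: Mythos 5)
Your proposal follows essentially the same route as the paper: apply Theorem \ref{thm:rc} (via the local version) to $\bs{X}_nS_n$ with the factorization $\bs{X}_n=\Upsilon_n\circ\bs{X}_n$ and $\opnorm{\Upsilon_n}_\infty=4$, verify the Malliavin moment conditions for $\bs{X}_n$ by combining the smoothness of $[Y^a,Y^b]_1$ with the negative-moment control \eqref{v-inv-moment}, control the plug-in error via Proposition \ref{prop:acov}, and invoke Propositions \ref{prop:comparison}--\ref{prop:quantile} for the bootstrap. Two small points you should tighten: (i) to pass from half-spaces to hyperrectangles in $\mathbb{R}^{\ul d^2}$ you must apply the theorem with $m=2\ul d^2$ by stacking $\bs{X}_n$ and $-\bs{X}_n$, as in the discussion before Corollary \ref{coro:main}; and (ii) the diagonal of $\Xi_n\mathfrak{C}_n\Xi_n^\top$ equals $1$ identically by construction of $\bs{X}_n$ and $\mathfrak{V}_n$, so no approximation argument is needed there, while the exact identity $\vectorize(\tilde T_n)=\hat{\bs{X}}_nS_n$ (with $\hat{\bs{X}}_n$ mixing estimated and true covariations) removes the need for your $O_p(n^{-1})$ remainder bound.
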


Now we return to the problem of testing \eqref{rs-test} simultaneously for $(i,j)\in\Lambda_n$. 
Here, we consider a more general setting described in the following for the purposes of application (cf.~Section \ref{sec:empirical}). We suppose that the set $\Lambda_n$ is decomposed into non-empty disjoint sets $\Lambda_n^{1},\dots,\Lambda_n^{\mathsf{L}}$ as $\Lambda_n=\bigcup_{\ell=1}^{\mathsf{L}}\Lambda_n^{\ell}$. We consider the problem of testing 
\begin{equation}\label{test:group}
\bigwedge_{\lambda\in\Lambda_n^{\ell}}H_0^\lambda\qquad\text{vs}\qquad
\bigvee_{\lambda\in\Lambda_n^{\ell}}H_1^\lambda 
\end{equation}
simultaneously for $\ell=1,\dots,\mathsf{L}$. Here, for a subset $\mathcal{L}$ of $\Lambda_n$, $\bigwedge_{\lambda\in\mathcal{L}}H_0^\lambda$ (resp.~$\bigvee_{\lambda\in\mathcal{L}}H_1^\lambda$) denotes the hypothesis that $H_0^\lambda$ is true for all $\lambda\in\mathcal{L}$ (resp.~$H_1^\lambda$ is true for some $\lambda\in\mathcal{L}$). 
For simplicity of notation, we set $\mathsf{H}_0^\ell:=\bigwedge_{\lambda\in\Lambda_n^{\ell}}H_0^\lambda$ and $\mathsf{H}_1^\ell:=\bigvee_{\lambda\in\Lambda_n^{\ell}}H_1^\lambda$.  
If we let $\mathsf{L}$ be the number of elements in $\Lambda_n$ and write $\Lambda_n=\{\lambda_1,\dots,\lambda_{\mathsf{L}}\}$ and set $\Lambda_n^{\ell}=\{\lambda_\ell\}$ for $\ell=1,\dots,\mathsf{L}$, we recover the original problem of testing \eqref{rs-test} simultaneously for $(i,j)\in\Lambda_n$. 

Our aim is the strong control of the family-wise error rate (FWER) in this problem. 
More formally, let $\Theta_n$ be a set of pairs $(\mu,\sigma)$ of coefficient processes, which is considered as the set of all data generating processes we are interested in (note that the data generating process may vary with $n$ mainly because the dimensions $d$ and $r$ may depend on $n$). 
For each $\theta\in\Theta_n$, we denote by $\mathcal{L}_n(\theta)$ the set of all indices $\ell\in\{1,\dots,\mathsf{L}\}$ for which the hypothesis $\mathsf{H}_0^\ell$ holds true when $\theta$ is the true data generating process. 
Then, the FWER for $\theta\in\Theta_n$, which is denoted by $\FWER(\theta)$, is defined as the probability that $\mathsf{H}_0^\ell$ for some $\ell\in\mathcal{L}_n(\theta)$ is rejected when $\theta$ is the true data generating process. 
Given the significance level $\alpha\in(0,1)$, we aim at constructing multiple testing procedures such that
\begin{equation}\label{eq:fwer}
\limsup_{n\to\infty}\FWER(\theta_n)\leq\alpha
\end{equation}
for any sequence $\theta_n\in\Theta_n$ ($n=1,2,\dots$) of data generating processes. 
To accomplish this, we employ the stepdown procedure of \citet{RW2005} which we describe in the following. 
First, given a fixed index $\ell$, we shall use the test statistic $\mathsf{T}_n^{\ell}:=\max_{\lambda\in\Lambda_n^{\ell}}|T_n^\lambda|$ for the problem \eqref{test:group}. 
Next, we sort the observed test statistics in descending order and denote them by
\[
\mathsf{T}_n^{\ell_1}\geq\cdots \geq \mathsf{T}_n^{\ell_{\mathsf{L}}}.
\]
Also, for every subset $\mathcal{L}\subset\{1,\dots,\mathsf{L}\}$, suppose that we have a critical value $c_n^{\mathcal{L}}(1-\alpha)$ to test the null $\bigwedge_{\lambda\in\mathcal{L}}H_0^\lambda$ against the alternative $\bigvee_{\lambda\in\mathcal{L}}H_1^\lambda$. Those critical values can be random variables and will be specified later. 
Then the stepdown procedure reads as follows:
\begin{enumerate}

\item Let $\mathcal{L}_1:=\{1,\dots,\mathsf{L}\}$. If $\mathsf{T}_n^{\ell_1}\leq c_n^{\mathcal{L}_1}(1-\alpha)$, then accept all the hypotheses and stop; otherwise, reject $\mathsf{H}_0^{\ell_1}$ and continue. 

\item Let $\mathcal{L}_2:=\mathcal{L}_1\setminus\{\ell_1\}$. If $\mathsf{T}_n^{\ell_2}\leq c_n^{\mathcal{L}_2}(1-\alpha)$, then accept all the hypotheses $\mathsf{H}_0^{\ell}$ for $\ell\in\mathcal{L}_2$ and stop; otherwise, reject $\mathsf{H}_0^{\ell_2}$ and continue.

$\vdots$ 

\item[$k$.] Let $\mathcal{L}_k:=\mathcal{L}_{k-1}\setminus\{\ell_{k-1}\}$. If $\mathsf{T}_n^{\ell_k}\leq c_n^{\mathcal{L}_k}(1-\alpha)$, then accept all the hypotheses $\mathsf{H}_0^{\ell}$ for $\ell\in\mathcal{L}_k$ and stop; otherwise, reject $\mathsf{H}_0^{\ell_k}$ and continue.

$\vdots$ 

\item[$\mathsf{L}$.] If $\mathsf{T}_n^{\lambda_{\mathsf{L}}}\leq c_n^{\{\ell_{\mathsf{L}}\}}(1-\alpha)$, then accept $\mathsf{H}_0^{\ell_\mathsf{L}}$; otherwise, reject $\mathsf{H}_0^{\ell_\mathsf{L}}$.

\end{enumerate}
According to Theorem 3 of \cite{RW2005}, the above stepdown procedure satisfies \eqref{eq:fwer} if the critical values $c_n^{\mathcal{L}}(1-\alpha)$, $\mathcal{L}\subset\{1,\dots,\mathsf{L}\}$, satisfy the following conditions:
\begin{enumerate}[label=(\roman*)]

\item\label{monotone} $c_n^{\mathcal{L}}(1-\alpha)\leq c_n^{\mathcal{L}'}(1-\alpha)$ whenever $\mathcal{L}\subset\mathcal{L}'\subset\{1,\dots,\mathsf{L}\}$.

\item\label{max-quantile} For any sequence $\theta_n\in\Theta_n$ ($n=1,2,\dots$), it holds that
\[
\limsup_{n\to\infty}P\left(\max_{\ell\in\mathcal{L}_n(\theta_n)}\mathsf{T}_n^\ell >c_n^{\mathcal{L}_n(\theta_n)}(1-\alpha)\right)\leq\alpha
\]
whenever $\theta_n$ is the true data generating process for every $n$. 

\end{enumerate}
%
The first method to construct the desired critical values is the well-known \textit{Bonferroni-Holm method}. Namely, we set $c_n^{\mathcal{L}}(1-\alpha):=q_{N(0,1)}(1-\alpha/(2\#[\bigcup_{\ell\in\mathcal{L}}\Lambda_n^{\ell}]))$ for every $\mathcal{L}\subset\Lambda$, where $q_{N(0,1)}$ denotes the quantile function of the standard normal distribution and $\#[\bigcup_{\ell\in\mathcal{L}}\Lambda_n^{\ell}]$ is the number of elements in $\bigcup_{\ell\in\mathcal{L}}\Lambda_n^{\ell}$. 
The second method is to use the $(1-\alpha)$-quantile of $\max_{\ell\in\mathcal{L}}\mathsf{T}_n^\ell$. Of course, we cannot analytically compute the quantiles of $\max_{\ell\in\mathcal{L}}\mathsf{T}_n^\ell$ in general, so we approximate them by resampling as in \cite{RW2005,CCK2013}. Formally, setting $\mathsf{T}_{n,*}^{\ell}:=\max_{\lambda\in\Lambda_n^{\ell}}|T_{n,*}^{\lambda}|$, we use the $\mathcal{F}$-conditional $(1-\alpha)$-quantile of $\max_{\ell\in\mathcal{L}}\mathsf{T}_{n,*}^{\ell}$ as $c_n^{\mathcal{L}}(1-\alpha)$, which can be evaluated by simulation. We refer to this method as the \textit{Romano-Wolf method} in the following. 
\begin{corollary}\label{coro:testing}
Suppose that the assumptions of Proposition \ref{prop:factor-test} are satisfied for any sequence $(\mu,\sigma)=(\mu^{(n)},\sigma^{(n)})\in\Theta_n$ ($n=1,2,\dots$) of data generating processes whenever $(\mu^{(n)},\sigma^{(n)})$ is the true data generating process for every $n$. 
Then, both the Bonferroni-Holm and Romano-Wolf methods satisfy conditions \ref{monotone}--\ref{max-quantile}, \tcr{so} \eqref{eq:fwer} holds true.
\end{corollary}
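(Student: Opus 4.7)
The plan is to invoke Theorem 3 of \cite{RW2005} by verifying conditions \ref{monotone} and \ref{max-quantile} for both critical-value constructions. The key preliminary observation is that the factor structure kills the non-centrality of the test statistic under the null: computing directly from $Y^j=\beta^jY^d+R^j$ with $[R^j,Y^d]\equiv 0$ gives $[Y^i,Y^d]_1=\beta^i[Y^d,Y^d]_1$ and $[Y^i,Y^j]_1=\beta^i\beta^j[Y^d,Y^d]_1+[R^i,R^j]_1$, whence $\mathfrak{T}^{ij}=-[R^i,R^j]_1\cdot[Y^d,Y^d]_1$. Hence $H_0^{(i,j)}$ forces $\mathfrak{T}^{ij}=0$ almost surely, so $T_n^\lambda=\tilde{T}_n^\lambda$ on that event. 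Writing $\Lambda_n^*:=\bigcup_{\ell\in\mathcal{L}_n(\theta_n)}\Lambda_n^\ell$, on the event that $\theta_n$ is the true data generating process we therefore have $\max_{\ell\in\mathcal{L}_n(\theta_n)}\mathsf{T}_n^\ell=\max_{\lambda\in\Lambda_n^*}|\tilde{T}_n^\lambda|$.

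Monotonicity (condition \ref{monotone}) is essentially free for both methods: Bonferroni-Holm uses $q_{N(0,1)}(1-\alpha/(2K_{\mathcal{L}}))$ with $K_{\mathcal{L}}:=\#[\bigcup_{\ell\in\mathcal{L}}\Lambda_n^\ell]$, which is non-decreasing in $\mathcal{L}$, whereas Romano-Wolf's $\max_{\ell\in\mathcal{L}}\mathsf{T}_{n,*}^\ell$ is pointwise non-decreasing in $\mathcal{L}$, hence so is any $\mathcal{F}$-conditional quantile of it. For condition \ref{max-quantile}, I would apply Proposition \ref{prop:factor-test} to the hyperrectangles $\{z\in\mathbb{R}^{\ul{d}^2}:|z^\lambda|\leq c,\ \lambda\in\Lambda_n^*\}$ to obtain the uniform Gaussian approximation
\[
\sup_{c\geq0}\bigl|P\bigl(\max_{\lambda\in\Lambda_n^*}|\tilde{T}_n^\lambda|\leq c\bigr)-P\bigl(\max_{\lambda\in\Lambda_n^*}|Z_n^\lambda|\leq c\bigr)\bigr|\to0,
\]
with $Z_n:=\bs{X}_n\mathfrak{C}_n^{1/2}\zeta_n$, and an analogous $\mathcal{F}$-conditional bootstrap statement with $T_{n,*}^\lambda$ in place of $\tilde{T}_n^\lambda$. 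A short calculation from the definition of $\bs{X}_n$ yields $(\bs{X}_n\mathfrak{C}_n\bs{X}_n^\top)^{\lambda\lambda}=\mathfrak{V}_n^{ij}/\mathfrak{V}_n^{ij}=1$ for $\lambda=(i,j)$, so each $Z_n^\lambda$ is marginally $N(0,1)$.

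With these ingredients, the Bonferroni-Holm case follows by combining the uniform approximation above at $c=c_n^{\mathcal{L}_n(\theta_n)}(1-\alpha)=q_{N(0,1)}(1-\alpha/(2K))$, $K:=K_{\mathcal{L}_n(\theta_n)}$, with the Bonferroni union bound $P(\max_\lambda|Z_n^\lambda|>c)\leq K\cdot(\alpha/K)=\alpha$ on the Gaussian side, giving $\limsup P(\max|\tilde{T}_n^\lambda|>c)\leq\alpha$. For Romano-Wolf I would invoke Proposition \ref{prop:quantile} with $T_n:=\max_{\lambda\in\Lambda_n^*}|\tilde{T}_n^\lambda|$, $T_n^\dagger:=\max_{\lambda\in\Lambda_n^*}|Z_n^\lambda|$, and $T_n^*:=\max_{\lambda\in\Lambda_n^*}|T_{n,*}^\lambda|$: the unconditional and conditional Kolmogorov convergences are supplied by the two limits just established, and the required $\mathcal{F}$-conditional density of $T_n^\dagger$ on $[0,\infty)$ is automatic because $Z_n\mid\mathcal{F}$ is Gaussian with unit-variance marginals (so $P(T_n^\dagger=t\mid\mathcal{F})\leq\sum_\lambda P(|Z_n^\lambda|=t\mid\mathcal{F})=0$ for every $t$), permitting the choice $E_n=\Omega$. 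Proposition \ref{prop:quantile} then gives $P(T_n>q_n^*(1-\alpha))\to\alpha$, which is precisely \ref{max-quantile}. The only step requiring genuine care is the density hypothesis of Proposition \ref{prop:quantile}, and it is dispatched by the unit-variance normalization built into $\bs{X}_n$; the rest of the argument is bookkeeping built on Propositions \ref{prop:factor-test} and \ref{prop:quantile}.
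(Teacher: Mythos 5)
Your proposal is correct and follows essentially the same route as the paper: exploit the fact that the factor structure makes $\mathfrak{T}^{ij}=-[R^i,R^j]_1\cdot[Y^d,Y^d]_1$ vanish under $H_0^{(i,j)}$ so that $\mathsf{T}_n^\ell$ reduces to the centered statistic on the relevant set, observe monotonicity of both critical-value constructions, feed the reduced statistic into Proposition \ref{prop:factor-test}, and then close the Bonferroni-Holm case with the union bound and the Romano-Wolf case with Proposition \ref{prop:quantile}. The paper is terser — it asserts the identity $\max_{\ell\in\mathcal{L}_n(\theta_n)}\mathsf{T}_n^\ell=\max_{\ell\in\mathcal{L}_n(\theta_n)}\max_{\lambda\in\Lambda_n^\ell}|\tilde{T}_n^\lambda|$ and cites the two propositions without spelling out the algebra or the density hypothesis — whereas you make those two points explicit, which is useful.

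One technical imprecision worth fixing in your verification of the density hypothesis of Proposition \ref{prop:quantile}: showing $P(T_n^\dagger=t\mid\mathcal{F})=0$ for every $t$ only establishes that the conditional law has no atoms, not that it is absolutely continuous. The correct (and equally short) argument uses the same union bound applied to an interval rather than a point: for $0\leq t\leq t+h$,
\[
P(T_n^\dagger\in(t,t+h]\mid\mathcal{F})\leq\sum_{\lambda\in\Lambda_n^*}P(|Z_n^\lambda|\in(t,t+h]\mid\mathcal{F})\leq\#\Lambda_n^*\cdot\sqrt{2/\pi}\,h,
\]
where the last bound uses that each $Z_n^\lambda$ is conditionally $N(0,1)$. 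This shows the conditional distribution function of $T_n^\dagger$ is Lipschitz, hence absolutely continuous with a density, and you may take $E_n=\Omega$. (Alternatively, Lemma \ref{mixed-nazarov} with $b=1$ gives the same Lipschitz bound with constant $\sqrt{2\log m}+2$.) With this one repair, your proof is complete and aligned with the paper's.
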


\begin{rmk}
The Romano-Wolf method takes account of the dependence structure of the test statistics while the Bonferroni-Holm method ignores it, \tcr{so} the former is generally more powerful than the latter, especially when the test statistics are strongly dependent on each other. Meanwhile, we need no resampling to implement the Bonferroni-Holm method, \tcr{so} it is computationally more attractive than the Romano-Wolf method.  
\end{rmk}

\begin{rmk}[Application to threshold selection in covariance estimation]
Another possible application of Theorem \ref{thm:rc} would be selection of the thresholds in high-dimensional quadratic covariation estimation from high-frequency data (see e.g.~\citet{WZ2010} for such an estimation method): We refer to Section 4.1 of \citet{Chen2017} for details on such an application in the case of i.i.d.~observations.  
\end{rmk}

\section{Simulation study and an empirical illustration}\label{sec:simulation}

In this section we present a small Monte Carlo study to assess the finite sample performance of the multiple testing procedures proposed in Section \ref{sec:factor}. We also demonstrate how the proposed methodology works in a real world using high-frequency data from the components of the S\&P 100 index. 

\subsection{Simulations}

We focus on the problem of testing the hypotheses \eqref{rs-test} simultaneously for $(i,j)\in\Lambda_n$. 
The simulation design is basically adopted from \cite{FFX2016}, but we include only the first factor representing the market factor in our model. 
Specifically, we simulate model \eqref{factor-model} with the following specification\footnote{
One can show that the volatility process $\sigma$ generated by \eqref{heston} \textit{locally} satisfy the condition \eqref{eq-sigma} for any $p\in[1,\infty)$ as long as the Feller condition $2\kappa\theta>\eta^2$ is satisfied. 
In fact, one can show this by setting 
$\Omega_n(\nu):=\{\inf_{t\in[0,1]}\sigma_t\geq\nu^{-1}\}$ 
and taking smoothed versions of $\sigma_t$ analogous to the one considered in \cite{AE2008} as $\sigma(\nu)$'s for $\nu=1,2,\dots$. 
}:
\[
dY^d_t=\mu dt+\sqrt{v_t}dB^d_t,\qquad
dR^j_t=\gamma_j^\top d\ul{B}_t\quad(j=1,\dots,\ul{d})
\]
and
\begin{equation}\label{heston}
dv_t=\kappa(\theta-v_t)dt+\eta\sqrt{v_t}\left(\rho dB^d_t+\sqrt{1-\rho^2}dB^{d+1}_t\right).
\end{equation}
Here, $\mu,\kappa,\theta,\eta$ and $\rho$ are constants, $\ul{B}_t=(B^1_t,\dots,B^{\ul{d}}_t)$, and $\gamma_1,\dots,\gamma_{\ul{d}}$ are $\ul{d}$-dimensional random vectors independent of $B$. 
The values of $\beta^1,\dots,\beta^{\ul{d}}$ are independently drawn from the uniform distribution on $[0.25,2.25]$. 
We set $\mu=0.05$, $\kappa=3$, $\theta=0.09$, $\eta=0.3$ and $\rho=-0.6$. The initial value $v_0$ is drawn from the stationary distribution of the process $(v_t)_{t\in[0,1]}$, i.e.~the gamma distribution with shape $2\kappa\theta/\eta^2$ and rate $2\kappa/\eta^2$. 
We assume that $\Gamma:=(\gamma_i^\top\gamma_j)_{1\leq i,j\leq \ul{d}}$ is a block diagonal matrix with 10 blocks of size $(\ul{d}/10)\times(\ul{d}/10)$ whose diagonals are uniformly generated from $[0.2,0.5]$ and the corresponding correlation matrices have the constant correlation of $\rho_\gamma$. 
We set $\ul{d}=100$ and vary $\rho_\gamma$ as $\rho_\gamma\in\{0.25,0.5,0.75\}$. 

For each scenario, we compute the FWERs and the average powers (i.e.~the average probabilities of rejecting the false null hypotheses) of the Bonferroni-Holm and Romano-Wolf methods at the 5\% level based on 10,000 Monte Carlo iterations respectively. 
Here, we generate 999 bootstrap resamples for the Romano-Wolf method. 

Tables \ref{table:fwer} and \ref{table:power} report the results. 
\tcr{We see from Table \ref{table:fwer} that both the methods succeed in controlling the FWERs under the nominal level 5\%, although both are rather conservative.} 
Table \ref{table:power} shows that the average powers in both the methods tend to 1 as $n$ and $\rho_\gamma$ increase. 
The table also reveals that the Romano-Wolf method is more powerful than the Bonferroni-Holm method. As expected, the difference of the average powers between two methods becomes pronounced as the correlation $\rho_\gamma$ of the residual processes increases. 

\if0
\begin{table}[ht]
\caption{Family-wise error rates at the 5\% level} 
\label{table:fwer}
\begin{center}
\begin{tabular}{lrrrrrr}
  \hline
 & $n=26$ & $n=39$ & $n=78$ & $n=130$ & $n=195$ & $n=390$ \\ 
  \hline
  $\rho_\gamma=0.25$ &  &  &  &  &  &  \\ 
  BH & 0.017 & 0.009 & 0.005 & 0.009 & 0.018 & 0.039 \\ 
  RW & 0.062 & 0.024 & 0.010 & 0.014 & 0.023 & 0.044 \\ 
  $\rho_\gamma=0.50$ &  &  &  &  &  &  \\ 
  BH & 0.016 & 0.008 & 0.005 & 0.011 & 0.018 & 0.036 \\ 
  RW & 0.068 & 0.029 & 0.012 & 0.018 & 0.028 & 0.046 \\ 
  $\rho_\gamma=0.75$ &  &  &  &  &  &  \\ 
  BH & 0.015 & 0.006 & 0.005 & 0.008 & 0.014 & 0.024 \\ 
  RW & 0.071 & 0.037 & 0.021 & 0.024 & 0.037 & 0.055 \\ 
   \hline
\end{tabular}\vspace{5mm}

\parbox{12cm}{\small 
\textit{Note}. This table reports the family-wise error rates at the 5\% level of multiple testing for the hypotheses \eqref{rs-test} by the Bonferroni-Holm (BH) and Romano-Wolf (RW) methods, respectively. The reported values are based on 10,000 Monte Carlo iterations. 999 bootstrap resamples are generated to implement the RW method.} 
\end{center}
\end{table}
\fi

\begin{table}[ht]
\centering
\caption{Family-wise error rates at the 5\% level} 
\label{table:fwer}
\begin{tabular}{lrrrrrr}
  \hline
rn & $n=26$ & $n=39$ & $n=78$ & $n=130$ & $n=195$ & $n=390$ \\ 
  \hline
$\rho_\gamma=0.25$ &  &  &  &  &  &  \\ 
  Holm & 0.010 & 0.004 & 0.002 & 0.003 & 0.008 & 0.018 \\ 
  RW & 0.022 & 0.007 & 0.003 & 0.004 & 0.009 & 0.018 \\ 
  $\rho_\gamma=0.50$ &  &  &  &  &  &  \\ 
  Holm & 0.009 & 0.004 & 0.002 & 0.003 & 0.007 & 0.017 \\ 
  RW & 0.023 & 0.008 & 0.004 & 0.005 & 0.009 & 0.019 \\ 
  $\rho_\gamma=0.75$ &  &  &  &  &  &  \\ 
  Holm & 0.008 & 0.003 & 0.002 & 0.003 & 0.006 & 0.010 \\ 
  RW & 0.026 & 0.011 & 0.006 & 0.008 & 0.014 & 0.023 \\ 
   \hline
\end{tabular}\vspace{5mm}

\parbox{12cm}{\small 
\textit{Note}. This table reports the family-wise error rates at the 5\% level of multiple testing for the hypotheses \eqref{rs-test} by the Bonferroni-Holm (BH) and Romano-Wolf (RW) methods, respectively. The reported values are based on 10,000 Monte Carlo iterations. 999 bootstrap resamples are generated to implement the RW method.} 
\end{table}

\if0
\begin{table}[ht]
\caption{Average powers at the 5\% level} 
\label{table:power}
\begin{center}
\begin{tabular}{lrrrrrr}
  \hline
 & $n=26$ & $n=39$ & $n=78$ & $n=130$ & $n=195$ & $n=390$ \\ 
  \hline
  $\rho_\gamma=0.25$ &  &  &  &  &  &  \\ 
  BH & 0.000 & 0.000 & 0.001 & 0.009 & 0.072 & 0.639 \\ 
  RW & 0.000 & 0.000 & 0.001 & 0.012 & 0.080 & 0.651 \\ 
  $\rho_\gamma=0.50$ &  &  &  &  &  &  \\ 
  BH & 0.000 & 0.001 & 0.049 & 0.539 & 0.974 & 1.000 \\ 
  RW & 0.001 & 0.004 & 0.077 & 0.604 & 0.980 & 1.000 \\ 
  $\rho_\gamma=0.75$ &  &  &  &  &  &  \\ 
  BH & 0.002 & 0.012 & 0.367 & 0.977 & 1.000 & 1.000 \\ 
  RW & 0.011 & 0.041 & 0.566 & 0.992 & 1.000 & 1.000 \\ 
   \hline
\end{tabular}\vspace{5mm}

\parbox{12cm}{\small 
\textit{Note}. This table reports the average powers at the 5\% level of multiple testing for the hypotheses \eqref{rs-test} by the Bonferroni-Holm (BH) and Romano-Wolf (RW) methods, respectively. The reported values are based on 10,000 Monte Carlo iterations. 999 bootstrap resamples are generated to implement the RW method.} 
\end{center}
\end{table}
\fi

\begin{table}[ht]
\centering
\caption{Average powers at the 5\% level} 
\label{table:power}
\begin{tabular}{lrrrrrr}
  \hline
rn & $n=26$ & $n=39$ & $n=78$ & $n=130$ & $n=195$ & $n=390$ \\ 
  \hline
$\rho_\gamma=0.25$ &  &  &  &  &  &  \\ 
  Holm & 0.000 & 0.000 & 0.000 & 0.005 & 0.046 & 0.563 \\ 
  RW & 0.000 & 0.000 & 0.000 & 0.006 & 0.048 & 0.567 \\ 
  $\rho_\gamma=0.50$ &  &  &  &  &  &  \\ 
  Holm & 0.000 & 0.001 & 0.028 & 0.421 & 0.950 & 1.000 \\ 
  RW & 0.000 & 0.001 & 0.037 & 0.458 & 0.956 & 1.000 \\ 
  $\rho_\gamma=0.75$ &  &  &  &  &  &  \\ 
  Holm & 0.001 & 0.007 & 0.262 & 0.953 & 1.000 & 1.000 \\ 
  RW & 0.004 & 0.017 & 0.393 & 0.977 & 1.000 & 1.000 \\ 
   \hline
\end{tabular}\vspace{5mm}

\parbox{12cm}{\small 
\textit{Note}. This table reports the average powers at the 5\% level of multiple testing for the hypotheses \eqref{rs-test} by the Bonferroni-Holm (BH) and Romano-Wolf (RW) methods, respectively. The reported values are based on 10,000 Monte Carlo iterations. 999 bootstrap resamples are generated to implement the RW method.} 
\end{table}

\if0
\begin{table}[ht]
\caption{Family-wise error rates at the 5\% level} 
\label{table:fwer}
\begin{center}
\begin{tabular}{lrrrrrrrr}
  \hline
 & $n=26$ & $n=39$ & $n=78$ & $n=130$ & $n=195$ & $n=390$ & $n=520$ & $n=780$ \\ 
  \hline
  & \multicolumn{8}{c}{$d=30$}\\
$\rho_\gamma=0.25$ &  &  &  &  &  &  &  &  \\ 
  Holm & 0.018 & 0.012 & 0.015 & 0.025 & 0.035 & 0.061 & 0.069 & 0.078 \\ 
  RW & 0.040 & 0.026 & 0.022 & 0.031 & 0.042 & 0.067 & 0.076 & 0.083 \\ 
  $\rho_\gamma=0.50$ &  &  &  &  &  &  &  &  \\ 
  Holm & 0.019 & 0.012 & 0.015 & 0.024 & 0.039 & 0.057 & 0.066 & 0.072 \\ 
  RW & 0.042 & 0.025 & 0.024 & 0.035 & 0.046 & 0.068 & 0.076 & 0.080 \\ 
  $\rho_\gamma=0.75$ &  &  &  &  &  &  &  &  \\ 
  Holm & 0.017 & 0.013 & 0.014 & 0.022 & 0.035 & 0.048 & 0.050 & 0.058 \\ 
  RW & 0.049 & 0.032 & 0.030 & 0.039 & 0.055 & 0.070 & 0.074 & 0.083 \\ 
  & \multicolumn{8}{c}{$d=100$}\\
  $\rho_\gamma=0.25$ &  &  &  &  &  &  &  &  \\ 
  Holm & 0.017 & 0.009 & 0.005 & 0.009 & 0.018 & 0.039 & 0.052 & 0.062 \\ 
  RW & 0.062 & 0.024 & 0.010 & 0.014 & 0.023 & 0.044 & 0.056 & 0.068 \\ 
  $\rho_\gamma=0.50$ &  &  &  &  &  &  &  &  \\ 
  Holm & 0.016 & 0.008 & 0.005 & 0.011 & 0.018 & 0.036 & 0.046 & 0.058 \\ 
  RW & 0.068 & 0.029 & 0.012 & 0.018 & 0.028 & 0.046 & 0.057 & 0.069 \\ 
  $\rho_\gamma=0.75$ &  &  &  &  &  &  &  &  \\ 
  Holm & 0.015 & 0.006 & 0.005 & 0.008 & 0.014 & 0.024 & 0.029 & 0.034 \\ 
  RW & 0.071 & 0.037 & 0.021 & 0.024 & 0.037 & 0.055 & 0.061 & 0.075 \\ 
   \hline
\end{tabular}\vspace{5mm}

\parbox{15cm}{\small 
\textit{Note }This table reports the family-wise error rates at the 5\% level for testing the hypotheses in \eqref{rs-test} simultaneously by the Holm and Romano-Wolf (RW) methods, respectively. The reported values are based on 10,000 Monte Carlo iterations. 999 bootstrap resamples are generated to implement the RW method.} 
\end{center}
\end{table}

\begin{table}[ht]
\caption{Average powers at the 5\% level} 
\label{table:power}
\begin{center}
\begin{tabular}{lrrrrrrrr}
  \hline
 & $n=26$ & $n=39$ & $n=78$ & $n=130$ & $n=195$ & $n=390$ & $n=520$ & $n=780$ \\ 
  \hline
  & \multicolumn{8}{c}{$d=30$}\\
$\rho_\gamma=0.25$ &  &  &  &  &  &  &  &  \\ 
  Holm & 0.000 & 0.001 & 0.009 & 0.070 & 0.263 & 0.860 & 0.973 & 0.999 \\ 
  RW & 0.001 & 0.001 & 0.011 & 0.078 & 0.277 & 0.865 & 0.974 & 1.000 \\ 
  $\rho_\gamma=0.50$ &  &  &  &  &  &  &  &  \\ 
  Holm & 0.003 & 0.014 & 0.269 & 0.889 & 0.999 & 1.000 & 1.000 & 1.000 \\ 
  RW & 0.007 & 0.023 & 0.321 & 0.907 & 0.999 & 1.000 & 1.000 & 1.000 \\ 
  $\rho_\gamma=0.75$ &  &  &  &  &  &  &  &  \\ 
  Holm & 0.017 & 0.083 & 0.804 & 0.999 & 1.000 & 1.000 & 1.000 & 1.000 \\ 
  RW & 0.039 & 0.149 & 0.878 & 1.000 & 1.000 & 1.000 & 1.000 & 1.000 \\ 
  & \multicolumn{8}{c}{$d=100$}\\
  $\rho_\gamma=0.25$ &  &  &  &  &  &  &  &  \\ 
  Holm & 0.000 & 0.000 & 0.001 & 0.009 & 0.072 & 0.639 & 0.892 & 0.996 \\ 
  RW & 0.000 & 0.000 & 0.001 & 0.012 & 0.080 & 0.651 & 0.896 & 0.996 \\ 
  $\rho_\gamma=0.50$ &  &  &  &  &  &  &  &  \\ 
  Holm & 0.000 & 0.001 & 0.049 & 0.539 & 0.974 & 1.000 & 1.000 & 1.000 \\ 
  RW & 0.001 & 0.004 & 0.077 & 0.604 & 0.980 & 1.000 & 1.000 & 1.000 \\ 
  $\rho_\gamma=0.75$ &  &  &  &  &  &  &  &  \\ 
  Holm & 0.002 & 0.012 & 0.367 & 0.977 & 1.000 & 1.000 & 1.000 & 1.000 \\ 
  RW & 0.011 & 0.041 & 0.566 & 0.992 & 1.000 & 1.000 & 1.000 & 1.000 \\ 
   \hline
\end{tabular}\vspace{5mm}

\parbox{15cm}{\small 
\textit{Note }This table reports the average powers at the 5\% level for testing the hypotheses in \eqref{rs-test} simultaneously by the Holm and Romano-Wolf (RW) methods, respectively. The reported values are based on 10,000 Monte Carlo iterations. 999 bootstrap resamples are generated to implement the RW method.} 
\end{center}
\end{table}
\fi

\subsection{Empirical illustration}\label{sec:empirical}

We apply our methodology to high-frequency returns of the components of the S\&P 100 index while taking the SPDR S\&P 500 ETF (SPY) as the observable factor process. 
The sample period is the one month, March 2018, and we regard this period as the interval $[0,1]$ (over-night returns are ignored). 
The data are provided by Bloomberg. 
Following \citet{FFX2016}, we use 15 minute returns to avoid notable market microstructure effects. 
To illuminate the block diagonal structure reported in \cite{FFX2016}, we sort the assets by their Global Industry Classification Standard (GICS) sectors while we construct the log-price processes $Y^j$, $j=1,\dots,\ul{d}$. 

We begin by examining the sparsity of the quadratic covariation matrix of the assets without taking account of the factor process. The top panel of Figure \ref{fig:sparse} shows the corresponding realized correlation matrix. Here, we perform multiple testing for the hypotheses \eqref{test:sparse} using the Romano-Wolf method with 999 bootstrap resamples and change the entries for which the null hypotheses are not rejected at the 5\% level to blanks. 
The violet squares indicate GICS sector classifications. Namely, all assets in the same square belong to the same sector.  
We clearly find that the raw realized correlation matrix is far from sparse, i.e.~most the entries are not blank. In fact, our test suggests that about \tcr{90.9}\% pairs would have significant correlations at the 5\% level. 
Meanwhile, the bottom panel of Figure \ref{fig:sparse} shows the realized correlation matrix of the residual processes of the assets regressed on SPY. 
Again, we perform multiple testing for the hypotheses \eqref{rs-test} as above to change the entries with insignificant correlations to blanks. 
The violet squares have the same meaning as above. 
In contrast to the first case, the realized correlation matrix exhibits the remarkable diagonal structure inherited from the assets' sectors. In this case only about \tcr{4.3}\% pairs are significantly correlated at the 5\% level. 

To investigate this \tcr{block} diagonal structure more deeply, we conduct another multiple testing for the absence of covariations within and between sectors after regressing assets on SPY. 
Formally, let $G_1,\dots,G_N$ be all the sectors, then we set $I_k:=\{i\in\{1,\dots,\ul{d}\}:\text{ the $i$-th asset $Y^i$ belongs to the sector $G_k$}\}$ for every $k=1,\dots,N$ and $\Lambda_n^{(k,l)}:=\Lambda_n\cap(I_k\times I_l)$ for all $k,l=1,\dots,N$. 
We test the null hypothesis $\bigwedge_{\lambda\in\Lambda_n^{(k,l)}}H_0^\lambda$ against the alternative $\bigvee_{\lambda\in\Lambda_n^{(k,l)}}H_1^\lambda$ simultaneously for all $1\leq k\leq l\leq N$ using the Romano-Wolf method with 999 bootstrap resamples. 
In our analysis there are totally $N=11$ sectors: Consumer Discretionary, Consumer Staples, Financials, Health Care, Industrials, Information Technology, Materials, Real Estate, Telecommunication Services, and Utilities. Since Materials and Real Estate contain only one asset respectively, we exclude the case $k=l$ from the above hypotheses when $G_k$ is Materials or Real Estate. 
The results are reported in Table \ref{table:sectors}. \tcr{As expected}, the $p$-values for the absence of within-sector covariations are very small across all the sectors, which suggests within-sector covariations should exist for all the sectors. 
In contrast, we find that between-sector covariations can be insignificant for several pairs. For example, assets belonging to Materials (M) are not significantly correlated with assets belonging to the other sectors at the 5\% level. 
The table also reveals a similar between-sector covariation pattern to the one observed in \cite{FFX2016}. Namely, they report that the correlation between Energy (E) and Financials (F) disappears but Consumer Staples (CS) and Utilities (U) remain strongly correlated after 2010, which is consistent with the $p$-values reported in Table \ref{table:sectors}.  

Overall, our methodology partially provides a statistically formal support of the findings by \cite{FFX2016}, although the scope of our analysis is quite limited and thus more comprehensive empirical studies will be necessary.  

\if0
\begin{figure}[ht]
\centering
\includegraphics[scale=1]{sk_figure.eps}
\caption{
Realized correlation matrices of the S\&P100 assets (top) and their residual processes regressed on SPY (bottom). They are computed from 5 minute returns in the one business week from March 19, 2018 to March 23, 2018, where we ignore over-night returns. We perform multiple testing for whether each the entry is zero or not using the Romano-Wolf method with 999 bootstrap replications, then the entries which are not significantly away from zero at the 5\% level are made blank. 
The violet squares indicate sector blocks. The figure was depicted using the \texttt{R} function \texttt{corrplot} from the \texttt{corrplot} package. 
}
\label{fig:sparse}
\end{figure}
\fi

\begin{figure}[hp]
\centering
\includegraphics[scale=0.7]{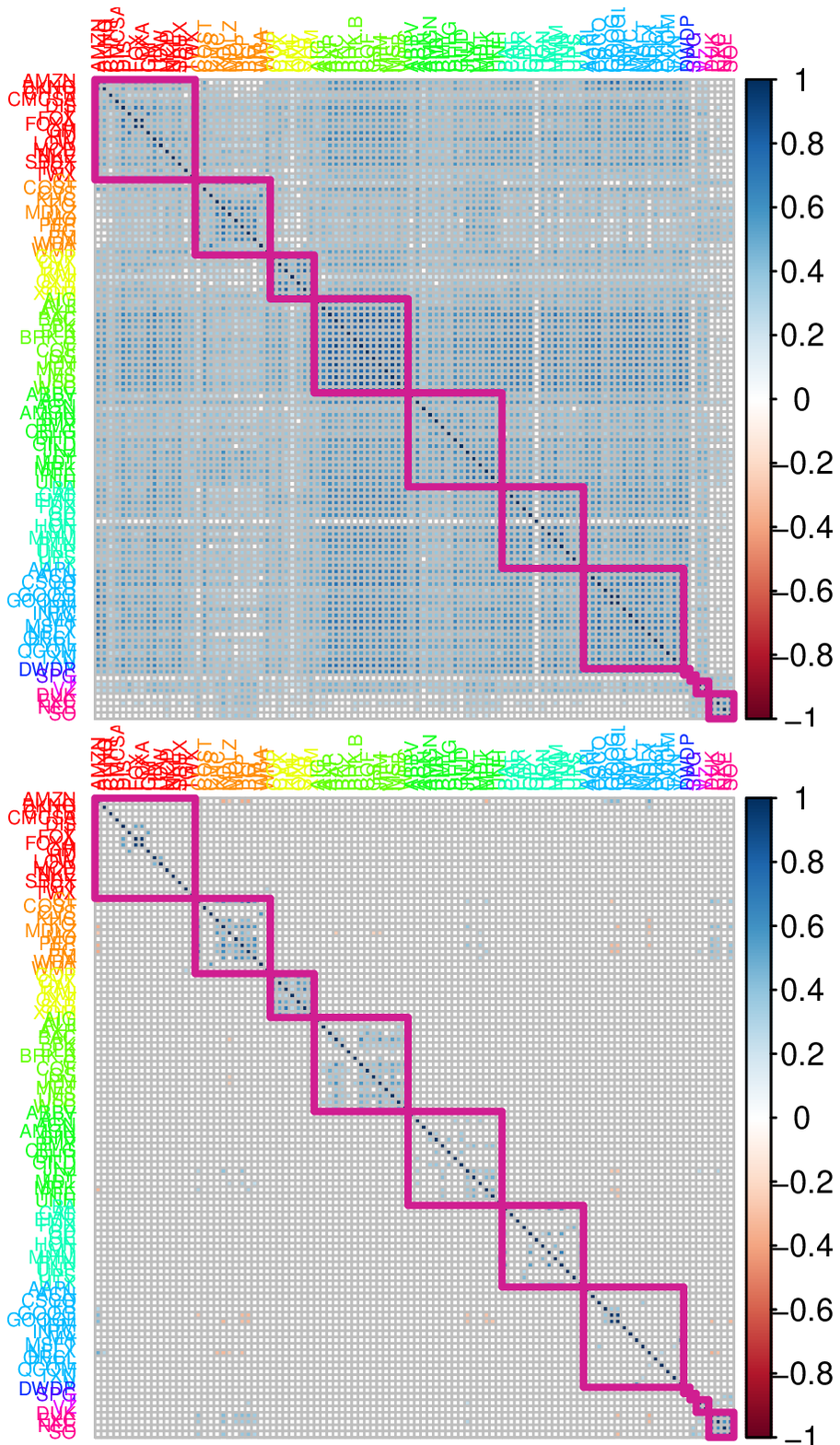}
\caption{
Realized correlation matrices of the S\&P 100 assets (top) and their residual processes regressed on SPY (bottom). They are computed from 15 minute returns in March 2018, where we ignore over-night returns. We perform multiple testing for whether each the entry is zero or not using the Romano-Wolf method with 999 bootstrap resamples, then the entries which are not significantly away from zero at the 5\% level are made blank. 
The violet squares indicate sector blocks. The figure was depicted using the \texttt{R} function \texttt{corrplot} from the \texttt{corrplot} package. 
}
\label{fig:sparse}
\end{figure}

\begin{table}[ht]
\caption{$p$-values of multiple testing for the absence of within- and between-secor covariations (the null hypotheses are the absence of covariations). 
}
\label{table:sectors}
\begin{center}
\begin{tabular}{c|ccccccccccc}
  \hline
 & CD & CS & E & F & HC & I & IT & M & RE & TS & U \\ 
  \hline
CD & 0.001 & 0.002 & 0.119 & 0.246 & 0.003 & 0.246 & 0.001 & 0.076 & 0.246 & 0.226 & 0.045 \\ 
  CS &  & 0.001 & 0.044 & 0.007 & 0.001 & 0.413 & 0.001 & 0.891 & 0.025 & 0.017 & 0.001 \\ 
  E &  &  & 0.001 & 0.502 & 0.446 & 0.211 & 0.076 & 0.932 & 0.098 & 0.662 & 0.076 \\ 
  F &  &  &  & 0.001 & 0.246 & 0.246 & 0.076 & 0.846 & 0.246 & 0.662 & 0.246 \\ 
  HC &  &  &  &  & 0.001 & 0.001 & 0.003 & 0.932 & 0.308 & 0.008 & 0.024 \\ 
  I &  &  &  &  &  & 0.001 & 0.246 & 0.502 & 0.846 & 0.662 & 0.224 \\ 
  IT &  &  &  &  &  &  & 0.001 & 0.446 & 0.246 & 0.072 & 0.004 \\ 
  M &  &  &  &  &  &  &  & -- & 0.932 & 0.909 & 0.932 \\ 
  RE &  &  &  &  &  &  &  &  & -- & 0.256 & 0.004 \\ 
  TS &  &  &  &  &  &  &  &  &  & 0.001 & 0.001 \\ 
  U &  &  &  &  &  &  &  &  &  &  & 0.001 \\ 
   \hline
\end{tabular}\vspace{5mm}

\parbox{15cm}{\small 
\textit{Note}. The $p$-values are computed using the Romano-Wolf method with 999 bootstrap resamples. 
The sector names are abbreviated as follows: CD: Consumer Discretionary; CS: Consumer Staples; F: Financials; HC: Health Care; I: Industrials; IT: Information Technology; M: Materials; RE: Real Estate; TS: Telecommunication Services; U: Utilities. 
}
\end{center}
\end{table}


\if0
\begin{table}[ht]
\centering
\begin{tabular}{ccccccccccc}
  \hline
 & CD & CS & E & F & HC & I & IT & M & TS & U \\ 
  \hline
CD & 0.001 & 0.003 & 0.056 & 0.309 & 0.003 & 0.507 & 0.001 & 0.849 & 0.003 & 0.283 \\ 
  CS &  & 0.001 & 0.014 & 0.507 & 0.003 & 0.484 & 0.012 & 0.849 & 0.004 & 0.001 \\ 
  E &  &  & 0.001 & 0.849 & 0.233 & 0.507 & 0.035 & 0.849 & 0.266 & 0.507 \\ 
  F &  &  &  & 0.001 & 0.419 & 0.146 & 0.284 & 0.677 & 0.507 & 0.507 \\ 
  H &  &  &  &  & 0.001 & 0.507 & 0.110 & 0.849 & 0.419 & 0.003 \\ 
  I &  &  &  &  &  & 0.001 & 0.309 & 0.849 & 0.849 & 0.507 \\ 
  IT &  &  &  &  &  &  & 0.001 & 0.507 & 0.266 & 0.012 \\ 
  M &  &  &  &  &  &  &  &  & 0.849 & 0.849 \\ 
  TS &  &  &  &  &  &  &  &  & 0.001 & 0.248 \\ 
  U &  &  &  &  &  &  &  &  &  & 0.001 \\ 
   \hline
\end{tabular}
\caption{$p$-values of multiple testing for the absence of within- and between-secor covariations (the null hypotheses are the absence of covariations). 
These values are computed using the Romano-Wolf method with 999 bootstrap replications. 
The sector names are abbreviated as follows: CD: Consumer Discretionary; CS: Consumer Staples; F: Financials; HC: Health Care; I: Industrials; IT: Information Technology; M: Materials; TS: Telecommunication Services; U: Utilities. 
Note that our analysis contains no asset belonging to Real Estate. 
Also, since only one asset belongs to Materials in our analysis, the $p$-value for the absence of within-sector covariation in Materials is not available and thus not reported.}
\label{table:sectors}
\end{table}
\fi


\appendix

\addcontentsline{toc}{section}{Appendix}

\section{Proofs for Section \ref{sec:main}}

\subsection{Additional notation}

This subsection introduces some additional notation related to multi-way arrays and derivatives, which are necessary for the subsequent proofs. 

As in Section \ref{sec:array}, $\mathbb{K}$ denotes the real field $\mathbb{R}$ or the complex field $\mathbb{C}$. We consider a vector space $V$ over $\mathbb{K}$. 
Let $N_1,\dots,N_q$ be positive integers. For $T\in V^{N_1\times\cdots\times N_q}$ and $x\in \mathbb{K}^{N_1\times\cdots\times N_q}$, we set
\[
T[x]:=\sum_{(i_1,\dots,i_q)\in\prod_{k=1}^q[N_k]}T^{i_1,\dots,i_q}x^{i_1,\dots,i_q}\in V.
\]
In particular, for $x_j\in\mathbb{K}^{N_j}$ ($j=1,\dots,q$) we have
\[
T[x_1\otimes\cdots\otimes x_q]=\sum_{(i_1,\dots,i_q)\in\prod_{k=1}^q[N_k]}T^{i_1,\dots,i_q}x_1^{i_1}\cdots x_q^{i_q}.
\]
Here, note that we identify $\mathbb{K}^{N_1}\otimes\cdots\otimes\mathbb{K}^{N_q}$ with $\mathbb{K}^{N_1\times\cdots\times N_q}$ in the canonical way (see Section \ref{sec:array}). 
Moreover, we evidently have
\begin{equation}\label{tensor:holder}
|T[x]|\leq\|T\|_{\ell_\infty}\|x\|_{\ell_1}.
\end{equation}

Now suppose that $\mathbb{K}=\mathbb{R}$ and $V$ is a real Hilbert space. Then we have
\begin{equation}\label{tensor:inner}
\langle T[x],v\rangle_V
=\sum_{(i_1,\dots,i_q)\in\prod_{k=1}^q[N_k]}\langle T^{i_1,\dots,i_q},v\rangle_Vx^{i_1,\dots,i_q}
=\langle T,v\rangle_V[x]
\end{equation}
for any $v\in V$ \tcr{(recall \eqref{array:inner-prod})}. 
\if0
\begin{align*}
\langle S[x^1,\dots,x^p],T[y^1,\dots,y^q]\rangle_V
=\sum_{1\leq i_1,\dots,i_p,j_1,\dots,j_q\leq d}\langle S_{i_1,\dots,i_p},T_{j_1,\dots,j_q}\rangle_Vx^1_{i_1}\cdots x^p_{i_p}y^1_{j_1}\cdots y^q_{j_q}
\end{align*}
\fi
Let $V_0$ be another real Hilbert space and $N_1',\dots,N_p'\in\mathbb{N}$. Then, 
for any $S\in V_0^{N_1'\times\cdots\times N_p'}$ and $y\in\mathbb{R}^{N_1'\times\cdots\times N_p'}$, it holds that
\begin{equation}\label{tensor:commute}
T[x]\otimes S[y]=(T\otimes S)[x\otimes y].
\end{equation}
In fact, we have
\begin{align*}
T[x]\otimes S[y]
&=\sum_{(i_1,\dots,i_q)\in\prod_{k=1}^q[N_k]}\sum_{(j_1,\dots,j_p)\in\prod_{k=1}^p[N'_k]}(T^{i_1,\dots,i_q}x^{i_1,\dots,i_q})\otimes(\tcr{S^{j_1,\dots,j_p}}y^{j_1,\dots,j_p})\\
&=\sum_{(i_1,\dots,i_q)\in\prod_{k=1}^q[N_k]}\sum_{(j_1,\dots,j_p)\in\prod_{k=1}^p[N'_k]}(T^{i_1,\dots,i_q}\otimes S^{j_1,\dots,j_p})x^{i_1,\dots,i_q}y^{j_1,\dots,j_p}\\
&=(T\otimes S)[x\otimes y].
\end{align*}

Let $\phi=(\phi(y))_{y\in\mathbb{R}^N}$ be a real-valued function. If $\phi$ is a $C^\infty$ function, we define the $\mathbb{R}$-valued $N$-dimensional $q$-way array $\partial_y^{\otimes q}\phi(y)$ by 
\[
\partial_y^{\otimes q}\phi(y)=(\partial_{y^{i_1}\cdots y^{i_q}}\phi(y))_{1\leq i_1,\dots,i_q\leq N}\in\mathbb{R}^{N\times\cdots\times N}
\]
for any $y\in\mathbb{R}^N$ and $q\in\mathbb{N}$, where $\partial_{y^{i_1}\cdots y^{i_q}}:=\partial^q/\partial y^{i_1}\cdots \partial y^{i_q}$. We set $\partial_y^{\otimes 0}\phi(y):=\phi(y)$ by convention. 
In general, we say that $\phi$ is \textit{rapidly decreasing} if $\phi$ is a $C^\infty$ function and
\[
\sup_{y\in\mathbb{R}^N}(1+\|y\|_{\ell_2})^A\|\partial_y^{\otimes q}\phi(y)\|_{\ell_\infty}<\infty
\]
for any $A>0$ and $q\in\mathbb{Z}_+$. When $\phi$ is rapidly decreasing, we define its Fourier transform $\hat{\phi}:\mathbb{R}^N\to\mathbb{C}$ by
\[
\hat{\phi}(\mathsf{y})=\int_{\mathbb{R}^N}\phi(y)e^{-y[\mathsf{iy}]}dy,\qquad \mathsf{y}\in\mathbb{R}^N.
\]
Here, $\mathsf{i}$ denotes the imaginary unit. By Theorem 7.4(c) from \cite{Rudin1991}, one has
\begin{equation}\label{eq:rudin}
T[(\mathsf{iy})^{\otimes q}]\hat{\phi}(\mathsf{y})=T[\widehat{\partial_y^{\otimes q}\phi}(\mathsf{y})]
\end{equation}
for any $\mathsf{y}\in\mathbb{R}^N$, $q\in\mathbb{N}$ and $\mathbb{C}$-valued $N$-dimensional $q$-way array $T\in\mathbb{C}^{N\times\cdots\times N}$. 

If $i_1=\cdots=i_q=i$, we will write $\partial_{y^{i_1}\cdots y^{i_q}}$ as $\partial^q_{y^i}$. We set $\partial^0_{y^i}\varphi(y)=\varphi(y)$ by convention. 
For a multi-index $\alpha=(\alpha_1,\dots,\alpha_N)\in\mathbb{Z}_+^N$, we write $\partial^\alpha_y:=\partial^{\alpha_1}_{y^1}\cdots\partial^{\alpha_N}_{y^N}$ as usual. 
Given a subset $\mathcal{A}=\{a_1,\dots,a_k\}$ of $\{1,\dots,s\}$, we will write $\prod_{a\in\mathcal{A}}\partial_{y^{i_a}}:=\partial_{y^{i_{a_1}}\cdots y^{i_{a_k}}}$. We set $(\prod_{a\in\emptyset}\partial_{y^{i_a}})\phi(y):=\phi(y)$ by convention. 

\subsection{Proof of Theorem \ref{thm:main}}

We begin by noting that it is enough to prove the theorem for the special case that all the rows of the matrix $\bs{X}_n$ are identical:
\begin{lemma}\label{lemma:reduction}
Suppose that the claim of Theorem \ref{thm:main} holds true if $\bs{X}_{n}^{1\cdot}=\cdots=\bs{X}_{n}^{m\cdot}$ for every $n\in\mathbb{N}$. Then the claim of Theorem \ref{thm:main} holds true for the general case as well.
\end{lemma}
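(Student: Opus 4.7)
The plan is to embed the general problem into the special case by artificially inflating the dimension of $Z_n$ through replication. Set $\tilde d := md$ and construct replicated variables indexed by pairs $(i, j) \in [m] \times [d]$ via $\tilde{\cdot}^{(i-1)d+j} := \cdot^{j}$ for each of $Z_n, M_n, W_n, u_n^{j}$, with corresponding orders $\tilde q_{(i-1)d+j} := q_j$. For the covariance structure, take $\tilde{\mathfrak{C}}_n := J_m \otimes \mathfrak{C}_n$ where $J_m$ is the $m \times m$ all-ones matrix; this is symmetric positive semidefinite, and if $\tilde\zeta_n$ is a $\tilde d$-dimensional standard Gaussian independent of $\mathcal{F}$, then $\tilde{\mathfrak{Z}}_n := \tilde{\mathfrak{C}}_n^{1/2}\tilde\zeta_n + \tilde W_n$ satisfies $\tilde{\mathfrak{Z}}_n^{(i-1)d+j} \stackrel{d}{=} \mathfrak{Z}_n^{j}$ jointly in $(i,j)$.

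Next I would construct the new weight matrices so that the corresponding $\bs{X}$-matrix has identical rows. Define $\tilde\Upsilon_n \in \mathbb{R}^{m \times \tilde d}$ by placing $\Upsilon_n^{ij}$ at position $(i, (i-1)d+j)$ and zeros elsewhere, and $\tilde{\bs{X}}_n \in \mathbb{R}^{m \times \tilde d}$ by $\tilde{\bs{X}}_n^{i, (i'-1)d+j} := \bs{X}_n^{i'j}$; the latter has all rows identical. Setting $\tilde\Xi_n := \tilde\Upsilon_n \circ \tilde{\bs{X}}_n$, a short computation shows $\tilde\Xi_n\tilde Z_n = \Xi_n Z_n$ and $\tilde\Xi_n\tilde{\mathfrak{Z}}_n \stackrel{d}{=} \Xi_n\mathfrak{Z}_n$. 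Hence it will suffice to apply the special case of Theorem \ref{thm:main} to the tilde data.

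The remaining work is bookkeeping: I would check that each hypothesis of Theorem \ref{thm:main} for the original data implies the corresponding hypothesis for the tilde data. Malliavin regularity and symmetry of $\tilde u_n^k$ pass through replication for free. Because zero entries of $\tilde\Upsilon_n$ are only padding, $\|\tilde\Upsilon_n\|_\infty = \|\Upsilon_n\|_\infty \geq 1$, and $\|\tilde{\bs{X}}_n\|_{\ell_\infty} = \|\bs{X}_n\|_{\ell_\infty}$, $\|\tilde Z_n\|_{\ell_\infty} = \|Z_n\|_{\ell_\infty}$, $\|\tilde{\mathfrak{Z}}_n\|_{\ell_\infty} = \|\mathfrak{Z}_n\|_{\ell_\infty}$. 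The quasi-tangent satisfies $\tilde\Delta_n^{(i-1)d+j',(i'-1)d+j} = \Delta_n^{j'j}$, so $\|\tilde\Delta_n\|_{\ell_\infty} = \|\Delta_n\|_{\ell_\infty}$, and $\mathrm{diag}(\tilde\Xi_n\tilde{\mathfrak{C}}_n\tilde\Xi_n^\top) = \mathrm{diag}(\Xi_n\mathfrak{C}_n\Xi_n^\top)$ so \eqref{diag-tight} transfers. Since the ``ambient'' parameter $m$ is unchanged, the $(\log m)^{\cdot}$ factors in \eqref{qtan-conv}--\eqref{delta-conv} match verbatim.

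The main obstacle I anticipate is verifying that the tensors $\tilde\Delta_{n,k}(\nu)$ indeed inherit their $\ell_\infty$ norm from the original $\Delta_{n,j}(\nu)$, since their index sets are strictly larger (several factors of dimension $d$ are replaced by $\tilde d = md$). The key observation is that the derivatives $D^\ell \tilde M_n, D^\ell \tilde W_n, D^\ell\tilde{\mathfrak{C}}_n$ have entries that are literal copies of entries of $D^\ell M_n, D^\ell W_n, D^\ell\mathfrak{C}_n$, while the rows of $D^\ell\tilde{\bs{X}}_n$ are all identical concatenations of the rows of $D^\ell\bs{X}_n$; hence the set of values appearing in $\tilde\Delta_{n,k}(\nu)$ coincides with that in $\Delta_{n,j}(\nu)$ (where $k = (i-1)d + j$), giving $\max_k\|\tilde\Delta_{n,k}(\nu)\|_{\ell_\infty} = \max_j\|\Delta_{n,j}(\nu)\|_{\ell_\infty}$. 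With this matched, the special case yields $\sup_{y\in\mathbb{R}^m}|P(\tilde\Xi_n\tilde Z_n \leq y) - P(\tilde\Xi_n\tilde{\mathfrak{Z}}_n \leq y)| \to 0$, which is exactly \eqref{aim} for the original problem.
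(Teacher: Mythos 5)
Your proposal is correct and follows essentially the same route as the paper: replicate $Z_n$ into an $md$-dimensional vector, take $\tilde\Upsilon_n$ block-diagonal from the rows of $\Upsilon_n$, take $\tilde{\bs{X}}_n$ with identical rows equal to the concatenation of the rows of $\bs{X}_n$, and apply the special case. You supply the explicit choices of $\tilde M_n,\tilde W_n,\tilde{\mathfrak{C}}_n = J_m\otimes\mathfrak{C}_n$, $\tilde u_n^k$ and the bookkeeping verifying that the hypotheses transfer, which the paper leaves implicit.
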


\begin{proof}
Define
the $m\times md$ matrix $\ol{\Upsilon}_n$ by
\[
\ol{\Upsilon}_n=
\left(
\begin{array}{ccccc}
(\Upsilon_n^{1\cdot})^\top  & 0  & \cdots & \cdots & 0  \\
0  & (\Upsilon_n^{2\cdot})^\top  & 0 & \cdots & 0  \\
\vdots  & \ddots  &  \ddots & \ddots & \vdots \\
0 & \ddots & \ddots & \ddots & 0 \\
0 & \cdots & \cdots & 0 & (\Upsilon_n^{m\cdot})^\top
\end{array}
\right).
\] 
We also define the $m\times md$ random matrix $\ol{\bs{X}}_n$ so that all the rows are identical to the $md$-dimensional random vector given by
\[
((\bs{X}_n^{1\cdot})^\top,\dots,(\bs{X}_{n}^{m\cdot})^\top).
\]
In addition, we define the $md$-dimensional random vector $\ol{Z}_n$ so that
\[
\ol{Z}_n^\top=(\underbrace{Z_n^\top,\dots,Z_n^\top}_{m})^\top.
\]
By assumption we can apply Theorem \ref{thm:main} with taking $\ol{\Upsilon}_n$, $\ol{\bs{X}}_n$ and $\ol{Z}_n$ as $\Upsilon_n$, $\bs{X}_n$ and $Z_n$ respectively, which yields the desired result.
\end{proof}
Taking account of Lemma \ref{lemma:reduction}, we focus only on the case that $\bs{X}_{n}^{1\cdot}=\cdots=\bs{X}_{n}^{m\cdot}=:X_n$ for every $n\in\mathbb{N}$. 

Next we recall the following anti-concentration inequality called Nazarov's inequality in \cite{CCK2017}: 
\begin{proposition}[Nazarov's inequality]
Let $\xi$ be an $m$-dimensional centered Gaussian vector such that $\|\xi^j\|_2\geq a$ for all $j=1,\dots,m$ and some constant $a>0$. Then for any $y\in\mathbb{R}^m$ and $\varepsilon>0$,
\[
P(\xi\leq y+\varepsilon)-P(\xi\leq y)\leq\frac{\varepsilon}{a}\left(\sqrt{2\log m}+2\right).
\]
\end{proposition}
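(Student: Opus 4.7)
The strategy is to follow the argument presented in \cite{CCK2017}. First I would perform a rescaling: replacing $\xi$ by $\xi/a$, $y$ by $y/a$, and $\varepsilon$ by $\varepsilon/a$, one may assume without loss of generality that $a = 1$ (so that $\|\xi^j\|_2 \geq 1$ for every $j$), in which case the target inequality becomes $P(\xi \leq y + \varepsilon) - P(\xi \leq y) \leq \varepsilon(\sqrt{2\log m} + 2)$.

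Next I would introduce the scalar random variable $M := \max_{1 \leq j \leq m}(\xi^j - y^j)$. The events $\{\xi \leq y + \varepsilon\}$ and $\{\xi \leq y\}$ coincide with $\{M \leq \varepsilon\}$ and $\{M \leq 0\}$ respectively, so the left-hand side equals $P(0 < M \leq \varepsilon)$. Since $\xi$ is non-degenerate (each $\xi^j$ has positive variance), $M$ admits a density $f_M$ with respect to Lebesgue measure, and hence
\[
P(\xi \leq y+\varepsilon) - P(\xi \leq y) = \int_0^\varepsilon f_M(t)\,dt \leq \varepsilon \cdot \sup_{t \in \mathbb{R}} f_M(t).
\]
The proof thus reduces to the pointwise density bound $\sup_t f_M(t) \leq \sqrt{2\log m} + 2$, which is the main content of the lemma.

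For the density bound, I would use the decomposition
\[
f_M(t) = \sum_{j=1}^m f_{\xi^j}(y^j+t)\, P\bigl(\xi^k - y^k \leq t,\, k \neq j \,\big|\, \xi^j = y^j + t\bigr),
\]
obtained by conditioning on which coordinate attains the maximum. The marginal density is dominated by the standard Gaussian bound $f_{\xi^j}(s) \leq (2\pi)^{-1/2}\exp(-s^2/(2\sigma_j^2))$ with $\sigma_j := \|\xi^j\|_2 \geq 1$. The sum would then be split at the threshold $|y^j + t| = \sqrt{2\log m}$: for indices with $|y^j + t|$ above the threshold the tail decay of $f_{\xi^j}$ forces each term to be at most $O(m^{-1})$, contributing $O(1)$ in total; for indices with $|y^j + t|$ below the threshold the marginal density is bounded by $(2\pi)^{-1/2}$ and the sum of the conditional probabilities is controlled by Sidak's / Gaussian-correlation-type inequalities, which produces the dominant $\sqrt{2\log m}$ factor.

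The main obstacle is the sharp calibration of constants in the third step. In particular, obtaining the constant $2$ in the additive term $\sqrt{2\log m} + 2$ requires precise one-dimensional integral estimates, and the Gaussian conditioning step must be handled carefully so that the conditional probabilities remain bounded uniformly in $t$. I would follow the detailed argument in \cite{CCK2017} to complete this step.
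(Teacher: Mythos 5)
The paper does not prove this proposition; it simply cites the form from \cite{CCK2017nazarov} (``The above form of Nazarov's inequality is found in \cite{CCK2017nazarov}.''). So there is no in-paper proof to compare against, and your proposal must stand on its own as an outline of a known argument.

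Your opening reductions are sound: rescaling to $a=1$, passing to $M := \max_j(\xi^j - y^j)$, noting $P(\xi\leq y+\varepsilon)-P(\xi\leq y)=\int_0^\varepsilon f_M$, and the density decomposition $f_M(t)=\sum_{j}f_{\xi^j}(y^j+t)\,P(\xi^k-y^k\leq t,\,k\neq j\mid\xi^j=y^j+t)$ are all correct (with the usual caveat that one first handles nonsingular covariances and then passes to the limit). The marginal density bound $f_{\xi^j}(s)\leq(2\pi)^{-1/2}\exp(-s^2/(2\sigma_j^2))$ using $\sigma_j\geq1$ is also fine, and the idea of splitting at a threshold of order $\sqrt{2\log m}$ is the right instinct (though the threshold must be $\sigma_j\sqrt{2\log m}$, not $\sqrt{2\log m}$, for the tail terms to be $O(1/m)$; you flag ``calibration'' issues, so this is a detail rather than a conceptual gap).

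The genuine gap is in the sub-threshold case. You claim ``the sum of the conditional probabilities is controlled by Sidak's / Gaussian-correlation-type inequalities, which produces the dominant $\sqrt{2\log m}$ factor.'' This is not correct, for two reasons. First, \v{S}id\'ak's and the Gaussian correlation inequalities give \emph{lower} bounds on probabilities of intersections (they say $P(\bigcap A_j)\geq\prod P(A_j)$), and each $P_j$ in your decomposition is itself an intersection probability, so these tools bound $P_j$ from below, not above—the wrong direction entirely. Second, even granting the trivial bound $P_j\leq1$, you could have all $m$ indices in the sub-threshold range, so the sub-threshold contribution would be at most $m/\sqrt{2\pi}$, which is useless. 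The actual mechanism that yields the $\sqrt{2\log m}$ factor is more delicate: Nazarov's original argument goes through a Gaussian isoperimetric estimate for downward polytopes, and the self-contained argument in \cite{CCK2017nazarov} proceeds via a telescoping/conditioning decomposition combined with careful monotonicity and tail estimates for the one-dimensional conditional Gaussian pieces. Neither approach is captured by ``Sidak plus thresholding,'' so the last step as you describe it would not close.
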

The above form of Nazarov's inequality is found in \cite{CCK2017nazarov}. An application of the above result immediately yields the following anti-concentration inequality for a mixed-normal random vector:
\begin{lemma}\label{mixed-nazarov}
Let $\xi$ be an $m$-dimensional standard Gaussian vector. Also, let $\Gamma$ be an $m\times m$ symmetric positive-semidefinite random matrix independent of $\xi$. Then for any $y\in\mathbb{R}^m$ and $b,\varepsilon>0$,
\[
P(\Gamma^{1/2}\xi\leq y+\varepsilon)-P(\Gamma^{1/2}\xi\leq y)\leq\frac{\varepsilon}{\sqrt{b}}\left(\sqrt{2\log m}+2\right)+P\left(\min\diag(\Gamma)<b\right).
\]
\end{lemma}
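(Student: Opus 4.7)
The plan is to condition on $\Gamma$ and apply the (unconditional) Nazarov's inequality to the resulting centered Gaussian vector. More precisely, since $\xi$ is independent of $\Gamma$, the $\mathcal{F}$-conditional distribution of $\Gamma^{1/2}\xi$ given $\Gamma$ is a centered Gaussian law on $\mathbb{R}^m$ with covariance matrix $\Gamma$. Hence, conditionally on $\Gamma$, the $j$-th component of $\Gamma^{1/2}\xi$ has standard deviation $\sqrt{\Gamma^{jj}}$, which is at least $\sqrt{b}$ on the event $\{\min\diag(\Gamma)\geq b\}$.

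I would then split the probability in question according to whether $\min\diag(\Gamma)\geq b$ or not:
\[
P(\Gamma^{1/2}\xi\leq y+\varepsilon)-P(\Gamma^{1/2}\xi\leq y)
=E\bigl[\Delta(\Gamma)\,\mathbf{1}_{\{\min\diag(\Gamma)\geq b\}}\bigr]
+E\bigl[\Delta(\Gamma)\,\mathbf{1}_{\{\min\diag(\Gamma)< b\}}\bigr],
\]
where $\Delta(\Gamma):=P(\Gamma^{1/2}\xi\leq y+\varepsilon\mid\Gamma)-P(\Gamma^{1/2}\xi\leq y\mid\Gamma)$. On the event $\{\min\diag(\Gamma)\geq b\}$, Nazarov's inequality applied conditionally (with $a=\sqrt{b}$) yields $\Delta(\Gamma)\leq(\varepsilon/\sqrt{b})(\sqrt{2\log m}+2)$. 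On the complementary event, $\Delta(\Gamma)$ is trivially bounded by $1$, so that term is at most $P(\min\diag(\Gamma)<b)$. Adding these two bounds gives exactly the stated inequality.

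The only subtle point is that Nazarov's inequality as quoted applies to a deterministic centered Gaussian vector, so I should justify that using it conditionally on $\Gamma$ is legitimate; this follows from the independence of $\xi$ and $\Gamma$ (one can realize the conditional law explicitly as the law of $\Gamma^{1/2}\xi$ with $\Gamma$ fixed to its observed value), making the step immediate rather than an obstacle. Thus the proof is essentially a one-line application of the preceding proposition after conditioning, and I do not expect any serious technical difficulty.
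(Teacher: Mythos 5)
Your proof is correct and is the same argument the paper has in mind: it presents the lemma as an ``immediate'' consequence of Nazarov's inequality, and conditioning on $\Gamma$, applying Nazarov with $a=\sqrt{b}$ on $\{\min\diag(\Gamma)\geq b\}$, and bounding $\Delta(\Gamma)\leq 1$ on the complement is exactly that one-line deduction. The justification you add for applying Nazarov conditionally (via independence of $\xi$ and $\Gamma$) is the right observation and makes the step rigorous.
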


Now we turn to the main body of the proof. 
As is pointed out in the Introduction, the key part of the proof is to derive reasonable estimates for the quantities
\begin{equation}\label{gap}
E[f(Z_n,X_n)]-E[f(\mathfrak{Z}_n,X_n)]
\end{equation}
for smooth functions $f:\mathbb{R}^{2d}\to\mathbb{R}$. In fact, the remaining part of the proof is essentially the same as the one for the high-dimensional central limit theorem of \cite{CCK2017}. 
To get a reasonable estimate for \eqref{gap}, we derive an interpolation formula for it, borrowing an idea from \cite{NY2017}. Namely, we use the duality between iterated Malliavin derivatives and multiple Skorohod integrals combined with the interpolation method in the frequency domain introduced in \cite{NY2017} (see also \cite{TY2017}). 

Following \cite{NY2017}, we set
\[
\lambda_n(\theta;\mathsf{z},\mathsf{x})=\theta M_n[\mathsf{i}\mathsf{z}]+2^{-1}(1-\theta^2) \mathfrak{C}_n[(\mathsf{i}\mathsf{z})^{\otimes2}]+W_n[\mathsf{i}\mathsf{z}]+ X_n[\mathsf{i}\mathsf{x}]
\]
and
\[
\varphi_n(\theta;\mathsf{z},\mathsf{x})=E[e^{\lambda_n(\theta;\mathsf{z},\mathsf{x})}]
\]
for $\theta\in[0,1]$ and $\mathsf{z},\mathsf{x}\in\mathbb{R}^d$. 
We first derive a representation formula for the derivative of $\varphi_n(\theta;\mathsf{z},\mathsf{x})$ with respect to $\theta$. For this purpose we need the following Malliavin derivative version of the (generalized) Fa\`a di Bruno formula for the iterated derivative of a composition of functions:
\begin{lemma}\label{faa}
Let $q,r$ be positive integers and $g=(g(x))_{x\in\mathbb{R}^r}$ be a real-valued $C^q$ function all of whose partial derivatives up to order $q$ are of polynomial growth. Then, for any $F\in\mathbb{D}_{q,\infty}(\mathbb{R}^r)$, we have $g(F)\in\mathbb{D}_{q,\infty}$ and
\begin{align*}
D^qg(F)=\sum_{\alpha\in\mathcal{A}(q)}\sum_{\nu\in\mathcal{N}_r(\alpha)}\mathsf{C}(\alpha,\nu)\partial_{x_1}^{|\nu_{\cdot1}|}\cdots \partial_{x_r}^{|\nu_{\cdot r}|}g(F)\symm\left(\bigotimes_{i=1}^{q}\bigotimes_{j=1}^{r}(D^iF^j)^{\otimes \nu_{ij}}\right),
\end{align*}
where
\[
\mathsf{C}(\alpha,\nu)=\frac{q!}{\prod_{i=1}^{q}(i!)^{\alpha_i}\prod_{j=1}^r\nu_{ij}!}.
\]
\end{lemma}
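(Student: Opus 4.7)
The plan is an induction on $q$. The base case $q=1$ is the standard chain rule for the Malliavin derivative, $Dg(F)=\sum_{j=1}^{r}\partial_{x_j}g(F)\,DF^j$ (Proposition 1.2.3 of \cite{Nualart2006}); it matches the asserted formula because $\mathcal{A}(1)=\{(1)\}$, the elements of $\mathcal{N}_r((1))$ are the standard basis vectors of $\mathbb{Z}_+^r$, and $\mathsf{C}((1),\nu)=1$. The claim $g(F)\in\mathbb{D}_{1,\infty}$ follows from the polynomial growth of $g$ and $\nabla g$ combined with $F\in\mathbb{D}_{1,\infty}(\mathbb{R}^r)$ and H\"older's inequality.

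For the inductive step I would apply $D$ to the formula at level $q-1$. The Leibniz rule produces two families of new terms. Transitions of type (I) come from the chain rule applied to the scalar factor $\partial_{x_1}^{|\nu'_{\cdot 1}|}\cdots\partial_{x_r}^{|\nu'_{\cdot r}|}g(F)$: this contributes an extra $DF^k$ tensor factor and increments the $x_k$-derivative order by one, sending a level-$(q-1)$ multi-index $(\alpha',\nu')$ to a level-$q$ one $(\alpha,\nu)$ with $\alpha_1=\alpha'_1+1$ and $\nu_{1k}=\nu'_{1k}+1$. Transitions of type (II) come from differentiating one of the $\nu'_{ij}$ copies of $D^iF^j$ inside $(D^iF^j)^{\otimes\nu'_{ij}}$: using $D(D^iF^j)=D^{i+1}F^j$, this yields a factor $D^{i+1}F^j$ with multiplicity $\nu'_{ij}=\nu_{ij}+1$ and sends $(\alpha',\nu')$ to $(\alpha,\nu)$ with $\alpha_i=\alpha'_i-1$, $\alpha_{i+1}=\alpha'_{i+1}+1$, $\nu_{ij}=\nu'_{ij}-1$, $\nu_{i+1,j}=\nu'_{i+1,j}+1$. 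After collecting terms by the resulting $(\alpha,\nu)\in\mathcal{A}(q)\times\mathcal{N}_r(\alpha)$, the formula at level $q$ follows once one verifies the recursion
\[
\mathsf{C}(\alpha,\nu)=\sum_{k:\,\nu_{1k}\geq 1}\mathsf{C}(\alpha^{(\mathrm{I},k)},\nu^{(\mathrm{I},k)}) + \sum_{(i,j):\,\nu_{i+1,j}\geq 1}(\nu_{ij}+1)\,\mathsf{C}(\alpha^{(\mathrm{II},i,j)},\nu^{(\mathrm{II},i,j)}),
\]
where $(\alpha^{(\mathrm{I},k)},\nu^{(\mathrm{I},k)})$ and $(\alpha^{(\mathrm{II},i,j)},\nu^{(\mathrm{II},i,j)})$ are the obvious preimages under transitions (I) and (II). A direct computation from the explicit formula for $\mathsf{C}$ shows that this identity reduces, after dividing through by $\mathsf{C}(\alpha,\nu)/q$, to the telescoping identity $\sum_{k}\nu_{1k}+\sum_{i\geq 1,j}(i+1)\nu_{i+1,j}=\sum_{i}i\alpha_i=q$. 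The regularity $g(F)\in\mathbb{D}_{q,\infty}$ is obtained along the induction by invoking the polynomial growth of $\partial^\beta g$ for $|\beta|\leq q$, the hypothesis $F\in\mathbb{D}_{q,\infty}(\mathbb{R}^r)$, and H\"older's inequality.

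The main obstacle to keep clean is the symmetrization: the Leibniz rule naturally produces tensor products in which the newly created factor occupies a specific slot, and one must argue that, after summing over all legitimate slots and applying $\symm$, the permutation counts collapse exactly into the $\prod_{i,j}\nu_{ij}!$ in the denominator of $\mathsf{C}(\alpha,\nu)$. A transparent way to organize this is the set-partition interpretation of $\mathsf{C}(\alpha,\nu)$: it counts partitions of $\{1,\dots,q\}$ into blocks whose sizes are prescribed by $\alpha$ and whose labels in $\{1,\dots,r\}$ are prescribed by $\nu$, modulo permutations of equally-labelled blocks of equal size. Under this interpretation, transitions (I) and (II) correspond respectively to adjoining the element $q$ as a new singleton with some label or inserting $q$ into an existing block, which makes the inductive step essentially bijective and the coefficient recursion transparent.
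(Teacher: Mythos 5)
Your proposal is correct and carries out what the paper's brief remark-style proof calls its alternative route: an induction that parallels the classical Fa\`a di Bruno argument, with the Malliavin chain rule in place of the ordinary one. The coefficient recursion and its reduction to $\sum_i i\alpha_i = q$ are verified correctly, and the set-partition interpretation is a standard and effective way to handle the symmetrization multiplicities; the only technical point worth making explicit before starting the induction is that the polynomial-growth (rather than bounded-derivative) hypothesis on $g$ requires the usual truncation extension of the Malliavin chain rule, which your appeal to H\"older and moment bounds implicitly covers.
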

Noting that Malliavin derivatives can be characterized by directional derivatives along Cameron-Martin shifts (cf.~Chapter 15 of \cite{Janson1997}), we can derive Lemma \ref{faa} from the usual Fa\`a di Bruno formula (found in e.g.~\cite{Mishkov2000}). Alternatively, we can prove Lemma \ref{faa} in a parallel way to the usual Fa\`a di Bruno formula using the chain rule for Malliavin derivatives (see e.g.~Theorem 15.78 of \cite{Janson1997}) instead of that for standard ones.  

\begin{lemma}\label{lemma:cf-deriv}
Under the assumptions of Theorem \ref{thm:main}, the partial derivative $\partial_\theta\varphi_n(\theta;\mathsf{z},\mathsf{x})$ exists and it is given by
\begin{align*}
&\partial_\theta\varphi_n(\theta;\mathsf{z},\mathsf{x})\\
&=\theta \sum_{i,j=1}^dE\left[e^{\lambda(\theta;\mathsf{z},\mathsf{x})}\left(\left\langle D^{q_j}\tcr{M^i_n},u_n^j\right\rangle_{H^{\otimes q_j}}-\mathfrak{C}_n^{ij}\right)(\mathsf{i}\mathsf{z}^i)(\mathsf{i}\mathsf{z}^j)\right]\\
&+\sum_{j=1}^d\sum_{\alpha\in\mathcal{A}(q_j)}\sum_{\nu\in\mathcal{N}^*_4(\alpha)}\mathsf{C}(\alpha,\nu)\theta^{|\nu_{\cdot1}|}\{2^{-1}(1-\theta^2)\}^{|\nu_{\cdot2}|}(\mathsf{i}\mathsf{z}^j) 
E\left[e^{\lambda(\theta;\mathsf{z},\mathsf{x})}\Delta_{n,j}(\nu)[(\mathsf{i}\mathsf{z})^{\otimes (\nu_{i1}+2\nu_{i2}+\nu_{i3})}\otimes(\mathsf{i}\mathsf{x})^{\otimes \nu_{i4}}]\right].
\end{align*}
\end{lemma}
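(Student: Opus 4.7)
The plan is to interchange $\partial_\theta$ with $E[\cdot]$, use the Skorohod integral representation $M_n^j=\delta^{q_j}(u_n^j)$ together with the duality formula to shift the $q_j$-fold Skorohod integral onto $e^{\lambda_n}$ as a $q_j$-fold Malliavin derivative, and then expand $D^{q_j}e^{\lambda_n}$ via the exponential Bell form of Fa\`a di Bruno and isolate the leading term.

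For the first step, note that since $\mathfrak{C}_n$ is positive semidefinite, $\mathrm{Re}\,\lambda_n(\theta;\mathsf{z},\mathsf{x})=-2^{-1}(1-\theta^2)\mathsf{z}^\top\mathfrak{C}_n\mathsf{z}\leq 0$, whence $|e^{\lambda_n}|\leq 1$; moreover $\partial_\theta\lambda_n=M_n[\mathsf{iz}]-\theta\mathfrak{C}_n[(\mathsf{iz})^{\otimes 2}]$ lies in $L^{\infty-}$ by the hypothesis $M_n,\mathfrak{C}_n\in\mathbb{D}_{\ol{q},\infty}$, so a dominated-convergence argument yields
\[
\partial_\theta\varphi_n=E\bigl[e^{\lambda_n}\bigl(M_n[\mathsf{iz}]-\theta\mathfrak{C}_n[(\mathsf{iz})^{\otimes 2}]\bigr)\bigr].
\]
The duality formula applied to each summand $(\mathsf{iz}^j)E[e^{\lambda_n}\delta^{q_j}(u_n^j)]$ of $E[e^{\lambda_n}M_n[\mathsf{iz}]]$ then produces $E[\langle u_n^j,D^{q_j}e^{\lambda_n}\rangle_{H^{\otimes q_j}}]$, where $e^{\lambda_n}\in\mathbb{D}_{q_j,2}$ follows by applying Lemma \ref{faa} separately to the real and imaginary parts of $\lambda_n$ with $g\in\{\cos,\sin,\exp(-\cdot)\}$, each of which satisfies the polynomial-growth hypothesis.

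The heart of the proof is the explicit computation of $D^{q_j}e^{\lambda_n}$. The exponential Bell version of Fa\`a di Bruno gives
\[
D^{q_j}e^{\lambda_n}=e^{\lambda_n}\sum_{\alpha\in\mathcal{A}(q_j)}\frac{q_j!}{\prod_i(i!)^{\alpha_i}\alpha_i!}\symm\!\left(\bigotimes_{i=1}^{q_j}(D^i\lambda_n)^{\otimes\alpha_i}\right),
\]
and expanding each $D^i\lambda_n=\theta D^iM_n[\mathsf{iz}]+2^{-1}(1-\theta^2)D^i\mathfrak{C}_n[(\mathsf{iz})^{\otimes 2}]+D^iW_n[\mathsf{iz}]+D^i\bs{X}_n[\mathsf{ix}]$ multinomially over $(\nu_{i1},\nu_{i2},\nu_{i3},\nu_{i4})$ subject to $\sum_l\nu_{il}=\alpha_i$, the product $\prod_i\alpha_i!/\prod_l\nu_{il}!$ of multinomial coefficients combines with the Bell prefactor to give exactly $\mathsf{C}(\alpha,\nu)$. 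Pulling the $\mathsf{iz}$ and $\mathsf{ix}$ factors out of each tensor slot via \eqref{tensor:commute}, using the symmetry of $u_n^j$ to drop $\symm$, and applying \eqref{tensor:inner}, one obtains
\[
(\mathsf{iz}^j)E\bigl[\langle u_n^j,D^{q_j}e^{\lambda_n}\rangle\bigr]=\sum_{\alpha\in\mathcal{A}(q_j)}\sum_{\nu\in\mathcal{N}_4(\alpha)}\mathsf{C}(\alpha,\nu)\theta^{|\nu_{\cdot 1}|}\{2^{-1}(1-\theta^2)\}^{|\nu_{\cdot 2}|}(\mathsf{iz}^j)E\bigl[e^{\lambda_n}\Delta_{n,j}(\nu)[(\mathsf{iz})^{\otimes|\nu|_*}\otimes(\mathsf{ix})^{\otimes|\nu_{\cdot 4}|}]\bigr].
\]

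Finally, I would isolate the unique $\nu\in\mathcal{N}_4(\alpha)\setminus\mathcal{N}_4^*(\alpha)$: the constraint $\sum_i i\alpha_i=q_j$ combined with $\nu_{q_j,1}\geq 1$ forces $\alpha=(0,\dots,0,1)$ with the $1$ in position $q_j$ and $\nu_{q_j,1}=1$, all other entries of $\nu$ being zero; then $\mathsf{C}(\alpha,\nu)=1$, $|\nu_{\cdot 1}|=1$, $|\nu_{\cdot 2}|=|\nu_{\cdot 3}|=|\nu_{\cdot 4}|=0$, and $\Delta_{n,j}(\nu)=\langle D^{q_j}M_n,u_n^j\rangle_{H^{\otimes q_j}}$. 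This term contributes $\theta\sum_{i,j}E[e^{\lambda_n}\langle D^{q_j}M_n^i,u_n^j\rangle_{H^{\otimes q_j}}](\mathsf{iz}^i)(\mathsf{iz}^j)$, which combines with $-\theta\sum_{i,j}E[e^{\lambda_n}\mathfrak{C}_n^{ij}](\mathsf{iz}^i)(\mathsf{iz}^j)$ from the $\mathfrak{C}_n$-piece of $\partial_\theta\lambda_n$ to yield the first line of the claimed formula (with $\Delta_n$ as in \eqref{qtan}), while the remaining sum over $\nu\in\mathcal{N}_4^*(\alpha)$ gives the second line. The main obstacle is the combinatorial bookkeeping in the Bell--multinomial collapse and keeping the tensor identifications consistent, so that the ``pulled-out'' $\mathsf{iz}$ and $\mathsf{ix}$ factors assemble precisely into the argument $(\mathsf{iz})^{\otimes|\nu|_*}\otimes(\mathsf{ix})^{\otimes|\nu_{\cdot 4}|}$ of $\Delta_{n,j}(\nu)$ as defined in the statement of Theorem \ref{thm:main}.
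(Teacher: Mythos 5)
Your proposal is correct and follows essentially the same route as the paper: differentiate under the expectation, apply the Skorohod--Malliavin duality, expand $D^{q_j}e^{\lambda_n}$ via Fa\`a di Bruno, and isolate the unique $\nu$ with $\nu_{q_j 1}\neq 0$ to form the quasi-tangent term. The only cosmetic difference is that you unpack the paper's Lemma~\ref{faa} (the $r=4$ multivariate formula) into the univariate exponential Bell form followed by a multinomial expansion of $(D^i\lambda_n)^{\otimes\alpha_i}$, and you correctly observe that the multinomial coefficients collapse with the Bell prefactor to give $\mathsf{C}(\alpha,\nu)$.
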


\begin{proof}
By assumption the function $\theta\mapsto\varphi_n(\theta;\mathsf{z},\mathsf{x})$ is evidently differentiable and we have
\begin{equation}\label{phi-deriv}
\partial_\theta\varphi_n(\theta;\mathsf{z},\mathsf{x})=E[e^{\lambda_n(\theta;\mathsf{z},\mathsf{x})}(M_n[\mathsf{i}\mathsf{z}]-\theta \mathfrak{C}_n[(\mathsf{i}\mathsf{z})^{\otimes2}])].
\end{equation}
By duality we obtain
\[
E[e^{\lambda_n(\theta;\mathsf{z},\mathsf{x})}M_n^j]
=E[\langle D^{q_j}\Re[e^{\lambda_n(\theta;\mathsf{z},\mathsf{x})}],u_n^j\rangle_{H^{\otimes q_j}}]
+\mathsf{i}E[\langle D^{q_j}\Im[e^{\lambda_n(\theta;\mathsf{z},\mathsf{x})}],u_n^j\rangle_{H^{\otimes q_j}}]
\]
for every $j$. Therefore, Lemma \ref{faa} yields
\if0
\begin{align*}
D^qe^{\lambda(\theta;\mathsf{z},\mathsf{x})}=\sum_{\alpha\in\mathcal{A}(q)}\sum_{\nu\in\mathcal{N}_3(\alpha)}\mathsf{C}(\alpha,\nu)e^{\lambda(\theta;\mathsf{z},\mathsf{x})}\bigotimes_{i=1}^{q}(\theta(\mathsf{i}\mathsf{z})^\top D^iZ_n)^{\otimes \nu_{i1}}(2^{-1}(1-\theta^2)(\mathsf{i}\mathsf{z})^\top D^iG (\mathsf{i}\mathsf{z}))^{\otimes \nu_{i2}}((\mathsf{i}\mathsf{x})^\top D^iX)^{\otimes \nu_{i3}}
\end{align*}
\fi
\begin{multline*}
E[e^{\lambda_n(\theta;\mathsf{z},\mathsf{x})}M_n^j]
=\sum_{\alpha\in\mathcal{A}(q_j)}\sum_{\nu\in\mathcal{N}_4(\alpha)}\mathsf{C}(\alpha,\nu)\theta^{|\nu_{\cdot1}|}\{2^{-1}(1-\theta^2)\}^{|\nu_{\cdot2}|}\cdot\mathsf{i}^{|\nu_{\cdot1}|+2|\nu_{\cdot2}|+|\nu_{\cdot3}|+|\nu_{\cdot4}|}\\
\times E\left[e^{\lambda(\theta;\mathsf{z},\mathsf{x})}\left\langle \bigotimes_{i=1}^{q_j}(D^iM_n[\mathsf{z}])^{\otimes \nu_{i1}}\otimes(D^i\mathfrak{C}_n[\mathsf{z}^{\otimes2}])^{\otimes \nu_{i2}}\otimes(D^iW_n[\mathsf{z}])^{\otimes \nu_{i3}}\otimes(D^iX_n[\mathsf{x}])^{\otimes \nu_{i4}},u_n^j\right\rangle_{H^{\otimes q_j}}\right],
\end{multline*} 
where we also use the identity
\begin{equation}\label{eq:symm}
\langle \symm(f),g\rangle_{H^{\otimes q}}=\langle f,g\rangle_{H^{\otimes q}}
\end{equation}
holding for any $f,g\in H^{\otimes q}$ such that $g$ is symmetric.  
Now, by \eqref{tensor:commute} we have
\begin{align*}
&\bigotimes_{i=1}^{q_j}(D^iM_n[\mathsf{z}])^{\otimes \nu_{i1}}\otimes(D^i\mathfrak{C}_n[\mathsf{z}^{\otimes2}])^{\otimes \nu_{i2}}\otimes(D^iW_n[\mathsf{z}])^{\otimes \nu_{i3}}\otimes(D^iX_n[\mathsf{x}])^{\otimes \nu_{i4}}\\
&=\bigotimes_{i=1}^{q_j}(D^iM_n)^{\otimes \nu_{i1}}[\mathsf{z}^{\otimes \nu_{i1}}]\otimes(D^i\mathfrak{C}_n)^{\otimes \nu_{i2}}[\mathsf{z}^{\otimes2\nu_{i2}}]\otimes(D^iW_n)^{\otimes \nu_{i3}}[\mathsf{z}^{\otimes \nu_{i3}}]\otimes(D^iX_n)^{\otimes \nu_{i4}}[\mathsf{x}^{\otimes \nu_{i4}}]\\
&=\left(\bigotimes_{i=1}^{q_j}(D^iM_n)^{\otimes \nu_{i1}}\otimes(D^i\mathfrak{C}_n)^{\otimes \nu_{i2}}\otimes(D^iW_n)^{\otimes \nu_{i3}}\otimes(D^iX_n)^{\otimes \nu_{i4}}\right)[\mathsf{z}^{\otimes (\nu_{i1}+2\nu_{i2}+\nu_{i3})}\otimes\mathsf{x}^{\otimes \nu_{i4}}],
\end{align*}
\tcr{so} using \eqref{tensor:inner} we obtain
\begin{align*}
&E[e^{\lambda_n(\theta;\mathsf{z},\mathsf{x})}M_n^j]\\
&=\sum_{\alpha\in\mathcal{A}(q_j)}\sum_{\nu\in\mathcal{N}_4(\alpha)}\mathsf{C}(\alpha,\nu)\theta^{|\nu_{\cdot1}|}\{2^{-1}(1-\theta^2)\}^{|\nu_{\cdot2}|}\cdot\mathsf{i}^{|\nu_{\cdot1}|+2|\nu_{\cdot2}|+|\nu_{\cdot3}|+|\nu_{\cdot4}|}
E\left[e^{\lambda(\theta;\mathsf{z},\mathsf{x})}\Delta_{n,j}(\nu)[\mathsf{z}^{\otimes (\nu_{i1}+2\nu_{i2}+\nu_{i3})}\otimes\mathsf{x}^{\otimes \nu_{i4}}]\right]\\
&=\theta E\left[e^{\lambda(\theta;\mathsf{z},\mathsf{x})}\left\langle D^{q_j}M_n,u_n^j\right\rangle_{H^{\otimes q_j}}[\mathsf{i}\mathsf{z}]\right]\\
&+\sum_{\alpha\in\mathcal{A}(q_j)}\sum_{\nu\in\mathcal{N}^*_4(\alpha)}\mathsf{C}(\alpha,\nu)\theta^{|\nu_{\cdot1}|}\{2^{-1}(1-\theta^2)\}^{|\nu_{\cdot2}|}
 E\left[e^{\lambda(\theta;\mathsf{z},\mathsf{x})}\Delta_{n,j}(\nu)[(\mathsf{i}\mathsf{z})^{\otimes (\nu_{i1}+2\nu_{i2}+\nu_{i3})}\otimes(\mathsf{i}\mathsf{x})^{\otimes \nu_{i4}}]\right].
\end{align*}
Combining this identity with \eqref{phi-deriv}, we obtain the desired result. 
\end{proof}

The following lemma is presumably a standard result. We prove it for the shake of completeness. 
\begin{lemma}\label{regularization}
Let $f=(f(y))_{y\in\mathbb{R}^N}$ be a real-valued $C^\infty$ function all of whose partial derivatives are of polynomial growth. Then there is a sequence $(f_j)_{j=1}^\infty$ of compactly supported real-valued $C^\infty$ functions on $\mathbb{R}^N$ such that 
\begin{equation}\label{eq:vitali}
E\left[\xi_0 \partial_y^\alpha f_j(\xi_1,\dots,\xi_N)\right]\to E\left[\xi_0\partial_y^\alpha f(\xi_1,\dots,\xi_N)\right]
\end{equation}
as $j\to\infty$ for any $\xi_0,\xi_1,\dots,\xi_N\in L^{\infty-}$ and $\alpha\in\mathbb{Z}_+^N$. 
\end{lemma}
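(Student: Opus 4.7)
The plan is to use a standard cutoff approximation. Fix a function $\chi\in C_c^\infty(\mathbb{R}^N)$ with $0\leq\chi\leq 1$, $\chi\equiv1$ on $\{y:\|y\|_{\ell_2}\leq 1\}$ and $\chi\equiv0$ on $\{y:\|y\|_{\ell_2}\geq 2\}$, and define
\[
f_j(y):=\chi(y/j)\,f(y),\qquad j=1,2,\dots.
\]
Each $f_j$ is $C^\infty$ and compactly supported, so it remains only to verify the convergence \eqref{eq:vitali}.

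First I would apply the Leibniz rule to write
\[
\partial_y^\alpha f_j(y)=\chi(y/j)\,\partial_y^\alpha f(y)+\sum_{0\neq\beta\leq\alpha}\binom{\alpha}{\beta}j^{-|\beta|}(\partial_y^\beta\chi)(y/j)\,\partial_y^{\alpha-\beta}f(y).
\]
The first term converges pointwise to $\partial_y^\alpha f(y)$ as $j\to\infty$, and it is dominated by $|\partial_y^\alpha f(y)|$. By the polynomial growth assumption there exist $C>0$ and $k\in\mathbb{N}$ such that $|\partial_y^\alpha f(y)|\leq C(1+\|y\|_{\ell_2})^k$. Since $\xi_0,\xi_1,\dots,\xi_N\in L^{\infty-}$, H\"older's inequality gives
\[
E\bigl[|\xi_0|\,|\partial_y^\alpha f(\xi_1,\dots,\xi_N)|\bigr]\leq CE\bigl[|\xi_0|(1+\|(\xi_1,\dots,\xi_N)\|_{\ell_2})^k\bigr]<\infty,
\]
so the dominated convergence theorem yields
\[
E\bigl[\xi_0\chi((\xi_1,\dots,\xi_N)/j)\,\partial_y^\alpha f(\xi_1,\dots,\xi_N)\bigr]\longrightarrow E\bigl[\xi_0\,\partial_y^\alpha f(\xi_1,\dots,\xi_N)\bigr].
\]

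For each $\beta\neq 0$, the factor $j^{-|\beta|}(\partial_y^\beta\chi)(y/j)$ is bounded by $\|\partial_y^\beta\chi\|_\infty$ uniformly in $j$ and tends to $0$ pointwise. Combined with the same polynomial bound on $\partial_y^{\alpha-\beta}f$, the integrand $\xi_0\,j^{-|\beta|}(\partial_y^\beta\chi)((\xi_1,\dots,\xi_N)/j)\,\partial_y^{\alpha-\beta}f(\xi_1,\dots,\xi_N)$ is dominated by an integrable random variable independent of $j$, and converges to $0$ pointwise. Dominated convergence again gives that these error terms vanish in expectation. Summing the contributions yields \eqref{eq:vitali}.

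There is no substantive obstacle; the argument is entirely routine. The only thing to keep a close eye on is the interplay between the polynomial growth of $f$ and the finiteness of all moments of $(\xi_0,\dots,\xi_N)$, which together make every application of dominated convergence legitimate via a single H\"older estimate.
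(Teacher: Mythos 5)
Your proof is correct and follows essentially the same approach as the paper: both truncate $f$ by multiplying with a dilated compactly supported smooth function and then pass to the limit in expectation. The only cosmetic differences are that the paper allows any $\phi\in C_c^\infty$ with $\phi(0)=1$ (rather than requiring $\chi\equiv 1$ on the unit ball), which forces it to establish uniform integrability via a uniform $L^2$ bound and invoke Vitali's convergence theorem, whereas your stronger choice of cutoff lets you exhibit an explicit $j$-independent integrable dominating function and apply dominated convergence directly; the two arguments are interchangeable here.
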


\begin{proof}
Take a $C^\infty$ function $\phi:\mathbb{R}^N\to[0,\infty)$ having compact support and satisfying $\phi(0)=1$. 
For every $j=1,2,\dots$, we define the function $\phi_j:\mathbb{R}^N\to[0,\infty)$ by $\phi_j(y)=\phi(j^{-1}y)$, $y\in\mathbb{R}^N$. 
Then we define $f_j:=f\phi_j$ for $j=1,2,\dots$. $f_j$ is evidently a $C^\infty$ function with compact support. Moreover, we have $\partial_{y}^\alpha f_j(y)\to\partial_{y}^\alpha f(y)$ as $j\to\infty$ for any $y\in\mathbb{R}^N$ and $\alpha\in\mathbb{Z}_+^N$. In addition, for any $s\in\mathbb{N}$, there is a constant $C>0$ which depends only on $\phi$ and $s$ such that $|f_j(y)|\leq C(|f(y)|+\sum_{k=1}^s\|\partial_y^{\otimes k}f(y)\|_{\ell_1})$ for any $y\in\mathbb{R}^N$; we can easily prove these facts by directly differentiating $f_j$ with the help of the Leibniz formula and the chain rule. Consequently, we have $\sup_{j\in\mathbb{N}}\|\xi_0 \partial_y^\alpha f_j(\xi_1,\dots,\xi_N)\|_2<\infty$ for any $\alpha\in\mathbb{Z}_+^N$ because $\xi_0,\xi_1,\dots,\xi_N\in L^{\infty-}$ and all the partial derivatives of $f$ have polynomial growth. Therefore, $(\xi_0 \partial_y^\alpha f_j(\xi_1,\dots,\xi_N))_{j\in\mathbb{N}}$ is uniformly integrable, \tcr{so} the Vitali convergence theorem yields \eqref{eq:vitali}. This completes the proof. 
\end{proof}

Now we get the following interpolation formula for \eqref{gap}:
\begin{lemma}\label{interpolation}
Let $f:\mathbb{R}^{2d}\to\mathbb{R}$ be a $C^\infty$ function all of whose partial derivatives are of polynomial growth. Under the assumptions of Theorem \ref{thm:main}, we have
\begin{align*}
&E[f(Z_n,X_n)]-E[f(\mathfrak{Z}_n,X_n)]\\
&=\sum_{i,j=1}^{d}\int_0^1\theta E\left[\left(\langle D^{q_j}M^i_n,u_n^j\rangle_{H^{\otimes q_j}}-\mathfrak{C}_n^{ij}\right)\partial_{z_i}\partial_{z_j}f(\theta Z_n+\sqrt{1-\theta^2}\mathfrak{Z}_n,X_n)\right]d\theta\\
&+\sum_{j=1}^d\sum_{\alpha\in\mathcal{A}(q_j)}\sum_{\nu\in\mathcal{N}_{4}^*(\alpha)}\mathsf{C}(\alpha,\nu)\int_0^1\theta^{|\nu_{\cdot 1}|}(2^{-1}(1-\theta^2))^{|\nu_{\cdot 2}|}
 E\left[\Delta_{n,j}(\nu)[\partial_z^{\otimes|\nu|_*}\partial_x^{\otimes|\nu_{\cdot 4}|}\partial_{z_j}f(\theta Z_n+\sqrt{1-\theta^2}\mathfrak{Z}_n,X_n)]\right]d\theta.
\end{align*}
\end{lemma}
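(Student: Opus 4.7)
The plan is to deduce the identity from Lemma \ref{lemma:cf-deriv} by Fourier inversion, after first reducing to a compactly supported smooth $f$. I apply Lemma \ref{regularization} to produce a sequence $(f_k)$ of compactly supported $C^\infty$ functions such that every expectation appearing on either side of the alleged identity for $f$ is the limit of the corresponding expectation for $f_k$; the $L^{\infty-}$ integrability of $M_n,W_n,\mathfrak{C}_n,X_n$ and of their Malliavin derivatives, together with the polynomial-growth hypothesis on the derivatives of $f$ and the finite-moment control afforded by the assumptions of Theorem \ref{thm:main} on the random factors $\Delta_n$ and $\Delta_{n,j}(\nu)$, supplies the uniform integrability required. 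It thus suffices to prove the identity for an $f$ whose Fourier transform $\hat f$ is rapidly decreasing and for which the pointwise Fourier inversion formula holds.

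For such an $f$, the definitions give $\varphi_n(1;\mathsf{z},\mathsf{x})=E[\exp(Z_n[\mathsf{i}\mathsf{z}]+X_n[\mathsf{i}\mathsf{x}])]$ and, upon conditioning on $\mathcal{F}$ and integrating out $\zeta_n$, $\varphi_n(0;\mathsf{z},\mathsf{x})=E[\exp(\mathfrak{Z}_n[\mathsf{i}\mathsf{z}]+X_n[\mathsf{i}\mathsf{x}])]$. A Fubini exchange then yields
\[
E[f(Z_n,X_n)]-E[f(\mathfrak{Z}_n,X_n)]=(2\pi)^{-2d}\int\hat f(\mathsf{z},\mathsf{x})\int_0^1\partial_\theta\varphi_n(\theta;\mathsf{z},\mathsf{x})\,d\theta\,d\mathsf{z}\,d\mathsf{x}.
\]
Substituting Lemma \ref{lemma:cf-deriv} produces a finite collection of terms whose frequency-domain factor has the shape $\hat f(\mathsf{z},\mathsf{x})\cdot T[(\mathsf{i}\mathsf{z})^{\otimes a}\otimes(\mathsf{i}\mathsf{x})^{\otimes b}]$ for some random array $T$, with $(a,b)=(2,0)$ in the first summand and $(a,b)=(|\nu|_*+1,|\nu_{\cdot 4}|)$ in the second (the extra factor $\mathsf{i}\mathsf{z}^j$ being absorbed into $(\mathsf{i}\mathsf{z})^{\otimes a}$). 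I then invoke identity \eqref{eq:rudin} to rewrite $T[(\mathsf{i}\mathsf{z})^{\otimes a}\otimes(\mathsf{i}\mathsf{x})^{\otimes b}]\hat f(\mathsf{z},\mathsf{x})$ as $T[\widehat{\partial_z^{\otimes a}\partial_x^{\otimes b}f}(\mathsf{z},\mathsf{x})]$, exchange expectation with the $(\mathsf{z},\mathsf{x})$-integral, and recognise the resulting Fourier inversion of $\widehat{\partial_z^{\otimes a}\partial_x^{\otimes b}f}(\mathsf{z},\mathsf{x})\,e^{\lambda_n(\theta;\mathsf{z},\mathsf{x})}$ as an $\mathcal{F}$-conditional expectation of $\partial_z^{\otimes a}\partial_x^{\otimes b}f$ at a Gaussian interpolant, which one then identifies via equality of $\mathcal{F}$-conditional distributions with $\partial_z^{\otimes a}\partial_x^{\otimes b}f(\theta Z_n+\sqrt{1-\theta^2}\mathfrak{Z}_n,X_n)$. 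Assembling these terms reproduces the right-hand side of the lemma verbatim, combinatorial coefficients and powers of $\theta$ and $(1-\theta^2)$ included.

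The main technical burden will be justifying the several Fubini exchanges and the differentiation under the integral sign used to define $\partial_\theta\varphi_n$. Both are handled uniformly by combining the rapid decrease of $\hat f$ (so that $|\hat f(\mathsf{z},\mathsf{x})|(1+\|\mathsf{z}\|_{\ell_2}+\|\mathsf{x}\|_{\ell_2})^A$ is integrable in $(\mathsf{z},\mathsf{x})$ for every $A>0$) with the pointwise bound $|e^{\lambda_n(\theta;\mathsf{z},\mathsf{x})}|\leq 1$, which follows from $\Re\lambda_n(\theta;\mathsf{z},\mathsf{x})=-\tfrac{1}{2}(1-\theta^2)\mathsf{z}^\top\mathfrak{C}_n\mathsf{z}\leq 0$, together with the integrability of the random factors $\langle D^{q_j}M_n^i,u_n^j\rangle_{H^{\otimes q_j}}-\mathfrak{C}_n^{ij}$ and $\Delta_{n,j}(\nu)$ inherited from the hypotheses of Theorem \ref{thm:main}. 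These yield uniform dominating functions on $[0,1]\times\mathbb{R}^{2d}\times\Omega$ for each summand, so every interchange of integrals can be carried out with no further technical obstacles.
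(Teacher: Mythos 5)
Your proof follows the same route as the paper's: reduce via Lemma \ref{regularization} to a rapidly decreasing $f$, express $E[f(Z_n,X_n)]-E[f(\mathfrak{Z}_n,X_n)]$ by Fourier inversion and Fubini as $\int_0^1 d\theta$ of $\partial_\theta\varphi_n$ integrated against $\hat f$, substitute Lemma \ref{lemma:cf-deriv}, and undo the Fourier transform via \eqref{eq:rudin} and Fourier inversion. Your elaboration of the dominating-function argument (rapid decrease of $\hat f$ plus $|e^{\lambda_n}|\leq 1$ plus $L^{\infty-}$ control of the random factors) is exactly the justification the paper leaves implicit, so the proposal is correct and essentially identical to the paper's argument.
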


\begin{proof}
Thanks to Lemma \ref{regularization}, it is enough to prove the lemma when $f$ is rapidly decreasing.  
In this case the Fourier inversion formula and the Fubini theorem yield
\begin{align*}
E[f(Z_n,X_n)]-E[f(\mathfrak{Z}_n,X_n)]
&=(2\pi)^{-2d}\int_{\mathbb{R}^{2d}}\hat{f}(\mathsf{z},\mathsf{x})\{\varphi(1;\mathsf{z},\mathsf{x})-\varphi(0;\mathsf{z},\mathsf{x})\}d\mathsf{z}d\mathsf{x}\\
&=(2\pi)^{-2d}\int_0^1d\theta\int_{\mathbb{R}^{2d}}\hat{f}(\mathsf{z},\mathsf{x})\partial_\theta\varphi(\theta;\mathsf{z},\mathsf{x})d\mathsf{z}d\mathsf{x}.
\end{align*}
Hence the desired result follows from Lemma \ref{lemma:cf-deriv}, \eqref{eq:rudin} and the Fourier inversion formula. 
\end{proof}

\if0
\begin{align*}
&\hat{f}(\mathsf{z},\mathsf{x})E[e^{\lambda(\theta;\mathsf{z},\mathsf{x})}Z[\mathsf{i}\mathsf{z}]]\\
&=\sum_{j=1}^d\sum_{\alpha\in\mathcal{A}(q_j)}\sum_{\nu\in\mathcal{N}_3(\alpha)}\mathsf{C}(\alpha,\nu)e^{\lambda(\theta;\mathsf{z},\mathsf{x})}\theta^{|\nu_{\cdot 1}|}(2^{-1}(1-\theta^2))^{|\nu_{\cdot 2}|}\\
&\times E\left[\left\langle\bigotimes_{i=1}^{q_j}(D^iZ)^{\otimes\nu_{i1}}\otimes(D^iG)^{\otimes \nu_{i2}}\otimes(D^iX)^{\otimes \nu_{i3}},u_j\right\rangle_{H^{\otimes q_j}}[(\mathsf{i}\mathsf{z})^{\otimes|\nu_{\cdot 1}|}\otimes(\mathsf{i}\mathsf{z})^{\otimes2|\nu_{\cdot 2}|}\otimes(\mathsf{i}\mathsf{x})^{\otimes|\nu_{\cdot 3}|}]\mathsf{i}\mathsf{z}_j\hat{f}(\mathsf{z},\mathsf{x})\right]\\
&=\sum_{j=1}^d\sum_{\alpha\in\mathcal{A}(q_j)}\sum_{\nu\in\mathcal{N}_3(\alpha)}\mathsf{C}(\alpha,\nu)\theta^{|\nu_{\cdot 1}|}(2^{-1}(1-\theta^2))^{|\nu_{\cdot 2}|}\\
&\times E\left[\left\langle\bigotimes_{i=1}^{q_j}(D^iZ)^{\otimes\nu_{i1}}\otimes(D^iG)^{\otimes \nu_{i2}}\otimes(D^iX)^{\otimes \nu_{i3}},u_j\right\rangle_{H^{\otimes q_j}}[(\partial_z^{\otimes|\nu_{\cdot 1}|+2|\nu_{\cdot 2}|}\partial_x^{\otimes|\nu_{\cdot 3}|}\partial_{z_j}f)^{\hat{}}(\mathsf{z},\mathsf{x})e^{\lambda(\theta;\mathsf{z},\mathsf{x})}]\right]
\end{align*}

\begin{align*}
&(2\pi)^{-\check{d}}\int_{\mathbb{R}^{\check{d}}}\hat{f}(\mathsf{z},\mathsf{x})E[e^{\lambda(\theta;\mathsf{z},\mathsf{x})}Z[\mathsf{i}\mathsf{z}]]d\mathsf{z}d\mathsf{x}\\
&=\sum_{j=1}^d\sum_{\alpha\in\mathcal{A}(q_j)}\sum_{\nu\in\mathcal{N}_3(\alpha)}\mathsf{C}(\alpha,\nu)\theta^{|\nu_{\cdot 1}|}(2^{-1}(1-\theta^2))^{|\nu_{\cdot 2}|}\\
&\times E\left[\left\langle\bigotimes_{i=1}^{q_j}(D^iZ)^{\otimes\nu_{i1}}\otimes(D^iG)^{\otimes \nu_{i2}}\otimes(D^iX)^{\otimes \nu_{i3}},u_j\right\rangle_{H^{\otimes q_j}}[\partial_z^{\otimes|\nu_{\cdot 1}|+2|\nu_{\cdot 2}|}\partial_x^{\otimes|\nu_{\cdot 3}|}\partial_{z_j}f(\theta Z+\sqrt{1-\theta^2}G^{1/2}\zeta,X)]\right]
\end{align*}

\begin{align*}
\hat{f}(\mathsf{z},\mathsf{x})E[e^{\lambda(\theta;\mathsf{z},\mathsf{x})}G[(\mathsf{i}\mathsf{z})^{\otimes2}]]
&=E[G[(\partial_z^{\otimes2}f)^\wedge(\mathsf{z},\mathsf{x})e^{\lambda(\theta;\mathsf{z},\mathsf{x})}]]
\end{align*}

\begin{align*}
(2\pi)^{-\check{d}}\int_{\mathbb{R}^{\check{d}}}\hat{f}(\mathsf{z},\mathsf{x})E[e^{\lambda(\theta;\mathsf{z},\mathsf{x})}G[(\mathsf{i}\mathsf{z})^{\otimes2}]]d\mathsf{z}d\mathsf{x}
&=E[G[\partial_z^{\otimes2}f(\theta Z+\sqrt{1-\theta^2}G^{1/2}\zeta,X)]]
\end{align*}
\fi

We will use the following elementary result in the proof:
\begin{lemma}\label{z-leibniz}
Let $k,l$ be two positive integers. Then we have
\if0
\begin{equation*}
\partial_{z^{i_1}\cdots z^{i_k}}^k\left(z^{j_1}\cdots z^{j_l}\right)
=\left\{\begin{array}{cl}
\sum_{\begin{subarray}{c}
c_1,\dots,c_k=1\\
c_s\neq c_t
\end{subarray}}^l\prod_{s=1}^k1_{\{j_{c_s}=i_s\}}\prod_{b\neq c_1,\dots,c_k}z^{j_b} & \text{if }k\leq l,\\
0 & \text{otherwise}
\end{array}\right.
\end{equation*}
\fi
\[
\partial_{z^{i_1}\cdots z^{i_k}}\left(z^{j_1}\cdots z^{j_l}\right)
=\sum_{\begin{subarray}{c}
c_1,\dots,c_k=1\\
c_s\neq c_t
\end{subarray}}^l\prod_{s=1}^k1_{\{j_{c_s}=i_s\}}\prod_{b\neq c_1,\dots,c_k}z^{j_b}
\]
for any $i_1,\dots,i_k,j_1,\dots,j_l\in\{1,\dots,d\}$. 
\end{lemma}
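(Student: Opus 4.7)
The statement is an elementary combinatorial identity about iterated partial derivatives of a monomial, so the plan is a short induction on $k$ with the product rule doing all the work.

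First I would dispose of the base case $k=1$. By the ordinary Leibniz rule,
\[
\partial_{z^{i_1}}(z^{j_1}\cdots z^{j_l}) = \sum_{c_1=1}^{l}(\partial_{z^{i_1}}z^{j_{c_1}})\prod_{b\neq c_1}z^{j_b} = \sum_{c_1=1}^{l}1_{\{j_{c_1}=i_1\}}\prod_{b\neq c_1}z^{j_b},
\]
which matches the claimed formula (the condition $c_s\neq c_t$ is vacuous when there is only one index).

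For the inductive step, assume the formula holds for $k-1$, apply $\partial_{z^{i_k}}$ to both sides, and use the product rule on the monomial $\prod_{b\notin\{c_1,\dots,c_{k-1}\}}z^{j_b}$. The derivative of this partial product with respect to $z^{i_k}$ is exactly
\[
\sum_{\substack{c_k\in[l]\\ c_k\neq c_1,\dots,c_{k-1}}}1_{\{j_{c_k}=i_k\}}\prod_{b\neq c_1,\dots,c_k}z^{j_b},
\]
so plugging in and relabeling recovers the formula for $k$. When $k>l$ there are no tuples $(c_1,\dots,c_k)\in[l]^k$ with pairwise distinct entries and the right-hand side is an empty sum equal to zero, which is also consistent with differentiating a monomial of degree $l$ more than $l$ times.

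There is no real obstacle; the only bookkeeping point worth being careful about is that $\partial_{z^{i_1}\cdots z^{i_k}}$ is symmetric in the indices $i_1,\dots,i_k$ (mixed partials commute for a polynomial), and the right-hand side is indeed symmetric in the pairs $(i_s,c_s)$ under joint permutation, so the formula is consistent with that symmetry. Alternatively one could present the identity as a purely combinatorial count, where each surviving term on the right corresponds to an injective assignment $s\mapsto c_s$ describing which of the $l$ factors $z^{j_{c_s}}$ is struck by the derivative $\partial_{z^{i_s}}$, but the induction is cleaner and shorter.
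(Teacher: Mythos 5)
Your proof is correct and matches the paper's own (omitted, but indicated) argument: induction on $k$ via the ordinary Leibniz rule, with the base case $k=1$ handled directly and the inductive step differentiating the surviving monomial $\prod_{b\neq c_1,\dots,c_{k-1}}z^{j_b}$ to pick up the extra summation index $c_k$. Your remarks on the empty-sum case $k>l$ and on symmetry in $(i_s,c_s)$ are sound but not needed beyond what the induction already gives.
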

One can easily prove the above lemma by induction on $k$ and application of the Leibniz rule, \tcr{so} we omit its proof.

\if0

\begin{align*}
\partial_{z^i}\left(z^{j_1}\cdots z^{j_l}\right)
=\sum_{a=1}^l1_{\{j_a=i\}}\prod_{b\neq a}z^{j_b}
\end{align*}

\begin{align*}
\partial_{z^{i_1}z^{i_2}}^2\left(z^{j_1}\cdots z^{j_l}\right)
=\sum_{a_1=1}^l1_{\{j_{a_1}=i_1\}}\partial_{z^{i_2}}\prod_{b\neq a_1}z^{j_b}
=\sum_{a_1=1}^l1_{\{j_{a_1}=i_1\}}\sum_{a_2\neq a_1}1_{\{j_{a_2}=i_2\}}\prod_{b\neq a_1,a_2}z^{j_b}
\end{align*}

\begin{proof}
\begin{align*}
\partial_{z^{i_1}\cdots z^{i_{N+1}}}^{N+1}\left(z^{j_1}\cdots z^{j_l}\right)
&=\partial_{z^{i_{N+1}}}\sum_{\begin{subarray}{c}
c_1,\dots,c_N=1\\
c_s\neq c_t
\end{subarray}}^l\prod_{s=1}^N1_{\{j_{c_s}=i_s\}}\prod_{b\neq c_1,\dots,c_N}z^{j_b}\\
&=\sum_{\begin{subarray}{c}
c_1,\dots,c_N=1\\
c_s\neq c_t
\end{subarray}}^l\prod_{s=1}^N1_{\{j_{c_s}=i_s\}}\sum_{c_{N+1}\neq c_1,\dots,c_N}1_{\{j_{c_{N+1}}=i_{N+1}\}}\prod_{b\neq c_1,\dots,c_{N+1}}z^{j_b}
\end{align*}
\end{proof}
\fi

{
Finally, as in the original CCK theory, a special approximation of the maximum function (called the ``smooth max function'') will play a crucial role in our proof. 
The following lemma summarizes the key properties of this smooth max function used in the proof:
\begin{lemma}\label{lemma:dz}
Let $\varepsilon>0$ and set $\beta=\varepsilon^{-1}\log m$. Define the function $\Phi_\beta:\mathbb{R}^{m}\to\mathbb{R}$ by
\begin{equation}\label{def:Phi}
\Phi_\beta(w)=\beta^{-1}\log\left(\sum_{j=1}^{m}\exp(\beta w^j)\right),\qquad w\in\mathbb{R}^{m}. 
\end{equation}
Then we have
\begin{equation}\label{max-smooth}
0\leq \Phi_\beta(w)-\max_{1\leq j\leq m}w^j\leq \beta^{-1}\log m=\varepsilon
\end{equation}
for every $w\in\mathbb{R}^{m}$. 
Moreover, for any $C^\infty$ function $g:\mathbb{R}\to\mathbb{R}$, $s\in\mathbb{N}$, $\varepsilon>0$ and $w\in\mathbb{R}^m$, it holds that
\begin{equation}\label{eq:dz}
\left\|\partial_w^{\otimes s}g(\varepsilon^{-1}\Phi_\beta(w))\right\|_{\ell_1}\leq C_{g,s}\max\{\varepsilon^{-s},\varepsilon^{-1}\beta^{s-1}\}
=C_{g,s}\varepsilon^{-s}(\log m)^{s-1},
\end{equation}
where $C_{g,s}>0$ depends only on $g$ and $s$. 
\end{lemma}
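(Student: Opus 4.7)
The first assertion \eqref{max-smooth} is standard: writing $M:=\max_{1\le j\le m}w^j$, we have $\exp(\beta M)\le\sum_{j=1}^m\exp(\beta w^j)\le m\exp(\beta M)$, and taking $\beta^{-1}\log$ of all sides gives $M\le\Phi_\beta(w)\le M+\beta^{-1}\log m$, which is exactly \eqref{max-smooth}.

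For \eqref{eq:dz}, the plan is first to bound the Euclidean derivatives of $\Phi_\beta$ itself, and then apply the (classical) Fa\`a di Bruno formula to the composition $w\mapsto g(\varepsilon^{-1}\Phi_\beta(w))$. Let $\pi_j(w):=\exp(\beta w^j)/\sum_{k=1}^m\exp(\beta w^k)$ denote the softmax weights, so that $\partial_{w^j}\Phi_\beta(w)=\pi_j(w)$ and in particular $\sum_{j=1}^m|\partial_{w^j}\Phi_\beta(w)|=1$. A direct computation gives the recursion $\partial_{w^k}\pi_j(w)=\beta\,\pi_j(w)(\delta_{jk}-\pi_k(w))$, which, together with an induction on $k$, yields the key bound
\begin{equation}\label{eq:Phi-bound}
\bigl\|\partial_w^{\otimes k}\Phi_\beta(w)\bigr\|_{\ell_1}\le C_k\,\beta^{k-1}\qquad(k\in\mathbb{N})
\end{equation}
for every $w\in\mathbb{R}^m$, where $C_k$ depends only on $k$. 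Indeed, each partial derivative of order $k\ge 2$ of $\Phi_\beta$ is a polynomial in the $\pi_j$'s of total degree $\le k$ multiplied by $\beta^{k-1}$, and the $\ell_1$ norm of such expressions over all index tuples is controlled by a constant depending only on $k$ because $\sum_j\pi_j=1$.

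Now set $h(w):=\varepsilon^{-1}\Phi_\beta(w)$. By Fa\`a di Bruno's formula, $\partial_w^{\otimes s}g(h(w))$ is a finite linear combination, with combinatorial coefficients depending only on $s$, of terms of the form
\[
g^{(|\alpha|)}(h(w))\,\symm\!\Bigl(\bigotimes_{i=1}^{s}\bigl(\partial_w^{\otimes i}h(w)\bigr)^{\otimes\alpha_i}\Bigr),\qquad \alpha=(\alpha_1,\dots,\alpha_s)\in\mathcal{A}(s).
\]
Taking $\ell_1$ norms, using submultiplicativity of $\|\cdot\|_{\ell_1}$ under tensor products, and inserting \eqref{eq:Phi-bound} (together with $\|\partial_wh\|_{\ell_1}=\varepsilon^{-1}$ for the case $i=1$), each such term is bounded by
\[
C_{g,s}\prod_{i=1}^{s}\bigl(\varepsilon^{-1}\beta^{\,i-1}\bigr)^{\alpha_i}
=C_{g,s}\,\varepsilon^{-|\alpha|}\,\beta^{\,s-|\alpha|},
\]
where we have used the defining identity $\sum_i i\alpha_i=s$ of $\mathcal{A}(s)$. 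Since $|\alpha|$ ranges over $\{1,\dots,s\}$ on $\mathcal{A}(s)$ and the constant $g^{(|\alpha|)}(h(w))$ is bounded uniformly in $w$ whenever $g\in C^\infty(\mathbb{R})$ (we only need local boundedness here because we can also absorb the modulus of $g^{(|\alpha|)}$ into $C_{g,s}$ if we accept a bound depending on $\sup_x|g^{(k)}(x)|$ for $k\le s$; otherwise the standard convention is that $g$ and its derivatives are bounded), maximizing over $\alpha$ gives
\[
\bigl\|\partial_w^{\otimes s}g(h(w))\bigr\|_{\ell_1}\le C_{g,s}\max_{1\le\ell\le s}\varepsilon^{-\ell}\beta^{s-\ell}
=C_{g,s}\max\{\varepsilon^{-s},\,\varepsilon^{-1}\beta^{s-1}\},
\]
which is the claimed inequality. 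The final equality in \eqref{eq:dz} follows from $\varepsilon\beta=\log m$.

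The main obstacle is establishing \eqref{eq:Phi-bound}: one has to keep careful track, in the inductive step, of how the $\beta$ factor emerges from every additional differentiation of the softmax $\pi_j$ and of how the combinatorial explosion in the number of terms is absorbed into a constant $C_k$ independent of $m$, using only $\sum_j\pi_j=1$. Once \eqref{eq:Phi-bound} is in place, the remainder of the argument is a straightforward application of Fa\`a di Bruno and the identity $\sum i\alpha_i=s$.
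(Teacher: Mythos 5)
Your proof is correct, but it takes a genuinely different route from the paper. The paper simply cites existing literature: the bound \eqref{max-smooth} is attributed to Eq.\ (1) of \cite{CCK2015}, and \eqref{eq:dz} is obtained by invoking Lemma 5 of \cite{DZ2017} with $h=g$, $n=1$, $m=s$, $b=\varepsilon^{-1}$ in their notation. You instead re-derive the estimate from scratch: you establish the bound $\|\partial_w^{\otimes k}\Phi_\beta\|_{\ell_1}\le C_k\beta^{k-1}$ via the softmax recursion $\partial_{w^k}\pi_j=\beta\pi_j(\delta_{jk}-\pi_k)$ and the normalization $\sum_j\pi_j=1$, then push it through the multivariate Fa\`a di Bruno formula using that a partition $\pi$ of $\{1,\dots,s\}$ with $\ell$ blocks of sizes $b_1,\dots,b_\ell$ contributes a factor $\varepsilon^{-\ell}\beta^{\sum(b_i-1)}=\varepsilon^{-\ell}\beta^{s-\ell}$, finally observing that the geometric sequence $\varepsilon^{-\ell}\beta^{s-\ell}$ is maximized at an endpoint. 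This is essentially what Deng--Zhang's Lemma 5 does under the hood, so your proposal is a self-contained reconstruction of the cited ingredient: the paper's approach is shorter, yours makes the mechanism visible. Two minor points worth tightening: the induction establishing \eqref{eq:Phi-bound} is left as a sketch, and the claim that the $\ell_1$-sum of each polynomial term in the $\pi_j$'s and Kronecker deltas is bounded by a constant depending only on $k$ deserves a careful combinatorial argument (essentially that free indices appearing in a $\pi$ contribute a summing factor $1$, and those appearing in $|\delta_{\cdot\cdot}-\pi_\cdot|$ a factor at most $2$). Also, as you note, the statement's claim that $C_{g,s}$ depends only on $g$ and $s$ implicitly requires $g$ to have bounded derivatives of orders $1,\dots,s$ for the estimate to be uniform in $w$ --- this is satisfied by the $g$ actually used in the proof of Theorem \ref{thm:main}.
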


\begin{proof}
First, note that $\Phi_\beta$ is usually denoted by $F_\beta$ in the literature on the CCK theory. 
Now, \eqref{max-smooth} is stated in e.g.~Eq.(1) of \cite{CCK2015}. 
On the other hand, \eqref{eq:dz} is obtained by applying Lemma 5 in \cite{DZ2017} with $h=g$, $n=1$, $m=s$ and $b=\varepsilon^{-1}$ in their notation. 
\end{proof}
}

\begin{proof}[Proof of Theorem \ref{thm:main}]
First, as is already noted in the above, for the proof it is enough to focus only on the case that $\bs{X}_{n}^{1\cdot}=\cdots=\bs{X}_{n}^{m\cdot}=:X_n$ for every $n\in\mathbb{N}$ due to Lemma \ref{lemma:reduction}. 
Note that in this case we have $\Xi_nz=\Upsilon_n(z\circ X_n)$ for every $z\in\mathbb{R}^d$. 

We turn to the main body of the proof. 
Take a number $\varepsilon>0$ arbitrarily, and set $\beta=\varepsilon^{-1}\log m$. 
\tcr{We define the function $\Phi_\beta:\mathbb{R}^{m}\to\mathbb{R}$ by \eqref{def:Phi}}. 
We also take a $C^\infty$ function $g:\mathbb{R}\to[0,1]$ such that all the derivatives of $g$ \tcr{are} bounded and $g(t)=1$ for $t\leq0$ and $g(t)=0$ for $t\geq1$. 

Now let us fix a vector $y\in\mathbb{R}^{m}$ arbitrarily, and define the functions $\varphi:\mathbb{R}^{m}\to\mathbb{R}$, $\psi:\mathbb{R}^{d}\to\mathbb{R}$ and $f:\mathbb{R}^{2d}\to\mathbb{R}$ by
\begin{align*}
\varphi(w)&=g(\varepsilon^{-1}\Phi_\beta(w-y-\varepsilon)),\qquad w\in\mathbb{R}^{m},\\
\psi(v)&=\varphi(\Upsilon_nv),\qquad v\in\mathbb{R}^d,\\
f(z,x)&=\psi(z\circ x),\qquad z,x\in\mathbb{R}^{d}.
\end{align*}
For any $k,l\in\mathbb{Z}_+$ and any $z,x\in\mathbb{R}^d$, we have 
\begin{align*}
\left\|\partial_z^{\otimes k}\partial_x^{\otimes l}f(z,x)\right\|_{\ell_1}
&=\sum_{i_1,\dots,i_k,j_1,\dots,j_l=1}^d\left|\partial_{z^{i_1}\cdots z^{i_k}}\left(z^{j_1}\cdots z^{j_l}\partial_{v^{j_1}\cdots v^{j_l}}\psi(z\circ x)\right)\right|.
\end{align*}
Applying the Leibniz rule repeatedly (cf.~Proposition 5 of \cite{Hardy2006}), we deduce 
\begin{align*}
\left\|\partial_z^{\otimes k}\partial_x^{\otimes l}f(z,x)\right\|_{\ell_1}
&=\sum_{i_1,\dots,i_k,j_1,\dots,j_l=1}^d\left|\sum_{\mathcal{A}\subset\{1,\dots,k\}}\left(\prod_{a\in\mathcal{A}}\partial_{z^{i_a}}\right)\left(z^{j_1}\cdots z^{j_l}\right)\left(\prod_{a\notin\mathcal{A}}x^{i_a}\right)\left(\prod_{a\notin\mathcal{A}}\partial_{v^{i_a}}\right)\partial_{v^{j_1}\cdots v^{j_l}}\psi(z\circ x)\right|\\
&\leq\sum_{j_1,\dots,j_l=1}^d\sum_{\mathcal{A}\subset\{1,\dots,k\}}\sum_{i_1,\dots,i_k=1}^d\left|\left(\prod_{a\in\mathcal{A}}\partial_{z^{i_a}}\right)\left(z^{j_1}\cdots z^{j_l}\right)\left(\prod_{a\notin\mathcal{A}}x^{i_a}\right)\left(\prod_{a\notin\mathcal{A}}\partial_{v^{i_a}}\right)\partial_{v^{j_1}\cdots v^{j_l}}\psi(z\circ x)\right|.
\end{align*}
Now let us fix a subset $\mathcal{A}$ of $\{1,\dots,k\}$. Let $r$ be the number of elements of $\mathcal{A}$ and we write $\mathcal{A}=\{a_1,\dots,a_r\}$ and $\{1,\dots,k\}\setminus\mathcal{A}=\{b_1,\dots,b_{k-r}\}$. Assume $1\leq r\leq l$. Then, by Lemma \ref{z-leibniz} we obtain
\if0
\begin{align*}
\partial_{z^{i_{a_1}}\cdots z^{i_{a_r}}}^r\left(z^{j_1}\cdots z^{j_l}\right)
=\sum_{\begin{subarray}{c}
c_1,\dots,c_r=1\\
c_s\neq c_t
\end{subarray}}^l\prod_{s=1}^r1_{\{j_{c_s}=i_{a_s}\}}\prod_{b\neq c_1,\dots,c_r}z^{j_b}
\end{align*}
\fi
\begin{align*}
&\sum_{i_1,\dots,i_k=1}^d\left|\left(\prod_{a\in\mathcal{A}}\partial_{z^{i_a}}\right)\left(z^{j_1}\cdots z^{j_l}\right)\left(\prod_{a\notin\mathcal{A}}x^{i_a}\right)\left(\prod_{a\notin\mathcal{A}}\partial_{v^{i_a}}\right)\partial_{v^{j_1}\cdots v^{j_l}}\psi(z\circ x)\right|\\
&=\sum_{i_1,\dots,i_k=1}^d\left|\left(\sum_{\begin{subarray}{c}
c_1,\dots,c_r=1\\
c_s\neq c_t
\end{subarray}}^l\prod_{s=1}^r1_{\{j_{c_s}=i_{a_s}\}}\prod_{b\neq c_1,\dots,c_r}z^{j_b}\right)\left(\prod_{t=1}^{k-r}x^{i_{b_t}}\right)\partial_{v^{i_{b_1}}\cdots v^{i_{b_{k-r}}}v^{j_1}\cdots v^{j_l}}\psi(z\circ x)\right|\\
&=\sum_{i_{a_1},\dots,i_{a_r}=1}^d\left|\sum_{\begin{subarray}{c}
c_1,\dots,c_r=1\\
c_s\neq c_t
\end{subarray}}^l\prod_{s=1}^r1_{\{j_{c_s}=i_{a_s}\}}\prod_{b\neq c_1,\dots,c_r}z^{j_b}\right|
\sum_{i_{b_1},\dots,i_{b_{k-r}}=1}^d\left|\left(\prod_{t=1}^{k-r}x^{i_{b_t}}\right)\partial_{v^{i_{b_1}}\cdots v^{i_{b_{k-r}}}v^{j_1}\cdots v^{j_l}}\psi(z\circ x)\right|\\
&\leq\tcr{\frac{l!}{(l-r)!}}\|z\|_{\ell_\infty}^{l-r}\|x\|_{\ell_\infty}^{k-r}\left\|\partial_v^{\otimes(k-r)}\partial_{v^{j_1}\cdots v^{j_l}}\psi(z\circ x)\right\|_{\ell_1}.
\end{align*}
Note that the above inequality evidently holds true if $\mathcal{A}=\emptyset$. Moreover, we obviously have 
$
\left(\prod_{a\in\mathcal{A}}\partial_{z^{i_a}}\right)\left(z^{j_1}\cdots z^{j_l}\right)=0
$ 
if $r>l$. Consequently, we infer that
\begin{align*}
\left\|\partial_z^{\otimes k}\partial_x^{\otimes l}f(z,x)\right\|_{\ell_1}
&\leq\sum_{r=0}^{k\wedge l}\tcr{r!}\binom{k}{r}\binom{l}{r}\|z\|_{\ell_\infty}^{l-r}\|x\|_{\ell_\infty}^{k-r}\left\|\partial_v^{\otimes(k+l-r)}\psi(z\circ x)\right\|_{\ell_1}.
\end{align*}
Meanwhile, we can easily verify that
\[
\partial_{v^{i_1}\dots v^{i_s}}\psi(v)=\sum_{j_1,\dots,j_s=1}^m\partial_{w^{j_1}\dots w^{j_s}}\varphi(\Upsilon_nv)\Upsilon_n^{j_1i_1}\cdots\Upsilon_n^{j_si_s}
\]
for any $s\in\mathbb{N}$ and $i_1,\dots,i_s\in\{1,\dots,d\}$. Hence we have
\begin{align*}
\|\partial_v^{\otimes s}\psi(v)\|_{\ell_1}
\leq\left(\max_{1\leq j\leq m}\sum_{i=1}^d|\Upsilon_n^{ji}|\right)^s\sum_{j_1,\dots,j_s=1}^m|\partial_{w^{j_1}\dots w^{j_s}}\varphi(\Upsilon_nv)|
=\opnorm{\Upsilon_n}_\infty^s\|\partial_w^{\otimes s}\varphi(\Upsilon_nv)\|_{\ell_1}.
\end{align*}
Now, by \tcr{\eqref{eq:dz}} it holds that
\[
\|\partial_w^{\otimes s}\varphi(w)\|_{\ell_1}\leq C_{g,s}\varepsilon^{-s}(\log m)^{s-1}
\]
for all $w\in\mathbb{R}^{m}$, where $C_{g,s}>0$ is a constant which depends only on $g$ and $s$. 
Therefore, we obtain
\begin{align*}
\left\|\partial_z^{\otimes k}\partial_x^{\otimes l}f(z,x)\right\|_{\ell_1}
&\leq c_{g,k,l}\sum_{r=0}^{k\wedge l}\tcr{r!}\binom{k}{r}\binom{l}{r}\|z\|_{\ell_\infty}^{l-r}\|x\|_{\ell_\infty}^{k-r}\opnorm{\Upsilon_n}_\infty^{k+l-r}\varepsilon^{-(k+l-r)}(\log m)^{k+l-r-1}\\
&\leq c'_{g,k,l}\left(\|z\|_{\ell_\infty}\vee1\right)^{l}\left(\|x\|_{\ell_\infty}\vee1\right)^k\tcr{\opnorm{\Upsilon_n}_\infty^{k+l}}\tcr{\varepsilon_1^{-(k+l)}}(\log m)^{k+l-1},
\end{align*}
where \tcr{$\varepsilon_1:=\varepsilon\wedge1$ and} $c_{g,k,l},c_{g,k,l}'>0$ are constants which depend only on $g$ and $k,l$ \tcr{(recall $\opnorm{\Upsilon_n}_\infty\geq1$ by assumption)}. 
We especially infer that all the partial derivatives of $f$ are of polynomial growth. Therefore, \tcr{noting that $\Delta_{n,j}(\nu)=0$ when $\nu\notin\bigcup_{\alpha\in\mathcal{A}(q_j)}\mathcal{N}_4^*(\alpha)$,} by \eqref{tensor:holder} and Lemma \ref{interpolation} we obtain
\begin{align*}
\eta_n(\varepsilon)&:=|E[f(Z_n,X_n)]-E[f(\mathfrak{Z}_n,X_n)]|\\
&\tcr{\leq\int_0^1E\left[\|\Delta_n\|_{\ell_\infty}\sum_{i,j=1}^{d}\left|\partial_{z_i}\partial_{z_j}f(\theta Z_n+\sqrt{1-\theta^2}\mathfrak{Z}_n,X_n)\right|\right]d\theta}\\
&\quad\tcr{+K_{\ol{q}}\sum_{j=1}^d\sum_{\alpha\in\mathcal{A}(q_j)}\sum_{\nu\in\mathcal{N}_{4}^*(\alpha)}\int_0^1E\left[\|\Delta_{n,j}(\nu)\|_{\ell_\infty}\|\partial_z^{\otimes|\nu|_*}\partial_x^{\otimes|\nu_{\cdot 4}|}\partial_{z_j}f(\theta Z_n+\sqrt{1-\theta^2}\mathfrak{Z}_n,X_n)\|_{\ell_1}\right]d\theta}\\
&\leq\int_0^1E\left[\|X_n\|_{\ell_\infty}^2\opnorm{\Upsilon_n}_\infty^2\|\Delta_n\|_{\ell_\infty}\sum_{i,j=1}^{\tcr{m}}\left|\partial_{w_i}\partial_{w_j}\varphi(X_n\circ(\theta Z_n+\sqrt{1-\theta^2}\mathfrak{Z}_n))\right|\right]d\theta\\
&\tcr{\quad+K_{\ol{q}}\sum_{\alpha\in\ol{\mathcal{A}}(\ol{q})}\sum_{\nu\in\mathcal{N}_{4}^*(\alpha)}\int_0^1E\left[\max_{1\leq j\leq d}\|\Delta_{n,j}(\nu)\|_{\ell_\infty}\|\partial_z^{\otimes(|\nu|_*+1)}\partial_x^{\otimes|\nu_{\cdot 4}|}f(\theta Z_n+\sqrt{1-\theta^2}\mathfrak{Z}_n,X_n)\|_{\ell_1}\right]d\theta}\\
&\leq c''_{g,\overline{q}}\tcr{\varepsilon_1^{-2}}(\log m)\opnorm{\Upsilon_n}_\infty^2E\left[\|X_n\|_{\ell_\infty}^2\|\Delta_n\|_{\ell_\infty}\right]\\
&\quad+c''_{g,\overline{q}}\sum_{\alpha\in\tcr{\ol{\mathcal{A}}}(\overline{q})}\sum_{\nu\in\mathcal{N}_{4}^*(\alpha)}\tcr{\varepsilon_1^{-|\nu|_{**}-1}}(\log m)^{|\nu|_{**}}\opnorm{\Upsilon_n}_\infty^{|\nu|_{**}+1}
E\left[\left(1+\|X_n\|_{\ell_\infty}^{|\nu|_*+1}\right)\left(1+\|Z_n\|_{\ell_\infty}^{|\nu_{\cdot 4}|}+\|\mathfrak{Z}_n\|_{\ell_\infty}^{|\nu_{\cdot 4}|}\right)\max_{1\leq j\leq d}\|\Delta_{n,j}(\nu)\|_{\ell_\infty}\right],
\end{align*}
where \tcr{$K_{\ol{q}}>0$ depends only on $\ol{q}$ and $c''_{g,\ol{q}}>0$ depends only on $g$ and $\overline{q}$}. 
Now we have
\begin{align*}
P(\Xi_nZ_n\leq y)&\leq P(\Phi_\beta(\Upsilon_n(Z_n\circ X_n)-y-\varepsilon)\leq0)~(\because\text{Eq.\eqref{max-smooth}})\\
&\leq E[f(Z_n,X_n)]
\leq E[f(\mathfrak{Z}_n,X_n)]+\eta_n(\varepsilon)\\
&\leq P(\Phi_\beta(\Upsilon_n(\mathfrak{Z}_n\circ X_n)-y-\varepsilon)<\varepsilon)+\eta_n(\varepsilon)~(\because\text{the definition of $g$})\\
&\leq P(\Xi_n\mathfrak{Z}_n\leq y+2\varepsilon)+\eta_n(\varepsilon)~(\because\text{Eq.\eqref{max-smooth}}).
\end{align*}
\if0
Note that the $\mathcal{F}$-conditional covariance matrix of $\Upsilon_n(\mathfrak{Z}_n\circ X_n)$ is given by $\Gamma_n:=\Upsilon_n(X_n^{\otimes2}\circ \mathfrak{C}_n)\Upsilon_n^\top$. Then, for any $b>0$, we have on the set $\{\diag(\Gamma_n)\geq b\}$
\begin{equation}\label{eq:nazarov}
P(\Upsilon_n(\mathfrak{Z}_n\circ X_n)\leq y+2\varepsilon|\mathcal{F})
\leq P(\Upsilon_n(\mathfrak{Z}_n\circ X_n)\leq y|\mathcal{F})+\frac{2\varepsilon}{b}(\sqrt{2\log m}+2)
\end{equation}
by the Nazarov inequality (cf.~Theorem 1 of \cite{CCK2017nazarov}). 
\fi
Set $\Gamma_n:=\Xi_n\mathfrak{C}_n\Xi_n^\top$. Then, by Lemma \ref{mixed-nazarov} we obtain
\begin{align*}
P(\Xi_nZ_n\leq y)
&\leq P(\min\diag(\Gamma_n)<b)
+P(\Xi_n\mathfrak{Z}_n\leq y)+\frac{2\varepsilon}{\sqrt{b}}(\sqrt{2\log m}+2)+\eta_n(\varepsilon)
\end{align*}
for every $b>0$. 
By an analogous argument we also obtain
\begin{align*}
P(\Xi_nZ_n\leq y)
&\geq P(\min\diag(\Gamma_n)<b)
-P(\Xi_n\mathfrak{Z}_n\leq y)-\frac{2\varepsilon}{\sqrt{b}}(\sqrt{2\log m}+2)-\eta_n(\varepsilon).
\end{align*}
Therefore, we conclude that
\begin{align*}
\sup_{y\in\mathbb{R}^m}\left|P(\Xi_nZ_n\leq y)-P(\Xi_n\mathfrak{Z}_n\leq y)\right|
&\leq P(\min\diag(\Gamma_n)<b)
+\frac{2\varepsilon}{\sqrt{b}}(\sqrt{2\log m}+2)+\eta_n(\varepsilon).
\end{align*}
Taking
\begin{multline*}
\varepsilon=\left(\sqrt{\log m}\opnorm{\Upsilon_n}_\infty^2E\left[\|X_n\|_{\ell_\infty}^2\|\Delta_n\|_{\ell_\infty}\right]\right)^{1/3}\vee\\
\max_{\nu\in\ol{\mathcal{N}}_4^*(\ol{q})}\left\{(\log m)^{|\nu|_{**}-\frac{1}{2}}\opnorm{\Upsilon_n}_\infty^{|\nu|_{**}+1}E\left[(1+\|X_n\|_{\ell_\infty}^{|\nu|_*+1})\left(1+\|Z_n\|_{\ell_\infty}^{|\nu_{\cdot 4}|}+\|\mathfrak{Z}_n\|_{\ell_\infty}^{|\nu_{\cdot 4}|}\right)\max_{1\leq j\leq d}\|\Delta_{n,j}(\nu)\|_{\ell_\infty}\right]\right\}^{\frac{1}{|\nu|_{**}+2}},
\end{multline*}
we obtain
\begin{align*}
\limsup_{n\to\infty}\sup_{y\in\mathbb{R}^m}\left|P(\Xi_nZ_n\leq y)-P(\Xi_n\mathfrak{Z}_n\leq y)\right|
&\leq \limsup_{n\to\infty}P(\min\diag(\Gamma_n)<b)
\end{align*}
by assumption. Letting $b\to0$, we complete the proof. 
\end{proof}

\subsection{Proof of Lemma \ref{lemma:approx}}

Take a number $\varepsilon>0$ arbitrarily. For any $y\in\mathbb{R}^m$, we have
\begin{align*}
P(Y_n\leq y)
&\leq P(\sqrt{\log m}\|Y_n-\Xi_nZ_n\|_{\ell_\infty}>\varepsilon)
+P(\Xi_nZ_n\leq y+\varepsilon/\sqrt{\log m})\\
&\leq P(\sqrt{\log m}\|Y_n-\Xi_nZ_n\|_{\ell_\infty}>\varepsilon)
+P(\Xi_n\mathfrak{Z}_n\leq y+\varepsilon/\sqrt{\log m})+\rho_n,
\end{align*}
where
\[
\rho_n:=\sup_{y\in\mathbb{R}^m}|P(\Xi_nZ_n\leq y)-P(\Xi_n\mathfrak{Z}_n\leq y)|.
\]
Therefore, Lemma \ref{mixed-nazarov} yields
\begin{multline*}
P(Y_n\leq y)
\leq P(\sqrt{\log m}\|Y_n-\Xi_nZ_n\|_{\ell_\infty}>\varepsilon)
+P(\Xi_n\mathfrak{Z}_n\leq y)\\
+\frac{2\varepsilon}{\sqrt{b\log m}}(\sqrt{2\log m}+2)+P(\min\diag(\Gamma_n)<b)
+\rho_n
\end{multline*}
for any $b>0$. An analogous argument yields
\begin{multline*}
P(Y_n\leq y)
\geq -P(\sqrt{\log m}\|Y_n-\Xi_nZ_n\|_{\ell_\infty}>\varepsilon)
+P(\Xi_n\mathfrak{Z}_n\leq y)\\
-\frac{2\varepsilon}{\sqrt{b\log m}}(\sqrt{2\log m}+2)+P(\min\diag(\Gamma_n)<b)
-\rho_n,
\end{multline*}
\tcr{so} we conclude that
\begin{multline*}
\sup_{y\in\mathbb{R}^m}|P(Y_n\leq y)-P(\Xi_n\mathfrak{Z}_n\leq y)|
\leq P(\sqrt{\log m}\|Y_n-\Xi_nZ_n\|_{\ell_\infty}>\varepsilon)\\
+\frac{2\varepsilon}{\sqrt{b\log m}}(\sqrt{2\log m}+2)+P(\min\diag(\Gamma_n)<b)
+\rho_n.
\end{multline*}
Now, by assumption we obtain
\[
\limsup_{n\to\infty}\sup_{y\in\mathbb{R}^m}|P(Y_n\leq y)-P(\Xi_n\mathfrak{Z}_n\leq y)|
\leq \frac{2\varepsilon}{\sqrt{b}}\left(\sqrt{2}+\frac{2}{\sqrt{\log 2}}\right)+\limsup_{n\to\infty}P(\min\diag(\Gamma_n)<b). 
\]
We first let $\varepsilon\to0$. After that, we let $b\to0$. Then we conclude that
\[
\limsup_{n\to\infty}\sup_{y\in\mathbb{R}^m}|P(Y_n\leq y)-P(\Xi_n\mathfrak{Z}_n\leq y)|
=0. 
\]
This completes the proof.\hfill\qed

\if0
faa di bruno
\begin{align*}
D^q\varphi(F)=\sum_{\alpha\in\mathcal{A}(q)}\frac{q!}{\prod_{i=1}^{q}(i!)^{\alpha_i}\alpha_i!}\varphi^{|\alpha|}(F)\bigotimes_{i=1}^{q}(D^iF)^{\otimes \alpha_{i}}
\end{align*}

\[
\mathcal{A}(q)=\{\alpha\in\mathbb{Z}_+^q:\alpha_1+2\alpha_2+\cdots+q\alpha_q=q\}
\]

\begin{align*}
D^qe^{\lambda(\theta;\mathsf{z},\mathsf{x})}
&=\sum_{\alpha\in\mathcal{A}(q)}\frac{q!}{\prod_{i=1}^{q}(i!)^{\alpha_i}\alpha_i!}e^{\lambda(\theta;\mathsf{z},\mathsf{x})}\bigotimes_{i=1}^{q}(D^i\lambda(\theta;\mathsf{z},\mathsf{x}))^{\otimes \alpha_{i}}\\
&=e^{\lambda(\theta;\mathsf{z},\mathsf{x})}D^q\lambda(\theta;\mathsf{z},\mathsf{x})
+\sum_{\alpha\in\mathcal{A}_0(q)}\frac{q!}{\prod_{i=1}^{q}(i!)^{\alpha_i}\alpha_i!}e^{\lambda(\theta;\mathsf{z},\mathsf{x})}\bigotimes_{i=1}^{q}(D^i\lambda(\theta;\mathsf{z},\mathsf{x}))^{\otimes \alpha_{i}}
\end{align*}

\[
\mathcal{A}_0(q)=\{\alpha\in\mathbb{Z}_+^q:\alpha_1+2\alpha_2+\cdots+q\alpha_q=q,\alpha_q=0\}
\]

multinomial formula
\begin{align*}
(D^i\lambda(\theta;\mathsf{z},\mathsf{x}))^{\otimes \alpha_i}
=\sum_{|\beta|=\alpha_i}\frac{\alpha_i!}{\beta!}(\theta\mathsf{i}\mathsf{z}^\top D^iZ)^{\otimes\beta_1}\otimes(-2^{-1}(1-\theta^2)\mathsf{z}^\top D^iG\mathsf{z})^{\otimes\beta_2}\otimes(\mathsf{i}\mathsf{x}^\top D^iX)^{\otimes\beta_3}
\end{align*}

\[
\varphi(\theta,\lambda)
=E\left[e^{\mathsf{i}\theta\lambda^\top F-\frac{1-\theta^2}{2}\lambda^\top\mathfrak{C}\lambda}\right]
\]

\[
\frac{\partial\varphi}{\partial\theta}(\theta,\lambda)
=E\left[e^{\mathsf{i}\theta\lambda^\top F-\frac{1-\theta^2}{2}\lambda^\top\mathfrak{C}\lambda}\left(\mathsf{i}\lambda^\top F+\theta\lambda^\top\mathfrak{C}\lambda\right)\right]
\]

\begin{align*}
E\left[e^{\mathsf{i}\theta\lambda^\top F-\frac{1-\theta^2}{2}\lambda^\top\mathfrak{C}\lambda}F_j\right]
=E\left[(D^{q_j}e^{\mathsf{i}\theta\lambda^\top F-\frac{1-\theta^2}{2}\lambda^\top\mathfrak{C}\lambda})u_j\right]
\end{align*}

Chain rule and Faa di Bruno
\begin{align*}
&D^{q_j}e^{\mathsf{i}\theta\lambda^\top F-\frac{1-\theta^2}{2}\lambda^\top\mathfrak{C}\lambda}\\
&=\sum_{\bs{k}\in\mathcal{K}(q_j)}\frac{q_j!}{\prod_{i=1}^{q_j}(i!)^{k_i}\prod_{l=1}^dk_l!}e^{\mathsf{i}\theta\lambda^\top F-\frac{1-\theta^2}{2}\lambda^\top \mathfrak{C}\lambda}\bigotimes_{i=1}^{q_j}(D^i(\mathsf{i}\theta\lambda^\top F-\frac{1-\theta^2}{2}\lambda^\top\mathfrak{C}\lambda))^{\otimes k_{i}}\\
&=e^{\mathsf{i}\theta\lambda^\top F-\frac{1-\theta^2}{2}\lambda^\top \mathfrak{C}\lambda}D^{q_j}(\mathsf{i}\theta\lambda^\top F-\frac{1-\theta^2}{2}\lambda^\top\mathfrak{C}\lambda)\\
&+e^{\mathsf{i}\theta\lambda^\top F-\frac{1-\theta^2}{2}\lambda^\top \mathfrak{C}\lambda}\sum_{\bs{k}\in\mathcal{K}_0(q_j)}\frac{q_j!}{\prod_{i=1}^{q_j}(i!)^{k_i}\prod_{l=1}^dk_l!}\bigotimes_{i=1}^{q_j}(D^i(\mathsf{i}\theta\lambda^\top F-\frac{1-\theta^2}{2}\lambda^\top\mathfrak{C}\lambda))^{\otimes k_{i}}
\end{align*}

\begin{align*}
(D^i(\mathsf{i}\theta\lambda^\top F-\frac{1-\theta^2}{2}\lambda^\top\mathfrak{C}\lambda))^{\otimes \alpha_{i}}
=\sum_{k=0}^{\alpha_i}\binom{\alpha_i}{k}(\mathsf{i}\theta)^k\left(\frac{1-\theta^2}{2}\right)^{\alpha_i-k}(\lambda^\top D^iF)^{\otimes k}(\lambda^\top D^i\mathfrak{C}\lambda)^{\alpha_i-k}
\end{align*}
\fi

\subsection{Proof of Proposition \ref{prop:comparison}}

The proof is analogous to that of Theorem 2 from \cite{CCK2015} (see also the proof of Theorem 4.1 from \cite{CCK2017}). 
Setting 
\begin{align*}
\Gamma_n:=\Xi_n\mathfrak{C}_n\Xi_n^\top,\qquad 
\widehat{\Gamma}_n:=\wh{\Xi}_n\widehat{\mathfrak{C}}_n\wh{\Xi}_n^\top,\qquad
\mu_n:=\Xi_nW_n,\qquad
\wh{\mu}_n:=\wh{\Xi}_n\wh{W}_n,
\end{align*}
we have
\[
P(\Xi_n\mathfrak{Z}_n\leq y|\mathcal{F})=P(\Gamma_n^{1/2}\xi_n+\mu_n\leq y|\mathcal{F}),\qquad
P(\wh{\Xi}_n\widehat{\mathfrak{Z}}_n\leq y|\mathcal{F})=P(\widehat{\Gamma}_n^{1/2}\xi_n'+\wh{\mu}_n\leq y|\mathcal{F})
\]
for all $y\in\mathbb{R}^m$, where $\xi_n$ and $\xi_n'$ are two independent $m$-dimensional standard Gaussian vectors jointly independent of $\mathcal{F}$. Therefore, it is enough to prove
\[
\sup_{y\in\mathbb{R}^m}|P(\widehat{\Gamma}_n^{1/2}\xi_n'+\wh{\mu}_n\leq y|\mathcal{F})-P(\Gamma_n^{1/2}\xi_n+\mu_n\leq y|\mathcal{F})|\to^p0
\]
as $n\to\infty$. In addition, thanks to the condition \eqref{diag-tight}, it suffices to prove the above convergence on the set $\Omega_b:=\{\diag(\Gamma_n)\geq b\}$ for an arbitrarily fixed $b>0$. More precisely, it is enough to prove
\begin{equation*}
P\left(\Omega_b\cap\left\{\sup_{y\in\mathbb{R}^m}|P(\widehat{\Gamma}_n^{1/2}\xi_n'+\wh{\mu}_n\leq y|\mathcal{F})-P(\Gamma_n^{1/2}\xi_n+\mu_n\leq y|\mathcal{F})|>\eta\right\}\right)\to0
\end{equation*}
as $n\to\infty$ for any $\eta>0$. 

We first prove
\begin{equation}\label{comp-aim1}
P\left(\Omega_b\cap\left\{\sup_{y\in\mathbb{R}^m}|P(\Gamma_n^{1/2}\xi_n+\wh{\mu}_n\leq y|\mathcal{F})-P(\Gamma_n^{1/2}\xi_n+\mu_n\leq y|\mathcal{F})|>\eta\right\}\right)\to0
\end{equation}
as $n\to\infty$ for any $\eta>0$. By Nazarov's inequality we have
\[
|P(\Gamma_n^{1/2}\xi_n+\wh{\mu}_n\leq y|\mathcal{F})-P(\Gamma_n^{1/2}\xi_n+\mu_n\leq y|\mathcal{F})|
\leq\frac{\|\wh{\mu}_n-\mu_n\|_{\ell_\infty}}{\sqrt{b}}\left(\sqrt{2\log m}+2\right)
\]
a.s.~on the set $\Omega_b$ for every $y\in\mathbb{R}^m$. Since the function $\tcr{y}\mapsto|P(\Gamma_n^{1/2}\xi_n+\wh{\mu}_n\leq y|\mathcal{F})-P(\Gamma_n^{1/2}\xi_n+\mu_n\leq y|\mathcal{F})|$ is a.s.~right-continuous, the above result yields
\[
\sup_{y\in\mathbb{R}^m}|P(\Gamma_n^{1/2}\xi_n+\wh{\mu}_n\leq y|\mathcal{F})-P(\Gamma_n^{1/2}\xi_n+\mu_n\leq y|\mathcal{F})|
\leq\frac{\|\wh{\mu}_n-\mu_n\|_{\ell_\infty}}{\sqrt{b}}\left(\sqrt{2\log m}+2\right)
\]
a.s.~on the set $\Omega_b$. Hence \eqref{comp-aim1} follows from the assumption \eqref{eq:consistent}. 

Thanks to \eqref{comp-aim1}, it suffices to prove
\begin{equation*}
P\left(\Omega_b\cap\left\{\sup_{y\in\mathbb{R}^m}|P(\widehat{\Gamma}_n^{1/2}\xi_n'+\wh{\mu}_n\leq y|\mathcal{F})-P(\Gamma_n^{1/2}\xi_n+\wh{\mu}_n\leq y|\mathcal{F})|>\eta\right\}\right)\to0
\end{equation*}
as $n\to\infty$ for any $\eta>0$. However, since we have
\[
\sup_{y\in\mathbb{R}^m}|P(\widehat{\Gamma}_n^{1/2}\xi_n'+\wh{\mu}_n\leq y|\mathcal{F})-P(\Gamma_n^{1/2}\xi_n+\wh{\mu}_n\leq y|\mathcal{F})|
=\sup_{y\in\mathbb{R}^m}|P(\widehat{\Gamma}_n^{1/2}\xi_n'\leq y|\mathcal{F})-P(\Gamma_n^{1/2}\xi_n\leq y|\mathcal{F})|,
\]
this amounts to proving
\begin{equation}\label{comp-aim2}
P\left(\Omega_b\cap\left\{\sup_{y\in\mathbb{R}^m}|P(\widehat{\Gamma}_n^{1/2}\xi_n'\leq y|\mathcal{F})-P(\Gamma_n^{1/2}\xi_n\leq y|\mathcal{F})|>\eta\right\}\right)\to0
\end{equation}
as $n\to\infty$ for any $\eta>0$. To prove this claim, we take a number $\varepsilon>0$ arbitrarily and set $\beta=\varepsilon^{-1}\log m$ as in the proof of Theorem \ref{thm:main}. 
Then we define the function $\Phi_\beta:\mathbb{R}^{m}\to\mathbb{R}$ by \eqref{def:Phi}. 
We also take a $C^\infty$ function $g:\mathbb{R}\to[0,1]$ such that all the derivatives of $g$ is bounded and $g(t)=1$ for $t\leq0$ and $g(t)=0$ for $t\geq1$. 

Fix a vector $y\in\mathbb{R}^{m}$ arbitrarily and define the function $\varphi:\mathbb{R}^{m}\to\mathbb{R}$ by
\begin{align*}
\varphi(w)&=g(\varepsilon^{-1}\Phi_\beta(w-y-\varepsilon)),\qquad w\in\mathbb{R}^{m}.
\end{align*}
Then we define the stochastic process $\Psi=(\Psi(t))_{t\in[0,1]}$ by
\[
\Psi(t)=E\left[\varphi\left(\sqrt{t}\widehat{\Gamma}_n^{1/2}\xi_n'+\sqrt{1-t}\Gamma_n^{1/2}\xi_n\right)|\mathcal{F}\right],\qquad t\in[0,1].
\]
We evidently have
\[
\frac{d\Psi(t)}{dt}=E\left[\partial_{w}^{\otimes1}\varphi\left(\sqrt{t}\widehat{\Gamma}_n^{1/2}\xi_n'+\sqrt{1-t}\Gamma_n^{1/2}\xi_n\right)\left[\frac{\widehat{\Gamma}_n^{1/2}\xi_n'}{\sqrt{t}}-\frac{\Gamma_n^{1/2}\xi_n}{\sqrt{1-t}}\right]|\mathcal{F}\right]\qquad \text{for all }t\in(0,1)
\]
with probability one. Then, Stein's identity yields
\[
\frac{d\Psi(t)}{dt}=E\left[\partial_{w}^{\otimes2}\varphi\left(\sqrt{t}\widehat{\Gamma}_n^{1/2}\xi_n'+\sqrt{1-t}\Gamma_n^{1/2}\xi_n\right)\left[\widehat{\Gamma}_n-\Gamma_n\right]|\mathcal{F}\right]\qquad \text{for all }t\in(0,1)
\]
with probability one. Consequently, we obtain
\begin{align*}
\left|E\left[\varphi\left(\widehat{\Gamma}_n^{1/2}\xi_n'\right)|\mathcal{F}\right]-E\left[\varphi\left(\Gamma_n^{1/2}\xi_n\right)|\mathcal{F}\right]\right|
&\leq\int_0^1\left|\frac{d\Psi(t)}{dt}\right|dt\\
&\leq C\varepsilon^{-2}(\log m)\|\widehat{\Gamma}_n-\Gamma_n\|_{\ell_\infty}
\end{align*}
by Lemmas 3--4 of \cite{CCK2015}, where $C>0$ is a constant which depends only on $g$. 
Now we have
\begin{align*}
P\left(\widehat{\Gamma}_n^{1/2}\xi_n'\leq y|\mathcal{F}\right)&\leq P\left(\Phi_\beta(\widehat{\Gamma}_n^{1/2}\xi_n'-y-\varepsilon)\leq0|\mathcal{F}\right)~(\because\text{Eq.\eqref{max-smooth}})\\
&\leq E\left[\varphi\left(\widehat{\Gamma}_n^{1/2}\xi_n'\right)|\mathcal{F}\right]
\leq E\left[\varphi\left(\Gamma_n^{1/2}\xi_n\right)|\mathcal{F}\right]+C\varepsilon^{-2}(\log m)\|\widehat{\Gamma}_n-\Gamma_n\|_{\ell_\infty}\\
&\leq P\left(\Phi_\beta(\Gamma_n^{1/2}\xi_n-y-\varepsilon)<\varepsilon|\mathcal{F}\right)+C\varepsilon^{-2}(\log m)\|\widehat{\Gamma}_n-\Gamma_n\|_{\ell_\infty}~(\because\text{the definition of $g$})\\
&\leq P\left(\Gamma_n^{1/2}\xi_n\leq y+2\varepsilon\right)+C\varepsilon^{-2}(\log m)\|\widehat{\Gamma}_n-\Gamma_n\|_{\ell_\infty}~(\because\text{Eq.\eqref{max-smooth}}).
\end{align*}
Since we have on the set $\Omega_b$
\begin{equation*}
P(\Gamma_n^{1/2}\xi_n\leq y+2\varepsilon|\mathcal{F})
\leq P(\Gamma_n^{1/2}\xi_n\leq y|\mathcal{F})+\frac{2\varepsilon}{\sqrt{b}}(\sqrt{2\log m}+2)
\end{equation*}
by the Nazarov inequality, we obtain
\begin{align*}
P\left(\widehat{\Gamma}_n^{1/2}\xi_n'\leq y|\mathcal{F}\right)
&\leq P(\Gamma_n^{1/2}\xi_n\leq y|\mathcal{F})+\frac{2\varepsilon}{\sqrt{b}}(\sqrt{2\log m}+2)+C\varepsilon^{-2}(\log m)\|\widehat{\Gamma}_n-\Gamma_n\|_{\ell_\infty}
\end{align*}
a.s.~on the set $\Omega_b$. By an analogous argument we also obtain
\begin{align*}
P\left(\widehat{\Gamma}_n^{1/2}\xi_n'\leq y|\mathcal{F}\right)
&\geq P(\Gamma_n^{1/2}\xi_n\leq y|\mathcal{F})-\frac{2\varepsilon}{\sqrt{b}}(\sqrt{2\log m}+2)-C\varepsilon^{-2}(\log m)\|\widehat{\Gamma}_n-\Gamma_n\|_{\ell_\infty}
\end{align*}
a.s.~on the set $\Omega_b$. Therefore, we conclude that
\begin{align*}
\left|P\left(\widehat{\Gamma}_n^{1/2}\xi_n'\leq y|\mathcal{F}\right)-P(\Gamma_n^{1/2}\xi_n\leq y|\mathcal{F})\right|
\leq \frac{2\varepsilon}{b}(\sqrt{2\log m}+2)+C\varepsilon^{-2}(\log m)\|\widehat{\Gamma}_n-\Gamma_n\|_{\ell_\infty}
\end{align*}
a.s.~on the set $\Omega_b$. Since the function $y\mapsto \left|P\left(\widehat{\Gamma}_n^{1/2}\xi_n'\leq y|\mathcal{F}\right)-P(\Gamma_n^{1/2}\xi_n\leq y|\mathcal{F})\right|$ is a.s.~right-continuous, the above result implies that
\begin{align*}
\sup_{y\in\mathbb{R}^m}\left|P\left(\widehat{\Gamma}_n^{1/2}\xi_n'\leq y|\mathcal{F}\right)-P(\Gamma_n^{1/2}\xi_n\leq y|\mathcal{F})\right|
\leq \frac{2\varepsilon}{b}(\sqrt{2\log m}+2)+C\varepsilon^{-2}(\log m)\|\widehat{\Gamma}_n-\Gamma_n\|_{\ell_\infty}
\end{align*}
a.s.~on the set $\Omega_b$. Hence we deduce
\begin{align*}
&P\left(\Omega_b\cap\left\{\sup_{y\in\mathbb{R}^m}|P(\widehat{\Gamma}_n^{1/2}\xi_n'\leq y|\mathcal{F})-P(\Gamma_n^{1/2}\xi_n\leq y|\mathcal{F})|>\eta\right\}\right)\\
&\leq P\left(\frac{2\varepsilon}{b}(\sqrt{2\log m}+2)+C\varepsilon^{-2}(\log m)\|\widehat{\Gamma}_n-\Gamma_n\|_{\ell_\infty}>\eta\right)
\end{align*}
for all $n\in\mathbb{N}$. Now, take a number $a>0$ such that $\frac{2a}{b}(\sqrt{2}+2/\sqrt{\log 2})\leq\frac{\eta}{2}$ and set $\varepsilon=a/\sqrt{\log m}$. Then the above inequality yields 
\begin{align*}
&P\left(\Omega_b\cap\left\{\sup_{y\in\mathbb{R}^m}|P(\widehat{\Gamma}_n^{1/2}\xi_n'\leq y|\mathcal{F})-P(\Gamma_n^{1/2}\xi_n\leq y|\mathcal{F})|>\eta\right\}\right)
\leq P\left(\frac{C}{a}(\log m)^2\|\widehat{\Gamma}_n-\Gamma_n\|_{\ell_\infty}>\frac{\eta}{2}\right).
\end{align*}
Therefore, 
\if0
noting that the inequality
\begin{align*}
\|\widehat{\Gamma}_n-\Gamma_n\|_{\ell_\infty}
\leq\opnorm{\Upsilon_n}_\infty^2
\left(\|\wh{\bs{X}}_n\|_{\ell_\infty}^2\|\widehat{\mathfrak{C}}_n-\mathfrak{C}_n\|_{\ell_\infty}
+\|\wh{\bs{X}}_n-\bs{X}_n\|_{\ell_\infty}(\|\wh{\bs{X}}_n\|_{\ell_\infty}+\|\bs{X}_n\|_{\ell_\infty})\|\mathfrak{C}_n\|_{\ell_\infty}\right),
\end{align*}
we obtain the desired result from the condition \eqref{eq:consistent}.
\fi
\eqref{comp-aim2} follows from the assumption \eqref{eq:consistent}, which yields the desired result. 
\hfill\qed

\subsection{Proof of Proposition \ref{prop:quantile}}

We follow Step 3 in the proof of Theorem 2 from \cite{KS2016}. First, by assumption and Theorem 9.2.2 of \cite{Dudley2002} there is a sequence $\varepsilon_n$ of positive numbers tending to 0 such that 
\[
P\left(\mathcal{E}_n^c\right)\leq\varepsilon_n,\qquad
\sup_{x\in\mathbb{R}}\left|P\left(T_n\leq x\right)-P\left(T_n^\dagger\leq x\right)\right|\leq\varepsilon_n
\]
for all $n\in\mathbb{N}$, where
\[
\mathcal{E}_n=\left\{\sup_{x\in\mathbb{R}}\left|P\left(T_n^\dagger\leq x|\mathcal{F}\right)-P\left(T^*_n\leq x|\mathcal{F}\right)\right|\leq\varepsilon_n\right\}.
\]
Next, let us denote by $q_n^\dagger$ the $\mathcal{F}$-conditional quantile function of $T_n^\dagger$. 
Then, on the set $\mathcal{E}_n\cap E_n$ we have
\begin{align*}
P\left(T^*_n\leq q_n^\dagger(\alpha+\varepsilon_n)|\mathcal{F}\right)
\geq P\left(T_n^\dagger\leq q_n^\dagger(\alpha+\varepsilon_n)|\mathcal{F}\right)-\varepsilon_n
=\alpha.
\end{align*}
\tcr{Hence,} on $\mathcal{E}_n\cap E_n$ it holds that $q_n^*(\alpha)\leq q_n^\dagger(\alpha+\varepsilon_n).$ 
Therefore, we obtain
\begin{align*}
P\left(T_n\leq q_n^*(\alpha)\right)
&\leq P\left(T_n\leq q_n^\dagger(\alpha+\varepsilon_n)\right)+P(\mathcal{E}_n^c)+P(E_n^c)\\
&\leq P\left(T_n^\dagger\leq q_n^\dagger(\alpha+\varepsilon_n)\right)+2\varepsilon_n+P(E_n^c)
=\alpha+3\varepsilon_n+P(E_n^c).
\end{align*}
Meanwhile, for any $\omega\in\mathcal{E}_n\cap E_n$ and any $z\in\mathbb{R}$ such that $P(T_n^*\leq z|\mathcal{F})(\omega)\geq\alpha$, we have
\begin{align*}
P\left(T_n^\dagger\leq q_n^\dagger(\alpha-\varepsilon_n)(\omega)|\mathcal{F}\right)(\omega)
=\alpha-\varepsilon_n
\leq P(T_n^*\leq z|\mathcal{F})(\omega)-\varepsilon_n
\leq P\left(T_n^\dagger\leq z|\mathcal{F}\right)(\omega).
\end{align*}
\tcr{H}ence it holds that $q_n^\dagger(\alpha-\varepsilon_n)(\omega)\leq z$. This implies that 
$q_n^*(\alpha)\geq q_n^\dagger(\alpha-\varepsilon_n)$ 
on $\mathcal{E}_n\cap E_n$. Therefore, we obtain
\begin{align*}
P\left(T_n< q_n^*(\alpha)\right)
&\geq P\left(T_n< q_n^\dagger(\alpha-\varepsilon_n)\right)-P(\mathcal{E}_n^c)-P(E_n^c)\\
&\geq P\left(T_n^\dagger< q_n^\dagger(\alpha-\varepsilon_n)\right)-2\varepsilon_n-P(E_n^c)
=\alpha-3\varepsilon_n-P(E_n^c).
\end{align*}
Consequently, we obtain $P\left(T_n\leq q_n^*(\alpha)\right)\to\alpha$ as $n\to\infty$.\hfill\qed

\section{Proofs for Section \ref{sec:rc}}

\subsection{Proof of Theorem \ref{thm:rc}}

We first introduce some notation. 
%
For two sequences $(x_n),(y_n)$ of numbers, the notation $x_n\lesssim y_n$ means that there is a \textit{universal} constant $C>0$ such that $x_n\leq Cy_n$ for all $n$. Here, the value of the constant $C$ will change from line to line.  
We define the $d$-dimensional processes $\mathsf{A}=(\mathsf{A}_t)_{t\in[0,1]}$ and $\mathsf{M}=(\mathsf{M}_t)_{t\in[0,1]}$ by
\[
\mathsf{A}_t=\int_0^t\mu_sds,\qquad
\mathsf{M}_t=\int_0^t\sigma_sdB_s
\]
for every $t\in[0,1]$. 
If $\phi=(\phi_t)_{t\in[0,1]}$ is an $r$-dimensional $(\mathcal{F}_t)$-progressively measurable process such that $\int_0^1\|\phi_t\|_{\ell_2}^2dt<\infty$ a.s., we define
\[
\int_0^t\phi_s\cdot dB_s:=\sum_{a=1}^r\int_0^t\phi_s^adB^a_s
\]
for all $t\in[0,1]$. 

For every $n\in\mathbb{N}$, we set $I_h=I_h^n:=(t_{h-1},t_h]$ for every $h=1,\dots,n$ and define the filtration $(\mathcal{G}^n_t)_{t\in[0,1]}$ by $\mathcal{G}^n_0:=\mathcal{F}_0$ and
\[
\mathcal{G}^n_t:=\mathcal{F}_{t_{h-1}}
\]
when $t\in I_h$ for some $h=1,\dots,n$. Then we define the process $(\varsigma_t)_{t\in[0,1]}$ by
\[
\varsigma_t=E[\sigma_t|\mathcal{G}^n_{t}],\qquad t\in[0,1]
\]
(we subtract the index $n$ from $\varsigma_t$ although it depends on $n$). 
%
For all $i,j=1,\dots,d$, we define the symmetric $H^{\otimes2}$-valued random variable $u_n^{ij}$ by
\[
u_n^{ij}:=\sqrt{n}\sum_{h=1}^nf_n^{ij}1_{I_h\times I_h},
\]
where $f_n^{ij}=\symm\left(\varsigma^{i\cdot}\otimes\varsigma^{j\cdot}\right)$. We note the following result:
\begin{lemma}\label{lemma:double}
Given an index $n\in\mathbb{N}$, let $\xi=(\xi_t)_{t\in[0,1]}$ and $\eta=(\eta_t)_{t\in[0,1]}$ be $(\mathcal{G}^n_t)$-adapted $r$-dimensional processes such that $\sup_{t\in[0,1]}E[\|\xi_t\|_{\ell_2}^4+\|\eta_t\|_{\ell_2}^4]<\infty$. Then $\xi\otimes\eta1_{I_h\times I_h}\in\domain(\delta^2)$ and 
\begin{equation*}
\delta^2(\xi\otimes\eta1_{I_h\times I_h})=\int_{t_{h-1}}^{t_h}\left(\int_{t_{h-1}}^t\xi_s\cdot dB_s\right)\eta_t\cdot dB_t+\int_{t_{h-1}}^{t_h}\left(\int_{t_{h-1}}^t\xi_s\cdot dB_s\right)\eta_t\cdot dB_t
\end{equation*}
for every $h=1,\dots,n$.
\end{lemma}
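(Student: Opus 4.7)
The plan is to verify the identity first for a dense class of elementary $(\mathcal{G}^n_t)$-adapted integrands, and then to extend by the closedness of $\delta^2$ and the standard $L^2$-continuity of iterated It\^o integrals. The structural observation driving the whole argument is that $\mathcal{G}^n_t = \mathcal{F}_{t_{h-1}}$ for $t \in I_h$, so the restriction of $\xi,\eta$ to $I_h$ is pointwise $\mathcal{F}_{t_{h-1}}$-measurable in $\omega$. In particular, the Malliavin derivative of any such pointwise value is supported in $[0,t_{h-1}]$ in its time argument, hence orthogonal to any element of $H$ supported in $I_h$. The moment hypothesis combined with Cauchy--Schwarz yields $\xi\otimes\eta\,1_{I_h\times I_h}\in L^2(\Omega;H^{\otimes 2})$, which is the starting point.

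Next I would approximate $\xi\cdot 1_{I_h}$ in $L^4(\Omega\times I_h;\mathbb{R}^r)$ by elementary processes of the form $\xi^{(N)}_t=\sum_{k=1}^N F^\xi_k\,g^\xi_k(t)\,1_{I_h}(t)$, where each $F^\xi_k$ is bounded and $\mathcal{F}_{t_{h-1}}$-measurable and each $g^\xi_k$ is a deterministic step function on $I_h$; approximate $\eta$ analogously. Such an approximation exists because the $\omega$-slice is $\mathcal{F}_{t_{h-1}}$-measurable. The $L^4$-approximation propagates via Cauchy--Schwarz to an $L^2(\Omega;H^{\otimes 2})$-approximation of $\xi^{(N)}\otimes\eta^{(N)}\,1_{I_h\times I_h}$ to $\xi\otimes\eta\,1_{I_h\times I_h}$, and via Burkholder--Davis--Gundy to $L^2(\Omega)$-convergence of each iterated It\^o integral appearing on the right-hand side.

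For a single pair of elementary integrands $\xi=Fg\,1_{I_h}$, $\eta=Hh\,1_{I_h}$, I would verify the formula directly. Since $F$ and $H$ are $\mathcal{F}_{t_{h-1}}$-measurable, the product rule for Skorohod integrals combined with the fact that $D(FH)$ is supported in $[0,t_{h-1}]$ (hence $\langle D(FH),v\rangle_H=0$ for any $v\in H$ supported in $I_h$, and analogously for $D^2$) yields
\[
\delta^2\bigl(FH\,(g\otimes h)\,1_{I_h\times I_h}\bigr)=FH\cdot\delta^2\bigl((g\otimes h)\,1_{I_h\times I_h}\bigr)=FH\cdot I_2\bigl((g\otimes h)\,1_{I_h\times I_h}\bigr),
\]
where $I_2$ is the double Wiener--It\^o integral. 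The classical identification of $I_2$ with iterated It\^o integrals gives
\[
I_2\bigl((g\otimes h)\,1_{I_h\times I_h}\bigr)=\int_{I_h}\!\Bigl(\int_{t_{h-1}}^t g_s\cdot dB_s\Bigr)h_t\cdot dB_t+\int_{I_h}\!\Bigl(\int_{t_{h-1}}^t h_s\cdot dB_s\Bigr)g_t\cdot dB_t,
\]
and the $\mathcal{F}_{t_{h-1}}$-measurable factors $F,H$ can then be pulled back inside each It\^o integral, producing precisely the claimed expression (with the $\xi\leftrightarrow\eta$-swapped second term).

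Finally, passing to the limit $N\to\infty$, the $L^2(\Omega;H^{\otimes 2})$-convergence of the integrands and the $L^2(\Omega)$-convergence of the right-hand sides, together with the closedness of $\delta^2$ viewed as the adjoint of $D^2$, give both $\xi\otimes\eta\,1_{I_h\times I_h}\in\domain(\delta^2)$ and the claimed identity. The main obstacle I anticipate is a clean justification of the ``pull-out'' $\delta^2(FH\,v)=FH\cdot\delta^2(v)$ for $v$ supported in $I_h\times I_h$; this is where the $(\mathcal{G}^n_t)$-adaptedness enters crucially, through the vanishing of $D^kF$ and $D^kH$ on the future of $t_{h-1}$.
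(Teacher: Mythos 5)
Your approach takes a genuinely different route from the paper's. The paper observes that, because $\xi,\eta$ are $(\mathcal{G}^n_t)$-adapted, the process $s\mapsto\xi_s\eta_t\,1_{(I_h\times I_h)\cap S}(s,t)$ (with $S=\{s\leq t\}$, for each fixed $t$) is $(\mathcal{F}_t)$-predictable; thus Proposition 1.3.11 of Nualart applies directly and $\delta$ coincides with the It\^o integral, with no approximation whatsoever. Iterating (the resulting $\mathbb{R}^r$-valued process in $t$ is again predictable) and invoking Proposition 2.6 of Nualart--Zakai (1988) to identify the iterated Skorohod integral with $\delta^2$ produces the first term on the upper triangle $S$; repeating on $S^c$ gives the second (swapped) term. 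Your route instead reduces to elementary integrands of the form $Fg\,1_{I_h}$, uses a pull-out identity $\delta^2(FH\,v)=FH\,\delta^2(v)$ for $\mathcal{F}_{t_{h-1}}$-measurable multipliers and deterministic $v$ supported in $I_h\times I_h$, invokes the classical identification of $I_2$ with iterated It\^o integrals, and closes with an $L^4\to L^2$ limit and the closedness of $\delta^2$ as the adjoint of $D^2$. Both routes exploit the same structural fact --- the randomness of $\xi,\eta$ on $I_h$ is frozen at time $t_{h-1}$ and does not interact with the Skorohod kernel on $I_h\times I_h$ --- but the paper exploits it through predictability, while you exploit it through vanishing Malliavin derivatives. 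The paper's proof is shorter because it needs no approximation at all.

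There is a gap in your pull-out step as written. You appeal to $D(FH)$ and $D^2(FH)$ being supported in $[0,t_{h-1}]$, but a bounded $\mathcal{F}_{t_{h-1}}$-measurable random variable need not lie in $\mathbb{D}_{1,2}$, let alone $\mathbb{D}_{2,2}$, so these derivatives need not be defined. To repair this you must either (i) first approximate $F,H$ by smooth $\mathcal{F}_{t_{h-1}}$-measurable cylinder functionals, for which the product/pull-out rule for $\delta^2$ is available by the Leibniz formula, and then pass to the limit once more by closedness of $\delta^2$, or (ii) argue directly from the duality relation, using that $FH$ is $\mathcal{F}_{t_{h-1}}$-measurable, $\delta^2(v)=I_2(v)$ is measurable with respect to the increments of $B$ over $I_h$, and the chaos decompositions over $L^2([0,t_{h-1}];\mathbb{R}^r)$ and $L^2((t_{h-1},1];\mathbb{R}^r)$ are independent. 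Either fix is routine, but as stated the step is incomplete --- and it is precisely the step the paper's predictability argument sidesteps.
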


\begin{proof}
Set $S:=\{(s,t)\in[0,1]^2:s\leq t\}$. For any $t\in[0,1]$, the process $(\xi_s\eta_t1_{(I_h\times I_h)\cap S}(s,t))_{s\in[0,1]}$ is evidently $\mathbf{F}$-predictable and $H$-valued, \tcr{so} it belongs to $\domain(\delta)$ and
\[
\delta(\xi\eta_t1_{(I_h\times I_h)\cap S}(\cdot,t))=\eta_t1_{I_h}(t)\int_0^t\xi_s1_{I_h}(s)\cdot dB_s
\]
by Proposition 1.3.11 of \cite{Nualart2006}. Moreover, from the above expression the process $(\delta(\xi\eta_t1_{(I_h\times I_h)\cap S}(\cdot,t)))_{t\in[0,1]}$ is evidently $\mathbf{F}$-predictable and $H$-valued. Therefore, Proposition 1.3.11 of \cite{Nualart2006} and Proposition 2.6 of \cite{NZ1988} imply that $\xi\otimes\eta1_{(I_h\times I_h)\cap S}$ belongs to $\domain(\delta^2)$ and
\[
\delta^2(\xi\otimes\eta1_{(I_h\times I_h)\cap S})
=\int_{t_{h-1}}^{t_h}\left(\int_{t_{h-1}}^t\xi_s\cdot dB_s\right)\eta_t\cdot dB_t.
\]
Similarly, we can show that $\xi\otimes\eta1_{(I_h\times I_h)\cap S^c}\in \domain(\delta^2)$ and
\[
\delta^2(\xi\otimes\eta1_{(I_h\times I_h)\cap S^c})
=\int_{t_{h-1}}^{t_h}\left(\int_{t_{h-1}}^t\eta_s\cdot dB_s\right)\xi_t\cdot dB_t.
\]
This completes the proof. 
\end{proof}
Thanks to Lemma \ref{lemma:double}, we have $u_n^{ij}\in\domain(\delta^2)$, \tcr{so} we can define the variable $M_n^{ij}$ by
\[
M_n^{ij}=\delta^2(u_n^{ij}).
\]

Next we prove some auxiliary results. 
%
We begin by noting some elementary facts which are frequently used throughout the proof. First, for any random variable $\xi$ and any $p,q\in(0,\infty)$, it holds that
\[
\||\xi|^q\|_p=\|\xi\|_{pq}^q.
\]
Second, for two random variables $\xi,\eta$ and numbers $p\in(0,\infty)$, $q\in(1,\infty)$, we have
\begin{equation*}
\|\xi\eta\|_p\leq\|\xi\|_{qp}\|\eta\|_{\frac{q}{q-1}p}.
\end{equation*}
This is a consequence of the H\"older inequality. These facts will be used without reference in the following. 
We also refer to two inequalities which are repeatedly used throughout the proof. The first one is the following integral version of the Minkowski inequality:
\begin{proposition}\label{minkowski}
Let $(\mathcal{X},\mathcal{A},\mathfrak{m})$ be a $\sigma$-finite measure space and $f:\mathcal{X}\times \Omega\to[0,\infty]$ be an $\mathcal{A}\otimes\mathcal{F}$-measurable function. Then we have
\[
\left\|\int_{\mathcal{X}}f(x)\mathfrak{m}(dx)\right\|_p
\leq\int_{\mathcal{X}}\|f(x)\|_p\mathfrak{m}(dx)
\]
for all $p\in[1,\infty]$. 
\end{proposition}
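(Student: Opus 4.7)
The plan is to prove this in the standard three-case manner, since the claim is the classical Minkowski integral inequality. Throughout, let $F(\omega) := \int_{\mathcal{X}} f(x,\omega)\,\mathfrak{m}(dx)$, which is $\mathcal{F}$-measurable (into $[0,\infty]$) by Tonelli's theorem applied to the nonnegative measurable function $f$.

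First I would handle $p=\infty$. Let $A := \{(x,\omega) : f(x,\omega) > \|f(x)\|_\infty\}$. By Tonelli applied with $\mathfrak{m}\otimes P$, this set has product measure zero, so for $P$-a.e.\ $\omega$ one has $f(x,\omega) \le \|f(x)\|_\infty$ for $\mathfrak{m}$-a.e.\ $x$. Integrating in $x$ and taking essential supremum in $\omega$ yields the bound. The case $p=1$ is simply the Tonelli identity $\|F\|_1 = E[F] = \int_{\mathcal{X}} \|f(x)\|_1\,\mathfrak{m}(dx)$, which in fact gives equality.

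For $p\in(1,\infty)$ I would use the ``self-dual'' Hölder trick. If $\int_{\mathcal{X}} \|f(x)\|_p\,\mathfrak{m}(dx) = \infty$ the claim is trivial, so assume this is finite. To avoid dividing by something infinite, work with the truncations $F_N := F\wedge N$ for $N\in\mathbb{N}$. Since $F_N\le F$ and $F_N^{p-1}\le N^{p-1}<\infty$ everywhere, Tonelli gives
\[
E[F_N^p] \le E\bigl[F_N^{p-1} F\bigr] = \int_{\mathcal{X}} E\bigl[F_N^{p-1} f(x)\bigr]\,\mathfrak{m}(dx).
\]
With $q := p/(p-1)$, Hölder's inequality on $(\Omega,\mathcal{F},P)$ yields
\[
E\bigl[F_N^{p-1} f(x)\bigr] \le \bigl\|F_N^{p-1}\bigr\|_q \,\|f(x)\|_p = \|F_N\|_p^{p-1}\,\|f(x)\|_p,
\]
and substituting and noting that $\|F_N\|_p^{p-1}<\infty$ so it can be divided out gives
\[
\|F_N\|_p \le \int_{\mathcal{X}} \|f(x)\|_p\,\mathfrak{m}(dx).
\]
Finally, monotone convergence ($F_N\uparrow F$) lets me pass $N\to\infty$ on the left-hand side and conclude.

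There is no real obstacle here: the inequality is textbook material, and the only minor care required is the truncation argument justifying the division by $\|F\|_p^{p-1}$ when $F$ might a priori be $P$-essentially unbounded. All measurability issues are handled by Tonelli applied to the nonnegative $\mathcal{A}\otimes\mathcal{F}$-measurable $f$.
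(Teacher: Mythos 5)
Your argument is correct, but it takes a genuinely different route from the paper's. The paper justifies Proposition \ref{minkowski} by approximating $f$ by simple functions of $x$, applying the ordinary finite-sum Minkowski (triangle) inequality to the resulting finite linear combination, and passing to the limit, pointing to Proposition C.4 of Janson's book for this reduction. You instead give the self-contained duality proof: $p=1$ is just Tonelli; $p=\infty$ is a Fubini null-set argument; and for $1<p<\infty$ you pair $F_N$ against $f(x)$ via H\"older with exponents $(q,p)$, $q=p/(p-1)$, using the truncation $F_N=F\wedge N$ to guarantee $\|F_N\|_p^{p-1}<\infty$ before dividing, and then pass $N\to\infty$ by monotone convergence. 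The approximation route is cheaper if the finite Minkowski inequality is already on hand and avoids splitting into cases; your route is fully self-contained, handles $\|F\|_p=\infty$ cleanly through the truncation, and does not require managing convergence of simple-function approximants. One point worth tightening in your $p=\infty$ step: for Tonelli to apply to $A=\{(x,\omega):f(x,\omega)>\|f(x)\|_\infty\}$ you need $A\in\mathcal{A}\otimes\mathcal{F}$, hence you need $x\mapsto\|f(x)\|_\infty$ to be $\mathcal{A}$-measurable. This does hold here because $P$ is a probability measure, so $\|f(x)\|_\infty=\lim_{n\to\infty}\|f(x)\|_n$ is a pointwise limit of the Tonelli-measurable maps $x\mapsto(E[f(x)^n])^{1/n}$, but as written the proof silently assumes it.
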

Proposition \ref{minkowski} is an easy consequence of the standard Minkowski inequality via approximating the function $f$ by simple functions (see also Proposition C.4 of \cite{Janson1997}). 

The second one is the following Burkholder-Davis-Gundy inequality with a sharp constant:
\begin{proposition}[\citet{BY1982}, Proposition 4.2]\label{sharp-BDG}
There is a universal constant $c>0$ such that
\[
\left\|\sup_{0\leq t\leq T}|M_t|\right\|_p\leq c\sqrt{p}\left\|\langle M\rangle_T^{1/2}\right\|_p
\]
for any $p\in[2,\infty)$ and any continuous martingale $M=(M_t)_{t\in[0,T]}$ with $M_0=0$. 
\end{proposition}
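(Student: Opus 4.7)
The plan is to derive this sharp BDG bound from an exponential good-$\lambda$ inequality that crucially exploits the continuity of $M$; this is the classical strategy behind Barlow--Yor's result, and once the good-$\lambda$ step is in place the $\sqrt{p}$-dependence comes out by tuning parameters. By stopping at $\tau_K=\inf\{t:\langle M\rangle_t\geq K^2\}$ and letting $K\to\infty$ at the end via monotone convergence, I may assume $\langle M\rangle_T$ is bounded, so that $\|M_T^*\|_p<\infty$ by Doob's inequality; here $M_T^*:=\sup_{t\leq T}|M_t|$.

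The core step is to establish, for every $\beta>1$, $\delta>0$, and $\lambda>0$, the inequality
\[
P\bigl(M_T^*>\beta\lambda,\ \langle M\rangle_T^{1/2}\leq\delta\lambda\bigr)\leq 2\exp\!\Bigl\{-\tfrac{(\beta-1)^2}{2\delta^2}\Bigr\}\,P(M_T^*>\lambda).
\]
To prove it I would introduce the stopping times $\tau:=\inf\{t\leq T:|M_t|\geq\lambda\}$ and consider the continuous martingale $N_t:=M_{(\tau+t)\wedge T}-M_\tau$ on the set $\{\tau<T\}$. On the event $\{\langle M\rangle_T^{1/2}\leq\delta\lambda\}$ one has $\langle N\rangle_\infty\leq\delta^2\lambda^2$, and applying the exponential martingale $\exp(\theta N_t-\theta^2\langle N\rangle_t/2)$ together with Doob's maximal inequality and optimization in $\theta$ yields the sub-Gaussian bound
\[
P\bigl(\sup_{s\geq0}|N_s|\geq(\beta-1)\lambda,\ \langle N\rangle_\infty\leq\delta^2\lambda^2\,\big|\,\mathcal{F}_\tau\bigr)\leq 2\exp\!\Bigl\{-\tfrac{(\beta-1)^2}{2\delta^2}\Bigr\}
\]
on $\{\tau<T\}$. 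Since on $\{\tau<T\}$ the event $\{M_T^*>\beta\lambda\}$ forces $\sup_{s\geq 0}|N_s|>(\beta-1)\lambda$, conditioning on $\mathcal{F}_\tau$ and taking expectations over $\{\tau<T\}=\{M_T^*>\lambda\}$ delivers the claimed good-$\lambda$ inequality.

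The remainder is a standard Fubini integration. From $E[(M_T^*)^p]=p\int_0^\infty\lambda^{p-1}P(M_T^*>\beta\lambda)\beta^p\,d\lambda/\beta^p$ and the bound
\[
P(M_T^*>\beta\lambda)\leq 2e^{-(\beta-1)^2/(2\delta^2)}P(M_T^*>\lambda)+P(\langle M\rangle_T^{1/2}>\delta\lambda),
\]
I obtain
\[
E[(M_T^*)^p]\leq \beta^p\cdot 2e^{-(\beta-1)^2/(2\delta^2)}\,E[(M_T^*)^p]+(\beta/\delta)^p\,E[\langle M\rangle_T^{p/2}].
\]
Taking $\beta=2$ and $\delta^{-2}=2(p+2)\log 2$ makes the prefactor $\beta^p\cdot 2e^{-1/(2\delta^2)}\leq 1/2$, so that (using finiteness from Step 1) I may absorb the first term on the right into the left-hand side, obtaining $E[(M_T^*)^p]\leq 2(2/\delta)^p E[\langle M\rangle_T^{p/2}]$. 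Since $1/\delta\leq c'\sqrt{p}$ for an absolute constant $c'$, taking $p$-th roots yields $\|M_T^*\|_p\leq c\sqrt{p}\,\|\langle M\rangle_T^{1/2}\|_p$.

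The main obstacle is the sub-Gaussian tail estimate inside Step 2 with the sharp exponent $(\beta-1)^2/(2\delta^2)$: this is precisely the point where continuity of $M$ enters, since the exponential martingale bound is only available without a compensator correction in the continuous case (the analogue for cadlag martingales requires a Novikov-type control on jumps and in general fails to yield a $\sqrt{p}$ constant). The other delicate point, though merely computational, is the simultaneous tuning of $\beta$ and $\delta$ as functions of $p$ in the final step so that the exponential factor dominates $\beta^p$; the asymmetry between $\beta^p$ (polynomial) and $e^{-1/(2\delta^2)}$ (exponential in $\delta^{-2}$) is what collapses the naive $p$-growth of the Burkholder constants down to $\sqrt{p}$.
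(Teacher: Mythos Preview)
The paper does not give its own proof of this proposition: it is simply quoted from \citet{BY1982} as an auxiliary tool and used repeatedly in the subsequent moment estimates. There is therefore no argument in the paper to compare your proposal against.

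That said, your sketch is a correct outline of the classical route to the sharp constant. The exponential-martingale sub-Gaussian bound in Step~2 and the good-$\lambda$ inequality you state are standard for continuous martingales, and the computation in Step~3 is the usual way to extract the $\sqrt{p}$ dependence by choosing $\delta^{-2}$ proportional to $p$. Two small points worth tightening if you write it out in full: (i) the identification $\{\tau<T\}=\{M_T^*>\lambda\}$ is not literally an equality (one has $\{M_T^*>\lambda\}\subset\{\tau\leq T\}$ and $\{\tau<T\}\subset\{M_T^*\geq\lambda\}$), but since $\{M_T^*>\beta\lambda\}\subset\{\tau<T\}$ and $P(M_T^*\geq\lambda)$ can replace $P(M_T^*>\lambda)$ in the layer-cake formula without affecting the outcome, this causes no difficulty; (ii) in the conditional sub-Gaussian bound the event $\{\langle N\rangle_\infty\leq\delta^2\lambda^2\}$ is not $\mathcal{F}_\tau$-measurable, so it is cleaner to state and prove the \emph{unconditional} joint bound $P(\sup_s|N_s|\geq(\beta-1)\lambda,\ \langle N\rangle_\infty\leq\delta^2\lambda^2\mid\mathcal{F}_\tau)\leq 2e^{-(\beta-1)^2/(2\delta^2)}$ directly via the exponential supermartingale, which is exactly what your argument yields anyway.
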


We then prove some auxiliary estimates. 
\begin{lemma}\label{lemma:BDG}
There is a universal constant $C>0$ such that
\[
\left\|\sum_{h=h_0+1}^{h_1}\int_{t_{h-1}}^{t_h}\left(\int_{t_{h-1}}^t\eta_s\cdot dB_s\right)\xi_t\cdot dB_t\right\|_p
\leq C\frac{p\sqrt{h_1-h_0}}{n}\sup_{t_{h_0}< t< t_{h_1}}\|\xi_t\|_{qp,\ell_2}\sup_{t_{h_0}< s< t_{h_1}}\|\eta_s\|_{\frac{q}{q-1}p,\ell_2}
\]
for any $p\in[2,\infty)$, $q\in(1,\infty)$, $n\in\mathbb{N}$, $h_0,h_1=0,1,\dots,n$ such that $h_0< h_1$ and any $r$-dimensional $(\mathcal{F}_t)$-progressively measurable processes $\xi$ and $\eta$ such that $\sup_{t\in[0,1]}(\|\xi_t\|_p+\|\eta_t\|_p)<\infty$ for all $p\in[1,\infty)$. 
\end{lemma}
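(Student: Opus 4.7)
The key observation is that the expression inside the norm equals the terminal value $X_{t_{h_1}}$ of the continuous $(\mathcal{F}_t)$-martingale
\[
X_t := \sum_{h=h_0+1}^{h_1}\int_{t_{h-1}\wedge t}^{t_h\wedge t}\left(\int_{t_{h-1}}^s\eta_u\cdot dB_u\right)\xi_s\cdot dB_s,\qquad t\in[t_{h_0},t_{h_1}],
\]
whose quadratic variation at $t_{h_1}$ is
\[
\langle X\rangle_{t_{h_1}}=\sum_{h=h_0+1}^{h_1}\int_{t_{h-1}}^{t_h}Y_s^2\,\|\xi_s\|_{\ell_2}^2\,ds,\qquad Y_s:=\int_{t_{h-1}}^{s}\eta_u\cdot dB_u\text{ for }s\in I_h.
\]
The first step is to apply the sharp Burkholder--Davis--Gundy inequality (Proposition \ref{sharp-BDG}) to obtain $\|X_{t_{h_1}}\|_p\leq c\sqrt{p}\,\|\langle X\rangle_{t_{h_1}}\|_{p/2}^{1/2}$.

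The second step is to bound $\|\langle X\rangle_{t_{h_1}}\|_{p/2}$ by invoking the integral Minkowski inequality (Proposition \ref{minkowski}), which gives
\[
\|\langle X\rangle_{t_{h_1}}\|_{p/2}\leq \sum_{h=h_0+1}^{h_1}\int_{t_{h-1}}^{t_h}\bigl\|Y_s^2\,\|\xi_s\|_{\ell_2}^2\bigr\|_{p/2}\,ds,
\]
and then separating the two factors via H\"older's inequality with exponents $\alpha=qp/2$ and $\beta=qp/\bigl(2(q-1)\bigr)$, which produces $\|\xi_s\|_{qp,\ell_2}^2\,\|Y_s\|_{\frac{q}{q-1}p}^2$. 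The choice of $\alpha$ is dictated by the wish to produce the norm $\|\xi_s\|_{qp,\ell_2}$ on the right-hand side.

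The third step is to reapply BDG (this time at the level $\frac{q}{q-1}p$) to the inner integral $Y_s$, giving
\[
\|Y_s\|_{\frac{q}{q-1}p}\leq c\sqrt{\tfrac{q}{q-1}p}\,\Bigl\|\int_{t_{h-1}}^{s}\|\eta_u\|_{\ell_2}^2\,du\Bigr\|_{\frac{q}{2(q-1)}p}^{1/2},
\]
and then a further application of integral Minkowski to bound the $L^{\frac{q}{2(q-1)}p}$-norm of $\int_{t_{h-1}}^{s}\|\eta_u\|_{\ell_2}^2\,du$ by $(s-t_{h-1})\sup_u\|\eta_u\|_{\frac{q}{q-1}p,\ell_2}^2$. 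Substituting back and performing the elementary computation $\sum_h\int_{t_{h-1}}^{t_h}(s-t_{h-1})\,ds=(h_1-h_0)/(2n^2)$ yields, after taking square roots and collecting constants,
\[
\|X_{t_{h_1}}\|_p\lesssim p\,\frac{\sqrt{h_1-h_0}}{n}\,\sup_{t_{h_0}<s<t_{h_1}}\|\xi_s\|_{qp,\ell_2}\,\sup_{t_{h_0}<u<t_{h_1}}\|\eta_u\|_{\frac{q}{q-1}p,\ell_2},
\]
where the two $\sqrt{p}$-factors from the two BDG applications combine into the advertised factor $p$.

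The main obstacle is purely bookkeeping: one must keep track of the exponents through two interleaved applications of BDG and H\"older so that the two supremum norms on the right carry precisely the exponents $qp$ and $\frac{q}{q-1}p$ (whose reciprocals must match $2/p$), and so that the two factors of $1/\sqrt{n}$ from the time increments combine correctly with the sum over $h$ to produce $\sqrt{h_1-h_0}/n$. Verifying that all the intermediate norms are at least $1$ (so that Proposition \ref{minkowski} is applicable) is automatic from $p\geq2$ and $q>1$, and the measurability and integrability hypotheses needed to invoke BDG are guaranteed by the moment assumptions on $\xi$ and $\eta$.
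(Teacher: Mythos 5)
Your proposal is correct and follows essentially the same route as the paper's proof: sharp BDG on the outer stochastic integral, Minkowski's integral inequality plus Hölder with conjugate exponents $qp/2$ and $qp/(2(q-1))$ to split off $\|\xi_s\|_{qp,\ell_2}^2$ from the inner martingale, a second application of sharp BDG at level $\tfrac{q}{q-1}p$, and a final Minkowski integral bound on the time integral of $\|\eta_u\|_{\ell_2}^2$ combined with $\sum_h\int_{t_{h-1}}^{t_h}(s-t_{h-1})\,ds = (h_1-h_0)/(2n^2)$ to produce the $\sqrt{h_1-h_0}/n$ factor. Your explicit identification of the martingale $X$ and its quadratic variation makes the BDG step slightly more transparent than the paper's, but the computation is otherwise the same; like the paper, you absorb the $\sqrt{q/(q-1)}$ factor from the second BDG into the constant, which is fine for the applications in the paper where $q$ ranges over a fixed finite set.
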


\begin{proof}
Set $q'=q/(1-q)$. By Propositions \ref{minkowski}--\ref{sharp-BDG} we have
\begin{align*}
\left\|\sum_{h=h_0+1}^{h_1}\int_{t_{h-1}}^{t_h}\left(\int_{t_{h-1}}^t\eta_s\cdot dB_s\right)\xi_t\cdot dB_t\right\|_p
&\lesssim\sqrt{p}\left\|\sqrt{\sum_{h=h_0+1}^{h_1}\int_{t_{h-1}}^{t_h}\|\xi_t\|_{\ell_2}^2\left(\int_{t_{h-1}}^t\eta_s\cdot dB_s\right)^2dt}\right\|_p\\
&\leq\sqrt{p\sum_{h=h_0+1}^{h_1}\int_{t_{h-1}}^{t_h}\|\xi_t\|_{qp,\ell_2}^2\left\|\int_{t_{h-1}}^t\eta_s\cdot dB_s\right\|_{q'p}^2dt}\\
&\lesssim p\sqrt{\sum_{h=h_0+1}^{h_1}\int_{t_{h-1}}^{t_h}\|\xi_t\|_{qp,\ell_2}^2\left\|\sqrt{\int_{t_{h-1}}^t\|\eta_s\|_{\ell_2}^2ds}\right\|_{q'p}^2dt}\\
&\leq\frac{p\sqrt{h_1-h_0}}{n}\sup_{t_{h_0}< t< t_{h_1}}\|\xi_t\|_{qp,\ell_2}\sup_{t_{h_0}< s< t_{h_1}}\|\eta_s\|_{q'p,\ell_2}.
\end{align*}
Hence we obtain the desired result. 
\end{proof}

\if0
\begin{lemma}\label{ocone}
There is a universal constant $C>0$ such that
\[
\sup_{0\leq t\leq 1}\left\|\sigma^{i\cdot}_t-\varsigma^{i\cdot}_t\right\|_{2p,\ell_2}^2\leq\frac{Cp}{n}\sup_{0\leq u\leq v\leq1}\|D_{u}\sigma^{i\cdot}_{v}\|_{2p,\ell_2}^2
\]
for all $n\in\mathbb{N}$, $i=1,\dots,r$ and $p\in[2,\infty)$. 
\end{lemma}

\begin{proof}
By the Clark-Ocone formula (Proposition A.1 of \cite{NP1988}) we have
\[
\sigma^{ia}_t=E[\sigma^{ia}_t|\mathcal{F}_{t_{h-1}}]+\int_{t_{h-1}}^tE[D_s\sigma^{ia}_t|\mathcal{F}_{s}]\cdot dB_s
\]
for all $t\in I_h$. Therefore, it suffices to show that there is a universal constant $C>0$ such that
\[
\max_{1\leq h\leq n}\sup_{t\in I_h}\left\|\sum_{a=1}^r\left(\int_{t_{h-1}}^tE[D_s\sigma^{ia}_t|\mathcal{F}_{s}]\cdot dB_s\right)^2\right\|_p\leq\frac{Cp}{n}\sup_{0\leq u\leq v\leq1}\|D_{u}\sigma^{i\cdot}_{v}\|_{2p,\ell_2}^2
\]
for all $n\in\mathbb{N}$, $i=1,\dots,r$ and $p\in[2,\infty)$. 

Fix $n\in\mathbb{N}$, $i=1,\dots,r$ and $p\in[2,\infty)$ arbitrarily. We also fix $h=1,\dots,n$ and $t\in I_h$ arbitrarily. Then, by It\^o's formula we have
\begin{align*}
&\sum_{a=1}^r\left(\int_{t_{h-1}}^\tau E[D_s\sigma^{ia}_t|\mathcal{F}_{s}]dB_s\right)^2\\
&=2\int_{t_{h-1}}^\tau \sum_{a=1}^r\left(\int_{t_{h-1}}^sE[D_u\sigma^{ia}_t|\mathcal{F}_{u}]dB_u\right)E[D_s\sigma^{ia}_t|\mathcal{F}_{s}]dB_s
+\int_{t_{h-1}}^\tau \sum_{a=1}^r\left\|E\left[D_s\sigma^{ia}_t|\mathcal{F}_{s}\right]\right\|_{\ell_2}^2ds\\
&=:\mathbf{I}_\tau+\mathbf{II}_\tau
\end{align*}
for every $\tau\in[t_{h-1},t]$. The Lyapunov inequality and Proposition \ref{minkowski} yield
\begin{align*}
\left\|\mathbf{II}_\tau\right\|_p
\leq\frac{1}{n}\sup_{0\leq u\leq v\leq1}\|D_{u}\sigma^{i\cdot}_{v}\|_{2p,\ell_2}^2.
\end{align*}
Meanwhile, Propositions \ref{minkowski}--\ref{sharp-BDG} as well as the Schwarz and Lyapunov inequalities yield
\begin{align*}
\left\|\mathbf{I}_\tau\right\|_{p}
&\lesssim\sqrt{p}\left\|\sqrt{\sum_{b=1}^r\int_{t_{h-1}}^\tau\left|\sum_{a=1}^r\left(\int_{t_{h-1}}^sE[D_u\sigma^{ia}_t|\mathcal{F}_{u}]dB_u\right)E[D^{(b)}_s\sigma^{ia}_t|\mathcal{F}_{s}]\right|^2ds}\right\|_{p}\\
&\leq\sqrt{p}\left\|\int_{t_{h-1}}^\tau\sum_{a=1}^r\left(\int_{t_{h-1}}^sE[D_u\sigma^{ia}_t|\mathcal{F}_{u}]dB_u\right)^2\sum_{a,b=1}^rE[D^{(b)}_s\sigma^{ia}_t|\mathcal{F}_{s}]^2ds\right\|_{p/2}^{1/2}\\
&\leq\sqrt{p\int_{t_{h-1}}^\tau\left\|\sum_{a=1}^r\left(\int_{t_{h-1}}^sE[D_u\sigma^{ia}_t|\mathcal{F}_{u}]dB_u\right)^2E\left[\|D_s\sigma^{i\cdot}_t\|_{\ell_2}^2|\mathcal{F}_{s}\right]\right\|_{p/2}ds}\\
&\leq\sqrt{p\int_{t_{h-1}}^\tau\left\|\sum_{a=1}^r\left(\int_{t_{h-1}}^sE[D_u\sigma^{ia}_t|\mathcal{F}_{u}]dB_u\right)^2\right\|_p\left\|E\left[\|D_s\sigma^{i\cdot}_t\|_{\ell_2}^2|\mathcal{F}_{s}\right]\right\|_{p}ds}\\
&\leq\sqrt{p\sup_{0\leq u\leq v\leq1}\|D_{u}\sigma^{i\cdot}_{v}\|_{2p,\ell_2}^2\int_{t_{h-1}}^\tau\left\|\sum_{a=1}^r\left(\int_{t_{h-1}}^sE[D_u\sigma^{ia}_t|\mathcal{F}_{u}]dB_u\right)^2\right\|_pds}.
\end{align*}
Therefore, defining the function $g:[t_{h-1},t]\to[0,\infty)$ by
\[
g(\tau)=\left\|\sum_{a=1}^r\left(\int_{t_{h-1}}^\tau E[D_s\sigma^{ia}_t|\mathcal{F}_{s}]dB_s\right)^2\right\|_{p}^2,\qquad \tau\in[t_{h-1},t],
\]
we conclude that
\[
g(\tau)\leq\frac{2}{n^2}\sup_{0\leq u\leq v\leq1}\|D_{u}\sigma^{i\cdot}_{v}\|_{2p,\ell_2}^4+C_0p\sup_{0\leq u\leq v\leq1}\|D_{u}\sigma^{i\cdot}_{v}\|_{2p,\ell_2}^2\int_{t_{h-1}}^\tau\sqrt{g(s)}ds
\]
for any $\tau\in[t_{h-1},t]$ with some universal constant $C_0>0$. Hence the Bihari inequality (cf.~Section 3 of \cite{Bihari1956}) yields
\[
\sqrt{g(t)}\leq\frac{\sqrt{2}}{n}\sup_{0\leq u\leq v\leq1}\|D_{u}\sigma^{i\cdot}_{v}\|_{2p,\ell_2}^2
+\frac{C_0p}{2n}\sup_{0\leq u\leq v\leq1}\|D_{u}\sigma^{i\cdot}_{v}\|_{2p,\ell_2}^2.
\]
This implies that the desired result holds true with the constant $C=1/\sqrt{2}+C_0/2$. 
\end{proof}
\fi

\begin{lemma}\label{ocone}
There is a universal constant $C>0$ such that
\[
\max_{1\leq h\leq n-1}\sup_{t\in[t_{h-1},t_{h+1}]}\left\|\xi_t-E[\xi_t|\mathcal{F}_{t_{h-1}}]\right\|_{2p,\ell_2}\leq C\sqrt{\frac{p}{n}}\sup_{0\leq u\leq v\leq1}\|D_{u}\xi_{v}\|_{2p,\ell_2}
\]
for all $n\in\mathbb{N}$, $p\in[2,\infty)$, $\mathsf{r}\in\mathbb{N}$ and any $\mathsf{r}$-dimensional $(\mathcal{F}_t)$-progressively measurable process $\xi$ such that $\xi_t\in\mathbb{D}_{1,\infty}(\mathbb{R}^{\mathsf{r}})$ for all $t\in[0,1]$. 
\end{lemma}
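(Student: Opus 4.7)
The plan is to apply the Clark--Ocone representation formula on each subinterval of length at most $2/n$, bound the resulting vector-valued stochastic integral in $L^{2p}(\ell_2)$ by a Hilbert space version of the sharp Burkholder--Davis--Gundy estimate from Proposition \ref{sharp-BDG}, and dispose of the conditional expectation in the integrand via a conditional Jensen step.

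First, fix $n\in\mathbb{N}$, $h\in\{1,\dots,n-1\}$ and $t\in[t_{h-1},t_{h+1}]$. Because $\xi$ is $(\mathcal{F}_t)$-progressively measurable, each $\xi_t^a$ is $\mathcal{F}_t$-measurable, so $D_s\xi_t^a=0$ a.s.~for $s>t$. The Clark--Ocone formula (see e.g.~Proposition 1.3.14 of \cite{Nualart2006}) applied componentwise with the conditioning $\mathcal{F}_{t_{h-1}}$ therefore yields
\[
\xi_t^a-E[\xi_t^a\mid\mathcal{F}_{t_{h-1}}]=\int_{t_{h-1}}^{t}E[D_s\xi_t^a\mid\mathcal{F}_s]\cdot dB_s,\qquad a=1,\dots,\mathsf{r}.
\]
Viewed as a function of the upper integration limit $\tau\in[t_{h-1},t]$, these components jointly define a continuous $\mathbb{R}^{\mathsf{r}}$-valued martingale $N=(N_\tau)$ with scalar quadratic variation $[N]_\tau=\int_{t_{h-1}}^{\tau}\|E[D_s\xi_t\mid\mathcal{F}_s]\|_{\ell_2}^{2}\,ds$, and $N_t=\xi_t-E[\xi_t\mid\mathcal{F}_{t_{h-1}}]$.

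Second, I appeal to the Hilbert space-valued extension of Proposition \ref{sharp-BDG} (which can be derived by applying It\^o's formula to the map $x\mapsto\|x\|_{\ell_2}^{2p}$ on $\mathbb{R}^{\mathsf{r}}$ and a standard Gronwall-type bootstrap, and which crucially preserves the universal $\sqrt{p}$-constant independent of $\mathsf{r}$) to obtain
\[
\bigl\|\|N_t\|_{\ell_2}\bigr\|_{2p}^{2}\lesssim p\left\|\int_{t_{h-1}}^{t}\|E[D_s\xi_t\mid\mathcal{F}_s]\|_{\ell_2}^{2}\,ds\right\|_{p}.
\]
Combining the integral Minkowski inequality (Proposition \ref{minkowski}) with the pointwise conditional Jensen bound $\|E[D_s\xi_t\mid\mathcal{F}_s]\|_{\ell_2}^{2}\leq E[\|D_s\xi_t\|_{\ell_2}^{2}\mid\mathcal{F}_s]$ and the conditional Lyapunov inequality, I further estimate
\[
\left\|\int_{t_{h-1}}^{t}\|E[D_s\xi_t\mid\mathcal{F}_s]\|_{\ell_2}^{2}\,ds\right\|_{p}\leq\int_{t_{h-1}}^{t}\|D_s\xi_t\|_{2p,\ell_2}^{2}\,ds\leq\frac{2}{n}\sup_{0\leq u\leq v\leq1}\|D_u\xi_v\|_{2p,\ell_2}^{2},
\]
using $t-t_{h-1}\leq t_{h+1}-t_{h-1}=2/n$. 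Combining the two displays and taking square roots yields the claim uniformly in $h$ and $t$ with a universal constant.

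The principal technical point is to justify that Proposition \ref{sharp-BDG} extends to Hilbert space-valued continuous martingales with a universal constant. A naive componentwise application of the scalar BDG would introduce an unwanted factor of $\mathsf{r}$ (through $\|\sum_a X_a\|_p\leq\sum_a\|X_a\|_p$ pointing the wrong way when transferring the componentwise brackets back to $\|D_s\xi_t\|_{\ell_2}^2$), which is essential to avoid because this lemma will be applied downstream with $\xi_t=\sigma^{i\cdot}_t$ and hence $\mathsf{r}=r$ possibly diverging with $n$.
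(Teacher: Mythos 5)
Your high-level plan matches the paper's: apply the componentwise Clark--Ocone formula on $[t_{h-1},t]$, view the remainder as an $\mathbb{R}^{\mathsf r}$-valued continuous martingale $N$, and bound $\|\|N_t\|_{\ell_2}\|_{2p}$ by $\sqrt{p/n}\sup\|D_u\xi_v\|_{2p,\ell_2}$ via a dimension-free vector BDG estimate. You also correctly flag the crux: you need a Hilbert-space BDG whose constant is both independent of $\mathsf r$ and of sharp order $\sqrt{p}$ in $p$.

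The gap is in your sketched derivation of that crux. Applying It\^o to $x\mapsto\|x\|_{\ell_2}^{2p}$ produces, after taking expectations and running a Young/Doob/Gronwall bootstrap, a drift term whose Hessian already carries a factor of order $p^2$, and the standard absorption argument then yields a constant of order $p$ (or worse) in the final moment bound, not $\sqrt{p}$. That is exactly what Barlow--Yor was proving is suboptimal for scalar martingales, and the naive It\^o route does not recover their sharp constant. Since this lemma is invoked in contexts where $p$ grows logarithmically and the bounds are iterated several times, losing the $\sqrt{p}$ here would propagate into the high-dimensional error rates downstream, so it is not a harmless constant.

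The paper avoids the vector BDG as a black box by a slightly different scheme that preserves $\sqrt{p}$ with no extra effort: apply It\^o to $\tau\mapsto\|N_\tau\|_{\ell_2}^2$ (not to $\|N_\tau\|_{\ell_2}^{2p}$), splitting it into a scalar martingale part $\mathbf I_\tau$ and a finite-variation part $\mathbf II_\tau=\int_{t_{h-1}}^\tau\|E[D_s\xi_t|\mathcal F_s]\|_{\ell_2}^2\,ds$. The term $\mathbf II_\tau$ is bounded directly by Minkowski/Lyapunov exactly as in your third display. For $\mathbf I_\tau$, being a scalar continuous martingale, one applies the scalar Barlow--Yor inequality (Proposition \ref{sharp-BDG}), then Schwarz and Minkowski, producing a bound of the form $\|\mathbf I_\tau\|_p\lesssim\sqrt{p\,A^2\int_{t_{h-1}}^\tau\sqrt{g(s)}\,ds}$ with $g(s):=\|\|N_s\|_{\ell_2}^2\|_p^2$ and $A:=\sup\|D_u\xi_v\|_{2p,\ell_2}$. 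Combining the two pieces yields the implicit inequality $g(\tau)\leq 2A^4/n^2+C_0pA^2\int_{t_{h-1}}^\tau\sqrt{g(s)}\,ds$, which is not linear in $g$, so the paper closes with the Bihari inequality (a nonlinear Gronwall) rather than ordinary Gronwall, obtaining $\sqrt{g(t)}\lesssim pA^2/n$ and hence $\|N_t\|_{2p,\ell_2}\lesssim\sqrt{p/n}\,A$. If you want to keep your presentation modular, you could replace your sketch by a citation to a Hilbert-space BDG with the sharp $\sqrt{p}$ constant (such results exist in the literature), but the self-contained It\^o-on-$\|x\|^2$ plus scalar Barlow--Yor plus Bihari route is both shorter and manifestly dimension-free, and I would recommend adopting it.
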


\begin{proof}
By the Clark-Ocone formula (Proposition A.1 of \cite{NP1988}) we have
\[
\xi^{a}_t=E[\xi^{a}_t|\mathcal{F}_{t_{h-1}}]+\int_{t_{h-1}}^tE[D_s\xi^{a}_t|\mathcal{F}_{s}]\cdot dB_s\qquad\text{a.s.}
\]
for all $t\in [t_{h-1},t_{h+1}]$ and $a=1,\dots,\mathsf{r}$. Therefore, it suffices to show that there is a universal constant $C'>0$ such that
\[
\max_{1\leq h\leq n-1}\sup_{t\in[t_{h-1},t_{h+1}]}\left\|\sum_{a=1}^\mathsf{r}\left(\int_{t_{h-1}}^tE[D_s\xi^{a}_t|\mathcal{F}_{s}]\cdot dB_s\right)^2\right\|_p\leq C'\frac{p}{n}\sup_{0\leq u\leq v\leq1}\|D_{u}\xi_{v}\|_{2p,\ell_2}^2
\]
for all $n\in\mathbb{N}$ and $p\in[2,\infty)$. 

Fix $h=1,\dots,n-1$ and $t\in[t_{h-1},t_{h+1}]$ arbitrarily. By It\^o's formula we have
\begin{align*}
&\sum_{a=1}^\mathsf{r}\left(\int_{t_{h-1}}^\tau E[D_s\xi^{a}_t|\mathcal{F}_{s}]\cdot dB_s\right)^2\\
&=2\int_{t_{h-1}}^\tau \sum_{a=1}^\mathsf{r}\left(\int_{t_{h-1}}^sE[D_u\xi^{a}_t|\mathcal{F}_{u}]\cdot dB_u\right)E[D_s\xi^{a}_t|\mathcal{F}_{s}]\cdot dB_s
+\int_{t_{h-1}}^\tau \sum_{a=1}^\mathsf{r}\left\|E\left[D_s\xi^{a}_t|\mathcal{F}_{s}\right]\right\|_{\ell_2}^2ds\\
&=:\mathbf{I}_\tau+\mathbf{II}_\tau
\end{align*}
for every $\tau\in[t_{h-1},t]$. The Lyapunov inequality and Proposition \ref{minkowski} yield
\begin{align*}
\left\|\mathbf{II}_\tau\right\|_p
\leq\frac{1}{n}\sup_{0\leq u\leq v\leq1}\|D_{u}\xi_{v}\|_{2p,\ell_2}^2.
\end{align*}
Meanwhile, 
\tcr{we have}
\begin{align*}
\left\|\mathbf{I}_\tau\right\|_{p}
&\lesssim\sqrt{p}\left\|\sqrt{\sum_{b=1}^r\int_{t_{h-1}}^\tau\left|\sum_{a=1}^\mathsf{r}\left(\int_{t_{h-1}}^sE[D_u\xi^{a}_t|\mathcal{F}_{u}]\cdot dB_u\right)E[D^{(b)}_s\xi^{a}_t|\mathcal{F}_{s}]\right|^2ds}\right\|_{p}
~\tcr{(\because\text{Proposition \ref{sharp-BDG}})}\\
&\leq\sqrt{p}\left\|\int_{t_{h-1}}^\tau\sum_{a=1}^\mathsf{r}\left(\int_{t_{h-1}}^sE[D_u\xi^{a}_t|\mathcal{F}_{u}]\cdot dB_u\right)^2\sum_{a=1}^\mathsf{r}\sum_{b=1}^rE[D^{(b)}_s\xi^{a}_t|\mathcal{F}_{s}]^2ds\right\|_{p/2}^{1/2}
~\tcr{(\because\text{Schwarz})}\\
&\leq\sqrt{p\int_{t_{h-1}}^\tau\left\|\sum_{a=1}^\mathsf{r}\left(\int_{t_{h-1}}^sE[D_u\xi^{a}_t|\mathcal{F}_{u}]dB_u\right)^2E\left[\|D_s\xi_t\|_{\ell_2}^2|\mathcal{F}_{s}\right]\right\|_{p/2}ds}
~(\because\text{Lyapunov, Proposition \ref{minkowski}})\\
&\leq\sqrt{p\int_{t_{h-1}}^\tau\left\|\sum_{a=1}^\mathsf{r}\left(\int_{t_{h-1}}^sE[D_u\xi^{a}_t|\mathcal{F}_{u}]\cdot dB_u\right)^2\right\|_p\left\|E\left[\|D_s\xi_t\|_{\ell_2}^2|\mathcal{F}_{s}\right]\right\|_{p}ds}
~(\because\text{Schwarz})\\
&\leq\sqrt{p\sup_{0\leq u\leq v\leq1}\|D_{u}\xi_{v}\|_{2p,\ell_2}^2\int_{t_{h-1}}^\tau\left\|\sum_{a=1}^\mathsf{r}\left(\int_{t_{h-1}}^sE[D_u\xi^{a}_t|\mathcal{F}_{u}]\cdot dB_u\right)^2\right\|_pds}
~\tcr{(\because\text{Lyapunov})}.
\end{align*}
Therefore, defining the function $g:[t_{h-1},t]\to[0,\infty)$ by
\[
g(\tau)=\left\|\sum_{a=1}^\mathsf{r}\left(\int_{t_{h-1}}^\tau E[D_s\xi^{a}_t|\mathcal{F}_{s}]\cdot dB_s\right)^2\right\|_{p}^2,\qquad \tau\in[t_{h-1},t],
\]
we obtain
\[
g(\tau)\leq\frac{2}{n^2}\sup_{0\leq u\leq v\leq1}\|D_{u}\xi_{v}\|_{2p,\ell_2}^4+C_0p\sup_{0\leq u\leq v\leq1}\|D_{u}\xi_{v}\|_{2p,\ell_2}^2\int_{t_{h-1}}^\tau\sqrt{g(s)}ds
\]
for any $\tau\in[t_{h-1},t]$ with some universal constant $C_0>0$. Hence the Bihari inequality (cf.~Section 3 of \cite{Bihari1956}) yields
\[
\sqrt{g(t)}\leq\frac{\sqrt{2}}{n}\sup_{0\leq u\leq v\leq1}\|D_{u}\xi_{v}\|_{2p,\ell_2}^2
+\frac{C_0p}{2n}\sup_{0\leq u\leq v\leq1}\|D_{u}\xi_{v}\|_{2p,\ell_2}^2.
\]
This implies that the desired result holds true with the constant $C'=1/\sqrt{2}+C_0/2$. 
\end{proof}

\begin{lemma}\label{sigma-l2}
Under the assumptions of Theorem \ref{thm:rc}, it holds that 
$\|\varsigma^{i\cdot}_t\|_{2p,\ell_2}\leq\|\sigma^{i\cdot}_t\|_{2p,\ell_2}=\|\Sigma^{ii}_t\|_{p}^{1/2}$
for any $t\in[0,1]$, $i=1,\dots,d$ and $p\geq1$. 
\end{lemma}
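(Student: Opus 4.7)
The statement splits into an equality $\|\sigma^{i\cdot}_t\|_{2p,\ell_2}=\|\Sigma^{ii}_t\|_p^{1/2}$ and an inequality $\|\varsigma^{i\cdot}_t\|_{2p,\ell_2}\le\|\sigma^{i\cdot}_t\|_{2p,\ell_2}$. Both are essentially bookkeeping plus conditional Jensen, so the plan is quite short.

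For the equality, I will simply unpack the definitions. Since $\Sigma_t=\sigma_t\sigma_t^\top$, the $(i,i)$-entry is $\Sigma^{ii}_t=\sum_{a=1}^r(\sigma_t^{ia})^2=\|\sigma^{i\cdot}_t\|_{\ell_2}^2$. Then, by the definition of $\|\cdot\|_{p,\ell_2}$ given at the beginning of Section~\ref{sec:rc},
\[
\|\sigma^{i\cdot}_t\|_{2p,\ell_2}
=\bigl\|\|\sigma^{i\cdot}_t\|_{\ell_2}\bigr\|_{2p}
=\bigl(E\bigl[(\Sigma^{ii}_t)^p\bigr]\bigr)^{1/(2p)}
=\|\Sigma^{ii}_t\|_p^{1/2},
\]
which is the desired identity.

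For the inequality, I will apply the conditional Jensen inequality twice. Since $\varsigma_t=E[\sigma_t\mid\mathcal{G}_t^n]$ entrywise, we have $\varsigma^{i\cdot}_t=E[\sigma^{i\cdot}_t\mid\mathcal{G}_t^n]$ as an $r$-dimensional conditional expectation. The map $v\mapsto\|v\|_{\ell_2}$ is convex (it is a norm), so conditional Jensen yields
\[
\|\varsigma^{i\cdot}_t\|_{\ell_2}\le E\bigl[\|\sigma^{i\cdot}_t\|_{\ell_2}\,\big|\,\mathcal{G}_t^n\bigr]\quad\text{a.s.}
\]
Raising both sides to the $2p$-th power (both sides are nonnegative and $2p\ge 2$) and then applying conditional Jensen once more to the convex map $x\mapsto x^{2p}$ on $[0,\infty)$ gives
\[
\|\varsigma^{i\cdot}_t\|_{\ell_2}^{2p}\le E\bigl[\|\sigma^{i\cdot}_t\|_{\ell_2}^{2p}\,\big|\,\mathcal{G}_t^n\bigr].
\]
Taking unconditional expectations and then $(2p)$-th roots yields $\|\varsigma^{i\cdot}_t\|_{2p,\ell_2}\le\|\sigma^{i\cdot}_t\|_{2p,\ell_2}$, completing the proof.

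There is no real obstacle here; the only thing to check is the standard fact that conditional expectation on a Hilbert-space-valued random variable is a contraction in every $L^q$-norm with $q\ge 1$, which is exactly what the two-step Jensen argument above encodes. The moment assumption \eqref{eq-sigma} in Theorem~\ref{thm:rc} guarantees that all quantities involved are finite, so no integrability issue arises.
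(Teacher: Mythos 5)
Your proposal is correct and follows essentially the same route as the paper. The paper's proof of the equality is identical (it observes $\|\sigma^{i\cdot}_t\|_{\ell_2}^2=\Sigma^{ii}_t$), and the paper's proof of the inequality likewise amounts to two applications of conditional Jensen: the paper first applies Jensen entrywise to $x\mapsto x^2$ to get $\|\varsigma^{i\cdot}_t\|_{\ell_2}^2\leq E[\|\sigma^{i\cdot}_t\|_{\ell_2}^2\mid\mathcal{G}^n_t]$, then applies Jensen to $x\mapsto x^p$; you instead apply Jensen once to the vector-valued norm map and once to $x\mapsto x^{2p}$, which yields the same contraction property of conditional expectation on $L^{2p}(\ell_2)$.
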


\begin{proof}
The last equality is evident from the identity $\|\sigma^{i\cdot}_t\|_{\ell_2}^2=\Sigma^{ii}_t$. 
Meanwhile, the Lyapunov inequality yields
\begin{align*}
\|\varsigma^{i\cdot}_t\|_{\ell_2}^2
=\sum_{a=1}^r\left(E\left[\sigma^{ia}_t|\mathcal{G}^n_t\right]\right)^2
\leq \sum_{a=1}^rE\left[\left(\sigma^{ia}_t\right)^2|\mathcal{G}^n_t\right]
=E\left[\left\|\sigma^{i\cdot}_t\right\|_{\ell_2}^2|\mathcal{G}^n_t\right].
\end{align*}
Therefore, the Lyapunov inequality again yields 
$
E\left[\|\varsigma^{i\cdot}_t\|_{\ell_2}^{2p}\right]
\leq E\left[\left(E\left[\left\|\sigma^{i\cdot}_t\right\|_{\ell_2}^2|\mathcal{G}^n_t\right]\right)^p\right]
\leq E\left[\left\|\sigma^{i\cdot}_t\right\|_{\ell_2}^{2p}\right].
$ 
This means $\|\varsigma^{i\cdot}_t\|_{2p,\ell_2}\leq\|\sigma^{i\cdot}_t\|_{2p,\ell_2}$. 
\end{proof}

\begin{lemma}\label{sigma-deriv}
Under the assumptions of Theorem \ref{thm:rc}, 
for all $i=1,\dots,d$ and $u\in[0,1]$, $\varsigma^{i\cdot}_u\in\mathbb{D}_{2,\infty}(\mathbb{R}^r)$ and $D_s\varsigma^{i\cdot}_u,D_{s,t}\varsigma^{i\cdot}_u$ are $\mathcal{G}^n_u$-measurable for any $s,t\in[0,1]$. Moreover, the following estimates hold true for any $p\in[1,\infty)$ and $s,t\in[0,1]$:
\begin{align}
\|D_{t}\varsigma^{i\cdot}_u\|_{2p,\ell_2}&\leq\|D_{t}\sigma^{i\cdot}_u\|_{2p,\ell_2},\label{eq:deriv1}\\
\|D_{s,t}\varsigma^{i\cdot}_u\|_{2p,\ell_2}&\leq\|D_{s,t}\sigma^{i\cdot}_u\|_{2p,\ell_2},\label{eq:deriv2}\\
\max_{1\leq k\leq d}\left\|\sum_{a=1}^r\varsigma^{ka}_{s}D_s^{(a)}\varsigma^{i\cdot}_{u}\right\|_{p,\ell_2}
&\leq\max_{1\leq k\leq d}\left\|\Sigma^{kk}_u\right\|_{p}^{1/2}\left\|D_s\sigma^{i\cdot}_{u}\right\|_{2p,\ell_2},\label{eq:deriv3}\\
\max_{1\leq k,l\leq d}\left\|\sum_{a,b=1}^r\varsigma^{ka}_{s}\varsigma^{lb}_{t}D^{(a,b)}_{s,t}\varsigma^{i\cdot}_{u}\right\|_{p,\ell_2}
&\leq\max_{1\leq k\leq d}\left\|\Sigma^{kk}_s\right\|_{\frac{3}{2}p}\left\|D_{s,t}\sigma^{i\cdot}_{u}\right\|_{3p,\ell_2}.\label{eq:deriv4}
\end{align}
\end{lemma}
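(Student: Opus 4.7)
The plan is to reduce everything to two classical facts about Malliavin calculus on a Brownian filtration. First, for $F\in\mathbb{D}_{1,p}$ and $t\in[0,1]$ the conditional expectation $E[F|\mathcal{F}_t]$ again lies in $\mathbb{D}_{1,p}$ and satisfies
\[
D_s E[F|\mathcal{F}_t]=1_{[0,t]}(s)\,E[D_s F|\mathcal{F}_t]\qquad\text{a.e.~on }[0,1]\times\Omega
\]
(Proposition 1.2.8 of \cite{Nualart2006}; the identity extends to the present partial Malliavin setting because conditioning on $\mathcal{F}_t=\mathcal{F}'\otimes\mathcal{B}_t$ only freezes the $\mathcal{F}'$-factor, which is untouched by $D$). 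Second, the conditional Jensen inequality applied to the convex function $x\mapsto\|x\|_{\ell_2}^2$, combined with Lyapunov, transfers $L^{2p}$-bounds on $\sigma$ and its derivatives to the corresponding conditional expectations.

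Applying the first fact to $\varsigma_u^{i\cdot}=E[\sigma_u^{i\cdot}|\mathcal{F}_{t_{h-1}}]$ for $u\in I_h$ yields $\varsigma_u^{i\cdot}\in\mathbb{D}_{1,\infty}(\mathbb{R}^r)$ together with
\[
D_s\varsigma^{i\cdot}_u=1_{[0,t_{h-1}]}(s)\,E[D_s\sigma^{i\cdot}_u|\mathcal{F}_{t_{h-1}}],
\]
and iterating it once more (using that $\sigma_u^{i\cdot}\in\mathbb{D}_{2,\infty}$, so that $D\sigma_u^{i\cdot}\in\mathbb{D}_{1,\infty}(H\otimes\mathbb{R}^r)$) produces $\varsigma_u^{i\cdot}\in\mathbb{D}_{2,\infty}(\mathbb{R}^r)$ with
\[
D_{s,t}\varsigma^{i\cdot}_u=1_{[0,t_{h-1}]}(s)\,1_{[0,t_{h-1}]}(t)\,E[D_{s,t}\sigma^{i\cdot}_u|\mathcal{F}_{t_{h-1}}].
\]
Since $\mathcal{G}^n_u=\mathcal{F}_{t_{h-1}}$ on $I_h$, both right-hand sides are $\mathcal{G}^n_u$-measurable, which settles the qualitative part of the lemma.

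The estimates \eqref{eq:deriv1} and \eqref{eq:deriv2} are then immediate: conditional Jensen for $\|\cdot\|_{\ell_2}^2$ gives $\|D_t\varsigma^{i\cdot}_u\|_{\ell_2}^2\leq E[\|D_t\sigma^{i\cdot}_u\|_{\ell_2}^2|\mathcal{F}_{t_{h-1}}]$, and Lyapunov followed by an unconditional expectation yields the $L^{2p}$ bound; the second-order case is identical. For \eqref{eq:deriv3}, Cauchy--Schwarz in the summation index $a$ gives
\[
\left\|\sum_{a=1}^r\varsigma_s^{ka}D_s^{(a)}\varsigma_u^{i\cdot}\right\|_{\ell_2}
\leq\|\varsigma_s^{k\cdot}\|_{\ell_2}\,\|D_s\varsigma_u^{i\cdot}\|_{\ell_2},
\]
after which Schwarz in $L^p$, together with Lemma \ref{sigma-l2} and \eqref{eq:deriv1}, produces the stated bound. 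Estimate \eqref{eq:deriv4} is obtained analogously, separating the three factors $\|\varsigma_s^{k\cdot}\|_{\ell_2}$, $\|\varsigma_t^{l\cdot}\|_{\ell_2}$ and $\|D_{s,t}\varsigma_u^{i\cdot}\|_{\ell_2}$ by H\"older with three $3p$-exponents and invoking \eqref{eq:deriv2}.

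The whole argument is essentially bookkeeping; the only point that deserves care is the iterated use of the conditional-expectation--derivative identity for $D^2$, and in particular the Fubini-type exchange between $\mathcal{F}_{t_{h-1}}$-conditioning and the second derivative in the partial Malliavin framework. This is, however, a direct consequence of the product structure of $(\Omega,\mathcal{F},P)$ together with $\sigma_u^{i\cdot}\in\mathbb{D}_{2,\infty}$, so no genuine obstacle arises.
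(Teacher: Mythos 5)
Your proof is correct and takes essentially the same route as the paper: commute the Malliavin derivative with the conditional expectation defining $\varsigma^{i\cdot}_u$, deduce $\mathcal{G}^n_u$-measurability and \eqref{eq:deriv1}--\eqref{eq:deriv2} via conditional Jensen plus Lyapunov, and obtain \eqref{eq:deriv3}--\eqref{eq:deriv4} by Cauchy--Schwarz/H\"older combined with Lemma \ref{sigma-l2}. The only difference is bibliographic: the paper invokes Proposition 3.1 of \cite{JS1990}, which supplies the second-order commutation formula in the partial Malliavin framework in one step, whereas you iterate the first-order identity of Proposition 1.2.8 of \cite{Nualart2006} and argue its applicability by hand through the product structure of $(\Omega,\mathcal{F},P)$ --- the same point the paper's chosen reference is designed to cover.
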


\begin{proof}
First, by Proposition 3.1 of \cite{JS1990} $\varsigma^{i\cdot}_u\in\mathbb{D}_{2,2}(\mathbb{R}^r)$ and we have
\[
D_s^{(a)}\varsigma^{i\cdot}_u=E\left[D_s^{(a)}\sigma^{i\cdot}_u|\mathcal{G}_u^n\right]1_{\left[0,(\lceil nu\rceil-1)/n\right]}(s),\qquad
D_{s,t}^{(a,b)}\varsigma^{i\cdot}_u=E\left[D_{s,t}^{(a,b)}\sigma^{i\cdot}_u|\mathcal{G}_u^n\right]1_{\left[0,(\lceil nu\rceil-1)/n\right]^2}(s,t)
\]
for any $s,t\in[0,1]$ and $a,b=1,\dots,r$. In particular, $D_s\varsigma^{i\cdot}_u,D_{s,t}\varsigma^{i\cdot}_u$ are $\mathcal{G}^n_u$-measurable. Moreover, \eqref{eq:deriv1}--\eqref{eq:deriv2} can be shown in an analogous way to the proof of Lemma \ref{sigma-l2}, which also implies that $\varsigma^{i\cdot}_u\in\mathbb{D}_{2,\infty}(\mathbb{R}^r)$. 

Next, the Schwarz inequality, Lemma \ref{sigma-l2} and \eqref{eq:deriv1} yield
\begin{align*}
\left\|\sum_{a=1}^r\varsigma^{ka}_{s}D_s^{(a)}\varsigma^{i\cdot}_{u}\right\|_{p,\ell_2}
&=\left\|\sqrt{\sum_{b=1}^r\left(\sum_{a=1}^r\varsigma^{ka}_{s}D_s^{(a)}\varsigma^{ib}_{u}\right)^2}\right\|_{p}
\leq\left\|\|\varsigma^{k\cdot}_s\|_{\ell_2}\left\|D_s\varsigma^{i\cdot}_{u}\right\|_{\ell_2}\right\|_{p}
\leq\left\|\Sigma^{kk}_u\right\|_{p}^{1/2}\left\|D_s\sigma^{i\cdot}_{u}\right\|_{2p,\ell_2},
\end{align*}
\tcr{and thus} we obtain \eqref{eq:deriv3}.

Finally, the Schwarz inequality, Lemma \ref{sigma-l2} and \eqref{eq:deriv2} yield
\begin{align*}
\left\|\sum_{a,b=1}^r\varsigma^{ka}_{s}\varsigma^{lb}_{t}D^{(a,b)}_{s,t}\varsigma^{i\cdot}_{u}\right\|_{p,\ell_2}
&=\left\|\sqrt{\sum_{c=1}^r\left(\sum_{a,b=1}^r\varsigma^{ka}_{s}\varsigma^{lb}_{t}D^{(a,b)}_{s,t}\varsigma^{ic}_{u}\right)^2}\right\|_{p}
\leq\left\|\|\varsigma^{k\cdot}_{s}\|_{\ell_2}\|\varsigma^{l\cdot}_{t}\|_{\ell_2}\|D^2_{s,t}\varsigma^{i\cdot}_{u}\|_{\ell_2}\right\|_{p}\\
&\leq\left\|\|\varsigma^{k\cdot}_{s}\|_{\ell_2}\|\varsigma^{l\cdot}_{t}\|_{\ell_2}\right\|_{\frac{3}{2}p}\|D^2_{s,t}\varsigma^{i\cdot}_{u}\|_{3p,\ell_2}
\leq\left\|\Sigma^{kk}_s\right\|_{\frac{3}{2}p}^{1/2}\left\|\Sigma^{ll}_t\right\|_{\frac{3}{2}p}^{1/2}\|D^2_{s,t}\sigma^{i\cdot}_{u}\|_{3p,\ell_2},
\end{align*}
\tcr{so} we obtain \eqref{eq:deriv4} and thus complete the proof.
\end{proof}

\if0
\begin{lemma}\label{lemma:BDG}
There is a universal constant $C>0$ such that
\[
\left\|\sum_{h=h_0}^n\delta^2(\xi\otimes\eta1_{I_h}\times1_{I_h})\right\|_p
\leq C\frac{p}{\sqrt{n}}\sup_{0\leq t\leq 1}\|\xi_t\|_{qp,\ell_2}\sup_{0\leq s\leq 1}\|\eta_s\|_{\frac{q}{q-1}p,\ell_2}
\]
for any $p\in[2,\infty)$, any $q>1$, any $n\in\mathbb{N}$, any $h_0=1,\dots,n$ and any $r$-dimensional $(\mathcal{G}^n_t)$-adapted processes $\xi$ and $\eta$ such that $\xi_t,\eta_t\in L^{\infty-}$ for every $t$. 
\end{lemma}

\begin{proof}
$q'=q/(1-q)$
\begin{align*}
\left\|\sum_{m=1}^n\xi_m\cdot(B_{t_m}-B_{t_{m-1}})\right\|_p
\lesssim\sqrt{\frac{p}{n}}\left\|\sqrt{\sum_{m=1}^n\|\xi_m\|_{\ell_2}^2}\right\|_p
\leq\sqrt{p}\max_{1\leq m\leq n}\|\xi_m\|_{p,\ell_2}
\end{align*}

\begin{align*}
\sum_{h=h_0}^n\delta^2(\xi\otimes\eta1_{I_h}\times1_{I_h})
=\sum_{h=h_0}^n\sum_{a=1}^r\int_{t_{h-1}}^{t_h}\left(\xi_t^a\int_{t_{h-1}}^t\eta_s\cdot dB_s+\eta_t^a\int_{t_{h-1}}^t\xi_s\cdot dB_s\right)dB_t^a
\end{align*}

\begin{align*}
\left\|\sum_{h=h_0}^n\delta^2(\xi\otimes\eta1_{I_h}\times1_{I_h})\right\|_p
&\lesssim\sqrt{p}\left\|\sqrt{\sum_{h=h_0}^n\sum_{a=1}^r\int_{t_{h-1}}^{t_h}\left(\xi_t^a\int_{t_{h-1}}^t\eta_s\cdot dB_s+\eta_t^a\int_{t_{h-1}}^t\xi_s\cdot dB_s\right)^2dt}\right\|_p\\
&\leq\sqrt{2p}\left\|\sum_{h=h_0}^n\int_{t_{h-1}}^{t_h}\left\{\|\xi_t\|_{\ell_2}^2\left(\int_{t_{h-1}}^t\eta_s\cdot dB_s\right)^2+\|\eta_t\|_{\ell_2}^2\left(\int_{t_{h-1}}^t\xi_s\cdot dB_s\right)^2\right\}dt\right\|_{p/2}^{1/2}\\
&\leq\sqrt{2p\sum_{h=h_0}^n\int_{t_{h-1}}^{t_h}\left\{\|\xi_t\|_{qp,\ell_2}^2\left\|\int_{t_{h-1}}^t\eta_s\cdot dB_s\right\|_{q'p}^2+\|\eta_t\|_{qp,\ell_2}^2\left\|\int_{t_{h-1}}^t\xi_s\cdot dB_s\right\|_{q'p}^2\right\}dt}\\
&\lesssim p\sqrt{\sum_{h=h_0}^n\int_{t_{h-1}}^{t_h}\left\{\|\xi_t\|_{qp,\ell_2}^2\left\|\sqrt{\int_{t_{h-1}}^t\|\eta_s\|_{\ell_2}^2ds}\right\|_{q'p}^2+\|\eta_t\|_{qp,\ell_2}^2\left\|\sqrt{\int_{t_{h-1}}^t\|\xi_s\|_{\ell_2}^2ds}\right\|_{q'p}^2\right\}dt}\\
&\leq\frac{p}{\sqrt{n}}\sqrt{\sum_{h=h_0}^n\int_{t_{h-1}}^{t_h}\left\{\|\xi_t\|_{qp,\ell_2}^2\sup_{0\leq s\leq 1}\|\eta_s\|_{q'p,\ell_2}^2+\|\eta_t\|_{qp,\ell_2}^2\sup_{0\leq s\leq 1}\|\xi_s\|_{q'p,\ell_2}^2\right\}dt}\\
&\leq\frac{\sqrt{2}p}{\sqrt{n}}\sup_{0\leq t\leq 1}\|\xi_t\|_{qp,\ell_2}\sup_{0\leq s\leq 1}\|\eta_s\|_{q'p,\ell_2}
\end{align*}

\end{proof}
\fi

Now we turn to the main body of the proof. We begin by evaluating the approximation error between $\sqrt{n}(\widehat{[Y^i,Y^j]}^n_1-[Y^i,Y^j]_1)$ and $M_n^{ij}$. 
\begin{lemma}\label{drift}
Under the assumptions of Theorem \ref{thm:rc}, 
it holds that
\[
\max_{1\leq i,j\leq d}\left\|\sqrt{n}\widehat{[\mathsf{A}^i,\mathsf{A}^j]}^n_1\right\|_p
\leq\frac{1}{\tcr{\sqrt{n}}}\max_{1\leq i\leq d}\sup_{0\leq s\leq 1}\|\mu^i_s\|_{2p}^2
\]
for any $p\in[1,\infty)$ and $n\in\mathbb{N}$. 
Moreover, there is a universal constant $C>0$ such that
\begin{align*}
\left\|\sqrt{n}\widehat{[\mathsf{M}^i,\mathsf{A}^j]}^n_1\right\|_p
\leq\frac{C}{\sqrt{n}}\sup_{0\leq s\leq 1}\|\sigma^{i\cdot}_s\|_{2p,\ell_2}\left(
\sqrt{p}\sup_{0\leq t\leq 1}\left\|\mu^{j}_t\right\|_{2p}
+p\sup_{0\leq u\leq v\leq 1}\|D_u\mu^j_v\|_{2p,\ell_2}
\right)
\end{align*}
for any $p\in[2,\infty)$, $n\in\mathbb{N}$ and $i,j=1,\dots,d$.
\end{lemma}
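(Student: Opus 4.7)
The plan is as follows. The first bound is an elementary Hölder--Minkowski estimate; the second, which carries the $\sqrt{p}$ and $p$ dependence in the factors, requires an integration by parts combined with a Clark--Ocone expansion, much in the spirit of the proof of Lemma \ref{ocone}.

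For the first assertion, I would write $\widehat{[\mathsf{A}^i,\mathsf{A}^j]}^n_1=\sum_{h=1}^n(\int_{I_h}\mu^i_s\,ds)(\int_{I_h}\mu^j_s\,ds)$ and then successively apply the triangle inequality, the Cauchy--Schwarz inequality in $L^p$, and Proposition \ref{minkowski} to pull the $L^p$ norm inside each time integral. Each factor $\|\int_{I_h}|\mu^i_s|\,ds\|_{2p}$ is bounded by $n^{-1}\sup_s\|\mu^i_s\|_{2p}$, and the sum of $n$ such terms produces an aggregate factor of $n^{-1}$; multiplying by the external $\sqrt{n}$ yields the stated $n^{-1/2}$ rate.

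The second assertion is the substantive part. On each subinterval $I_h$, applying integration by parts to the It\^o integral $X_t:=\int_{t_{h-1}}^t\sigma^{i\cdot}_s\cdot dB_s$ and the finite-variation process $Y_t:=\int_{t_{h-1}}^t\mu^j_u\,du$ gives
\[
X_{t_h}Y_{t_h}=\int_{I_h}\mu^j_tX_t\,dt+\int_{I_h}Y_t\,\sigma^{i\cdot}_t\cdot dB_t,
\]
the quadratic covariation term vanishing because $Y$ has bounded variation. Summing over $h$ decomposes $\widehat{[\mathsf{M}^i,\mathsf{A}^j]}^n_1$ into an ``outer'' stochastic integral $\int_0^1\phi_t\sigma^{i\cdot}_t\cdot dB_t$ with $\phi_t=Y_t$ on $I_h$, plus a ``sandwich'' part $\sum_h\int_{I_h}\mu^j_tX_t\,dt$. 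Proposition \ref{sharp-BDG} applied to the outer integral, together with Proposition \ref{minkowski} to bound $\|\phi_t\|_{2p}\leq n^{-1}\sup_u\|\mu^j_u\|_{2p}$, contributes a term of order $\sqrt{p}\,n^{-1}\sup\|\mu^j\|_{2p}\sup\|\sigma^{i\cdot}\|_{2p,\ell_2}$.

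For the sandwich part I would further decompose $\mu^j_t=E[\mu^j_t\mid\mathcal{F}_{t_{h-1}}]+(\mu^j_t-E[\mu^j_t\mid\mathcal{F}_{t_{h-1}}])$ for $t\in I_h$. The projected piece can be rewritten by the stochastic Fubini theorem (legitimate because $E[\mu^j_t\mid\mathcal{F}_{t_{h-1}}]$ is $\mathcal{F}_{t_{h-1}}\subset\mathcal{F}_s$-measurable for $s\in I_h$) as the It\^o integral $\int_{I_h}\bigl(\int_s^{t_h}E[\mu^j_t\mid\mathcal{F}_{t_{h-1}}]\,dt\bigr)\sigma^{i\cdot}_s\cdot dB_s$, which Proposition \ref{sharp-BDG} again controls at the level $\sqrt{p}\,n^{-1}\sup\|\mu^j\|_{2p}\sup\|\sigma^{i\cdot}\|_{2p,\ell_2}$. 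For the residual piece, the Clark--Ocone formula expresses $\mu^j_t-E[\mu^j_t\mid\mathcal{F}_{t_{h-1}}]=\int_{t_{h-1}}^tE[D_u\mu^j_t\mid\mathcal{F}_u]\cdot dB_u$, so the corresponding term becomes a product of two It\^o integrals over an interval of length $\leq n^{-1}$. Applying Cauchy--Schwarz in $L^p$, then Proposition \ref{sharp-BDG} with exponent $2p$ on each factor, and finally conditional Jensen to drop the $E[\,\cdot\mid\mathcal{F}_u]$ from the Malliavin derivative, produces a per-summand bound of order $p\,(t-t_{h-1})\sup_{u,v}\|D_u\mu^j_v\|_{2p,\ell_2}\sup_s\|\sigma^{i\cdot}_s\|_{2p,\ell_2}$; summing over $h$ and integrating over $t\in I_h$ gives the additional $n^{-1}$ needed to match the second term of the target inequality.

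The main bookkeeping obstacle will be controlling the norm exponents through the two nested BDG applications in the residual term: one needs Hölder with conjugate exponents $2,2$ so that each It\^o integral lands in $L^{2p}$, so that Proposition \ref{sharp-BDG} produces the $\sqrt{t-t_{h-1}}$ scaling on each factor; the final $\sqrt{n}$ in front of $\widehat{[\mathsf{M}^i,\mathsf{A}^j]}^n_1$ then combines with the aggregated $n^{-1}$ to yield the announced $n^{-1/2}$. The stochastic Fubini step in the projected piece also deserves some care, but the required adaptedness follows directly from the nesting $\mathcal{F}_{t_{h-1}}\subset\mathcal{F}_s$ for $s\in I_h$.
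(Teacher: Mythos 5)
Your proof follows essentially the same route as the paper's: the same Itô integration-by-parts decomposition on each $I_h$, the same split of the drift into its $\mathcal{F}_{t_{h-1}}$-projection and residual, the same stochastic-Fubini trick for the projected part, and the same Clark--Ocone plus BDG argument for the residual. The only cosmetic difference is that the paper invokes its prepackaged Lemma~\ref{ocone} (which extracts the $\sqrt{p/n}$ bound for $\mu^j_t - E[\mu^j_t\mid\mathcal{F}_{t_{h-1}}]$ via a Bihari-type argument) whereas you apply Clark--Ocone and BDG directly to the residual integral; both yield the same $p$- and $n$-dependence.
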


\begin{proof}
The first claim is an immediate consequence of the H\"older inequality and Proposition \ref{minkowski}. 

To prove the second claim, by It\^o's formula we decompose $\sqrt{n}\widehat{[\mathsf{M}^i,\mathsf{A}^j]}^n_1$ as
\begin{align*}
\sqrt{n}\widehat{[\mathsf{M}^i,\mathsf{A}^j]}^n_1
&=\sqrt{n}\sum_{h=1}^n\left\{\int_{t_{h-1}}^{t_h}\left(\int_{t_{h-1}}^t\mu^{j}_sds\right)\sigma^{i\cdot}_t\cdot dB_t+\int_{t_{h-1}}^{t_h}\left(\int_{t_{h-1}}^t\sigma^{i\cdot}_s\cdot dB_s\right)\mu^{j}_tdt\right\}\\
&=:\mathbf{I}_n^{ij}+\mathbf{II}_n^{ij}.
\end{align*}
By Propositions \ref{minkowski}--\ref{sharp-BDG} we have
\begin{align*}
\|\mathbf{I}_n^{ij}\|_p
&\lesssim \sqrt{np}\left\|\sqrt{\sum_{h=1}^n\int_{t_{h-1}}^{t_h}\left(\int_{t_{h-1}}^t\mu^{j}_sds\right)^2\|\sigma^{i\cdot}_t\|_{\ell_2}^2 dt}\right\|_p\\
&\leq\sqrt{\frac{p}{n}}\sup_{0\leq s\leq 1}\left\|\mu^{j}_s\right\|_{2p}\sup_{0\leq t\leq 1}\|\sigma^{i\cdot}_t\|_{2p,\ell_2}.
\end{align*}
In the meantime, we further decompose $\mathbf{II}_n^{ij}$ as
\begin{align*}
\mathbf{II}_n^{ij}&=\sqrt{n}\sum_{h=1}^n\left\{\int_{t_{h-1}}^{t_h}\left(\int_{t_{h-1}}^t\sigma^{i\cdot}_s\cdot dB_s\right)E\left[\mu^{j}_t|\mathcal{G}_t^n\right]dt
+\int_{t_{h-1}}^{t_h}\left(\int_{t_{h-1}}^t\sigma^{i\cdot}_s\cdot dB_s\right)\left(\mu^j_t-E\left[\mu^{j}_t|\mathcal{G}_t^n\right]\right)dt\right\}\\
&=:\mathbf{II}_n^{ij}(1)+\mathbf{II}_n^{ij}(2).
\end{align*}
Since $E\left[\mu^{j}_t|\mathcal{G}_t^n\right]$ is $\mathcal{F}_{t_{h-1}}$-measurable for $t\in I_h$, we have
\[
\mathbf{II}_n^{ij}(1)=\sqrt{n}\sum_{h=1}^n\int_{t_{h-1}}^{t_h}\left(\int_{t_{h-1}}^{t_h}1_{(t_{h-1},t]}(s)E\left[\mu^{j}_t|\mathcal{G}_t^n\right]\sigma^{i\cdot}_s\cdot dB_s\right)dt.
\]
Therefore, the stochastic Fubini theorem (e.g.~Corollary 5.28 of \cite{Medv2007}) yields
\[
\mathbf{II}_n^{ij}(1)=\sqrt{n}\sum_{h=1}^n\int_{t_{h-1}}^{t_h}\left(\int_{t_{h-1}}^{t_h}1_{(t_{h-1},t]}(s)E\left[\mu^{j}_t|\mathcal{G}_t^n\right]dt\right)\sigma^{i\cdot}_s\cdot dB_s.
\]
Hence, Propositions \ref{minkowski}--\ref{sharp-BDG} and the Lyapunov inequality imply that
\begin{align*}
\left\|\mathbf{II}_n^{ij}(1)\right\|_p
&\lesssim\sqrt{np}\left\|\sqrt{\sum_{h=1}^n\int_{t_{h-1}}^{t_h}\left(\int_{t_{h-1}}^{t_h}1_{(t_{h-1},t]}(s)E\left[\mu^{j}_t|\mathcal{G}_t^n\right]dt\right)^2\|\sigma^{i\cdot}_s\|_{\ell_2}^2ds}\right\|_p\\
&\leq\sqrt{\frac{p}{n}}\sup_{0\leq t\leq 1}\|\mu^j_t\|_{2p}\sup_{0\leq s\leq 1}\|\sigma^{i\cdot}_s\|_{2p,\ell_2}.
\end{align*}
Meanwhile, Propositions \ref{minkowski}--\ref{sharp-BDG} and Lemma \ref{ocone} yield
\begin{align*}
\left\|\mathbf{II}_n^{ij}(2)\right\|_p
&\leq\sqrt{n}\sum_{h=1}^n
\int_{t_{h-1}}^{t_h}\left\|\int_{t_{h-1}}^t\sigma^{i\cdot}_s\cdot dB_s\right\|_{2p}\left\|\mu^j_t-E\left[\mu^{j}_t|\mathcal{G}_t^n\right]\right\|_{2p}dt\\
&\lesssim p\sum_{h=1}^n\int_{t_{h-1}}^{t_h}\left\|\sqrt{\int_{t_{h-1}}^t\|\sigma^{i\cdot}_s\|_{\ell_2}^2ds}\right\|_{2p}dt\sup_{0\leq u\leq v\leq 1}\|D_u\mu^j_v\|_{2p,\ell_2}\\
&\leq\frac{p}{\sqrt{n}}\sup_{0\leq s\leq1}\|\sigma^{i\cdot}_s\|_{2p,\ell_2}\sup_{0\leq u\leq v\leq 1}\|D_u\mu^j_v\|_{2p,\ell_2}.
\end{align*}
Combining these estimates, we complete the proof. 
\end{proof}

\begin{lemma}\label{approx-M}
Under the assumptions of Theorem \ref{thm:rc}, 
there is a universal constant $C>0$ such that
\[
\max_{1\leq i,j\leq d}\left\|\sqrt{n}\left(\widehat{[\mathsf{M}^i,\mathsf{M}^j]}^n_1-[Y^i,Y^j]_1\right)-M_n^{ij}\right\|_p
\leq C\frac{p^{3/2}}{\sqrt{n}}\max_{1\leq i\leq d}\sup_{0\leq s\leq 1}\left\|\Sigma^{ii}_s\right\|_{p}^{1/2}\max_{1\leq j\leq d}\sup_{0\leq u\leq v\leq1}\|D_{u}\sigma^{j\cdot}_{v}\|_{2p,\ell_2}
\]
for every $n\in\mathbb{N}$ and $p\in[2,\infty)$. 
\end{lemma}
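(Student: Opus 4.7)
The plan is to exploit Itô's formula to express the discretization error $\sqrt{n}(\widehat{[\mathsf{M}^i,\mathsf{M}^j]}^n_1 - [Y^i,Y^j]_1)$ as a sum of iterated stochastic integrals built from $\sigma$, and to use Lemma \ref{lemma:double} to write $M_n^{ij}$ as an analogous sum built from the piecewise-conditional process $\varsigma$. For each interval $I_h$, Itô's formula gives
\[
(\mathsf{M}^i_{t_h}-\mathsf{M}^i_{t_{h-1}})(\mathsf{M}^j_{t_h}-\mathsf{M}^j_{t_{h-1}})=\int_{t_{h-1}}^{t_h}\!\Big(\!\int_{t_{h-1}}^t\sigma^{i\cdot}_s\!\cdot\! dB_s\Big)\sigma^{j\cdot}_t\!\cdot\! dB_t+\int_{t_{h-1}}^{t_h}\!\Big(\!\int_{t_{h-1}}^t\sigma^{j\cdot}_s\!\cdot\! dB_s\Big)\sigma^{i\cdot}_t\!\cdot\! dB_t+\int_{t_{h-1}}^{t_h}\Sigma^{ij}_t\,dt,
\]
and summing over $h$ the last term cancels against $[Y^i,Y^j]_1$. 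On the other hand, Lemma \ref{lemma:double} (applied to the two summands in $\symm(\varsigma^{i\cdot}\otimes\varsigma^{j\cdot})=\frac12(\varsigma^{i\cdot}\otimes\varsigma^{j\cdot}+\varsigma^{j\cdot}\otimes\varsigma^{i\cdot})$) yields
\[
M_n^{ij}=\sqrt n\sum_{h=1}^n\!\left[\int_{t_{h-1}}^{t_h}\!\Big(\!\int_{t_{h-1}}^t\varsigma^{i\cdot}_s\!\cdot\! dB_s\Big)\varsigma^{j\cdot}_t\!\cdot\! dB_t+\int_{t_{h-1}}^{t_h}\!\Big(\!\int_{t_{h-1}}^t\varsigma^{j\cdot}_s\!\cdot\! dB_s\Big)\varsigma^{i\cdot}_t\!\cdot\! dB_t\right].
\]

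Next, I would subtract the two representations and use the bilinear identity $\sigma^{i\cdot}_s\otimes\sigma^{j\cdot}_t-\varsigma^{i\cdot}_s\otimes\varsigma^{j\cdot}_t=(\sigma^{i\cdot}_s-\varsigma^{i\cdot}_s)\otimes\sigma^{j\cdot}_t+\varsigma^{i\cdot}_s\otimes(\sigma^{j\cdot}_t-\varsigma^{j\cdot}_t)$ (and its $(i\leftrightarrow j)$ counterpart) to rewrite the difference as four sums of iterated stochastic integrals, each one involving a factor of $\sigma-\varsigma$ together with a factor of either $\sigma$ or $\varsigma$.

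Now each of the four pieces is a sum of the form considered in Lemma \ref{lemma:BDG} with $h_0=0$, $h_1=n$. Choosing $q=2$ there, the $L^p$-norm of such a sum (after the $\sqrt n$ prefactor) is bounded by $p\sup_t\|\xi_t\|_{2p,\ell_2}\sup_s\|\eta_s\|_{2p,\ell_2}$. For each of the four terms, one of the two factors is of the form $\sigma^{k\cdot}-\varsigma^{k\cdot}$, whose $\|\cdot\|_{2p,\ell_2}$-norm is controlled via Lemma \ref{ocone} by $C\sqrt{p/n}\sup_{u,v}\|D_u\sigma^{k\cdot}_v\|_{2p,\ell_2}$, producing the desired extra $n^{-1/2}$ rate and upgrading the $p$-power to $p^{3/2}$. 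The remaining factor is either $\sigma^{k\cdot}_t$ with $\|\sigma^{k\cdot}_t\|_{2p,\ell_2}=\|\Sigma^{kk}_t\|_p^{1/2}$, or $\varsigma^{k\cdot}_s$, which by Lemma \ref{sigma-l2} is bounded by the same quantity.

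Combining the four estimates and taking the maximum over $i,j$ of the resulting constants yields the claimed bound. No step should be a serious obstacle; the only minor care required is in the bookkeeping of the symmetric decomposition of $\symm(\varsigma^{i\cdot}\otimes\varsigma^{j\cdot})$ and in verifying that the hypotheses of Lemma \ref{lemma:BDG} (specifically the $L^p$ integrability of the integrands, which holds by the assumption $\sigma_t\in\mathbb{D}_{1,\infty}$ together with Lemma \ref{sigma-deriv}) are satisfied uniformly in $n$.
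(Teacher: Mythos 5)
Your proposal is correct and follows essentially the same route as the paper's own proof: the same Itô-formula decomposition of the discretization error into two iterated double-integral sums, the same telescoping replacement of $\sigma$ by $\varsigma$ (producing four error pieces, each with exactly one factor of $\sigma-\varsigma$), and the same pairing of a BDG-type estimate with Lemma \ref{ocone} (to get the extra $\sqrt{p/n}$) and Lemma \ref{sigma-l2} (to convert $\ell_2$-norms of $\sigma$ or $\varsigma$ into $\|\Sigma^{ii}\|_p^{1/2}$). The only difference is cosmetic: you invoke Lemma \ref{lemma:BDG} (with $q=2$, $h_0=0$, $h_1=n$) in one step, whereas the paper re-derives the same estimate in-line from Propositions \ref{minkowski} and \ref{sharp-BDG}; the power counting and the $p^{3/2}/\sqrt{n}$ rate come out identically. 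One minor slip: the moment bounds needed to verify the hypotheses of Lemma \ref{lemma:BDG} come from Lemma \ref{sigma-l2} (for $\varsigma$) and Lemma \ref{ocone} (for $\sigma-\varsigma$), not Lemma \ref{sigma-deriv}, which concerns Malliavin derivatives of $\varsigma$; this does not affect the argument.
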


\begin{proof}
By It\^o's formula we deduce the following decomposition:
\begin{align*}
&\sqrt{n}\left(\widehat{[\mathsf{M}^i,\mathsf{M}^j]}^n_1-[Y^i,Y^j]_1\right)\\
&=\sqrt{n}\sum_{h=1}^n\left\{\int_{t_{h-1}}^{t_h}\left(\int_{t_{h-1}}^t\sigma^{j\cdot}_s\cdot dB_s\right)\sigma^{i\cdot}_t\cdot dB_t+\int_{t_{h-1}}^{t_h}\left(\int_{t_{h-1}}^t\sigma^{i\cdot}_s\cdot dB_s\right)\sigma^{j\cdot}_t\cdot dB_t\right\}\\
&=:\mathbf{I}_n^{ij}+\mathbf{II}_n^{ij}.
\end{align*}
Propositions \ref{minkowski}--\ref{sharp-BDG} and Lemmas \ref{ocone}--\ref{sigma-l2} yield
\begin{align*}
&\left\|\sqrt{n}\sum_{h=1}^n\int_{t_{h-1}}^{t_h}\left(\int_{t_{h-1}}^t\sigma^{j\cdot}_s\cdot dB_s\right)\left(\sigma^{i\cdot}_t-\varsigma^{i\cdot}_t\right)\cdot dB_t\right\|_p\\
&\lesssim\sqrt{np}\left\|\sqrt{\sum_{h=1}^n\int_{t_{h-1}}^{t_h}\left(\int_{t_{h-1}}^t\sigma^{j\cdot}_s\cdot dB_s\right)^2\left\|\sigma^{i\cdot}_t-\varsigma^{i\cdot}_t\right\|_{\ell_2}^2dt}\right\|_p\\
&\leq\sqrt{np}\sqrt{\sum_{h=1}^n\int_{t_{h-1}}^{t_h}\left\|\int_{t_{h-1}}^t\sigma^{j\cdot}_s\cdot dB_s\right\|_{2p}^2\left\|\sigma^{i\cdot}_t-\varsigma^{i\cdot}_t\right\|_{2p,\ell_2}^2dt}\\
&\lesssim p\sqrt{\sup_{0\leq s\leq 1}\left\|\Sigma^{jj}_s\right\|_{p}\sup_{0\leq t\leq 1}\left\|\sigma^{i\cdot}_t-\varsigma^{i\cdot}_t\right\|_{2p,\ell_2}^2}
\lesssim\frac{p^{3/2}}{\sqrt{n}}\sup_{0\leq s\leq 1}\left\|\Sigma^{jj}_s\right\|_{p}^{1/2}\sup_{0\leq u\leq v\leq1}\|D_{u}\sigma^{i\cdot}_{v}\|_{2p,\ell_2}
\end{align*}
and
\begin{align*}
&\left\|\sqrt{n}\sum_{h=1}^n\int_{t_{h-1}}^{t_h}\left(\int_{t_{h-1}}^t\left(\sigma^{j\cdot}_s-\varsigma^{j\cdot}_s\right)\cdot dB_s\right)\varsigma^{i\cdot}_t\cdot dB_t\right\|_p\\
&\lesssim \sqrt{np}\left\|\sqrt{\sum_{h=1}^n\int_{t_{h-1}}^{t_h}\left(\int_{t_{h-1}}^t\left(\sigma^{j\cdot}_s-\varsigma^{j\cdot}_s\right)\cdot dB_s\right)^2\|\varsigma^{i\cdot}_t\|_{\ell_2}^2dt}\right\|_p\\
&\lesssim \sqrt{n}p\sqrt{\sum_{h=1}^n\int_{t_{h-1}}^{t_h}\left\|\sqrt{\int_{t_{h-1}}^t\left\|\sigma^{j\cdot}_s-\varsigma^{j\cdot}_s\right\|_{\ell_2}^2ds}\right\|_{2p}^2\|\varsigma^{i\cdot}_t\|_{2p,\ell_2}^2dt}\\
&\lesssim\frac{p^{3/2}}{\sqrt{n}}\sup_{0\leq u\leq v\leq1}\|D_{u}\sigma^{j\cdot}_{v}\|_{2p,\ell_2}\sup_{0\leq s\leq 1}\left\|\Sigma^{ii}_s\right\|_{p}^{1/2}.
\end{align*}
Hence we obtain
\[
\left\|\mathbf{I}_n^{ij}-\sqrt{n}\sum_{h=1}^n\int_{t_{h-1}}^{t_h}\left(\int_{t_{h-1}}^t\varsigma^{j\cdot}_s\cdot dB_s\right)\varsigma^{i\cdot}_t\cdot dB_t\right\|_p
\lesssim \frac{p^{3/2}}{\sqrt{n}}\max_{1\leq i,j\leq d}\sup_{0\leq u\leq v\leq1}\|D_{u}\sigma^{j\cdot}_{v}\|_{2p,\ell_2}\sup_{0\leq s\leq 1}\left\|\Sigma^{ii}_s\right\|_{p}^{1/2}.
\]
Analogously we can prove
\[
\left\|\mathbf{II}_n^{ij}-\sqrt{n}\sum_{h=1}^n\int_{t_{h-1}}^{t_h}\left(\int_{t_{h-1}}^t\varsigma^{i\cdot}_s\cdot dB_s\right)\varsigma^{j\cdot}_t\cdot dB_t\right\|_p
\lesssim \frac{p^{3/2}}{\sqrt{n}}\max_{1\leq i,j\leq d}\sup_{0\leq u\leq v\leq1}\|D_{u}\sigma^{j\cdot}_{v}\|_{2p,\ell_2}\sup_{0\leq s\leq 1}\left\|\Sigma^{ii}_s\right\|_{p}^{1/2}.
\]
Consequently, the desired result follows from Lemma \ref{lemma:double}. 
\end{proof}

Next we establish some properties of $M_n^{ij}$ which are necessary for \tcr{the} application of our main theorem. The first result gives the moment bounds.
\begin{lemma}\label{M-moment}
Under the assumptions of Theorem \ref{thm:rc}, 
there is a universal constant $C>0$ such that
\[
\max_{1\leq i,j\leq d}\|M_n^{ij}\|_p\leq Cp\max_{1\leq i\leq d}\sup_{0\leq t\leq 1}\|\Sigma^{ii}_t\|_p
\]
for all $n\in\mathbb{N}$ and $p\in[2,\infty)$. 
\end{lemma}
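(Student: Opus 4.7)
The plan is to unravel $M_n^{ij} = \delta^2(u_n^{ij})$ into a sum of iterated It\^o integrals by means of Lemma \ref{lemma:double}, and then to apply the stochastic integral estimate of Lemma \ref{lemma:BDG} on the whole interval $[0,1]$, controlling the integrands by way of Lemma \ref{sigma-l2}. The proof is essentially a bookkeeping exercise that combines these three ingredients.

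First I would rewrite
\[
M_n^{ij}=\sqrt{n}\sum_{h=1}^n\delta^2\bigl(\symm(\varsigma^{i\cdot}\otimes\varsigma^{j\cdot})\,1_{I_h\times I_h}\bigr)
=\frac{\sqrt{n}}{2}\sum_{h=1}^n\Bigl\{\delta^2\bigl(\varsigma^{i\cdot}\otimes\varsigma^{j\cdot}1_{I_h\times I_h}\bigr)+\delta^2\bigl(\varsigma^{j\cdot}\otimes\varsigma^{i\cdot}1_{I_h\times I_h}\bigr)\Bigr\}
\]
and invoke Lemma \ref{lemma:double}, which applies since $\varsigma^{i\cdot}$ and $\varsigma^{j\cdot}$ are $(\mathcal{G}^n_t)$-adapted and, by Lemma \ref{sigma-l2}, have uniformly bounded fourth moments (these are controlled by $\sup_t\|\Sigma^{ii}_t\|_2$ and $\sup_t\|\Sigma^{jj}_t\|_2$, which are finite under assumption \eqref{eq-sigma}). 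This yields
\[
M_n^{ij}=\sqrt{n}\sum_{h=1}^n\left\{\int_{t_{h-1}}^{t_h}\left(\int_{t_{h-1}}^t\varsigma^{j\cdot}_s\cdot dB_s\right)\varsigma^{i\cdot}_t\cdot dB_t+\int_{t_{h-1}}^{t_h}\left(\int_{t_{h-1}}^t\varsigma^{i\cdot}_s\cdot dB_s\right)\varsigma^{j\cdot}_t\cdot dB_t\right\}.
\]

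Next I would apply Lemma \ref{lemma:BDG} to each of the two sums with $h_0=0$, $h_1=n$, $q=2$, taking $(\xi,\eta)$ to be $(\varsigma^{i\cdot},\varsigma^{j\cdot})$ and $(\varsigma^{j\cdot},\varsigma^{i\cdot})$ respectively. This gives, after multiplying by the prefactor $\sqrt{n}$,
\[
\|M_n^{ij}\|_p\lesssim p\,\sup_{0\leq t\leq 1}\|\varsigma^{i\cdot}_t\|_{2p,\ell_2}\sup_{0\leq s\leq 1}\|\varsigma^{j\cdot}_s\|_{2p,\ell_2}.
\]
Finally, Lemma \ref{sigma-l2} gives $\|\varsigma^{i\cdot}_t\|_{2p,\ell_2}\leq\|\Sigma^{ii}_t\|_{p}^{1/2}$, so by the elementary inequality $ab\leq\tfrac12(a^2+b^2)$ we conclude
\[
\|M_n^{ij}\|_p\lesssim p\,\max_{1\leq i\leq d}\sup_{0\leq t\leq 1}\|\Sigma^{ii}_t\|_{p},
\]
uniformly in $i,j$, as desired.

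There is no real obstacle in this argument: the only subtle point is the typographical issue that, as written, Lemma \ref{lemma:double} appears to give two copies of the same integral, whereas its proof makes clear that the second term should have $\xi$ and $\eta$ interchanged. Once this is corrected, the display above is exactly what the lemma produces, and the bound then follows mechanically from Lemmas \ref{lemma:BDG} and \ref{sigma-l2}.
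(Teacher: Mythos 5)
Your argument is correct and reconstructs exactly what the paper intends: the paper's proof is a one-line citation of Lemmas \ref{lemma:double}, \ref{lemma:BDG} and \ref{sigma-l2}, and you have supplied precisely the bookkeeping that makes this work (unravel $\delta^2(u_n^{ij})$ via Lemma \ref{lemma:double}, apply Lemma \ref{lemma:BDG} with $h_0=0$, $h_1=n$, $q=2$, and conclude by Lemma \ref{sigma-l2}). Your observation about the typo in the displayed formula of Lemma \ref{lemma:double} is also correct — the second integral should have $\xi$ and $\eta$ swapped, as its proof makes clear.
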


\begin{proof}
This is an immediate consequence of Lemmas \ref{lemma:double}--\ref{lemma:BDG} and \ref{sigma-l2}. 
\end{proof}

Second, we prove the Malliavin differentiability of $M_n^{ij}$ and compute its Malliavin derivatives. For this purpose we prove an auxiliary result. Recall that we have $\mathbb{D}_{1,2}(H)\subset\domain(\delta)$ by Proposition 1.3.1 of \cite{Nualart2006}. 
\begin{lemma}\label{heisenberg}
Let $k\in\mathbb{N}$. 
\begin{enumerate}[label={\normalfont(\alph*)}]

\item Suppose that $u\in\mathbb{D}_{k,2}(H)$ satisfies $D_{t_1,\dots,t_j}^{(a_1,\dots,a_j)}u\in\domain(\delta)$ for all $j=1,\dots,k$, $a_1,\dots,a_j\in\{1,\dots,r\}$ and $t_1,\dots,t_j\in[0,1]$ and 
\begin{equation}\label{ass:heisenberg}
\sum_{j=1}^kE\left[\int_{[0,1]^j}\left\|\delta\left(D_{t_1,\dots,t_j}u\right)\right\|_{\ell_2}^2dt_1\cdots dt_j\right]<\infty.
\end{equation} 
Then we have $\delta(u)\in\mathbb{D}_{k,2}$ and
\begin{equation}\label{eq:heisenberg}
D_{t_1,\dots,t_k}\delta(u)=\delta\left(D_{t_1,\dots,t_k}u\right)+k\symm\left(D^{k-1}u\right)(t_1,\dots,t_k)
\end{equation}
for all $t_1,\dots,t_k\in[0,1]$.   

\item If $u\in\mathbb{D}_{k,2}(H)$ is $\mathbf{F}$-adapted, then $\delta(u)\in\mathbb{D}_{k,2}$ and \eqref{eq:heisenberg} holds true for all $t_1,\dots,t_k\in[0,1]$.

\end{enumerate}
\end{lemma}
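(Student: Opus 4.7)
The plan is to prove part (a) by induction on $k$ and then derive (b) as a consequence. The base case $k=1$ is the classical commutation relation
\[
D_{t_1}\delta(u) = u_{t_1} + \delta(D_{t_1}u),
\]
which, under the hypotheses in (a) for $k=1$ (which coincide with the standard ones), follows from Proposition 1.3.2 of \cite{Nualart2006}. Note that $\symm(u)(t_1) = u_{t_1}$, so this matches \eqref{eq:heisenberg} for $k=1$.

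For the induction step, assume the formula at level $k-1$ and fix $u \in \mathbb{D}_{k,2}(H)$ satisfying the hypotheses of (a). Since those hypotheses imply the level-$(k-1)$ ones, the induction hypothesis yields
\[
D_{t_2,\dots,t_k}\delta(u) = \delta(D_{t_2,\dots,t_k}u) + (k-1)\,\symm(D^{k-2}u)(t_2,\dots,t_k).
\]
Applying $D_{t_1}$ to both sides and using the $k=1$ commutation for the $H$-valued random variable $D_{t_2,\dots,t_k}u$ on the first summand, we obtain
\[
D_{t_1,\dots,t_k}\delta(u) = \delta(D_{t_1,\dots,t_k}u) + D_{t_2,\dots,t_k}u_{t_1} + (k-1)\,D_{t_1}\symm(D^{k-2}u)(t_2,\dots,t_k).
\]
It remains to match the last two terms with $k\,\symm(D^{k-1}u)(t_1,\dots,t_k)$. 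Exploiting that $D^{k-1}u$ is symmetric in its first $k-1$ (Malliavin derivative) indices while the last index (the $H$-evaluation point) plays a distinct role, a short combinatorial count yields
\[
k\,\symm(D^{k-1}u)(t_1,\dots,t_k) = \sum_{i=1}^k D_{t_1,\dots,\hat{t}_i,\dots,t_k}u_{t_i},
\]
where $\hat{t}_i$ denotes omission. An analogous expansion of $D_{t_1}\symm(D^{k-2}u)(t_2,\dots,t_k)$ produces exactly the $k-1$ summands indexed by $i=2,\dots,k$, while the additional term $D_{t_2,\dots,t_k}u_{t_1}$ supplies the $i=1$ summand, completing the induction.

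For part (b), when $u \in \mathbb{D}_{k,2}(H)$ is $\mathbf{F}$-adapted, $D_{t_1,\dots,t_j}u_s$ vanishes for $s < \max_i t_i$ and, using that iterated Malliavin derivatives at times $\leq s$ of an $\mathcal{F}_s$-measurable random variable remain $\mathcal{F}_s$-measurable, defines an adapted process in $s$. Hence each $D_{t_1,\dots,t_j}u$ lies in $\domain(\delta)$, its Skorohod integral coincides with the It\^o integral, and the It\^o isometry combined with Fubini gives
\[
\int_{[0,1]^j} E\|\delta(D_{t_1,\dots,t_j}u)\|_{\ell_2}^2 \, dt_1 \cdots dt_j \leq E\|D^j u\|_{H^{\otimes(j+1)}}^2 < \infty,
\]
verifying \eqref{ass:heisenberg}; part (a) then delivers the conclusion. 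The main technical obstacle is the careful bookkeeping of the integrability conditions needed to apply both the $k=1$ commutation and the induction hypothesis to the intermediate $H$-valued random variables encountered during differentiation, along with verifying that symmetrization commutes with the Malliavin derivative in the sense required above.
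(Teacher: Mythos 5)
Your proof is correct and follows essentially the same route as the paper: induction on $k$ with the $k=1$ commutation relation (Proposition 1.3.8 in Nualart rather than 1.3.2, but the same result) applied to the intermediate random variable $D_{t_2,\dots,t_k}u$, and the adapted case reduced to (a) via the facts that iterated derivatives of adapted processes remain adapted and that $\delta$ coincides with the It\^o integral. Your explicit combinatorial identity $k\,\symm(D^{k-1}u)(t_1,\dots,t_k)=\sum_{i=1}^k D_{t_1,\dots,\hat t_i,\dots,t_k}u_{t_i}$ is a welcome clarification of a step the paper leaves implicit.
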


\begin{proof}
(a) We prove the claim by induction on $k$. When $k=1$, the claim follows from Proposition 1.3.8 of \cite{Nualart2006}. 
Next, supposing that the claim holds true for $k=K\in\mathbb{N}$, we prove the claim for $k=K+1$. 
From \eqref{ass:heisenberg} we have
\begin{equation}\label{fubini:heisenberg}
E\left[\int_0^1\left\|\delta\left(D_t\left(D_{t_1,\dots,t_{K}}u\right)\right)\right\|_{\ell_2}^2dt\right]<\infty
\end{equation}
for all $t_1,\dots,t_{K}\in[0,1]$. 
Moreover, by the assumption of the induction \eqref{eq:heisenberg} holds true for all $t_1,\dots,t_{K}\in[0,1]$. 
Now let us take $t_1,\dots,t_{K}\in[0,1]$ arbitrarily, and set $v:=D_{t_1,\dots,t_K}u$. 
Then, by assumptions and Proposition 1.3.8 of \cite{Nualart2006}, $\delta(v)\in\mathbb{D}_{1,2}$ and $D_t\delta(v)=\delta(D_tv)+v(t)$ for all $t\in[0,1]$. Therefore, by \eqref{eq:heisenberg} we have $D_{t_1,\dots,t_K}\delta(u)\in\mathbb{D}_{1,2}$ and
\begin{align*}
D_t\left(D_{t_1,\dots,t_K}\delta(u)\right)
&=\delta(D_tv)+v(t)+KD_t\symm\left(D^{K-1}u\right)(t_1,\dots,t_K)\\
&=\delta\left(D_{t_1,\dots,t_K,t}u\right)+(K+1)\symm\left(D^{K}u\right)(t_1,\dots,t_K,t)
\end{align*}
for all $t\in[0,1]$. This implies that the claim also holds true for $k=K+1$ and thus completes the proof. 

(b) This claim is an immediate consequence of claim (a) and Propositions 1.2.8 and 1.3.11 of \cite{Nualart2006}.
\end{proof}

We then obtain the following result. 
\begin{lemma}\label{M-deriv}
Under the assumptions of Theorem \ref{thm:rc}, the following statements hold true for any $n\in\mathbb{N}$ and $i,j=1,\dots,d$: 
\begin{enumerate}[label={\normalfont(\alph*)}]

\item $f_n^{ij}\in\mathbb{D}_{2,\infty}(H^{\otimes2})$ and it holds that
\begin{equation}\label{eq:1st-deriv}
D_s^{(a)}f_n^{ij}(u,v)=\frac{1}{2}(D^{(a)}_s\varsigma_u^{i\cdot}\otimes \varsigma_v^{j\cdot}+\varsigma_u^{i\cdot}\otimes D_s^{(a)}\varsigma_v^{j\cdot}
+D_s^{(a)}\varsigma_u^{j\cdot}\otimes \varsigma_v^{i\cdot}+\varsigma_u^{j\cdot}\otimes D_s^{(a)}\varsigma_v^{i\cdot})
\end{equation}
and
\begin{align}
D_{s,t}^{(a,b)}f^{ij}_{n}(u,v)
&=\frac{1}{2}\left(D^{(a,b)}_{s,t}\varsigma^{i\cdot}_{u}\otimes\varsigma^{j\cdot}_{v}+D_s^{(a)}\varsigma^{i\cdot}_{u}\otimes D_t^{(b)}\varsigma^{j\cdot}_{v}+D_t^{(b)}\varsigma^{i\cdot}_{u}\otimes D_s^{(a)}\varsigma^{j\cdot}_{v}+\varsigma^{i\cdot}_{u}\otimes D^{(a,b)}_{s,t}\varsigma^{j\cdot}_{v}\right.\nonumber\\
&\left.+D^{(a,b)}_{s,t}\varsigma^{j\cdot}_{u}\otimes\varsigma^{i\cdot}_{v}+D_s^{(a)}\varsigma^{j\cdot}_{u}\otimes D_t^{(b)}\varsigma^{i\cdot}_{v}+D_t^{(b)}\varsigma^{j\cdot}_{u}\otimes D_s^{(a)}\varsigma^{i\cdot}_{v}+\varsigma^{j\cdot}_{u}\otimes D^{(a,b)}_{s,t}\varsigma^{i\cdot}_{v}\right)\label{eq:2nd-deriv}
\end{align}
for any $a,b=1,\dots,r$ and $s,t,u,v\in[0,1]$. 

\item $u_n^{ij}\in\mathbb{D}_{2,\infty}(H^{\otimes2})$ and $u_n^{ij}(s,\cdot)^{a\cdot},D_s^{(a)}u_n^{ij}(t,\cdot)^{b\cdot}\in\domain(\delta)$, $D_{s,t}^{(a,b)}u_n^{ij}\in\domain(\delta^2)$ for any $a,b=1,\dots,r$ and $s,t\in[0,1]$. 

\item $M_n^{ij}\in\mathbb{D}_{2,\infty}$ and we have
\begin{align*}
D_s^{(a)}M_n^{ij}&=\delta^2(D_s^{(a)}u_n^{ij})+2\delta(u_n^{ij}(s,\cdot)^{a\cdot}),\\
D_{s,t}^{(a,b)}M_n^{ij}&=\delta^2\left(D_{s,t}^{(a,b)}u_n^{ij}\right)
+2\delta\left(D_{s}^{(a)}u_n^{ij}(t,\cdot)^{b\cdot}\right)
+2\delta\left(D_{t}^{(b)}u_n^{ij}(s,\cdot)^{a\cdot}\right)
+2u_n^{ij}(s,t)^{ab}
\end{align*}
for any $a,b=1,\dots,r$ and $s,t\in[0,1]$. 

\end{enumerate}
\end{lemma}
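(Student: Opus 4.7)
The plan is to prove the three parts in order, each building on the previous.

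\emph{Part (a)} will follow by applying the standard Leibniz rule for Malliavin derivatives of tensor products (see Section 1.2 of \cite{Nualart2006}) to $\varsigma^{i\cdot}_u\otimes\varsigma^{j\cdot}_v$. Lemma \ref{sigma-deriv} already gives $\varsigma^{i\cdot}_u\in\mathbb{D}_{2,\infty}(\mathbb{R}^r)$ together with first- and second-derivative $L^p$ bounds uniform in $u$, inherited from those on $\sigma$. Since symmetrization is a continuous linear projection on $H^{\otimes2}$, it commutes with $D$ and $D^2$, turning the Leibniz expansions into the symmetric formulas \eqref{eq:1st-deriv}--\eqref{eq:2nd-deriv}.

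\emph{Part (b)} follows from (a) because multiplication by the deterministic kernel $\sqrt{n}\sum_h 1_{I_h\times I_h}$ commutes with the Malliavin derivative and preserves $\mathbb{D}_{2,\infty}$. The crucial structural observation for the Skorohod-domain memberships is that, by Lemma \ref{sigma-deriv}, for $s\in I_h$ the variables $\varsigma^{i\cdot}_s$, $D_u\varsigma^{i\cdot}_s$ and $D_{u,v}\varsigma^{i\cdot}_s$ are all $\mathcal{F}_{t_{h-1}}$-measurable. Consequently, with one or both arguments of $u_n^{ij}$ frozen in $I_h$, the resulting integrand is $(\mathcal{F}_t)$-adapted on $I_h$ in its free variable (resp.\ jointly so on $I_h\times I_h$ in the sense used in Lemma \ref{lemma:double}). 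Hence Proposition 1.3.11 of \cite{Nualart2006} yields $u_n^{ij}(s,\cdot)^{a\cdot},\ D_s^{(a)}u_n^{ij}(t,\cdot)^{b\cdot}\in\domain(\delta)$, while Lemma \ref{lemma:double} applied to each summand of \eqref{eq:2nd-deriv} gives $D_{s,t}^{(a,b)}u_n^{ij}\in\domain(\delta^2)$.

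\emph{Part (c)} is obtained by iterating the commutation relation between $D$ and $\delta$ (Proposition 1.3.8 of \cite{Nualart2006}). Formally, regarding $\delta^2(u_n^{ij})=\delta(v)$ with $v(t)=\delta(u_n^{ij}(\cdot,t))$, one commutation followed by the symmetry of $u_n^{ij}$ in its two $H$-arguments yields
\[
D_s^{(a)}M_n^{ij}=\delta^2(D_s^{(a)}u_n^{ij})+2\delta(u_n^{ij}(s,\cdot)^{a\cdot}).
\]
Differentiating once more and applying the same rule produces the stated formula for $D_{s,t}^{(a,b)}M_n^{ij}$, with the boundary term $2u_n^{ij}(s,t)^{ab}$ arising from the $v(s)$-type contribution generated by differentiating $\delta(u_n^{ij}(s,\cdot)^{a\cdot})$. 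The $L^2$ integrability hypotheses required at each application of Proposition 1.3.8 reduce, via Lemma \ref{lemma:BDG} combined with the $L^p$ bounds of Lemmas \ref{sigma-l2}--\ref{sigma-deriv}, to the finiteness of moments of $\Sigma$ and of $D\sigma,D^2\sigma$, which is part of the standing hypotheses.

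The main obstacle I anticipate is the bookkeeping in the last step: verifying that the intermediate Skorohod integrals belong to $\mathbb{D}_{1,2}$ (not merely $L^2$) so that the second commutation is rigorous, and chaining the BDG-type estimate in Lemma \ref{lemma:BDG} with the derivative bounds to secure $\mathbb{D}_{2,\infty}$ rather than only $\mathbb{D}_{2,2}$. This is tedious but straightforward once one observes that every input has finite moments of all orders under the assumptions of Theorem \ref{thm:rc}.
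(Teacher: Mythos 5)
Your proposal takes essentially the same route as the paper. Part (a) by the Leibniz rule for Malliavin derivatives of tensor products (the paper cites the equivalent statements in Janson's Lemma 15.82 and Theorem 15.83), part (b) by Proposition 1.3.11 of Nualart together with Lemma \ref{lemma:double} and the adaptedness of $\varsigma$ established in Lemma \ref{sigma-deriv}, and part (c) by expressing $\delta^2(u_n^{ij})$ as an iterated It\^o/Skorohod integral and applying the commutation relation $D\delta=\delta D+\mathrm{id}$ twice — the paper packages exactly this iteration, together with the easy verification of the domain conditions in the adapted case, into its auxiliary Lemma \ref{heisenberg}(b), which you replicate by quoting Proposition 1.3.8 of Nualart directly and noting that the adaptedness of the building blocks on each $I_h\times I_h$ reduces the integrability checks to moment bounds already in hand.
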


\begin{proof}
Claim (a) follows from Lemma \ref{sigma-deriv} as well as Lemma 15.82 and Theorem 15.83 of \cite{Janson1997}. 
Claim (b) is a consequence of claim (a), Proposition 1.3.11 of \cite{Nualart2006} and Lemmas \ref{lemma:double} and \ref{sigma-deriv}. 

Now we prove claim (c). By Lemma \ref{lemma:double} we can rewrite $M_n^{ij}$ as
\begin{equation}\label{M-deriv:eq1}
M_n^{ij}
=2\sqrt{n}\sum_{h=1}^n\int_{I_h}\delta(f_n^{ij}(\cdot,t)1_{I_h\cap[0,t]})\cdot dB_t.
\end{equation}
From claim (a), $f_n^{ij}(\cdot,t)\in\mathbb{D}_{2,\infty}(H)$ for all $t\in[0,1]$, \tcr{so} Lemma \ref{heisenberg}(b) implies that $\delta(f_n^{ij}(\cdot,t)1_{I_h\cap[0,t]})\in\mathbb{D}_{2,2}(\mathbb{R}^r)$ and
\begin{align*}
D_u^{(a)}\delta(f_n^{ij}(\cdot,t)1_{I_h\cap[0,t]})&=\delta(D_u^{(a)}f_n^{ij}(\cdot,t)1_{I_h\cap[0,t]})+f_n^{ij}(u,t)^{a\cdot}1_{I_h\cap[0,t]}(u),\\
D_{u,v}^{(a,b)}\delta(f_n^{ij}(\cdot,t)1_{I_h\cap[0,t]})&=\delta(D_{u,v}^{(a,b)}f_n^{ij}(\cdot,t)1_{I_h\cap[0,t]})
+D_v^{(b)}f_n^{ij}(u,t)^{a\cdot}1_{I_h\cap[0,t]}(u)
+D_u^{(a)}f_n^{ij}(v,t)^{b\cdot}1_{I_h\cap[0,t]}(v)
\end{align*}
for all $a,b=1,\dots,r$ and $u,v\in[0,1]$. These formulae imply that the process $(\delta(f_n^{ij}(\cdot,t)))_{t\in[0,1]}$ belongs to $\mathbb{D}_{2,2}(H)$, \tcr{so} Lemma \ref{heisenberg}(b) again implies that $\int_{I_h}\delta(f_n^{ij}(\cdot,t)1_{I_h\cap[0,t]})\cdot dB_t\in\mathbb{D}_{2,2}$ and
\begin{align}
&D_u^{(a)}\left(\int_{I_h}\delta(f_n^{ij}(\cdot,t)1_{I_h\cap[0,t]})\cdot dB_t\right)
=\int_{I_h}D_u^{(a)}\delta(f_n^{ij}(\cdot,t)1_{I_h\cap[0,t]})\cdot dB_t+\delta(f_n^{ij}(\cdot,u)^{\cdot a}1_{I_h\cap[0,u]})1_{I_h}(u)\nonumber\\
&=\int_{I_h}\delta(D_u^{(a)}f_n^{ij}(\cdot,t)1_{I_h\cap[0,t]})\cdot dB_t
+\int_{I_h}f_n^{ij}(u,t)^{a\cdot}1_{I_h\cap[0,t]}(u)\cdot dB_t
+\delta(f_n^{ij}(\cdot,u)^{\cdot a}1_{I_h\cap[0,u]})1_{I_h}(u)\nonumber\\
&=\int_{I_h}\left(\int_{t_{h-1}}^tD_u^{(a)}f_n^{ij}(s,t)dB_s\right)\cdot dB_t
+\delta(f_n^{ij}(u,\cdot)^{a \cdot}1_{I_h\times I_h}(u,\cdot))\label{M-deriv:eq2}
\end{align}
and
\begin{align}
&D_{u,v}^{(a,b)}\left(\int_{I_h}\delta(f_n^{ij}(\cdot,t)1_{I_h\cap[0,t]})\cdot dB_t\right)\nonumber\\
&=\int_{I_h}D_{u,v}^{(a,b)}\delta(f_n^{ij}(\cdot,t)1_{I_h\cap[0,t]})\cdot dB_t
+D_v^{(b)}\delta(f_n^{ij}(\cdot,u)^{\cdot a}1_{I_h\cap[0,u]})1_{I_h}(u)
+D_u^{(a)}\delta(f_n^{ij}(\cdot,v)^{\cdot b}1_{I_h\cap[0,v]})1_{I_h}(v)\nonumber\\
&=\int_{I_h}\left\{\delta(D_{u,v}^{(a,b)}f_n^{ij}(\cdot,t)1_{I_h\cap[0,t]})
+D_v^{(b)}f_n^{ij}(u,t)^{a\cdot}1_{I_h\cap[0,t]}(u)
+D_u^{(a)}f_n^{ij}(v,t)^{b\cdot}1_{I_h\cap[0,t]}(v)\right\}\cdot dB_t\nonumber\\
&\quad+\left\{\delta(D_v^{(b)}f_n^{ij}(\cdot,u)^{\cdot a}1_{I_h\cap[0,u]})+f_n^{ij}(v,u)^{ba}1_{I_h\cap[0,u]}(v)\right\}1_{I_h}(u)\nonumber\\
&\quad+\left\{\delta(D_u^{(a)}f_n^{ij}(\cdot,v)^{\cdot b}1_{I_h\cap[0,v]})+f_n^{ij}(u,v)^{ab}1_{I_h\cap[0,v]}(u)\right\}1_{I_h}(v)\nonumber\\
&=\int_{I_h}\left\{\int_{t_{h-1}}^tD_{u,v}^{(a,b)}f_n^{ij}(s,t)dB_s
+D_v^{(b)}f_n^{ij}(u,t)^{a\cdot}1_{I_h\cap[0,t]}(u)
+D_u^{(a)}f_n^{ij}(v,t)^{b\cdot}1_{I_h\cap[0,t]}(v)\right\}\cdot dB_t\nonumber\\
&\quad+\delta(D_v^{(b)}f_n^{ij}(\cdot,u)^{\cdot a}1_{I_h\cap[0,u]})1_{I_h}(u)
+\delta(D_u^{(a)}f_n^{ij}(\cdot,v)^{\cdot b}1_{I_h\cap[0,v]})1_{I_h}(v)
+f_n^{ij}(u,v)^{ab}1_{I_h\times I_h}(u,v)\nonumber\\
&=\int_{I_h}\left\{\int_{t_{h-1}}^tD_{u,v}^{(a,b)}f_n^{ij}(s,t)dB_s\right\}\cdot dB_t
+\delta(D_v^{(b)}f_n^{ij}(u,\cdot)^{a\cdot}1_{I_h\times I_h}(u,\cdot))\nonumber\\
&\quad+\delta(D_u^{(a)}f_n^{ij}(v,\cdot)^{b\cdot}1_{I_h\times I_h}(v,\cdot))
+f_n^{ij}(u,v)^{ab}1_{I_h\times I_h}(u,v)\label{M-deriv:eq3}
\end{align}
for all $a,b=1,\dots,r$ and $u,v\in[0,1]$. Now, noting that formulae \eqref{eq:1st-deriv}--\eqref{eq:2nd-deriv} can be rewritten as
\[
D_s^{(a)}f_n^{ij}(u,v)=\symm\left(D^{(a)}_s\varsigma^{i\cdot}\otimes \varsigma^{j\cdot}\right)(u,v)
+\symm\left(\varsigma^{i\cdot}\otimes D^{(a)}_s\varsigma^{j\cdot}\right)(u,v)
\]
and 
\begin{align*}
D_{s,t}^{(a,b)}f^{ij}_{n}(u,v)
&=\symm\left(D^{(a,b)}_{s,t}\varsigma^{i\cdot}\otimes\varsigma^{j\cdot}\right)(u,v)
+\symm\left(D_s^{(a)}\varsigma^{i\cdot}\otimes D_t^{(b)}\varsigma^{j\cdot}\right)(u,v)\\
&\quad+\symm\left(D_t^{(b)}\varsigma^{i\cdot}\otimes D_s^{(a)}\varsigma^{j\cdot}\right)(u,v)
+\symm\left(\varsigma^{i\cdot}_{u}\otimes D^{(a,b)}_{s,t}\varsigma^{j\cdot}_{v}\right)(u,v),
\end{align*}
by Lemma \ref{lemma:double} we obtain
\tcr{
\begin{align}
2\int_{I_h}\left(\int_{t_{h-1}}^tD_u^{(a)}f_n^{ij}(s,t)dB_s\right)\cdot dB_t
&=\delta^2\left(D_u^{(a)}f_n^{ij}\right),\label{M-deriv:eq4}\\
2\int_{I_h}\left\{\int_{t_{h-1}}^tD_{u,v}^{(a,b)}f_n^{ij}(s,t)dB_s\right\}\cdot dB_t
&=\delta^2\left(D_{u,v}^{(a,b)}f^{ij}_{n}\right).\label{M-deriv:eq5}
\end{align}
}
Now claim (c) follows from \tcr{\eqref{M-deriv:eq1}--\eqref{M-deriv:eq5}} and the assumptions of the lemma. 
\end{proof}

Third, we prove the Malliavin differentiability of $\Sigma$ and $\mathfrak{C}_n$ as well as establish moment estimates for their Malliavin derivatives. 
\begin{lemma}\label{S-deriv}
Under the assumptions of Theorem \ref{thm:rc}, 
for any $i,j=1,\dots,d$ and $t\in[0,1]$, $\Sigma_t^{ij}\in\mathbb{D}_{2,\infty}$ and
\begin{align*}
\|D_u\Sigma^{ij}_t\|_{p,\ell_2}
&\leq2\max_{1\leq i,j\leq d}\left\|\Sigma^{ii}_t\right\|_{p}^{1/2}\|D_u\sigma^{j\cdot}_t\|_{2p,\ell_2},\\
\|D_{u,v}\Sigma^{ij}_t\|_{p,\ell_2}
&\leq2\left(\max_{1\leq i,j\leq d}\left\|\Sigma^{ii}_t\right\|_{p}^{1/2}\|D_{u,v}\sigma^{j\cdot}_t\|_{2p,\ell_2}
+\max_{1\leq i,j\leq d}\|D_u\sigma^{i\cdot}_t\|_{2p,\ell_2}\|D_v\sigma^{j\cdot}_t\|_{2p,\ell_2}\right)
\end{align*}
for any $p\in[1,\infty)$ and $u,v\in[0,1]$. 
\end{lemma}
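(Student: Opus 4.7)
The plan is to derive both claims by two applications of the Malliavin Leibniz rule to the representation $\Sigma_t^{ij}=\sum_{a=1}^r\sigma_t^{ia}\sigma_t^{ja}$, and then to reduce the resulting $\ell_2$-norm expressions to products of pointwise covariance- and derivative-norm quantities via Cauchy--Schwarz in the summation index $a$. Since $\sigma_t\in\mathbb{D}_{2,\infty}(\mathbb{R}^{d\times r})$ by assumption, each coordinate $\sigma_t^{ia}$ lies in $\mathbb{D}_{2,\infty}$; iterating the product rule for Malliavin derivatives (Theorem~15.78 of \cite{Janson1997}) yields $\sigma_t^{ia}\sigma_t^{ja}\in\mathbb{D}_{2,\infty}$, and summing over $a=1,\dots,r$ gives $\Sigma_t^{ij}\in\mathbb{D}_{2,\infty}$ together with the entry-wise Leibniz formulas $D_u^{(b)}\Sigma_t^{ij}=\sum_a\{(D_u^{(b)}\sigma_t^{ia})\sigma_t^{ja}+\sigma_t^{ia}(D_u^{(b)}\sigma_t^{ja})\}$ and its twofold analog.

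For the first-order bound I would apply the triangle inequality in $\ell_2$ (over the index $b$) to separate the two Leibniz summands, and then for each summand apply Cauchy--Schwarz in $a$ for every fixed $b$, yielding $\sum_b(\sum_a D_u^{(b)}\sigma_t^{ia}\sigma_t^{ja})^2\leq\Sigma_t^{jj}\,\|D_u\sigma_t^{i\cdot}\|_{\ell_2}^2$. Taking $L^p$-norms and using H\"older with exponents $(p,p)$ on the resulting product then produces, after maximising over the free indices, the stated bound $2\max_{i,j}\|\Sigma_t^{ii}\|_p^{1/2}\|D_u\sigma_t^{j\cdot}\|_{2p,\ell_2}$.

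For the second-order bound, twofold Leibniz produces four contributions: two \emph{pure} ones pairing a second Malliavin derivative of $\sigma$ with $\sigma$ itself, and two \emph{cross} ones pairing two first Malliavin derivatives. The pure terms are controlled exactly as in the first-order step and yield the summand $2\max_{i,j}\|\Sigma_t^{ii}\|_p^{1/2}\|D_{u,v}\sigma_t^{j\cdot}\|_{2p,\ell_2}$. For the cross terms, Cauchy--Schwarz in the summation index couples $\|D_u\sigma_t^{i\cdot}\|_{\ell_2}$ with $\|D_v\sigma_t^{j\cdot}\|_{\ell_2}$ in the $\ell_2$-norm; taking $L^p$-norms and applying H\"older then gives the remaining $2\max_{i,j}\|D_u\sigma_t^{i\cdot}\|_{2p,\ell_2}\|D_v\sigma_t^{j\cdot}\|_{2p,\ell_2}$ contribution.

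The argument is essentially routine, so there is no serious analytic obstacle; the one point worth emphasising is the structural role of summation over the Brownian index $a\in\{1,\dots,r\}$, which, combined with Cauchy--Schwarz, is precisely what eliminates any explicit $r$-dependence from the bounds. This is what makes the estimates usable in the present high-dimensional regime where $r=r_n$ may diverge with $n$. Careful index bookkeeping in each Leibniz expansion is the main concrete task.
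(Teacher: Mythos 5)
Your proposal matches the paper's proof essentially step for step: the paper also applies the Malliavin Leibniz rule (Theorem 15.78 of \cite{Janson1997}) to $\Sigma_t^{ij}=\sum_a\sigma_t^{ia}\sigma_t^{ja}$, then combines the triangle inequality in the $\ell_2$-norm with Cauchy--Schwarz over the Brownian index $a$ to obtain the pointwise bounds $\|D_u\Sigma^{ij}_t\|_{\ell_2}\leq\sqrt{\Sigma^{jj}_t}\|D_u\sigma^{i\cdot}_t\|_{\ell_2}+\sqrt{\Sigma^{ii}_t}\|D_u\sigma^{j\cdot}_t\|_{\ell_2}$ and its second-order analog with the two cross terms, and finally passes to $L^p$-norms by H\"older. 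No meaningful difference in approach.
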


\begin{proof}
Since $\Sigma_t^{ij}=\sum_{a=1}^r\sigma^{ia}_t\sigma^{ja}_t$, Theorem 15.78 of \cite{Janson1997} implies that $\Sigma_t^{ij}\in\mathbb{D}_{2,\infty}$ for all $t\in[0,1]$ and
\[
D\Sigma^{ij}_t=\sum_{a=1}^r\left(\sigma^{ja}_tD\sigma^{ia}_t+\sigma^{ia}_tD\sigma^{ja}_t\right)
\]
and
\begin{align*}
D^2\Sigma^{ij}_t=\sum_{a=1}^r\left(\sigma^{ja}_tD^2\sigma^{ia}_t
+D\sigma^{ja}_t\otimes D\sigma^{ia}_t
+D\sigma^{ia}_t\otimes D\sigma^{ja}_t
+\sigma^{ia}_tD^2\sigma^{ja}_t\right).
\end{align*}
In particular, we have
\begin{align*}
\|D_u\Sigma^{ij}_t\|_{\ell_2}
&\leq\sum_{a=1}^r\left(|\sigma^{ja}_t|\|D\sigma^{ia}_t\|_{\ell_2}+|\sigma^{ia}_t|\|D\sigma^{ja}_t\|_{\ell_2}\right)
\leq\sqrt{\Sigma^{jj}_t}\|D\sigma^{i\cdot}_t\|_{\ell_2}+\sqrt{\Sigma^{ii}_t}\|D\sigma^{j\cdot}_t\|_{\ell_2}
\end{align*}
and
\begin{align*}
\|D_{u,v}\Sigma^{ij}_t\|_{\ell_2}
&\leq\sum_{a=1}^r\left(|\sigma^{ja}_t|\|D_{u,v}\sigma^{ia}_t\|_{\ell_2}
+\|D_u\sigma^{ja}_t\|_{\ell_2}\|D_v\sigma^{ia}_t\|_{\ell_2}
+\|D_u\sigma^{ia}_t\|_{\ell_2}\|D_v\sigma^{ja}_t\|_{\ell_2}
+|\sigma^{ia}_t|\|D_{u,v}\sigma^{ja}_t\|_{\ell_2}\right)\\
&\leq\sqrt{\Sigma^{jj}_t}\|D_{u,v}\sigma^{i\cdot}_t\|_{\ell_2}
+\|D_u\sigma^{j\cdot}_t\|_{\ell_2}\|D_v\sigma^{i\cdot}_t\|_{\ell_2}
+\|D_u\sigma^{i\cdot}_t\|_{\ell_2}\|D_v\sigma^{j\cdot}_t\|_{\ell_2}
+\sqrt{\Sigma^{ii}_t}\|D_{u,v}\sigma^{j\cdot}_t\|_{\ell_2}
\end{align*}
by the triangular and Schwarz inequalities. Hence we complete the proof by the H\"older inequality.
\end{proof}

\begin{lemma}\label{C-deriv}
Under the assumptions of Theorem \ref{thm:rc}, 
$\mathfrak{C}_n\in\mathbb{D}_{2,\infty}(\mathbb{R}^{d^2\times d^2})$ and 
\begin{align*}
&\|D_u\mathfrak{C}_n^{(i-1)d+j,(k-1)d+l}\|_{p,\ell_2}
\leq 8\max_{1\leq i,j\leq d}\sup_{0\leq s,t\leq 1}\|\Sigma^{ii}_s\|_{2p}^{3/2}\|D_u\sigma^{j\cdot}_t\|_{4p,\ell_2},\\
&\|D_{u,v}\mathfrak{C}_n^{(i-1)d+j,(k-1)d+l}\|_{p,\ell_2}\\
&\leq 8\max_{1\leq i,j,k\leq d}\sup_{0\leq s,t\leq 1}\|\Sigma^{ii}_s\|_{2p}^{3/2}\|D_{u,v}\sigma^{j\cdot}_t\|_{4p,\ell_2}
+24\max_{1\leq i,j,k\leq d}\sup_{0\leq s,t,\tau\leq 1}\|\Sigma^{ii}_s\|_{2p}\|D_{u}\sigma^{j\cdot}_t\|_{4p,\ell_2}\|D_{v}\sigma^{k\cdot}_\tau\|_{4p,\ell_2}
\end{align*}
for any $p\in[1,\infty)$, $n\in\mathbb{N}$, $i,j,k,l=1,\dots,d$ and $u,v\in[0,1]$. 
\end{lemma}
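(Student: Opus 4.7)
The plan is to reduce the Malliavin derivatives of $\mathfrak{C}_n$ to those of $\Sigma$ via the Leibniz rule, then control each factor by Lemma \ref{S-deriv}. For every $h=1,\dots,n$ and every pair $(i,j)\in[d]^2$, I would introduce the abbreviation
\[
Q^{ij}_h:=\int_{t_{h-1}}^{t_h}\Sigma^{ij}_sds,
\]
so that $\mathfrak{C}_n^{(i-1)d+j,(k-1)d+l}=n\sum_{h=1}^n(Q^{ik}_hQ^{jl}_h+Q^{il}_hQ^{jk}_h)$. Since Lemma \ref{S-deriv} gives $\Sigma^{ij}_s\in\mathbb{D}_{2,\infty}$ together with moment bounds on $D_u\Sigma^{ij}_s$ and $D_{u,v}\Sigma^{ij}_s$, the Bochner-integral commutation for Malliavin derivatives shows that $Q^{ij}_h\in\mathbb{D}_{2,\infty}$ with $D_uQ^{ij}_h=\int_{I_h}D_u\Sigma^{ij}_sds$ and $D_{u,v}Q^{ij}_h=\int_{I_h}D_{u,v}\Sigma^{ij}_sds$; both identities are handled through Proposition \ref{minkowski} applied in $\ell_2$ to convert the $L^p$-norms of $\ell_2$-norms into uniform sup-norms.

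Next, the elementary bound $|\Sigma^{jl}_s|\leq\sqrt{\Sigma^{jj}_s\Sigma^{ll}_s}$ together with Cauchy-Schwarz yields
\[
\|Q^{jl}_h\|_{2p}\leq\tfrac{1}{n}\max_{1\leq i\leq d}\sup_s\|\Sigma^{ii}_s\|_{2p},
\]
while Lemma \ref{S-deriv} combined with Proposition \ref{minkowski} gives
\[
\|D_uQ^{jl}_h\|_{2p,\ell_2}\leq\tfrac{2}{n}\max_{i,j}\sup_s\|\Sigma^{ii}_s\|_{2p}^{1/2}\|D_u\sigma^{j\cdot}_s\|_{4p,\ell_2},
\]
and an analogous (slightly longer) estimate for $\|D_{u,v}Q^{jl}_h\|_{2p,\ell_2}$ featuring both $D_{u,v}\sigma$ and the product $\|D_u\sigma\|\|D_v\sigma\|$.

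To control $D_u(Q^{ik}_hQ^{jl}_h)$ I would use the Leibniz rule together with H\"older's inequality with exponents $(2p,2p)$, which produces two terms, each bounded by $2n^{-2}\max_{i,j}\sup_s\|\Sigma^{ii}_s\|_{2p}^{3/2}\|D_u\sigma^{j\cdot}_s\|_{4p,\ell_2}$. Summing the four such contributions from the two products appearing in $\mathfrak{C}_n^{(i-1)d+j,(k-1)d+l}$, multiplying by $n$, and summing over $h$ contributes a factor $n\cdot n\cdot 4\cdot 2n^{-2}=8$, yielding the first claimed bound. For the second derivative, the corresponding Leibniz identity
\[
D_{u,v}(FG)=FD_{u,v}G+(D_uF)\otimes(D_vG)+(D_vF)\otimes(D_uG)+GD_{u,v}F
\]
produces four ``pure'' terms and four ``cross'' terms per pair, and repeating the above H\"older and Lemma \ref{S-deriv} estimates gives, per product, bounds of the form $2n^{-2}\max\|\Sigma^{ii}\|_{2p}^{3/2}\|D_{u,v}\sigma^{j\cdot}\|_{4p,\ell_2}$ for pure terms, $2n^{-2}\max\|\Sigma^{ii}\|_{2p}\|D_u\sigma\|\|D_v\sigma\|$ from the second half of the $D_{u,v}\Sigma$ estimate in Lemma \ref{S-deriv}, and $4n^{-2}\max\|\Sigma^{ii}\|_{2p}\|D_u\sigma\|\|D_v\sigma\|$ for cross terms. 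After multiplying by $n$ and summing, the coefficients aggregate to $8$ on the first summand and $8+16=24$ on the second, producing the stated inequality.

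The proof is essentially a careful bookkeeping exercise; no step presents a genuine obstacle. The only point requiring mild care is the identification of the Malliavin derivatives of the Bochner integrals $Q^{ij}_h$, which follows from the closedness of $D^k$ on $\mathbb{D}_{k,2}$ together with the uniform-in-$s$ moment bounds on $D^k\Sigma^{ij}_s$ furnished by Lemma \ref{S-deriv}. Tracking all the binomial-type constants in the $\mathbb{R}^r$-valued Leibniz rule so as to match the coefficients $8$ and $24$ constitutes the most tedious portion.
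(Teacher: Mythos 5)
Your proposal is correct and follows essentially the same route as the paper's proof: pass the Malliavin derivative inside the Bochner integral $Q^{ij}_h=\int_{I_h}\Sigma^{ij}_s\,ds$ (the paper cites Remark~15.87 of \cite{Janson1997} for this commutation, whereas you invoke closedness of $D^k$ plus the uniform moment bounds, which amounts to the same thing), then apply the product rule for Malliavin derivatives (Theorem~15.78 of \cite{Janson1997}), H\"older, Proposition~\ref{minkowski}, and Lemma~\ref{S-deriv}, with the same bookkeeping of constants producing the factors $8$ and $8+16=24$.
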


\begin{proof}
By Remark 15.87 of \cite{Janson1997}, $\int_{I_h}\Sigma^{ij}_tdt\in\mathbb{D}_{2,\infty}$ and
\[
D\left(\int_{I_h}\Sigma^{ij}_tdt\right)=\int_{I_h}D\Sigma^{ij}_tdt,\qquad
D^2\left(\int_{I_h}\Sigma^{ij}_tdt\right)=\int_{I_h}D^2\Sigma^{ij}_tdt
\]
for any $i,j=1,\dots,d$ and $h=1,\dots,n$. Therefore, by Theorem 15.78 of \cite{Janson1997}, the Schwarz inequality and Proposition \ref{minkowski} we obtain $\mathfrak{C}_n^{(i-1)d+j,(k-1)d+l}\in\mathbb{D}_{2,\infty}$ and
\begin{align*}
\|D_u\mathfrak{C}_n^{(i-1)d+j,(k-1)d+l}\|_{p,\ell_2}
&\leq 4\max_{1\leq i,j,k,l\leq d}\sup_{0\leq s,t\leq 1}\|\Sigma^{ik}_s\|_{2p}\|D_u\Sigma^{kl}_t\|_{2p,\ell_2},\\
\|D_{u,v}\mathfrak{C}_n^{(i-1)d+j,(k-1)d+l}\|_{p,\ell_2}
&\leq 4\max_{1\leq i,j,k,l\leq d}\sup_{0\leq s,t\leq 1}\left(\|\Sigma^{ik}_s\|_{2p}\|D^2_{u,v}\Sigma^{kl}_t\|_{2p,\ell_2}
+\|D_{u}\Sigma^{ik}_s\|_{2p,\ell_2}\|D_{v}\Sigma^{jl}_t\|_{2p,\ell_2}
\right)
\end{align*}
for any $p\geq1$, $i,j,k,l=1,\dots,d$ and $u,v\in[0,1]$. Now the desired result follows from the Schwarz inequality and Lemma \ref{S-deriv}. 
\end{proof}

Now we proceed to checking the conditions of Theorem \ref{thm:main} in the current setting. 
\begin{lemma}\label{quasi-torsion}
Under the assumptions of Theorem \ref{thm:rc}, there is a universal constant $C>0$ such that
\begin{multline*}
\max_{1\leq i,j,k,l\leq d}\left\|\left\langle D^2M_n^{ij},u_n^{kl}\right\rangle_{H^{\otimes2}}-\mathfrak{C}_n^{(i-1)d+j,(k-1)d+l}\right\|_p\\
\leq C\frac{p}{\sqrt{n}}
\left(
\max_{1\leq i,j\leq d}\sup_{0\leq s\leq1}\left\|\Sigma^{jj}_s\right\|_{2p}^{3/2}\sup_{0\leq s,t,u\leq1}\left\|D_{s,t}\sigma^{i\cdot}_{u}\right\|_{4p,\ell_2}
+\max_{1\leq i,k\leq d}\sup_{0\leq s,u\leq1}\left\|\Sigma^{kk}_u\right\|_{2p}\left\|D_s\sigma^{i\cdot}_{u}\right\|_{4p,\ell_2}^2
\right)\\
+C\sqrt{\frac{p}{n}}\max_{1\leq i,j\leq d}\sup_{0\leq s,t,u\leq1}\|D_s\sigma_u^{i\cdot}\|_{4p,\ell_2}\|\Sigma_t^{jj}\|_{2p}^{3/2}
\end{multline*}
for any $n\in\mathbb{N}$ and $p\in[2,\infty)$. 
\end{lemma}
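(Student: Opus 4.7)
The plan is to invoke the commutation formula for the Malliavin derivative and Skorohod integral stated in Lemma \ref{M-deriv}(c). Writing
\[
D_{s,t}^{(a,b)}M_n^{ij}=\delta^2(D_{s,t}^{(a,b)}u_n^{ij})+2\delta(D_s^{(a)}u_n^{ij}(t,\cdot)^{b\cdot})+2\delta(D_t^{(b)}u_n^{ij}(s,\cdot)^{a\cdot})+2u_n^{ij}(s,t)^{ab},
\]
pairing with $u_n^{kl}$ in $H^{\otimes 2}$ splits $\langle D^2M_n^{ij},u_n^{kl}\rangle_{H^{\otimes 2}}$ into four pieces $T_1,T_2,T_3,T_4$. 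I would treat the fourth piece as the main term and handle the other three as remainders.

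The main term is $T_4=2\langle u_n^{ij},u_n^{kl}\rangle_{H^{\otimes 2}}$. Unraveling the $H^{\otimes 2}$-symmetrization---which, under the identification $H=L^2([0,1];\mathbb{R}^r)$, acts by the joint swap $(s,a)\leftrightarrow(t,b)$---one has $f_n^{ij}(s,t)^{ab}=\tfrac12(\varsigma_s^{ia}\varsigma_t^{jb}+\varsigma_t^{ib}\varsigma_s^{ja})$, so that contracting indices in the double integral collapses the expression to
\[
T_4=n\sum_{h=1}^n\Bigl\{\Bigl(\int_{I_h}\wt\Sigma_s^{ik}ds\Bigr)\Bigl(\int_{I_h}\wt\Sigma_s^{jl}ds\Bigr)+\Bigl(\int_{I_h}\wt\Sigma_s^{il}ds\Bigr)\Bigl(\int_{I_h}\wt\Sigma_s^{jk}ds\Bigr)\Bigr\},
\]
where $\wt\Sigma_s^{ij}:=\varsigma_s^{i\cdot}\cdot\varsigma_s^{j\cdot}$. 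Thus $T_4$ has exactly the form of $\mathfrak{C}_n^{(i-1)d+j,(k-1)d+l}$ with $\Sigma$ replaced by $\wt\Sigma$. I would control the resulting error by expanding $\Sigma_s^{ik}-\wt\Sigma_s^{ik}=\sigma_s^{i\cdot}\cdot(\sigma_s^{k\cdot}-\varsigma_s^{k\cdot})+(\sigma_s^{i\cdot}-\varsigma_s^{i\cdot})\cdot\varsigma_s^{k\cdot}$ and applying the Schwarz inequality, Proposition \ref{minkowski}, Lemma \ref{sigma-l2}, and the Clark-Ocone estimate of Lemma \ref{ocone} (which gives $\sup_t\|\sigma_t^{i\cdot}-\varsigma_t^{i\cdot}\|_{4p,\ell_2}\lesssim\sqrt{p/n}\,\sup_{u,v}\|D_u\sigma_v^{i\cdot}\|_{4p,\ell_2}$). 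This produces the final summand $\sqrt{p/n}\,\|\Sigma\|^{3/2}\|D\sigma\|$ of the stated bound.

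For the three remainders $T_1,T_2,T_3$ I would substitute the explicit formulas \eqref{eq:1st-deriv}--\eqref{eq:2nd-deriv} for $Df_n^{ij}$ and $D^2f_n^{ij}$. Using the support disjointness of the cells $I_{h'}\times I_{h'}$ in the definition of $u_n^{ij}$, contracting the $(a,b)$-indices against those of $f_n^{kl}$ restricted to $I_h\times I_h$, and unfolding each $\delta^2$ via Lemma \ref{lemma:double} (for $T_1$) or recognizing $\delta$ as an adapted It\^o integral (for $T_2,T_3$), each remainder becomes an iterated stochastic integral whose integrand is a product of $\varsigma$-factors with a single $D^2\varsigma$-factor (in $T_1$) or two $D\varsigma$-factors (in $T_2$ and $T_3$). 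The sharp BDG-type estimate of Lemma \ref{lemma:BDG}, together with the contraction bounds \eqref{eq:deriv3}--\eqref{eq:deriv4} of Lemma \ref{sigma-deriv} and the moment bound of Lemma \ref{sigma-l2}, then yields $(p/\sqrt n)\|\Sigma\|^{3/2}\|D_{s,t}\sigma\|$ for $T_1$ and $(p/\sqrt n)\|\Sigma\|\|D\sigma\|^2$ for $T_2,T_3$, matching the first two summands in the statement.

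The main obstacle will be the tensor-index bookkeeping: verifying that after the $H^{\otimes 2}$-symmetrization the contraction in $T_4$ reduces to precisely the two permutation summands defining $\mathfrak{C}_n$ (with no surviving ``mixed'' cross terms of the form $A^{ik}_{st}A^{jl}_{ts}$), and organizing the expansion of $T_1$--$T_3$ so that the resulting Skorohod integrals, once summed over $h$ against the cell-supported kernel $u_n^{kl}$, can be written as a single iterated It\^o integral to which Lemma \ref{lemma:BDG} applies with the correct exponent in $p$ and the correct $n^{-1/2}$ scaling.
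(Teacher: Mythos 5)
Your proposal follows essentially the same route as the paper: decompose $\langle D^2M_n^{ij},u_n^{kl}\rangle_{H^{\otimes2}}$ via the commutation formula of Lemma~\ref{M-deriv}(c), identify the undifferentiated piece $2\langle u_n^{ij},u_n^{kl}\rangle$ as the term matching $\mathfrak{C}_n$ up to replacing $\sigma$ by $\varsigma$ (controlled by Lemma~\ref{ocone}), and bound the three $\delta$-terms by pulling out the $\mathcal{F}_{t_{h'-1}}$-measurable factors, applying Minkowski, and then Lemma~\ref{lemma:BDG} together with the contraction bounds of Lemma~\ref{sigma-deriv}. Your worry about surviving mixed cross terms in $T_4$ resolves cleanly since the $s$- and $t$-integrations over $I_h$ are independent, so all four terms in the product of symmetrized kernels already collapse into the two permutation summands defining $\mathfrak{C}_n$.
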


\begin{proof}
By Lemma \ref{M-deriv} the desired result follows once we verify the following statements for all $p\in[2,\infty)$ (note \eqref{eq:symm}): 
\begin{align}
&\left\|\langle\delta^2(D^2u_n^{ij}),u_n^{kl}\rangle_{H^{\otimes2}}\right\|_p\nonumber\\
&\lesssim\frac{p}{\sqrt{n}}
\left(
\max_{1\leq i,j\leq d}\sup_{0\leq s\leq1}\left\|\Sigma^{jj}_s\right\|_{2p}^{3/2}\sup_{0\leq s,t,u\leq1}\left\|D_{s,t}\sigma^{i\cdot}_{u}\right\|_{4p,\ell_2}
+\max_{1\leq i,k\leq d}\sup_{0\leq s,u\leq1}\left\|\Sigma^{kk}_u\right\|_{2p}\left\|D_s\sigma^{i\cdot}_{u}\right\|_{4p,\ell_2}^2
\right)\label{eq:1st-term},\\
&\left\|\langle\delta(Du_n^{ij}),u_n^{kl}\rangle_{H^{\otimes2}}\right\|_p
\lesssim \sqrt{\frac{p}{n}}\max_{1\leq i,j\leq d}\sup_{0\leq s,t,u\leq1}\|D_s\sigma_u^{i\cdot}\|_{4p,\ell_2}\|\Sigma_t^{jj}\|_{2p}^{3/2},\label{eq:2nd-term}\\
&\left\|2\langle u_n^{ij},u_n^{kl}\rangle_{H^{\otimes2}}-\mathfrak{C}_n^{(i-1)d+j,(k-1)d+l}\right\|_p
\lesssim\sqrt{\frac{p}{n}}\max_{1\leq i\leq d}\sup_{0\leq u\leq v\leq1}\|D_u\sigma_v^{i\cdot}\|_{4p,\ell_2}\max_{1\leq j\leq d}\sup_{0\leq t\leq 1}\|\Sigma_t^{jj}\|_{2p}^{3/2}.\label{eq:3rd-term}
\end{align}

We first verify \eqref{eq:1st-term}. We can rewrite $\langle\delta^2(D^2u_n^{ij}),u_n^{kl}\rangle_{H^{\otimes2}}$ as
\begin{align*}
\langle\delta^2(D^2u_n^{ij}),u_n^{kl}\rangle_{H^{\otimes2}}
&=n\sum_{h,h'=1}^n\sum_{a,b=1}^r\int_{I_h\times I_h}\delta^2(D^{(a,b)}_{s,t}f^{ij}_{n}1_{I_{h'}}\times 1_{I_{h'}})f^{kl}_n(s,t)^{ab}dsdt\\
&=n\sum_{h,h'=1}^n\sum_{a,b=1}^r\int_{I_h\times I_h}\delta^2\left(D^{(a,b)}_{s,t}f^{ij}_{n}1_{I_{h'}}\times 1_{I_{h'}}\right)\varsigma^{ka}_s\varsigma^{lb}_tdsdt,
\end{align*}
\tcr{
where the last identity holds true because $\langle\varphi,\symm(\psi)\rangle_{H^{\otimes2}}=\langle\varphi,\psi\rangle_{H^{\otimes2}}$ for any $\varphi,\psi\in H^{\otimes2}$ if $\varphi$ is symmetric. 
Then, noting that $f_n^{ij}(u,v)$ is $\mathcal{F}_{t_{h'-1}}$-measurable when $u,v\in I_{h'}$ by construction, Corollary 1.2.1 of \cite{Nualart2006} yields
\begin{align*}
\langle\delta^2(D^2u_n^{ij}),u_n^{kl}\rangle_{H^{\otimes2}}
&=n\sum_{h=1}^n\sum_{h':h'>h}\sum_{a,b=1}^r\int_{I_h\times I_h}\delta^2(D^{(a,b)}_{s,t}f^{ij}_{n}1_{I_{h'}}\times 1_{I_{h'}})\varsigma^{ka}_s\varsigma^{lb}_tdsdt.
\end{align*}
Moreover, since $\varsigma_u$ is $\mathcal{F}_{t_{h-1}}$-measurable when $u\in I_{h}$, using Lemma \ref{lemma:double} and Exercise 2.30 in Chapter 3 of \cite{KS1998} repeatedly, we obtain
\begin{align*}
\langle\delta^2(D^2u_n^{ij}),u_n^{kl}\rangle_{H^{\otimes2}}
&=&=n\sum_{h=1}^n\int_{I_h\times I_h}\sum_{h':h'>h}\delta^2\left(\sum_{a,b=1}^r\varsigma^{ka}_s\varsigma^{lb}_tD^{(a,b)}_{s,t}f^{ij}_{n}1_{I_{h'}}\times 1_{I_{h'}}\right)dsdt.
\end{align*}
}%
\tcr{H}ence Proposition \ref{minkowski} yields
\[
\left\|\langle\delta^2(D^2u_n^{ij}),u_n^{kl}\rangle_{H^{\otimes2}}\right\|_p
\leq n\sum_{h=1}^n\int_{I_h\times I_h}\left\|\sum_{h':h'>h}\delta^2\left(\sum_{a,b=1}^r\varsigma^{ka}_s\varsigma^{lb}_tD^{(a,b)}_{s,t}f^{ij}_{n}1_{I_{h'}}\times 1_{I_{h'}}\right)\right\|_pdsdt.
\]
Now, from \eqref{eq:2nd-deriv} we infer that
\begin{align*}
\sum_{a,b=1}^r\varsigma^{ka}_{s}\varsigma^{lb}_{t}D^{(a,b)}_{s,t}f^{ij}_{n}(u,v)
&=\frac{1}{2}\left\{\left(\sum_{a,b=1}^r\varsigma^{ka}_{s}\varsigma^{lb}_{t}D^{(a,b)}_{s,t}\varsigma^{i\cdot}_{u}\right)\otimes\varsigma^{j\cdot}_{v}
+\left(\sum_{a=1}^r\varsigma^{ka}_{s}D_s^{(a)}\varsigma^{i\cdot}_{u}\right)\otimes \left(\sum_{b=1}^r\varsigma^{lb}_{t}D_t^{(b)}\varsigma^{j\cdot}_{v}\right)\right.\\
&+\left(\sum_{b=1}^r\varsigma^{lb}_{t}D_t^{(b)}\varsigma^{i\cdot}_{u}\right)\otimes \left(\sum_{a=1}^r\varsigma^{ka}_{s}D_s^{(a)}\varsigma^{j\cdot}_{v}\right)
+\varsigma^{i\cdot}_{u}\otimes \left(\sum_{a,b=1}^r\varsigma^{ka}_{s}\varsigma^{lb}_{t}D^{(a,b)}_{s,t}\varsigma^{j\cdot}_{v}\right)\\
&+\left(\sum_{a,b=1}^r\varsigma^{ka}_{s}\varsigma^{lb}_{t}D^{(a,b)}_{s,t}\varsigma^{j\cdot}_{u}\right)\otimes\varsigma^{i\cdot}_{v}+\left(\sum_{a=1}^r\varsigma^{ka}_{s}D_s^{(a)}\varsigma^{j\cdot}_{u}\right)\otimes \left(\sum_{b=1}^r\varsigma^{lb}_{t}D_t^{(b)}\varsigma^{i\cdot}_{v}\right)\\
&\left.+\left(\sum_{b=1}^r\varsigma^{lb}_{t}D_t^{(b)}\varsigma^{j\cdot}_{u}\right)\otimes \left(\sum_{a=1}^r\varsigma^{ka}_{s}D_s^{(a)}\varsigma^{i\cdot}_{v}\right)+\varsigma^{j\cdot}_{u}\otimes \left(\sum_{a,b=1}^r\varsigma^{ka}_{s}\varsigma^{lb}_{t}D^{(a,b)}_{s,t}\varsigma^{i\cdot}_{v}\right)
\right\}.
\end{align*}
Hence Lemmas \ref{lemma:BDG} and \ref{sigma-l2}--\ref{sigma-deriv} yield
\begin{align*}
&\sup_{0\leq s,t\leq 1}\left\|\sum_{h':h'>h}\delta^2\left(\sum_{a,b=1}^r\varsigma^{ka}_s\varsigma^{lb}_tD^{(a,b)}_{s,t}f^{ij}_{n}1_{I_{h'}}\times 1_{I_{h'}}\right)\right\|_p\\
&\lesssim\frac{p}{\sqrt{n}}\left(\sup_{0\leq s,t,u,v\leq1}\left\|\sum_{a,b=1}^r\varsigma^{ka}_{s}\varsigma^{lb}_{t}D^{(a,b)}_{s,t}\varsigma^{i\cdot}_{u}\right\|_{\frac{4}{3}p,\ell_2}\left\|\varsigma^{j\cdot}_v\right\|_{4p,\ell_2}\right.
+\sup_{0\leq s,t,u,v\leq1}\left\|\sum_{a=1}^r\varsigma^{ka}_{s}D_s^{(a)}\varsigma^{i\cdot}_{u}\right\|_{2p,\ell_2}\left\|\sum_{b=1}^r\varsigma^{lb}_{t}D_t^{(b)}\varsigma^{j\cdot}_{v}\right\|_{2p,\ell_2}\\
&\left.\qquad\qquad+\sup_{0\leq s,t,u,v\leq1}\left\|\varsigma^{i\cdot}_{u}\right\|_{4p,\ell_2}\left\|\sum_{a,b=1}^r\varsigma^{ka}_{s}\varsigma^{lb}_{t}D^{(a,b)}_{s,t}\varsigma^{j\cdot}_{v}\right\|_{\frac{4}{3}p,\ell_2}\right)\\
&\lesssim\frac{p}{\sqrt{n}}
\left(
\max_{1\leq i,j\leq d}\sup_{0\leq s\leq1}\left\|\Sigma^{jj}_s\right\|_{2p}^{3/2}\sup_{0\leq s,t,u\leq1}\left\|D_{s,t}\sigma^{i\cdot}_{u}\right\|_{4p,\ell_2}
+\max_{1\leq i,k\leq d}\sup_{0\leq s,u\leq1}\left\|\Sigma^{kk}_u\right\|_{2p}\left\|D_s\sigma^{i\cdot}_{u}\right\|_{4p,\ell_2}^2
\right).
\end{align*}
Therefore, we obtain \eqref{eq:1st-term}. 

Next we verify \eqref{eq:2nd-term}. \tcr{W}e have
\begin{align*}
\langle\delta(Du_n^{ij}),u_n^{kl}\rangle_{H^{\otimes2}}
&=n\sum_{h=1}^n\sum_{a=1}^r\int_{I_h\times I_h}\delta(D_s^{(a)}f_n^{ij}(\cdot,t)1_{I_h}(\cdot))\cdot f_n^{kl}(s,t)^{a\cdot}dsdt.
\end{align*}
\tcr{Since $f_n^{kl}(s,t)$ is $\mathcal{F}_{t_{h-1}}$-measurable when $s,t\in I_{h}$, by Proposition 1.3.11 of \cite{Nualart2006} and Exercise 2.30 in Chapter 3 of \cite{KS1998} we obtain} 
\begin{align*}
\langle\delta(Du_n^{ij}),u_n^{kl}\rangle_{H^{\otimes2}}
&=n\sum_{h=1}^n\int_{I_h\times I_h}\left(\int_{I_h}\left(\sum_{a,b=1}^rf_n^{kl}(s,t)^{ab}D_s^{(a)}f_n^{ij}(u,t)^{b\cdot}\right)\cdot dB_u\right)dsdt.
\end{align*}
Therefore, Propositions \ref{minkowski}--\ref{sharp-BDG} and the Schwarz inequality yield
\begin{align*}
\left\|\langle\delta(Du_n^{ij}),u_n^{kl}\rangle_{H^{\otimes2}}\right\|_p
&\lesssim\sqrt{p}n\sum_{h=1}^n\int_{I_h\times I_h}\left\|\sqrt{\int_{I_h}\sum_{c=1}^r\left(\sum_{a,b=1}^rf_n^{kl}(s,t)^{ab}D_s^{(a)}f_n^{ij}(u,t)^{bc}\right)^2du}\right\|_pdsdt\\
&\leq\sqrt{p}n\sum_{h=1}^n\int_{I_h\times I_h}\left\|\int_{I_h}\|f_n^{kl}(s,t)\|_{\ell_2}^2\|D_sf_n^{ij}(u,t)\|_{\ell_2}^2du\right\|_{p/2}^{1/2}dsdt\\
&\leq\sqrt{p}n\sum_{h=1}^n\int_{I_h\times I_h}\sqrt{\int_{I_h}\|f_n^{kl}(s,t)\|_{2p,\ell_2}^2\|D_sf_n^{ij}(u,t)\|_{2p,\ell_2}^2du}dsdt\\
&\leq\sqrt{\frac{p}{n}}\sup_{0\leq s,t,u\leq1}\|f_n^{kl}(s,t)\|_{2p,\ell_2}\|D_sf_n^{ij}(u,t)\|_{2p,\ell_2}.
\end{align*}
Since we have 
$
\|f_n^{kl}(s,t)\|_{2p,\ell_2}
\leq\sqrt{\|\Sigma^{kk}_s\|_{2p}\|\Sigma^{ll}_t\|_{2p}}
$ 
and 
\[
\|D_sf_n^{ij}(u,t)\|_{2p,\ell_2}
\leq2\max_{1\leq i,j\leq d}\sup_{0\leq s,t,u\leq1}\|D_s\sigma_u^{i\cdot}\|_{4p,\ell_2}\|\Sigma_t^{jj}\|_{2p}^{1/2}
\] 
by the Schwarz inequality and Lemmas \ref{sigma-l2}--\ref{sigma-deriv}, we obtain \eqref{eq:2nd-term}. 

Finally we verify \eqref{eq:3rd-term}. We can rewrite $2\langle u_n^{ij},u_n^{kl}\rangle_{H^{\otimes2}}$ as 
\begin{align*}
2\langle u_n^{ij},u_n^{kl}\rangle_{H^{\otimes2}}
&=n\sum_{h=1}^n\sum_{a,b=1}^r\int_{I_h\times I_h}\left(\varsigma_s^{ia}\varsigma_t^{jb}\varsigma_s^{ka}\varsigma_t^{lb}+\varsigma_s^{ia}\varsigma_t^{jb}\varsigma_t^{kb}\varsigma_s^{la}\right)dsdt\\
&=n\sum_{h=1}^n\left\{\left(\int_{I_h}\varsigma_s^{i\cdot}\cdot\varsigma_s^{k\cdot}ds\right)\left(\int_{I_h}\varsigma_s^{j\cdot}\cdot\varsigma_s^{l\cdot}ds\right)
+\left(\int_{I_h}\varsigma_s^{i\cdot}\cdot\varsigma_s^{l\cdot}ds\right)\left(\int_{I_h}\varsigma_s^{j\cdot}\cdot\varsigma_s^{k\cdot}ds\right)
\right\}.
\end{align*}
\if0
\begin{align*}
&\left|n\sum_{h=1}^n\left(\int_{I_h}(\varsigma_s^{i\cdot}-\sigma^{i\cdot}_s)\cdot\varsigma_s^{k\cdot}ds\right)\left(\int_{I_h}\varsigma_s^{j\cdot}\cdot\varsigma_s^{l\cdot}ds\right)\right|\\
&\leq n\sum_{h=1}^n\left(\int_{I_h}\|\varsigma_s^{i\cdot}-\sigma^{i\cdot}_s\|_{\ell_2}\|\varsigma_s^{k\cdot}\|_{\ell_2}ds\right)\left(\int_{I_h}\|\varsigma_s^{j\cdot}\|_{\ell_2}\|\varsigma_s^{l\cdot}\|_{\ell_2}ds\right)
\end{align*}
\fi
Note that we have
\begin{align*}
&\left\|n\sum_{h=1}^n\left(\int_{I_h}(\varsigma_s^{i\cdot}-\sigma^{i\cdot}_s)\cdot\varsigma_s^{k\cdot}ds\right)\left(\int_{I_h}\varsigma_s^{j\cdot}\cdot\varsigma_s^{l\cdot}ds\right)\right\|_p\\
&\leq n\sum_{h=1}^n\left(\int_{I_h}\|\varsigma_s^{i\cdot}-\sigma^{i\cdot}_s\|_{4p,\ell_2}\|\varsigma_s^{k\cdot}\|_{4p,\ell_2}ds\right)\left(\int_{I_h}\|\varsigma_s^{j\cdot}\|_{4p,\ell_2}\|\varsigma_s^{l\cdot}\|_{4p,\ell_2}ds\right)\\
&\lesssim\sqrt{\frac{p}{n}}\sup_{0\leq u\leq v\leq1}\|D_u\sigma_v^{i\cdot}\|_{4p,\ell_2}\max_{1\leq j\leq d}\sup_{0\leq t\leq 1}\|\Sigma_t^{jj}\|_{2p}^{3/2}
\end{align*}
for any $i,j,k,l=1,\dots,d$ by Proposition \ref{minkowski} and Lemmas \ref{ocone} and \ref{sigma-l2}. Therefore, we obtain \eqref{eq:3rd-term} and complete the proof of the lemma. 
\end{proof}


\begin{lemma}\label{lemma:2nd-deriv}
Under the assumptions of Theorem \ref{thm:rc}, 
we have
\[
\left\|\langle D^2F,u^n_{ij}\rangle_{H^{\otimes2}}\right\|_p
\leq\frac{1}{\sqrt{n}}\sup_{0\leq s,t\leq 1}\left\|D_{s,t}F\right\|_{2p,\ell_2}\max_{1\leq i\leq d}\sup_{0\leq s\leq 1}\left\|\Sigma_s^{ii}\right\|_{2p}
\]
for any $p\in[1,\infty)$, $n\in\mathbb{N}$ and $F\in\mathbb{D}_{2,\infty}$.
\end{lemma}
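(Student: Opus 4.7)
The plan is to exploit symmetry, apply Cauchy--Schwarz pointwise in $\ell_2$, and then reduce the moment estimate to a triple Hölder bound already established by the preceding lemmas.

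First I would unfold the definition: since $u_n^{ij}=\sqrt{n}\sum_{h=1}^n f_n^{ij}\,1_{I_h\times I_h}$ and $f_n^{ij}=\symm(\varsigma^{i\cdot}\otimes\varsigma^{j\cdot})$, the key observation is that $D^2F$ is always symmetric, so by \eqref{eq:symm} the symmetrization in $f_n^{ij}$ may be dropped inside $\langle D^2F,u_n^{ij}\rangle_{H^{\otimes 2}}$. Identifying $H^{\otimes 2}$ with $L^2([0,1]^2;(\mathbb{R}^r)^{\otimes 2})$ as in Section \ref{sec:rc}, this gives the explicit representation
\[
\langle D^2F,u_n^{ij}\rangle_{H^{\otimes 2}}
=\sqrt{n}\sum_{h=1}^n\int_{I_h\times I_h}\sum_{a,b=1}^r D^{(a,b)}_{s,t}F\,\varsigma_s^{ia}\varsigma_t^{jb}\,ds\,dt.
\]

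Next I would estimate the inner summation pointwise in $(s,t,\omega)$ by the Cauchy--Schwarz inequality on $\mathbb{R}^{r\times r}\cong(\mathbb{R}^r)^{\otimes 2}$, yielding
\[
\Bigl|\sum_{a,b=1}^r D^{(a,b)}_{s,t}F\,\varsigma_s^{ia}\varsigma_t^{jb}\Bigr|
\leq\|D_{s,t}F\|_{\ell_2}\,\|\varsigma_s^{i\cdot}\|_{\ell_2}\,\|\varsigma_t^{j\cdot}\|_{\ell_2}.
\]
Taking absolute values inside the double integral, applying the generalized Minkowski inequality (Proposition \ref{minkowski}) to pull the $L^p$ norm inside, and then applying Hölder's inequality with exponents $2p,4p,4p$ gives
\[
\bigl\|\,\|D_{s,t}F\|_{\ell_2}\|\varsigma_s^{i\cdot}\|_{\ell_2}\|\varsigma_t^{j\cdot}\|_{\ell_2}\bigr\|_p
\leq\|D_{s,t}F\|_{2p,\ell_2}\,\|\varsigma_s^{i\cdot}\|_{4p,\ell_2}\,\|\varsigma_t^{j\cdot}\|_{4p,\ell_2}.
\]
By Lemma \ref{sigma-l2}, $\|\varsigma_s^{i\cdot}\|_{4p,\ell_2}\leq\|\Sigma_s^{ii}\|_{2p}^{1/2}$, so the right-hand side is bounded by
$\|D_{s,t}F\|_{2p,\ell_2}\,\max_i\sup_s\|\Sigma_s^{ii}\|_{2p}$.

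Finally, summing over $h$ I use the elementary identity $\sum_{h=1}^n|I_h\times I_h|=n\cdot n^{-2}=n^{-1}$, so the prefactor $\sqrt{n}$ becomes $\sqrt{n}\cdot n^{-1}=n^{-1/2}$, which produces precisely the target bound. There is no real obstacle here — the argument is a clean application of symmetry, Cauchy--Schwarz, Minkowski, and Hölder — the only point requiring a moment of care is the choice of exponents $(2p,4p,4p)$ that lets Lemma \ref{sigma-l2} convert the $\varsigma$-norms into the stated $\|\Sigma^{ii}\|_{2p}$ factor without picking up further volatility moments.
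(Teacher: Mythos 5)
Your proof is correct and follows essentially the same route as the paper's: drop the symmetrization via the symmetry of $D^2F$, write the inner product as a double integral over the diagonal blocks $I_h\times I_h$, apply Cauchy--Schwarz pointwise, then generalized Minkowski and Hölder with exponents $(2p,4p,4p)$, and finally Lemma \ref{sigma-l2} together with the counting $\sum_h |I_h\times I_h| = n^{-1}$. The exponent choices are valid for all $p\ge 1$ since Lemma \ref{sigma-l2} is invoked at level $2p\ge 2$, so nothing is missing.
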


\begin{proof}
Since $D^2F$ is symmetric, \eqref{eq:symm} yields
\begin{align*}
\langle D^2F,u^n_{ij}\rangle_{H^{\otimes2}}
&=\sqrt{n}\sum_{h=1}^n\langle D^2F,\varsigma_s^{i\cdot}\otimes \varsigma_t^{j\cdot}1_{I_h\times I_h}\rangle_{H^{\otimes2}}
=\sqrt{n}\sum_{h=1}^n\int_{I_h\times I_h}D_{s,t}F\cdot\varsigma_s^{i\cdot}\otimes \varsigma_t^{j\cdot}dsdt.
\end{align*}
Therefore, by Proposition \ref{minkowski}, the Schwarz inequality and Lemma \ref{sigma-l2} we have
\begin{align*}
\left\|\langle D^2F,u^n_{ij}\rangle_{H^{\otimes2}}\right\|_p
&\leq\sqrt{n}\sum_{h=1}^n\int_{I_h\times I_h}\left\|D_{s,t}F\cdot\varsigma_s^{i\cdot}\otimes \varsigma_t^{j\cdot}\right\|_pdsdt\\
&\leq\sqrt{n}\sum_{h=1}^n\int_{I_h\times I_h}\left\|D_{s,t}F\right\|_{2p,\ell_2}\left\|\varsigma_s^{i\cdot}\right\|_{4p,\ell_2}\left\|\varsigma_t^{j\cdot}\right\|_{4p,\ell_2}dsdt\\
&=\frac{1}{\sqrt{n}}\sup_{0\leq s,t\leq 1}\left\|D_{s,t}F\right\|_{2p,\ell_2}\max_{1\leq i\leq d}\sup_{0\leq s\leq 1}\left\|\Sigma_s^{ii}\right\|_{2p}.
\end{align*}
This completes the proof. 
\end{proof}

\begin{lemma}\label{lemma:DM}
Under the assumptions of Theorem \ref{thm:rc}, 
there is a universal constant $C>0$ such that
\begin{align*}
\max_{1\leq i,j,k\leq d}\|D_sM_n^{ij}\cdot\varsigma_s^{k\cdot}\|_p
\leq C\left(p\max_{1\leq i\leq d}\sup_{0\leq s,t\leq 1}\left\|D_s\sigma^{i\cdot}_t\right\|_{3p,\ell_2}\max_{1\leq j\leq d}\sup_{0\leq t\leq 1}\|\Sigma^{jj}_t\|_{\frac{3}{2}p}
+\sqrt{p}\max_{1\leq i\leq d}\sup_{0\leq t\leq 1}\left\|\Sigma^{ii}_t\right\|_{\frac{3}{2}p}^{3/2}
\right)
\end{align*}
for any $p\in[2,\infty)$, $n\in\mathbb{N}$ and $s\in(0,1]$.
\end{lemma}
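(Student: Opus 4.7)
The plan is to use the representation of $D_s^{(a)} M_n^{ij}$ from Lemma \ref{M-deriv}(c), namely
$D_s^{(a)}M_n^{ij}=\delta^2(D_s^{(a)}u_n^{ij})+2\delta(u_n^{ij}(s,\cdot)^{a\cdot})$, and handle the two resulting contributions to $D_sM_n^{ij}\cdot\varsigma_s^{k\cdot}=\sum_a\varsigma_s^{ka}D_s^{(a)}M_n^{ij}$ separately. Throughout, fix $h=h(s)\in\{1,\dots,n\}$ with $s\in I_h$; note $\varsigma_s$ is $\mathcal{F}_{t_{h-1}}$-measurable, and the second term in the bound (the $\sqrt{p}\|\Sigma\|_{3p/2}^{3/2}$ piece) will come from the simple Skorohod integral $\delta(u_n^{ij}(s,\cdot)^{a\cdot})$, while the first (the $p\|D\sigma\|_{3p,\ell_2}\|\Sigma\|_{3p/2}$ piece) will come from $\delta^2(D_s^{(a)}u_n^{ij})$.

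For the easy term, since $u_n^{ij}=\sqrt n\sum_{h'} f_n^{ij}1_{I_{h'}\times I_{h'}}$, the slice $u_n^{ij}(s,\cdot)^{a\cdot}=\sqrt n\,f_n^{ij}(s,\cdot)^{a\cdot}1_{I_h}(\cdot)$ is supported on $I_h$ and the integrand $\frac12(\varsigma_s^{ia}\varsigma_\cdot^{j\cdot}+\varsigma_s^{ja}\varsigma_\cdot^{i\cdot})1_{I_h}$ is $\mathbf{F}$-adapted; thus the Skorohod integral reduces to an It\^o integral. Summing over $a$ with $\varsigma_s^{ka}$ gives
\[
\sum_{a=1}^r\varsigma_s^{ka}\delta(u_n^{ij}(s,\cdot)^{a\cdot})=\tfrac{\sqrt n}{2}\Bigl((\varsigma_s^{i\cdot}\!\cdot\varsigma_s^{k\cdot})\!\int_{I_h}\!\varsigma_t^{j\cdot}\!\cdot dB_t+(\varsigma_s^{j\cdot}\!\cdot\varsigma_s^{k\cdot})\!\int_{I_h}\!\varsigma_t^{i\cdot}\!\cdot dB_t\Bigr).
\]
Apply H\"older with exponents $3p/2$ and $3p$, Proposition \ref{sharp-BDG}, and Lemma \ref{sigma-l2} to bound the $p$-norm by $\sqrt p\,\|\Sigma^{ii}\|_{3p/2}^{1/2}\|\Sigma^{kk}\|_{3p/2}^{1/2}\|\Sigma^{jj}\|_{3p/2}^{1/2}$, which is $\lesssim\sqrt p\,\max_i\sup_t\|\Sigma^{ii}_t\|_{3p/2}^{3/2}$.

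For the main term, the crucial observation is that by Lemma \ref{sigma-deriv} the Malliavin derivative $D_s^{(a)}\varsigma_u^{i\cdot}$ vanishes when $s\in I_h$ and $u\in I_{h'}$ with $h'\leq h$; hence in the expansion \eqref{eq:1st-deriv} of $D_s^{(a)}f_n^{ij}$, every term has a $D_s\varsigma$ factor, so $D_s^{(a)}u_n^{ij}=\sqrt n\sum_{h'>h}D_s^{(a)}f_n^{ij}1_{I_{h'}\times I_{h'}}$. On each block $I_{h'}\times I_{h'}$ with $h'>h$, the integrand $D_s^{(a)}f_n^{ij}$ is $\mathcal{F}_{t_{h'-1}}$-measurable (as $D_s\varsigma_u$ is $\mathcal{G}^n_u$-measurable), and $\varsigma_s^{ka}$ is also $\mathcal{F}_{t_{h'-1}}$-measurable; using Corollary 1.2.1 of \cite{Nualart2006} (exactly as in the proof of Lemma \ref{quasi-torsion}) we may pull $\varsigma_s^{ka}$ inside $\delta^2$. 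Lemma \ref{lemma:double} then rewrites the resulting Skorohod integral on each block as an ordinary iterated It\^o integral of the form $\int_{I_{h'}}(\int_{t_{h'-1}}^t\xi_u\cdot dB_u)\eta_t\cdot dB_t$ (plus its symmetric partner), where $\xi$ and $\eta$ run over factors of $\sum_a\varsigma_s^{ka}D_s^{(a)}\varsigma^{i\cdot}$, $\varsigma^{j\cdot}$, and the two analogous combinations obtained from the four summands in \eqref{eq:1st-deriv}. Lemma \ref{lemma:BDG} applied with $h_1-h_0\leq n$ and H\"older exponents $q=3,q'=3/2$ then yields a $p$-norm bound of order
\[
\sqrt n\cdot\tfrac{p\sqrt n}{n}\,\bigl\|\varsigma^{j\cdot}\bigr\|_{3p,\ell_2}\,\Bigl\|\sum_a\varsigma_s^{ka}D_s^{(a)}\varsigma^{i\cdot}_u\Bigr\|_{3p/2,\ell_2}.
\]
Finally, $\|\varsigma^{j\cdot}\|_{3p,\ell_2}\leq\|\Sigma^{jj}\|_{3p/2}^{1/2}$ by Lemma \ref{sigma-l2}, and applying Lemma \ref{sigma-deriv} \eqref{eq:deriv3} with its internal $p$ equal to $3p/2$ gives $\|\sum_a\varsigma_s^{ka}D_s^{(a)}\varsigma^{i\cdot}_u\|_{3p/2,\ell_2}\leq\|\Sigma^{kk}\|_{3p/2}^{1/2}\|D_s\sigma^{i\cdot}_u\|_{3p,\ell_2}$, producing the required $p\,\|D_s\sigma^{i\cdot}_t\|_{3p,\ell_2}\max_j\sup_t\|\Sigma^{jj}_t\|_{3p/2}$ after using $\sqrt{ab}\leq\max(a,b)$.

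The main obstacle is Step 3: establishing that the Skorohod integral $\delta^2$ is supported on blocks $h'>h(s)$ and then legitimately commuting the deterministic-at-time-$t_{h'-1}$ factor $\varsigma_s^{ka}$ through $\delta^2$ so as to fall within the scope of Lemma \ref{lemma:double}. Without the vanishing of $D_s\varsigma_u$ for $u\leq t_{h(s)}$ guaranteed by Lemma \ref{sigma-deriv}, one could not produce the additional factor $\sqrt{1/n}$ (the $\sqrt{n-h}/n$ in Lemma \ref{lemma:BDG}) needed to cancel the outer $\sqrt n$; the rest of the proof is then routine H\"older and BDG estimation exactly as performed in the earlier lemmas of this subsection.
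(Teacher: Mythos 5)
Your proposal matches the paper's proof in structure and substance: the same decomposition of $D_sM_n^{ij}$ via Lemma \ref{M-deriv}(c), the same reduction to blocks $h'>h(s)$, the same pulling of $\varsigma_s^{ka}$ through $\delta^2$, and the same application of Lemmas \ref{lemma:double}, \ref{lemma:BDG}, \ref{sigma-l2}, and \ref{sigma-deriv}\eqref{eq:deriv3} with H\"older exponents $\{3p,\tfrac32 p\}$. The only small slip is the citation for moving the $\mathcal{F}_{t_{h'-1}}$-measurable factor $\varsigma_s^{ka}$ inside the multiple Skorohod integral — the paper uses Exercise 2.30 in Chapter 3 of \cite{KS1998} together with Lemma \ref{lemma:double} for that step, while Corollary 1.2.1 of \cite{Nualart2006} is only what justifies the restriction to $h'>h$ via the vanishing of the Malliavin derivative (which you instead read off directly from the explicit formula for $D_s\varsigma$ in Lemma \ref{sigma-deriv}; this is an equivalent and equally valid way to see it).
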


\begin{proof}
By Lemma \ref{M-deriv} the desired result follows once we verify the following statements for all $p\in[2,\infty)$: 
\begin{align}
\|\delta^2(D_su_n^{ij})\cdot\varsigma_s^{k\cdot}\|_p
&\lesssim p\max_{1\leq i\leq d}\sup_{0\leq s,t\leq 1}\left\|D_s\sigma^{i\cdot}_t\right\|_{3p,\ell_2}\max_{1\leq j\leq d}\sup_{0\leq t\leq 1}\|\Sigma^{jj}_t\|_{\frac{3}{2}p},\label{eq:1st-DM},\\
\left\|\delta(u_n^{ij}(s,\cdot))\cdot\varsigma^{k\cdot}_s\right\|_p
&\lesssim \sqrt{p}\max_{1\leq i\leq d}\sup_{0\leq t\leq 1}\left\|\Sigma^{ii}_t\right\|_{\frac{3}{2}p}^{3/2}.\label{eq:2nd-DM}
\end{align}
Let $h$ be the unique integer such that $s\in I_h$. First we verify \eqref{eq:1st-DM}. 
\tcr{Since $f_n^{ij}(u,v)$ is $\mathcal{F}_{t_{h'-1}}$-measurable when $u,v\in I_{h'}$, by Corollary 1.2.1 of \cite{Nualart2006} we have}
\begin{align*}
\delta^2(D^{(a)}_su_n^{ij})
=\sqrt{n}\sum_{h'=1}^n\delta^2(D^{(a)}_sf_n^{ij}1_{I_{h'}\times I_{h'}})
=\sqrt{n}\sum_{h':h'>h}\delta^2(D^{(a)}_sf_n^{ij}1_{I_{h'}\times I_{h'}})
\end{align*}
for any $a=1,\dots,r$. \tcr{H}ence Lemmas \ref{lemma:double} and \ref{sigma-deriv} \tcr{as well as Exercise 2.30 in Chapter 3 of \cite{KS1998}} yield 
\begin{align*}
\delta^2(D_su_n^{ij})\cdot\varsigma_s^{k\cdot}
=\sqrt{n}\sum_{h':h'>h}\delta^2\left(\sum_{a=1}^r\varsigma_s^{ka}D^{(a)}_sf_n^{ij}1_{I_{h'}\times I_{h'}}\right).
\end{align*}
Since \tcr{\eqref{eq:1st-deriv} implies that}
\begin{align*}
&\sum_{a=1}^r\varsigma_s^{ka}D^{(a)}_sf_n^{ij}\\
&=\frac{1}{2}\left\{\left(
\sum_{a=1}^r\varsigma_s^{ka}D^{(a)}_s\varsigma^{i\cdot}\right)\otimes\varsigma^{j\cdot}+\varsigma^{i\cdot}\otimes \left(\sum_{a=1}^r\varsigma_s^{ka}D^{(a)}_s\varsigma^{j\cdot}\right)
+\left(\sum_{a=1}^r\varsigma_s^{ka}D^{(a)}_s\varsigma^{j\cdot}\right)\otimes\varsigma^{i\cdot}+\varsigma^{j\cdot}\otimes \left(\sum_{a=1}^r\varsigma_s^{ka}D^{(a)}_s\varsigma^{i\cdot}\right)
\right\},
\end{align*}
Lemmas \ref{lemma:double}--\ref{lemma:BDG} and \ref{sigma-l2}--\ref{sigma-deriv} imply that
\begin{align*}
\|\delta^2(D_su_n^{ij})\cdot\varsigma_s^{k\cdot}\|_p
&\lesssim p\max_{1\leq i,k\leq d}\sup_{0\leq t\leq 1}\left\|\sum_{a=1}^r\varsigma_s^{ka}D^{(a)}_s\varsigma^{i\cdot}_t\right\|_{\frac{3}{2}p,\ell_2}\max_{1\leq j\leq d}\sup_{0\leq t\leq 1}\|\varsigma^{j\cdot}_t\|_{3p,\ell_2}\\
&\leq p\max_{1\leq i,k\leq d}\sup_{0\leq t\leq 1}\|\Sigma_s^{kk}\|_{\frac{3}{2}p}^{1/2}\left\|D_s\sigma^{i\cdot}_t\right\|_{3p,\ell_2}\max_{1\leq j\leq d}\sup_{0\leq t\leq 1}\|\Sigma^{jj}_t\|_{\frac{3}{2}p,\ell_2}^{1/2}\\
&\leq p\max_{1\leq i\leq d}\sup_{0\leq s,t\leq 1}\left\|D_s\sigma^{i\cdot}_t\right\|_{3p,\ell_2}\max_{1\leq j\leq d}\sup_{0\leq t\leq 1}\|\Sigma^{jj}_t\|_{\frac{3}{2}p}.
\end{align*}

Next we verify \eqref{eq:2nd-DM}. Proposition 1.3.11 of \cite{Nualart2006} yields
\begin{align*}
\delta(u_n^{ij}(s,\cdot))
&=\frac{\sqrt{n}}{2}\sum_{h'=1}^n1_{I_{h'}}(s)\int_{I_{h'}}\left(\varsigma^{i\cdot}_s\otimes \varsigma^{j\cdot}_t+\varsigma^{j\cdot}_s\otimes \varsigma^{i\cdot}_t\right)dB_t
=\frac{\sqrt{n}}{2}\int_{I_{h}}\left(\varsigma^{i\cdot}_s\otimes \varsigma^{j\cdot}_t+\varsigma^{j\cdot}_s\otimes \varsigma^{i\cdot}_t\right)dB_t,
\end{align*}
\tcr{so} we obtain
\begin{align*}
\delta(u_n^{ij}(s,\cdot))\cdot\varsigma^{k\cdot}_s
&=\frac{\sqrt{n}}{2}\int_{I_{h}}\left\{\left(\varsigma^{k\cdot}_s\cdot\varsigma^{i\cdot}_s\right)\varsigma^{j\cdot}_t+\left(\varsigma^{k\cdot}_s\cdot\varsigma^{j\cdot}_s\right)\varsigma^{i\cdot}_t\right\}dB_t.
\end{align*}
Therefore, Propositions \ref{minkowski}--\ref{sharp-BDG}, the Schwarz inequality and Lemma \ref{sigma-l2} yield
\begin{align*}
\left\|\delta(u_n^{ij}(s,\cdot))\cdot\varsigma^{k\cdot}_s\right\|_p
&\lesssim \sqrt{np}\left\|\sqrt{\int_{I_{h}}\left\|\left(\varsigma^{k\cdot}_s\cdot\varsigma^{i\cdot}_s\right)\varsigma^{j\cdot}_t+\left(\varsigma^{k\cdot}_s\cdot\varsigma^{j\cdot}_s\right)\varsigma^{i\cdot}_t\right\|_{\ell_2}^2dt}\right\|_p\\
&\leq \sqrt{np\int_{I_{h}}\left\|\left(\varsigma^{k\cdot}_s\cdot\varsigma^{i\cdot}_s\right)\varsigma^{j\cdot}_t+\left(\varsigma^{k\cdot}_s\cdot\varsigma^{j\cdot}_s\right)\varsigma^{i\cdot}_t\right\|_{p,\ell_2}^2dt}\\
&\leq 2\max_{1\leq i,j,k\leq d}\sqrt{np\int_{I_{h}}\left\|\varsigma^{k\cdot}_s\cdot\varsigma^{i\cdot}_s\right\|_{\frac{3}{2}p}^2\left\|\varsigma^{j\cdot}_t\right\|_{3p,\ell_2}^2dt}\\
&\leq 2\sqrt{p}\max_{1\leq i\leq d}\sup_{0\leq t\leq 1}\left\|\varsigma^{i\cdot}_t\right\|_{3p,\ell_2}^3
\leq2\sqrt{p}\max_{1\leq i\leq d}\sup_{0\leq t\leq 1}\left\|\Sigma^{ii}_t\right\|_{\frac{3}{2}p}^{3/2}.
\end{align*}
This completes the proof. 
\end{proof}

\begin{lemma}\label{lemma:1st-deriv}
Under the assumptions of Theorem \ref{thm:rc}, 
there is a universal constant $C>0$ such that
\begin{align*}
&\left\|\langle DM_n^{ij}\otimes DM_n^{i'j'},u_n^{kl}\rangle_{H^{\otimes}}\right\|_p\\
&\leq \frac{C}{\sqrt{n}}\left(p\max_{1\leq i\leq d}\sup_{0\leq s,t\leq 1}\left\|D_s\sigma^{i\cdot}_t\right\|_{6p,\ell_2}\max_{1\leq j\leq d}\sup_{0\leq t\leq 1}\|\Sigma^{jj}_t\|_{3p}
+\sqrt{p}\max_{1\leq i\leq d}\sup_{0\leq t\leq 1}\left\|\Sigma^{ii}_t\right\|_{3p}^{3/2}
\right)^2,\\
&\left\|\langle DM_n^{ij}\otimes DF,u_n^{kl}\rangle_{H^{\otimes}}\right\|_p\\
&\leq \frac{C}{\sqrt{n}}\left(p\max_{1\leq i\leq d}\sup_{0\leq s,t\leq 1}\left\|D_s\sigma^{i\cdot}_t\right\|_{6p,\ell_2}\max_{1\leq j\leq d}\sup_{0\leq t\leq 1}\|\Sigma^{jj}_t\|_{3p}^{3/2}
+\sqrt{p}\max_{1\leq i\leq d}\sup_{0\leq t\leq 1}\left\|\Sigma^{ii}_t\right\|_{3p}^2
\right)\sup_{0\leq s\leq 1}\left\|D_sF\right\|_{3p,\ell_2},\\
&\left\|\langle DF\otimes DG,u_n^{kl}\rangle_{H^{\otimes}}\right\|_p
\leq \frac{C}{\sqrt{n}}\max_{1\leq i\leq d}\sup_{0\leq t\leq 1}\left\|\Sigma^{ii}_t\right\|_{3p}\sup_{0\leq s\leq 1}\left\|D_sF\right\|_{3p,\ell_2}\sup_{0\leq s\leq 1}\left\|D_sG\right\|_{3p,\ell_2}
\end{align*}
for all $n\in\mathbb{N}$, $i,j,i',j',k,l=1,\dots,d$, $F,G\in\mathbb{D}_{1,\infty}$ and $p\in[2,\infty)$.
\end{lemma}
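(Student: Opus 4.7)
The approach is to exploit the concrete form of $u_n^{kl}$ and reduce each estimate to Lemma \ref{lemma:DM} (for factors containing $DM_n$) together with direct moment bounds on $\varsigma$ and the given Malliavin derivatives. Writing $f_n^{kl}=\symm(\varsigma^{k\cdot}\otimes\varsigma^{l\cdot})$ and $u_n^{kl}=\sqrt{n}\sum_{h=1}^n f_n^{kl}1_{I_h\times I_h}$, for any $H$-valued random variables $\phi,\psi$ we have the pointwise identity
\[
\langle \phi\otimes\psi, u_n^{kl}\rangle_{H^{\otimes 2}}
=\frac{\sqrt{n}}{2}\sum_{h=1}^n\int_{I_h\times I_h}\left\{(\varsigma_s^{k\cdot}\cdot\phi(s))(\varsigma_t^{l\cdot}\cdot\psi(t))+(\varsigma_s^{l\cdot}\cdot\phi(s))(\varsigma_t^{k\cdot}\cdot\psi(t))\right\}ds\,dt.
\]
The first step is to apply this identity with $(\phi,\psi)=(DM_n^{ij},DM_n^{i'j'})$, $(DM_n^{ij},DF)$ or $(DF,DG)$, and then bring the $L^p$ norm inside the sum and integral via Proposition \ref{minkowski}.

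For the first estimate, I will apply the Cauchy-Schwarz inequality in $L^{2p}$ to the two factors of each term, obtaining
\[
\left\|(\varsigma_s^{k\cdot}\cdot D_sM_n^{ij})(\varsigma_t^{l\cdot}\cdot D_tM_n^{i'j'})\right\|_p
\leq\|D_sM_n^{ij}\cdot\varsigma_s^{k\cdot}\|_{2p}\|D_tM_n^{i'j'}\cdot\varsigma_t^{l\cdot}\|_{2p},
\]
and then bounding each factor by Lemma \ref{lemma:DM} at order $2p$. Since $\sum_h|I_h|^2=n^{-1}$, the prefactor $\sqrt{n}\cdot n^{-1}=n^{-1/2}$ emerges, and the square of the bound from Lemma \ref{lemma:DM} (at order $2p$, which produces norms $\|D_s\sigma_t^{i\cdot}\|_{6p,\ell_2}$ and $\|\Sigma_t^{jj}\|_{3p}$) reproduces exactly the right-hand side.

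For the second estimate, I will again use Cauchy-Schwarz at $L^{2p}$, so one factor is again controlled by Lemma \ref{lemma:DM} at order $2p$, while the other factor $\|D_tF\cdot\varsigma_t^{l\cdot}\|_{2p}$ is bounded by the generalised H\"older inequality $\|D_tF\|_{3p,\ell_2}\|\varsigma_t^{l\cdot}\|_{6p,\ell_2}$ followed by Lemma \ref{sigma-l2}, which yields $\|\Sigma_t^{ll}\|_{3p}^{1/2}\|D_tF\|_{3p,\ell_2}$; multiplying out the two bounds produces the asserted exponent pattern ($\|\Sigma\|_{3p}^{3/2}$ and $\|\Sigma\|_{3p}^{2}$ respectively). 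For the third estimate, no appeal to Lemma \ref{lemma:DM} is needed: applying H\"older directly to
\[
\|(\varsigma_s^{k\cdot}\cdot D_sF)(\varsigma_t^{l\cdot}\cdot D_tG)\|_p
\leq\|\varsigma_s^{k\cdot}\|_{6p,\ell_2}\|\varsigma_t^{l\cdot}\|_{6p,\ell_2}\|D_sF\|_{3p,\ell_2}\|D_tG\|_{3p,\ell_2}
\]
(the four reciprocals summing to $1/p$) and invoking Lemma \ref{sigma-l2} gives the stated bound after the $n^{-1/2}$ accounting. No step in this argument is genuinely delicate; the only point that requires care is to choose compatible H\"older exponents so that the final bound matches the specific combination of norms stated in the lemma, and I have verified above that the triple $(L^{2p},L^{3p},L^{6p})$ does the job.
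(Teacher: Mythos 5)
Your proposal is correct and follows essentially the same route as the paper: rewrite $\langle \phi\otimes\psi,u_n^{kl}\rangle_{H^{\otimes2}}$ using the explicit form of $u_n^{kl}$ (the paper factors the double integral into a product of single integrals over $I_h$, but this is the same identity), then bring the $L^p$-norm inside with Proposition \ref{minkowski}, split the product with Cauchy--Schwarz at $L^{2p}$ (and a three-exponent H\"older with $(2p,3p,6p)$ for the mixed terms), and invoke Lemma \ref{lemma:DM} for the $DM_n\cdot\varsigma$ factors and Lemma \ref{sigma-l2} for the $\varsigma$ factors. The exponent bookkeeping ($\sqrt{n}\cdot\sum_h|I_h|^2=n^{-1/2}$) and the doubling $p\mapsto 2p$ that produces the $6p$- and $3p$-norms are carried out correctly.
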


\begin{proof}
For any $H$-valued random variables $\xi,\eta$, we have
\begin{align*}
\langle\xi\otimes\eta,u_n^{ij}\rangle_{H^{\otimes2}}
=\frac{\sqrt{n}}{2}\sum_{h=1}^n\left\{\left(\int_{I_h}\xi_s\cdot\varsigma_s^{i\cdot}ds\right)\left(\int_{I_h}\eta_s\cdot\varsigma_s^{j\cdot}ds\right)
+\left(\int_{I_h}\xi_s\cdot\varsigma_s^{j\cdot}ds\right)\left(\int_{I_h}\eta_s\cdot\varsigma_s^{i\cdot}ds\right)
\right\}.
\end{align*}
\tcr{Hence,} Proposition \ref{minkowski} yields
\begin{align*}
\left\|\langle\xi\otimes\eta,u_n^{ij}\rangle_{H^{\otimes2}}\right\|_p
\leq\sqrt{n}\max_{1\leq i,j\leq d}\sum_{h=1}^n\left(\int_{I_h}\left\|\xi_s\cdot\varsigma_s^{i\cdot}\right\|_{2p}ds\right)\left(\int_{I_h}\left\|\eta_s\cdot\varsigma_s^{j\cdot}\right\|_{2p}ds\right).
\end{align*}
Now the desired result follows from the Schwarz inequality and Lemmas \ref{sigma-l2} and \ref{lemma:DM}.
\end{proof}

\begin{proof}[Proof of Theorem \ref{thm:rc}]
Set $\mathfrak{S}_n:=\mathfrak{C}_n^{1/2}\zeta_n$. By the hypercontractivity of Gaussian variables, we have
\begin{align*}
E\left[|\mathfrak{S}_n^k|^p\mid\mathcal{F}\right]\leq\left(\sqrt{(p-1)\mathfrak{C}_n^{kk}}\right)^p
\end{align*}
for any $k=1,\dots,d^2$ and $p\in[2,\infty)$. \tcr{H}ence we obtain
\begin{equation}\label{eq:hyper}
\max_{1\leq k\leq d^2}\left\|\mathfrak{S}_n^k\right\|_p\leq\sqrt{p-1}\left\|\mathfrak{C}_n^{kk}\right\|_{p/2}^{1/2}
\leq2\sqrt{p-1}\max_{1\leq i\leq d}\sup_{0\leq t\leq1}\|\Sigma^{ii}_t\|_p
\end{equation}
for any $p\in[2,\infty)$ by the H\"older inequality and Proposition \ref{minkowski}. 

\tcr{
We turn to the main body. First we prove claim (a). Note that we have $\|\|\xi\|_{\ell_\infty}\|_p\leq k^{1/p}\max_{1\leq i\leq k}\|\xi^i\|_p$ for any $p\geq1$, $k\in\mathbb{N}$ and $k$-dimensional random vector $\xi$. 
Then, thanks to Lemmas \ref{lemma:approx}, \ref{drift} and \ref{approx-M}, it suffices to prove
\[
\lim_{n\to\infty}\sup_{y\in\mathbb{R}^m}\left|P\left(\Xi_n\left(M_n+W_n\right)\leq y\right)-P(\Xi_n(\mathfrak{S}_n+W_n)\leq y)\right|=0.
\]
To prove this equation, we apply Theorem \ref{thm:main}. For this purpose we need to verify conditions \eqref{diag-tight}--\eqref{main2:eq10}. 
\eqref{diag-tight} follows from \eqref{rc:diag}. 
\eqref{main2:eq1} follows from Lemma \ref{quasi-torsion}. 
\eqref{main2:eq2} follows from Lemmas \ref{C-deriv} and \ref{lemma:2nd-deriv}. 
\eqref{main2:eq3} follows from Lemma \ref{lemma:2nd-deriv}. 
\eqref{main2:eq4} follows from Lemmas \ref{M-moment}, \ref{lemma:2nd-deriv} and \eqref{eq:hyper}. 
\eqref{main2:eq5}--\eqref{main2:eq6} follow from Lemmas \ref{C-deriv} and \ref{lemma:1st-deriv}. 
\eqref{main2:eq7} follows from Lemmas \ref{M-moment}, \ref{lemma:1st-deriv} and \eqref{eq:hyper}.
\eqref{main2:eq8} follows from Lemmas \ref{C-deriv} and \ref{lemma:1st-deriv}. 
\eqref{main2:eq9}--\eqref{main2:eq10} follow from Lemmas \ref{M-moment}, \ref{C-deriv}, \ref{lemma:1st-deriv} and \eqref{eq:hyper}. So we complete the proof of claim (a).
}

\tcr{
Next, if a $k$-dimensional random vector $\xi$ satisfy $\max_{1\leq i\leq k}\|\xi^i\|_p\leq Ap^{r/2}$ for any $p\in\mathbb{N}$ with some constants $A>0$ and $r\in\mathbb{N}$, then Lemma A.7 and Proposition A.1 of \cite{Koike2017stein} imply that $\|\|\xi\|_{\ell_\infty}\|_p\leq A\log^{r/2}(2k-1+e^{pr/2-1})$ for any $p>0$ with $pr\geq2$. Using this fact, we can prove claim (b) in the same way as the proof of claim (a). 
}
%
\if0
\begin{align*}
\opnorm{\Upsilon_n}_\infty p^{3/2+2/\alpha}\upsilon_n^2\sqrt{\log m}=o(\sqrt{n})\\
\opnorm{\Upsilon_n}_\infty^2 p^{1+4/\alpha}\upsilon_n^4(\log m)^2=o(\sqrt{n})\\
\opnorm{\Upsilon_n}_\infty^{3}p^{5/\alpha}\upsilon_n^5(\log m)^{\frac{7}{2}}\to0\\
\opnorm{\Upsilon_n}_\infty^{2}p^{4/\alpha}\upsilon_n^4(\log m)^{2}\to0\\
\opnorm{\Upsilon_n}_\infty^{2}p^{1+3/\alpha}\upsilon_n^3(\log m)^2\to0\\
\opnorm{\Upsilon_n}_\infty^{3}p^{2+6/\alpha}\upsilon_n^6(\log m)^{\frac{7}{2}}\to0\\
\opnorm{\Upsilon_n}_\infty^{5}p^{10/\alpha}\upsilon_n^{10}(\log m)^{\frac{13}{2}}\to0\\
\opnorm{\Upsilon_n}_\infty^{3}p^{6/\alpha}\upsilon_n^6(\log m)^{\frac{7}{2}}\to0\\
\opnorm{\Upsilon_n}_\infty^{3}p^{2+5/\alpha}(\log m)^{\frac{7}{2}}\to0\\
\opnorm{\Upsilon_n}_\infty^{4}p^{1+8/\alpha}(\log m)^{5}\to0\\
\opnorm{\Upsilon_n}_\infty^{3}p^{2+6/\alpha}(\log m)^{\frac{7}{2}}\to0\\
\opnorm{\Upsilon_n}_\infty^{4}p^{8/\alpha}(\log m)^{5}\to0\\
\opnorm{\Upsilon_n}_\infty^{4}p^{1+8/\alpha}(\log m)^{5}\to0
\end{align*}
\fi
\end{proof}

\subsection{Proof of Theorem \ref{thm:rc-local}}

For every $\nu\in\mathbb{N}$, define the process $Y(\nu)=(Y(\nu)_t)_{t\in[0,1]}$ by
\[
Y(\nu)_t=Y_0+\int_0^t\mu(\nu)_sds+\int_0^t\sigma(\nu)_sdB_s,\qquad t\in[0,1].
\]
By the local property of It\^o integrals (cf.~pages 17--18 of \cite{Nualart2006}) we have $Y_t=Y(\nu)_t$ on $\Omega_n(\nu)$ for all $t\in[0,1]$. Therefore, setting $S_n(\nu):=\vectorize\left[\sqrt{n}\left(\widehat{[Y(\nu),Y(\nu)]}^n_1-[Y(\nu),Y(\nu)]_1\right)\right]$, we obtain
\begin{align*}
\rho_n(\nu):=\sup_{y\in\mathbb{R}^m}\left|P\left(\Xi_n(\nu)\left(S_n(\nu)+W_n(\nu)\right)\leq y\right)-P(\Xi_n(\nu)(\mathfrak{C}_n(\nu)^{1/2}\zeta_n+W_n(\nu))\leq y)\right|\to0
\end{align*}
as $n\to\infty$ by Theorem \ref{thm:rc}. Now, for every $y\in\mathbb{R}^m$, we have
\begin{align*}
P\left(\Xi_n\left(S_n+W_n\right)\leq y\right)
&\leq P\left(\Xi_n(\nu)\left(S_n(\nu)+W_n(\nu)\right)\leq y\right)+P(\Omega_n(\nu)^c)\\
&\leq P(\Xi_n(\nu)(\mathfrak{C}_n(\nu)^{1/2}\zeta_n+W_n(\nu))\leq y)+\rho_n(\nu)+P(\Omega_n(\nu)^c)\\
&\leq P(\Xi_n(\mathfrak{C}_n^{1/2}\zeta_n+W_n)\leq y)+\rho_n(\nu)+2P(\Omega_n(\nu)^c).
\end{align*}
By an analogous argument we also have
\[
P\left(\Xi_n\left(S_n+W_n\right)\leq y\right)\geq P(\Xi_n(\mathfrak{C}_n^{1/2}\zeta_n+W_n)\leq y)-\rho_n(\nu)-2P(\Omega_n(\nu)^c).
\]
Consequently, we obtain
\begin{align*}
\limsup_{n\to\infty}\sup_{y\in\mathbb{R}^m}\left|P\left(\Xi_n\left(S_n+W_n\right)\leq y\right)-P(\Xi_n(\mathfrak{C}_n^{1/2}\zeta_n+W_n)\leq y)\right|
\leq 2\limsup_{n\to\infty}P(\Omega_n(\nu)^c).
\end{align*}
Letting $\nu\to\infty$, we complete the proof.\hfill\qed

\subsection{Proof of Proposition \ref{prop:acov}}

We introduce some notation. Given a process $\xi=(\xi_t)_{t\in[0,1]}$ and an interval $I=(S,T]\subset[0,1]$, we set
\[
\xi(I):=\xi_T-\xi_S,\qquad
\xi(I)_t:=\xi_{t\wedge T}-\xi_{t\wedge S}\quad(t\in[0,1]).
\]
Also, we define
\[
L(h)^{ij}:=\mathsf{M}^i(I_h)\mathsf{M}^j(I_h)-[\mathsf{M}^i,\mathsf{M}^j](I_h),\qquad
L(h)^{ij}_t:=\mathsf{M}^i(I_h)_t\mathsf{M}^j(I_h)_t-[\mathsf{M}^i,\mathsf{M}^j](I_h)_t\quad(t\in[0,1])
\]
for $i,j=1,\dots,d$ and $h=1,\dots,n$, where $I_h:=(t_{h-1},t_h]$. 

Next we remark that a localization procedure allows us to reduce the situation of the proposition to the case that $\mu=\mu(\nu)$ and $\sigma=\sigma(\nu)$ for all $n,\nu\in\mathbb{N}$:
\begin{lemma}\label{lemma:local}
Suppose that the statement of Proposition \ref{prop:acov} holds true when we additionally assume $\mu=\mu(\nu)$ and $\sigma=\sigma(\nu)$ for all $n,\nu\in\mathbb{N}$. Then the original statement of Proposition \ref{prop:acov} holds true as well. 
\end{lemma}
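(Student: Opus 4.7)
My plan is to execute a standard localization argument, exploiting that every object appearing in the conclusion is a pathwise functional of $(\mu,\sigma)$ (via $Y$), so that local equality of the coefficients forces local equality of $\wh{\mathfrak{C}}_n-\mathfrak{C}_n$ and its $\nu$-analogue.

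First, for each $\nu\in\mathbb{N}$ I introduce $Y(\nu)_t:=Y_0+\int_0^t\mu(\nu)_s\,ds+\int_0^t\sigma(\nu)_s\,dB_s$, its realized-type quantities $\chi_h(\nu):=\vectorize[(Y(\nu)_{t_h}-Y(\nu)_{t_{h-1}})(Y(\nu)_{t_h}-Y(\nu)_{t_{h-1}})^\top]$, the estimator $\wh{\mathfrak{C}}_n(\nu)$ built from $\chi_h(\nu)$ exactly as $\wh{\mathfrak{C}}_n$ is built from $\chi_h$, and the random matrix $\mathfrak{C}_n(\nu)$ defined in the same way as $\mathfrak{C}_n$ but with $\Sigma(\nu)$ in place of $\Sigma$. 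The local property of Itô integrals (see e.g.~Section 1.2 of \cite{Nualart2006}) combined with the pointwise equality $\mu_t=\mu(\nu)_t$ on $\Omega_n(\nu)$ yields $Y_t=Y(\nu)_t$ for every $t\in[0,1]$ on $\Omega_n(\nu)$, and the pointwise equality $\sigma_t=\sigma(\nu)_t$ gives $\Sigma_t=\Sigma(\nu)_t$ on $\Omega_n(\nu)$. Because $\chi_h$, $\wh{\mathfrak{C}}_n$, and $\mathfrak{C}_n$ are pathwise functionals of $Y$ and $\Sigma$, it follows that
\[
\wh{\mathfrak{C}}_n=\wh{\mathfrak{C}}_n(\nu)\quad\text{and}\quad \mathfrak{C}_n=\mathfrak{C}_n(\nu)\qquad\text{on }\Omega_n(\nu),
\]
so in particular $\|\wh{\mathfrak{C}}_n-\mathfrak{C}_n\|_{\ell_\infty}=\|\wh{\mathfrak{C}}_n(\nu)-\mathfrak{C}_n(\nu)\|_{\ell_\infty}$ on $\Omega_n(\nu)$.

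Next I observe that, for every fixed $\nu$, the processes $\mu(\nu)$ and $\sigma(\nu)$ themselves satisfy the hypotheses of Proposition \ref{prop:acov} in the reduced form (where the true coefficients coincide with their localized version): indeed, one can just set $\Omega_n(\nu')=\Omega$ for all $\nu'$ in the localization, and the required moment/modulus bounds (i), (ii), \eqref{sigma-modulus} and \eqref{acov-moment} are precisely what is assumed for the $\nu$-version. Thus, by the assumed (reduced) statement of Proposition \ref{prop:acov},
\[
\|\wh{\mathfrak{C}}_n(\nu)-\mathfrak{C}_n(\nu)\|_{\ell_\infty}=O_p(n^{-\varpi})
\]
as $n\to\infty$ for every $\varpi\in(0,\gamma)$ under hypothesis (a), and at the rate $n^{-1/2}\log^2 d+n^{-\gamma}$ under hypothesis (b).

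Finally, I combine these two observations via the standard two-step limit. For any constant $K>0$,
\[
P\bigl(n^{\varpi}\|\wh{\mathfrak{C}}_n-\mathfrak{C}_n\|_{\ell_\infty}>K\bigr)\leq P(\Omega_n(\nu)^c)+P\bigl(n^{\varpi}\|\wh{\mathfrak{C}}_n(\nu)-\mathfrak{C}_n(\nu)\|_{\ell_\infty}>K\bigr).
\]
Taking $\limsup_{n\to\infty}$ of both sides and using the $\nu$-fixed convergence yields a bound depending only on $\limsup_{n\to\infty}P(\Omega_n(\nu)^c)$, which tends to $0$ as $\nu\to\infty$ by hypothesis (i). Letting $K\to\infty$ gives tightness, and an identical argument with $K=\varepsilon$ small gives the $O_p$ statement. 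The same scheme handles part (b) verbatim with the rate $n^{-1/2}\log^2 d+n^{-\gamma}$ in place of $n^{-\varpi}$. There is no substantive obstacle here—the only point that needs care is verifying that the $\nu$-version of the hypothesis is available for each fixed $\nu$ in the reduced statement, which is immediate from the hypotheses (i)--(iii) and \eqref{acov-moment} as stated.
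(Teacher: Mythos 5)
Your proof is correct and takes essentially the same approach the paper intends: the paper omits the proof of Lemma~\ref{lemma:local}, pointing to the argument for Theorem~\ref{thm:rc-local}, which is exactly the localization-plus-two-step-limit scheme you execute (local equality of paths on $\Omega_n(\nu)$, pass to the trivially localized version for fixed $\nu$, bound by adding $P(\Omega_n(\nu)^c)$, then take $\limsup_n$, $K\to\infty$, and finally $\nu\to\infty$). The only blemish is the throwaway remark about repeating the argument with $K=\varepsilon$ small, which is superfluous since the $O_p$ claim \emph{is} the tightness you already establish; the substance of the argument is sound.
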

The proof of Lemma \ref{lemma:local} is analogous to the one of Theorem \ref{thm:rc-local}, so we omit it. 

\begin{lemma}\label{lemma:driftV3}
There is a universal constant $C>0$ such that
\begin{align*}
\max_{1\leq i,j,k,l\leq d}\sup_{1\leq h\leq n}n^2\left\|\mathsf{A}^i(I_h)\mathsf{X}^j(I_h)\mathsf{Y}^k(I_h)\mathsf{Z}^l(I_h)\right\|_p
&\leq \frac{C}{\sqrt{n}}\max_{1\leq i,j\leq d}\sup_{0\leq ,ts\leq 1}\left\|\mu_s^{ii}\right\|_{4p}\left(
\left\|\mu_t^{j}\right\|_{4p}^3
+p^{3/2}\left\|\Sigma_t^{jj}\right\|_{2p}^{3/2}\right)
\end{align*}
for any $\mathsf{X},\mathsf{Y},\mathsf{Z}\in\{\mathsf{A},\mathsf{M}\}$, $p\in[2,\infty)$ and $n\in\mathbb{N}$.
\end{lemma}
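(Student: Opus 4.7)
The plan is to estimate each of the four increments separately via H\"older's inequality and then glue the pieces together. Fix $h\in\{1,\dots,n\}$ and $\mathsf{X},\mathsf{Y},\mathsf{Z}\in\{\mathsf{A},\mathsf{M}\}$, and let $k\in\{0,1,2,3\}$ denote the number of martingale factors among $\mathsf{X}^j(I_h),\mathsf{Y}^k(I_h),\mathsf{Z}^l(I_h)$. First apply H\"older with exponent $4p$ on each factor to get
\[
\|\mathsf{A}^i(I_h)\mathsf{X}^j(I_h)\mathsf{Y}^k(I_h)\mathsf{Z}^l(I_h)\|_p
\leq \|\mathsf{A}^i(I_h)\|_{4p}\,\|\mathsf{X}^j(I_h)\|_{4p}\,\|\mathsf{Y}^k(I_h)\|_{4p}\,\|\mathsf{Z}^l(I_h)\|_{4p}.
\]

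Next bound each factor. For a drift increment, Proposition \ref{minkowski} yields
\[
\|\mathsf{A}^i(I_h)\|_{4p}\leq\frac{1}{n}\sup_{0\leq s\leq 1}\|\mu_s^i\|_{4p}.
\]
For a martingale increment, Propositions \ref{minkowski}--\ref{sharp-BDG} give, with a universal constant,
\[
\|\mathsf{M}^j(I_h)\|_{4p}\lesssim\sqrt{p}\,\Big\|\sqrt{\textstyle\int_{t_{h-1}}^{t_h}\Sigma^{jj}_s\,ds}\,\Big\|_{4p}
\leq\sqrt{\frac{p}{n}}\sup_{0\leq s\leq 1}\|\Sigma^{jj}_s\|_{2p}^{1/2}.
\]
Setting $A:=\max_{1\leq j\leq d}\sup_{s}\|\mu_s^j\|_{4p}$ and $B:=\max_{1\leq j\leq d}\sup_{s}\|\Sigma^{jj}_s\|_{2p}^{1/2}$, we therefore obtain
\[
\|\mathsf{A}^i(I_h)\mathsf{X}^j(I_h)\mathsf{Y}^k(I_h)\mathsf{Z}^l(I_h)\|_p
\lesssim \frac{A}{n}\cdot\frac{A^{3-k}}{n^{3-k}}\cdot\frac{(\sqrt{p}\,B)^{k}}{n^{k/2}}
= \frac{A\cdot A^{3-k}(\sqrt{p}\,B)^{k}}{n^{4-k/2}}.
\]

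Now use the elementary inequality $x^{3-k}y^{k}\leq x^{3}+y^{3}$ for $k\in\{0,1,2,3\}$ (which follows by splitting into the cases $x\geq y$ and $x<y$), applied with $x=A$ and $y=\sqrt{p}\,B$. This gives
\[
A^{3-k}(\sqrt{p}\,B)^{k}\leq A^{3}+p^{3/2}B^{3}.
\]
Multiplying by $n^{2}$ and using $1/n^{4-k/2}\leq 1/(n^2\sqrt{n})$ for $k\leq 3$, we conclude
\[
n^{2}\|\mathsf{A}^i(I_h)\mathsf{X}^j(I_h)\mathsf{Y}^k(I_h)\mathsf{Z}^l(I_h)\|_p
\lesssim\frac{A}{\sqrt{n}}\left(A^{3}+p^{3/2}B^{3}\right),
\]
which is the claimed bound. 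Taking maxima over $i,j,k,l$ and suprema over $h$ is harmless since all bounds are uniform. There is no real obstacle here; the only subtlety is keeping track of the $n$- and $p$-exponents in the Burkholder step so that the final combination via $x^{3-k}y^{k}\leq x^{3}+y^{3}$ lines up exactly with the two terms on the right-hand side of the lemma.
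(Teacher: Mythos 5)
Your proof is correct and follows the same route as the paper: the paper's own proof consists of the single sentence that the bound is ``an immediate consequence of the H\"older inequality and Propositions \ref{minkowski}--\ref{sharp-BDG},'' and you have simply written out the details (H\"older with exponent $4p$ on the four increments, the Minkowski bound $\|\mathsf{A}^i(I_h)\|_{4p}\lesssim n^{-1}\sup_s\|\mu_s^i\|_{4p}$, the BDG bound $\|\mathsf{M}^j(I_h)\|_{4p}\lesssim\sqrt{p/n}\sup_s\|\Sigma_s^{jj}\|_{2p}^{1/2}$, and the elementary inequality $x^{3-k}y^k\leq x^3+y^3$ to merge the cases $k=0,1,2,3$) that the paper leaves to the reader.
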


\begin{proof}
This is an immediate consequence of the H\"older inequality and Propositions \ref{minkowski}--\ref{sharp-BDG}.
\end{proof}

\begin{lemma}\label{lemma:V3dV}
There is a universal constant $C>0$ such that
\[
\max_{1\leq i,j,k,l\leq d}\left\|n\sum_{h=1}^n\int_{t_{h-1}}^{t_h}\mathsf{M}^j(I_h)_s\mathsf{M}^k(I_h)_s\mathsf{M}^l(I_h)_sd\mathsf{M}^i_s\right\|_p
\leq C\frac{p^2}{\sqrt{n}}\max_{1\leq i\leq d}\sup_{0\leq s\leq 1}\left\|\Sigma_s^{ii}\right\|_{2p}^2
\]
for all $p\in[2,\infty)$ and $n\in\mathbb{N}$.
\end{lemma}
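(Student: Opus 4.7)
My plan is to treat this as a stochastic integral against $\mathsf{M}^i$ with a completely explicit integrand (a product of three martingale increments), and then apply the sharp Burkholder--Davis--Gundy inequality followed by a careful H\"older split. Since $\mathsf{M}^i = \int_0^\cdot \sigma_s^{i\cdot}\cdot dB_s$ has predictable quadratic variation density $\Sigma^{ii}_s$, Proposition \ref{sharp-BDG} immediately yields
\[
\left\|n\sum_{h=1}^n\int_{t_{h-1}}^{t_h}\mathsf{M}^j(I_h)_s\mathsf{M}^k(I_h)_s\mathsf{M}^l(I_h)_s\, d\mathsf{M}^i_s\right\|_p
\lesssim \sqrt{p}\left\|\sqrt{n^2\sum_{h=1}^n\int_{t_{h-1}}^{t_h}\bigl(\mathsf{M}^j(I_h)_s\mathsf{M}^k(I_h)_s\mathsf{M}^l(I_h)_s\bigr)^2\Sigma_s^{ii}\,ds}\right\|_p.
\]
Then I will use $\|\sqrt{X}\|_p=\|X\|_{p/2}^{1/2}$ together with the integral Minkowski inequality (Proposition \ref{minkowski}) to pull the norm inside the sum and time integral, obtaining a bound of the shape
\[
\sqrt{p}\left(n^2\sum_{h=1}^n\int_{t_{h-1}}^{t_h}\bigl\|\mathsf{M}^j(I_h)_s\mathsf{M}^k(I_h)_s\mathsf{M}^l(I_h)_s\sqrt{\Sigma_s^{ii}}\bigr\|_p^2 ds\right)^{1/2}.
\]

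Next I will factor the integrand via H\"older's inequality with the \emph{specific} exponents $(4p,4p,4p,4p)$ (so that the factor $\sqrt{\Sigma_s^{ii}}$ lands in $L^{4p}$, giving $\|\Sigma_s^{ii}\|_{2p}^{1/2}$, which is exactly the exponent appearing in the target bound). Choosing a smaller exponent for the three martingale factors would blow up the $\Sigma$-norm beyond $2p$, so this balance is the only delicate point. For each martingale increment I will apply Proposition \ref{sharp-BDG} once more, noting that $\mathsf{M}^j(I_h)_s = \mathsf{M}^j_s-\mathsf{M}^j_{t_{h-1}}$ is the endpoint of a martingale on $[t_{h-1},s]\subset I_h$, and then the Lyapunov and integral Minkowski inequalities control its quadratic variation:
\[
\|\mathsf{M}^j(I_h)_s\|_{4p}\lesssim \sqrt{p}\left\|\sqrt{\textstyle\int_{t_{h-1}}^s\Sigma_u^{jj}du}\right\|_{4p}
=\sqrt{p}\left\|\textstyle\int_{t_{h-1}}^s\Sigma_u^{jj}du\right\|_{2p}^{1/2}
\lesssim \sqrt{p/n}\sup_{0\le u\le 1}\|\Sigma_u^{jj}\|_{2p}^{1/2},
\]
and analogously for $k,l$.

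Combining these bounds gives $\|\mathsf{M}^j(I_h)_s\mathsf{M}^k(I_h)_s\mathsf{M}^l(I_h)_s\sqrt{\Sigma_s^{ii}}\|_p\lesssim (p/n)^{3/2}\max_i\sup_s\|\Sigma_s^{ii}\|_{2p}^2$. Squaring, multiplying by the volume factor $n^2\cdot n\cdot n^{-1}=n^2$ coming from the sum and inner integral, and taking the square root produces $p^{3/2}n^{-1/2}\max_i\sup_s\|\Sigma_s^{ii}\|_{2p}^2$; combined with the outer $\sqrt{p}$ from the first application of BDG this yields exactly $p^2/\sqrt{n}\cdot\max_i\sup_s\|\Sigma_s^{ii}\|_{2p}^2$, as required. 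No step here is substantially difficult; the only thing to watch is the exponent balance in the H\"older split, so that the $2p$-norm of $\Sigma$ (and not $4p$) appears in the final estimate.
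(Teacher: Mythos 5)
Your argument is correct and follows the same path as the paper's proof: one application of Proposition \ref{sharp-BDG} to the outer stochastic integral, then Minkowski (Proposition \ref{minkowski}) to pull the $L^{p/2}$-norm inside the sum and time integral, the symmetric H\"older split with exponents $(4p,4p,4p,4p)$ so that $\|\Sigma^{ii}_s\|_{2p}$ appears, and a second application of BDG to each increment $\mathsf{M}^j(I_h)_s$ to harvest the $\sqrt{p/n}$ factors. The paper phrases the H\"older step at the $L^{p/2}$ level on the squared increments, whereas you phrase it at the $L^p$ level on the unsquared factors, but the two are identical; the exponent bookkeeping in both gives the stated $p^2 n^{-1/2}\max_i\sup_s\|\Sigma^{ii}_s\|_{2p}^2$.
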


\begin{proof}
Propositions \ref{minkowski}--\ref{sharp-BDG} yield
\begin{align*}
&\left\|n\sum_{h=1}^n\int_{t_{h-1}}^{t_h}\mathsf{M}^j(I_h)_s\mathsf{M}^k(I_h)_s\mathsf{M}^l(I_h)_sd\mathsf{M}^i_s\right\|_p\\
&\lesssim n\sqrt{p}\left\|\sqrt{\sum_{h=1}^n\int_{t_{h-1}}^{t_h}\mathsf{M}^j(I_h)_s^2\mathsf{M}^k(I_h)^2_s\mathsf{M}^l(I_h)^2_sd[\mathsf{M}^i,\mathsf{M}^i]_s}\right\|_p\\
&\leq n\sqrt{p\sum_{h=1}^n\int_{t_{h-1}}^{t_h}\left\|\mathsf{M}^j(I_h)_s\right\|_{4p}^2\left\|\mathsf{M}^k(I_h)_s\right\|_{4p}^2\left\|\mathsf{M}^l(I_h)_s\right\|_{4p}^2\left\|\Sigma_s^{ii}\right\|_{2p}ds}\\
&\lesssim np^2\sqrt{\sum_{h=1}^n\int_{t_{h-1}}^{t_h}\left\|\sqrt{\int_{t_{h-1}}^s\Sigma^{jj}_udu}\right\|_{4p}^2\left\|\sqrt{\int_{t_{h-1}}^s\Sigma^{kk}_udu}\right\|_{4p}^2\left\|\sqrt{\int_{t_{h-1}}^s\Sigma^{ll}_udu}\right\|_{4p}^2\left\|\Sigma_s^{ii}\right\|_{2p}ds}\\
&\leq \frac{p^2}{\sqrt{n}}\max_{1\leq i\leq d}\sup_{0\leq s\leq 1}\left\|\Sigma_s^{ii}\right\|_{2p}^2.
\end{align*}
This completes the proof. 
\end{proof}

\begin{lemma}\label{l-qv}
Suppose that the assumptions of of Proposition \ref{prop:acov} hold true under the additional assumption that $\mu=\mu(\nu)$ and $\sigma=\sigma(\nu)$ for all $n,\nu\in\mathbb{N}$. 
Then there is a universal constant $C>0$ such that
\begin{align*}
&\max_{1\leq i,j,k,l\leq d}\left\|n\sum_{h=1}^{n-\nu}L(h)^{ij}[\mathsf{M}^k,\mathsf{M}^l](I_{h+\nu})\right\|_p\\
&\leq\frac{C}{\sqrt{n}}\left(p\max_{1\leq i\leq d}\sup_{0\leq t\leq1}\|\Sigma^{ii}_t\|_{2p}^2
+p^{3/2}\max_{1\leq i\leq d}\sup_{0\leq t\leq 1}\|\Sigma_t^{ii}\|_{2p}^{3/2}
\max_{1\leq k\leq d}\sup_{0\leq u\leq v\leq 1}\|D_u\sigma^{k\cdot}_v\|_{4p,\ell_2}\right)
\end{align*}
for all $p\in[2,\infty)$, $n\in\mathbb{N}$ and $\nu\in\{0,1\}$.
\end{lemma}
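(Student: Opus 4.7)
The plan is to decompose $n[\mathsf{M}^k,\mathsf{M}^l](I_{h+\nu})$ via the Clark--Ocone formula applied to $\Sigma^{kl}$. Under the assumptions of Proposition \ref{prop:acov} we have $\Sigma^{kl}_u\in\mathbb{D}_{1,\infty}$, so for $u\in I_{h+\nu}$,
\[
\Sigma^{kl}_u=E\left[\Sigma^{kl}_u\mid\mathcal{F}_{t_{h-1}}\right]+\int_{t_{h-1}}^u E\left[D_v\Sigma^{kl}_u\mid\mathcal{F}_v\right]\cdot dB_v.
\]
Integrating over $u\in I_{h+\nu}$ and multiplying by $n$ gives $n[\mathsf{M}^k,\mathsf{M}^l](I_{h+\nu})=\bar{\Sigma}^{kl}_{h,\nu}+R_h$, with $\bar{\Sigma}^{kl}_{h,\nu}:=n\int_{I_{h+\nu}}E[\Sigma^{kl}_u\mid\mathcal{F}_{t_{h-1}}]\,du$ being $\mathcal{F}_{t_{h-1}}$-measurable and $R_h:=n\int_{I_{h+\nu}}\int_{t_{h-1}}^u E[D_v\Sigma^{kl}_u\mid\mathcal{F}_v]\cdot dB_v\,du$. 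This splits the target sum as $S=S_1+S_2$, where $S_1:=\sum_h L(h)^{ij}\bar{\Sigma}^{kl}_{h,\nu}$ is a predictably weighted martingale-difference sum (because $L(h)^{ij}$ is an MDS with respect to $(\mathcal{F}_{t_h})$) and $S_2:=\sum_h L(h)^{ij}R_h$ is an error term. Both cases $\nu\in\{0,1\}$ are handled uniformly since $[t_{h-1},t_{h+\nu}]$ has length at most $2/n$.

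For $S_1$, I would exploit the representation
\[
L(h)^{ij}=\int_{I_h}\mathsf{M}^j(I_h)_s\sigma^{i\cdot}_s\cdot dB_s+\int_{I_h}\mathsf{M}^i(I_h)_s\sigma^{j\cdot}_s\cdot dB_s
\]
coming from It\^o's formula. Since $\bar{\Sigma}^{kl}_{h,\nu}$ is $\mathcal{F}_{t_{h-1}}$-measurable, $S_1$ realizes as the terminal value of the continuous martingale $\int_0^1\Phi_s\cdot dB_s$ with $\Phi_s:=\sum_h 1_{I_h}(s)\bar{\Sigma}^{kl}_{h,\nu}(\mathsf{M}^j(I_h)_s\sigma^{i\cdot}_s+\mathsf{M}^i(I_h)_s\sigma^{j\cdot}_s)$. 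Applying Proposition \ref{sharp-BDG} reduces the problem to bounding $\|(\int_0^1\|\Phi_s\|_{\ell_2}^2\,ds)^{1/2}\|_p$; combining the trivial estimate $\|\bar{\Sigma}^{kl}_{h,\nu}\|_{2p}\leq\max_i\sup_t\|\Sigma^{ii}_t\|_{2p}$ (via Jensen and $|\Sigma^{kl}|\leq\sqrt{\Sigma^{kk}\Sigma^{ll}}$), the inner BDG bound $\|\mathsf{M}^j(I_h)_s\|_{4p}^2\lesssim(p/n)\max_i\sup_t\|\Sigma^{ii}_t\|_{2p}$, and the H\"older estimate $\|A^2B^2C\|_{p/2}\leq\|A\|_{2p}^2\|B\|_{4p}^2\|C\|_{2p}$ applied pointwise in $s$, one obtains $\|S_1\|_p\lesssim(p/\sqrt{n})\max_i\sup_t\|\Sigma^{ii}_t\|_{2p}^2$, matching the first term of the target bound.

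For $S_2$, the crude bound $\|S_2\|_p\leq\sum_h\|L(h)^{ij}\|_{2p}\|R_h\|_{2p}$ via Minkowski and Cauchy--Schwarz will suffice. The estimate $\|L(h)^{ij}\|_{2p}\lesssim(p/n)\max_i\sup_t\|\Sigma^{ii}_t\|_{2p}$ follows from bounding $\mathsf{M}^i(I_h)\mathsf{M}^j(I_h)$ via BDG and $[\mathsf{M}^i,\mathsf{M}^j](I_h)$ directly. For $\|R_h\|_{2p}$, I apply stochastic Fubini to rewrite $R_h=n\int_{t_{h-1}}^{t_{h+\nu}}\tilde{g}(v)\cdot dB_v$, where $\tilde{g}(v):=\int_{u\in I_{h+\nu},\,u\geq v}E[D_v\Sigma^{kl}_u\mid\mathcal{F}_v]\,du$ satisfies $\|\tilde{g}(v)\|_{\ell_2}\leq(1/n)\sup_u\|E[D_v\Sigma^{kl}_u\mid\mathcal{F}_v]\|_{\ell_2}$. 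Proposition \ref{sharp-BDG}, Jensen's inequality, and Lemma \ref{S-deriv} then yield $\|R_h\|_{2p}\lesssim\sqrt{p/n}\max_i\sup_t\|\Sigma^{ii}_t\|_{2p}^{1/2}\sup_{u,v}\|D_u\sigma^{k\cdot}_v\|_{4p,\ell_2}$. Multiplying by the $n$ copies of $\|L(h)^{ij}\|_{2p}\lesssim(p/n)\max\|\Sigma\|_{2p}$ reproduces exactly the $p^{3/2}/\sqrt{n}$ second term of the bound. The main point of care will be careful bookkeeping of moment exponents (between $\|\cdot\|_{2p}$ and $\|\cdot\|_{4p}$) inside the H\"older estimates used for $S_1$, and verifying that the stochastic Fubini rearrangement defining $\tilde{g}$ is justified under the standing integrability assumptions.
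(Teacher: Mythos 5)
Your proposal is correct and follows essentially the same route as the paper's own proof: the key decomposition of $n[\mathsf{M}^k,\mathsf{M}^l](I_{h+\nu})$ into the $\mathcal{F}_{t_{h-1}}$-measurable piece and a remainder is identical (the paper writes $\mathbf{I}_n+\mathbf{II}_n$), the treatment of the main term proceeds by moving the predictable weight inside the It\^o representation of $L(h)^{ij}$ and applying sharp BDG, and the remainder is controlled by Minkowski plus a modulus-of-continuity bound on $\Sigma^{kl}_t-E[\Sigma^{kl}_t|\mathcal{F}_{t_{h-1}}]$. The only cosmetic difference is that you re-derive that modulus bound from Clark--Ocone and stochastic Fubini, while the paper invokes its Lemma \ref{ocone} (which encapsulates exactly that argument) together with Lemma \ref{S-deriv}, and likewise for the main term the paper factors the BDG step through its packaged Lemma \ref{lemma:BDG}.
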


\begin{proof}
We decompose the target quantity as
\begin{align*}
n\sum_{h=1}^{n-\nu}L(h)^{ij}[\mathsf{M}^k,\mathsf{M}^l](I_{h+\nu})
&=n\sum_{h=1}^{n-\nu}L(h)^{ij}\int_{I_{h+\nu}}E\left[\Sigma^{kl}_t|\mathcal{F}_{t_{h-1}}\right]dt
+n\sum_{h=1}^{n-\nu}L(h)^{ij}\int_{I_{h+\nu}}\left(\Sigma^{kl}_t-E\left[\Sigma^{kl}_t|\mathcal{F}_{t_{h-1}}\right]\right)dt\\
&=:\mathbf{I}_n+\mathbf{II}_n.
\end{align*}
First we consider $\mathbf{I}_n$. Set 
\[
\phi_h:=\int_{I_{h+\nu}}E\left[\Sigma^{kl}_t|\mathcal{F}_{t_{h-1}}\right]dt,\qquad
h=1,\dots,n-\nu.
\]
Then we can rewrite $\mathbf{I}_n$ as
\begin{align*}
\mathbf{I}_n=n\sum_{h=1}^{n-\nu}\left\{\int_{I_h}\left(\int_{t_{h-1}}^t\sigma^{i\cdot}_s\cdot dB_s\right)\phi_h\sigma^{j\cdot}\cdot dB_t
+\int_{I_h}\left(\int_{t_{h-1}}^t\phi_h\sigma^{j\cdot}_s\cdot dB_s\right)\sigma^{i\cdot}\cdot dB_t
\right\}.
\end{align*}
Therefore, Lemmas \ref{sigma-l2}--\ref{lemma:BDG} and the H\"older inequality imply that
\begin{align*}
\left\|\mathbf{I}_n\right\|_p
&\lesssim p\sqrt{n}\sup_{0\leq s\leq 1}\|\sigma^{i\cdot}_s\|_{4p,\ell_2}\sup_{0\leq s< 1-\nu/n}\|\phi_{\lceil ns\rceil}\sigma^{j\cdot}_s\|_{\frac{4}{3}p,\ell_2}
\leq p\sqrt{n}\sup_{0\leq s\leq 1}\|\Sigma^{ii}_s\|_{2p}^{1/2}\sup_{0\leq s< 1-\nu/n}\|\phi_{\lceil ns\rceil}\|_{2p}\|\Sigma^{jj}_s\|_{2p}^{1/2}.
\end{align*}
Now, Proposition \ref{minkowski} and the Lyapunov and Schwarz inequalities yield
\begin{align*}
\sup_{0\leq s< 1-\nu/n}\|\phi_{\lceil ns\rceil}\|_{2p}
\leq\frac{1}{n}\sup_{0\leq t\leq 1}\|\Sigma^{kl}_t\|_{2p}
\leq\frac{1}{n}\max_{1\leq k\leq d}\sup_{0\leq t\leq 1}\|\Sigma^{kk}_t\|_{2p}.
\end{align*}
Consequently, we obtain
\[
\left\|\mathbf{I}_n\right\|_p\lesssim \frac{p}{\sqrt{n}}\max_{1\leq i\leq d}\sup_{0\leq t\leq1}\|\Sigma^{ii}_t\|_{2p}^2.
\]
Next we consider $\mathbf{II}_n$. By Proposition \ref{minkowski} we have
\[
\|\mathbf{II}_n\|_p
\leq n\sum_{h=1}^{n-\nu}\|L(h)^{ij}\|_{2p}\int_{I_{h+\nu}}\left\|\Sigma^{kl}_t-E\left[\Sigma^{kl}_t|\mathcal{F}_{t_{h-1}}\right]\right\|_{2p}dt.
\]
It\^o's formula, Lemmas \ref{lemma:BDG} and \ref{sigma-l2} yield
\[
\|L(h)^{ij}\|_{2p}\lesssim\frac{p}{n}\max_{1\leq i\leq d}\sup_{0\leq t\leq 1}\|\Sigma_t^{ii}\|_{2p}.
\]
Meanwhile, Lemmas \ref{ocone} and \ref{S-deriv} yield
\[
\left\|\Sigma^{kl}_t-E\left[\Sigma^{kl}_t|\mathcal{F}_{t_{h-1}}\right]\right\|_{2p}
\lesssim\sqrt{\frac{p}{n}}\sup_{0\leq u\leq v\leq 1}\|D_u\Sigma^{kl}_v\|_{2p,\ell_2}
\lesssim\sqrt{\frac{p}{n}}\max_{1\leq k,l\leq d}\sup_{0\leq u\leq v\leq 1}\|\Sigma^{ll}_v\|_{2p}^{1/2}\|D_u\sigma^{k\cdot}_v\|_{4p,\ell_2}.
\]
Consequently, we obtain
\[
\|\mathbf{II}_n\|_p
\lesssim\sqrt{\frac{p^3}{n}}\max_{1\leq i\leq d}\sup_{0\leq t\leq 1}\|\Sigma_t^{ii}\|_{2p}^{3/2}
\max_{1\leq k\leq d}\sup_{0\leq u\leq v\leq 1}\|D_u\sigma^{k\cdot}_v\|_{4p,\ell_2}.
\]
We thus complete the proof. 
\end{proof}

\begin{lemma}\label{qv-dl}
Under the assumptions of of Lemma \ref{l-qv}, 
there is a universal constant $C>0$ such that
\begin{align*}
&\max_{1\leq i,j,k,l\leq d}\left\|n\sum_{h=1}^n\int_{t_{h-1}}^{t_h}L(h)^{kl}_td[\mathsf{M}^i,\mathsf{M}^j]_t\right\|_p\\
&\leq\frac{C}{\sqrt{n}}\left(p\max_{1\leq i\leq d}\sup_{0\leq t\leq1}\|\Sigma^{ii}_t\|_{2p}^2
+p^{3/2}\max_{1\leq i\leq d}\sup_{0\leq t\leq 1}\|\Sigma_t^{ii}\|_{2p}^{3/2}
\max_{1\leq k\leq d}\sup_{0\leq u\leq v\leq 1}\|D_u\sigma^{k\cdot}_v\|_{4p,\ell_2}\right)
\end{align*}
for all $p\in[2,\infty)$ and $n\in\mathbb{N}$.
\end{lemma}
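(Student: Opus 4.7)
The strategy parallels that of Lemma \ref{l-qv}. Writing $d[\mathsf{M}^i,\mathsf{M}^j]_t=\Sigma^{ij}_t\,dt$, I would decompose, for each $h$ and $t\in I_h$,
\[
\Sigma^{ij}_t=E[\Sigma^{ij}_t\mid\mathcal{F}_{t_{h-1}}]+R_h(t),\qquad R_h(t):=\Sigma^{ij}_t-E[\Sigma^{ij}_t\mid\mathcal{F}_{t_{h-1}}],
\]
and split the target quantity into $\mathbf{I}_n+\mathbf{II}_n$, where the two summands correspond to the conditional-expectation and residual parts of $\Sigma^{ij}_t$, respectively. I would also record the martingale representation obtained from Itô's formula,
\[
L(h)^{kl}_t=\int_{t_{h-1}}^{t}\mathsf{M}^k(I_h)_s\sigma^{l\cdot}_s\cdot dB_s+\int_{t_{h-1}}^{t}\mathsf{M}^l(I_h)_s\sigma^{k\cdot}_s\cdot dB_s,\qquad t\in I_h,
\]
which will be the workhorse for bounding both terms.

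For the residual term $\mathbf{II}_n$, Proposition \ref{minkowski} gives $\|\mathbf{II}_n\|_p\leq n\sum_h\int_{I_h}\|L(h)^{kl}_t\|_{2p}\,\|R_h(t)\|_{2p}\,dt$. Applying Lemma \ref{lemma:BDG} to the above martingale representation with $h_0=h-1,\,h_1=h$ (and Doob's inequality to pass from the terminal-time bound to a uniform-in-$t$ bound on $I_h$) yields $\sup_{t\in I_h}\|L(h)^{kl}_t\|_{2p}\lesssim(p/n)\max_i\sup_s\|\Sigma^{ii}_s\|_{2p}$, while Lemma \ref{ocone} applied to $\Sigma^{ij}_\cdot$, combined with Lemma \ref{S-deriv}, gives $\|R_h(t)\|_{2p}\lesssim\sqrt{p/n}\,\max_i\sup_s\|\Sigma^{ii}_s\|_{2p}^{1/2}\max_k\sup_{u,v}\|D_u\sigma^{k\cdot}_v\|_{4p,\ell_2}$. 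Multiplying these two estimates, summing the $n$ sub-intervals of length $1/n$ and inserting the prefactor $n$ produces the $p^{3/2}/\sqrt{n}$ contribution advertised in the lemma.

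For the main term $\mathbf{I}_n$, the plan is to substitute the Itô representation of $L(h)^{kl}_t$ and to swap the $dt$ and $dB_s$ orders of integration via stochastic Fubini; this is legitimate because $E[\Sigma^{ij}_t\mid\mathcal{F}_{t_{h-1}}]$ is $\mathcal{F}_{t_{h-1}}$-measurable, hence the weight $\Phi_h(s):=\int_s^{t_h}E[\Sigma^{ij}_t\mid\mathcal{F}_{t_{h-1}}]\,dt$ is progressively measurable. The outcome is
\[
\mathbf{I}_n=n\sum_{h=1}^n\int_{I_h}\Phi_h(s)\left(\int_{t_{h-1}}^{s}\sigma^{k\cdot}_u\cdot dB_u\right)\sigma^{l\cdot}_s\cdot dB_s+(\text{$k\leftrightarrow l$ symmetric term}),
\]
to which I would apply Lemma \ref{lemma:BDG} with $h_0=0,\,h_1=n$ and Hölder exponent $q=4$ (so $q/(q-1)=4/3$), exactly as in the treatment of $\mathbf{I}_n$ in Lemma \ref{l-qv}. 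Lemma \ref{sigma-l2} controls the inner factor by $\sup_u\|\sigma^{k\cdot}_u\|_{4p,\ell_2}\leq\max\|\Sigma^{kk}\|_{2p}^{1/2}$, while Hölder's inequality bounds the outer factor $\sup_s\|\Phi_{h(s)}(s)\sigma^{l\cdot}_s\|_{(4/3)p,\ell_2}$ by $\sup_s\|\Phi_{h(s)}(s)\|_{2p}\sup_s\|\sigma^{l\cdot}_s\|_{4p,\ell_2}$, with $\sup_s\|\Phi_{h(s)}(s)\|_{2p}\leq n^{-1}\max_{k,l}\sup_t\|\Sigma^{kl}_t\|_{2p}\leq n^{-1}\max_k\sup_t\|\Sigma^{kk}_t\|_{2p}$ by Proposition \ref{minkowski} and Jensen's inequality. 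Multiplying the resulting $p\sqrt{n}/n$ with the outer prefactor $n$ and the $n^{-1}\max\|\Sigma^{ii}\|_{2p}^2$ produces the $(p/\sqrt{n})\max\|\Sigma^{ii}\|_{2p}^2$ summand.

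The main obstacle, compared with Lemma \ref{l-qv}, is that here the integrator $\Sigma^{ij}_t\,dt$ varies over the same sub-interval $I_h$ on which $L(h)^{kl}_t$ evolves, so the $\mathcal{F}_{t_{h-1}}$-measurable weight cannot simply be pulled outside of the block-level martingale before invoking Burkholder--Davis--Gundy. The stochastic Fubini step is precisely what absorbs the outer $dt$ integration into an additional Brownian integral, after which the iterated-integral machinery of Lemma \ref{lemma:BDG} applies in the same form as in Lemma \ref{l-qv}, and combining the two contributions completes the proof.
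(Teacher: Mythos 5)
Your argument is correct, but it takes a genuinely different route from the paper. The paper first applies integration by parts in $t$ (It\^o's formula for the product $L(h)^{kl}_t[\mathsf{M}^i,\mathsf{M}^j](I_h)_t$) to rewrite
\[
n\sum_{h=1}^n\int_{t_{h-1}}^{t_h}L(h)^{kl}_t\,d[\mathsf{M}^i,\mathsf{M}^j]_t
\]
as $n\sum_h L(h)^{kl}[\mathsf{M}^i,\mathsf{M}^j](I_h)$ plus $n\sum_h\int_{I_h}[\mathsf{M}^i,\mathsf{M}^j](I_h)_t\,dL(h)^{kl}_t$. The first piece is literally the quantity of Lemma~\ref{l-qv} (with $\nu=0$), which has already been bounded; the second piece, after expanding $dL(h)^{kl}_t$, is a double stochastic integral in the exact shape of Lemma~\ref{lemma:BDG}, with the finite-variation weight $[\mathsf{M}^i,\mathsf{M}^j](I_h)_t$ absorbed into the outer integrand. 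You instead replay the internal structure of Lemma~\ref{l-qv}'s proof: decompose $\Sigma^{ij}_t$ into its $\mathcal{F}_{t_{h-1}}$-conditional expectation plus a residual, bound the residual contribution via Proposition~\ref{minkowski}, Lemma~\ref{ocone} and Lemma~\ref{S-deriv} (which gives the $p^{3/2}$ term), and handle the main part by inserting the martingale representation of $L(h)^{kl}_t$ and swapping the $dt$ and $dB_s$ integrals by stochastic Fubini so that Lemma~\ref{lemma:BDG} applies (which gives the $p$ term). The stochastic Fubini step is precisely what the paper avoids by doing integration by parts first. Both routes are sound and produce the same bound; the paper's is shorter because it re-uses Lemma~\ref{l-qv} as a black box, while yours is more self-contained at the cost of an extra Fubini argument and an explicit bound on $\sup_{t\in I_h}\|L(h)^{kl}_t\|_{2p}$.
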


\begin{proof}
By It\^o's formula we can rewrite the target quantity as
\begin{align*}
n\sum_{h=1}^n\int_{t_{h-1}}^{t_h}L(h)^{kl}_td[\mathsf{M}^i,\mathsf{M}^j]_t
=n\sum_{h=1}^nL(h)^{kl}[\mathsf{M}^i,\mathsf{M}^j](I_h)
+n\sum_{h=1}^n\int_{t_{h-1}}^{t_h}[\mathsf{M}^i,\mathsf{M}^j](I_h)_tdL(h)^{kl}_t.
\end{align*}
Since we have
\begin{align*}
&n\sum_{h=1}^n\int_{t_{h-1}}^{t_h}[\mathsf{M}^i,\mathsf{M}^j](I_h)_tdL(h)^{kl}_t\\
&=n\sum_{h=1}^n\left\{\int_{t_{h-1}}^{t_h}\left(\int_{t_{h-1}}^t\sigma_s^{k\cdot}\cdot dB_s\right)[\mathsf{M}^i,\mathsf{M}^j](I_h)_t\sigma_t^{l\cdot}\cdot dB_t
+\int_{t_{h-1}}^{t_h}\left(\int_{t_{h-1}}^t\sigma_s^{l\cdot}\cdot dB_s\right)[\mathsf{M}^i,\mathsf{M}^j](I_h)_t\sigma_t^{k\cdot}\cdot dB_t
\right\}
\end{align*}
by It\^o's formula, Lemmas \ref{lemma:BDG} and \ref{sigma-l2} yield
\begin{align*}
\left\|n\sum_{h=1}^n\int_{t_{h-1}}^{t_h}[\mathsf{M}^i,\mathsf{M}^j](I_h)_sdL(h)^{kl}_s\right\|_p
&\lesssim \sqrt{n}p\max_{1\leq k,l\leq d}\sup_{0\leq s\leq1}\|\sigma_s^{k\cdot}\|_{4p,\ell_2}\sup_{0\leq s\leq1}\|[\mathsf{M}^i,\mathsf{M}^j](I_h)_s\sigma_s^{l\cdot}\|_{\frac{4}{3}p,\ell_2}\\
&\leq \sqrt{n}p\max_{1\leq k,l\leq d}\sup_{0\leq s\leq1}\|\Sigma_s^{kk}\|_{2p}^{1/2}\sup_{0\leq s\leq1}\|[\mathsf{M}^i,\mathsf{M}^j](I_h)_s\|_{2p}\|\Sigma_s^{ll}\|_{2p}^{1/2}.
\end{align*}
Since Proposition \ref{minkowski} and the Schwarz inequality imply that
\[
\|[\mathsf{M}^i,\mathsf{M}^j](I_h)_s\|_{2p}\leq\frac{1}{n}\sup_{0\leq s\leq1}\|\Sigma_s^{ij}\|_{2p}
\leq\frac{1}{n}\max_{1\leq i\leq d}\sup_{0\leq s\leq1}\|\Sigma_s^{ii}\|_{2p},
\]
we obtain
\begin{align*}
\left\|n\sum_{h=1}^n\int_{t_{h-1}}^{t_h}[\mathsf{M}^i,\mathsf{M}^j](I_h)_sdL(h)^{kl}_s\right\|_p
\lesssim \frac{p}{\sqrt{n}}\max_{1\leq i\leq d}\sup_{0\leq s\leq1}\|\Sigma_s^{ii}\|_{2p}^2.
\end{align*}
Combining this estimate with Lemma \ref{l-qv}, we obtain the desired result. 
\end{proof}

\begin{lemma}\label{lemma:no-lag}
Under the assumptions of of Lemma \ref{l-qv}, 
there is a universal constant $C>0$ such that
\begin{align*}
&\max_{1\leq i,j,k,l\leq d}\left\|n\sum_{h=1}^n\mathsf{M}^i(I_h)\mathsf{M}^j(I_h)\mathsf{M}^k(I_h)\mathsf{M}^l(I_h)\right.\\
&\left.\hphantom{\max_{1\leq i,j,k,l\leq d}}-n\sum_{h=1}^n\left\{[\mathsf{M}^i,\mathsf{M}^j](I_h)[\mathsf{M}^k,\mathsf{M}^l](I_h)
+[\mathsf{M}^i,\mathsf{M}^k](I_h)[\mathsf{M}^j,\mathsf{M}^l](I_h)
+[\mathsf{M}^j,\mathsf{M}^k](I_h)[\mathsf{M}^i,\mathsf{M}^l](I_h)\right\}\right\|_p\\
&\leq\frac{C}{\sqrt{n}}\left(p^2\max_{1\leq i\leq d}\sup_{0\leq t\leq1}\|\Sigma^{ii}_t\|_{2p}^2
+p^{3/2}\max_{1\leq i\leq d}\sup_{0\leq t\leq 1}\|\Sigma_t^{ii}\|_{2p}^{3/2}
\max_{1\leq k\leq d}\sup_{0\leq u\leq v\leq 1}\|D_u\sigma^{k\cdot}_v\|_{4p,\ell_2}\right)
\end{align*}
for all $p\in[2,\infty)$ and $n\in\mathbb{N}$.
\end{lemma}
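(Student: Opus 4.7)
The plan is to expand the quartic product on each subinterval $I_h$ by It\^o's formula, group the resulting quadratic--variation integrals into the three pairings of $\{i,j,k,l\}$ into two pairs, use integration by parts within each pairing to extract exactly the Wick-type product $[\mathsf{M}^a,\mathsf{M}^b](I_h)[\mathsf{M}^c,\mathsf{M}^d](I_h)$, and then read off the error from the lemmas already established.

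Since $\mathsf{M}^a(I_h)_{t_{h-1}}=0$ for every $a\in\{i,j,k,l\}$, applying It\^o's formula to $\phi(x_1,x_2,x_3,x_4)=x_1x_2x_3x_4$ and integrating over $I_h$ yields
\begin{align*}
\mathsf{M}^i(I_h)\mathsf{M}^j(I_h)\mathsf{M}^k(I_h)\mathsf{M}^l(I_h)
&=\sum_{a\in\{i,j,k,l\}}\int_{I_h}\Big(\prod_{b\neq a}\mathsf{M}^b(I_h)_s\Big)\,d\mathsf{M}^a_s\\
&\quad+\sum_{\{a,b\}\subset\{i,j,k,l\}}\int_{I_h}\Big(\prod_{c\neq a,b}\mathsf{M}^c(I_h)_s\Big)\,d[\mathsf{M}^a,\mathsf{M}^b]_s,
\end{align*}
so six covariation integrals appear, one for each unordered pair. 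I will group these six into the three pairings $\{i,j\}|\{k,l\}$, $\{i,k\}|\{j,l\}$, $\{i,l\}|\{j,k\}$: for a generic pairing $\{a,b\}|\{c,d\}$ the two associated integrals are $\int_{I_h}\mathsf{M}^c(I_h)_s\mathsf{M}^d(I_h)_s\,d[\mathsf{M}^a,\mathsf{M}^b]_s$ and $\int_{I_h}\mathsf{M}^a(I_h)_s\mathsf{M}^b(I_h)_s\,d[\mathsf{M}^c,\mathsf{M}^d]_s$. Substituting $\mathsf{M}^a(I_h)_s\mathsf{M}^b(I_h)_s=L(h)^{ab}_s+[\mathsf{M}^a,\mathsf{M}^b](I_h)_s$ in both integrals and then applying integration by parts for continuous finite-variation processes,
\[
[\mathsf{M}^a,\mathsf{M}^b](I_h)[\mathsf{M}^c,\mathsf{M}^d](I_h)=\int_{I_h}[\mathsf{M}^a,\mathsf{M}^b](I_h)_s\,d[\mathsf{M}^c,\mathsf{M}^d]_s+\int_{I_h}[\mathsf{M}^c,\mathsf{M}^d](I_h)_s\,d[\mathsf{M}^a,\mathsf{M}^b]_s,
\]
each pairing then contributes exactly one target covariation product plus two integrals of $L(h)^{ab}_s$ or $L(h)^{cd}_s$ against a quadratic--covariation differential. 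Summing over the three pairings reproduces all three Wick contractions on the right-hand side of the lemma, while leaving a total remainder consisting of four martingale integrals (from the first sum above) and six $L(h)$--type covariation integrals.

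To finish I multiply by $n$, sum over $h=1,\dots,n$, and bound the two families of remainders separately. Each martingale term $n\sum_{h=1}^{n}\int_{I_h}\mathsf{M}^j(I_h)_s\mathsf{M}^k(I_h)_s\mathsf{M}^l(I_h)_s\,d\mathsf{M}^i_s$ (and its three permutations) is controlled by Lemma \ref{lemma:V3dV}, producing the $p^2/\sqrt{n}\max\|\Sigma^{ii}_t\|_{2p}^2$ piece of the stated estimate. Each $L(h)$--type term $n\sum_{h=1}^{n}\int_{I_h}L(h)^{cd}_s\,d[\mathsf{M}^a,\mathsf{M}^b]_s$ (and its permutations) is controlled by Lemma \ref{qv-dl}; the $p/\sqrt{n}$ piece of that bound is absorbed into the $p^2/\sqrt{n}$ contribution already produced, while the $p^{3/2}/\sqrt{n}$ piece matches the second summand of the target bound. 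Applying the triangle inequality across the at most ten remainder terms gives the claim.

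I do not expect a genuine obstacle: the argument is a bookkeeping exercise combining It\^o's formula with the two preceding lemmas. The only mildly delicate step is verifying that the six covariation integrals reassemble cleanly, through integration by parts, into the three Wick products $[\mathsf{M}^a,\mathsf{M}^b](I_h)[\mathsf{M}^c,\mathsf{M}^d](I_h)$ with coefficient exactly one and no stray symmetrization factors; explicitly listing the six pairs and matching them to the three pairings prevents any miscounting.
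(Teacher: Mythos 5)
Your proposal is correct and matches the paper's argument essentially step for step: the paper also obtains, by repeated application of It\^o's formula, precisely the decomposition of $\mathsf{M}^i(I_h)\mathsf{M}^j(I_h)\mathsf{M}^k(I_h)\mathsf{M}^l(I_h)$ into four martingale integrals, six $L(h)$-against-covariation integrals, and three Wick products, and then invokes Lemmas \ref{lemma:V3dV} and \ref{qv-dl} exactly as you do. The only cosmetic difference is that you phrase the derivation via the multivariate It\^o formula for $x_1x_2x_3x_4$ plus integration by parts for the finite-variation pieces, while the paper simply writes the resulting identity directly.
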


\begin{proof}
Using It\^o's formula repeatedly, we have
\begin{align*}
&\mathsf{M}^i(I_h)\mathsf{M}^j(I_h)\mathsf{M}^k(I_h)\mathsf{M}^l(I_h)\\
&=\int_{t_{h-1}}^{t_h}\mathsf{M}^j(I_h)_s\mathsf{M}^k(I_h)_s\mathsf{M}^l(I_h)_sd\mathsf{M}^i_s
+\int_{t_{h-1}}^{t_h}\mathsf{M}^i(I_h)_s\mathsf{M}^k(I_h)_s\mathsf{M}^l(I_h)_sd\mathsf{M}^j_s\\
&\quad+\int_{t_{h-1}}^{t_h}\mathsf{M}^i(I_h)_s\mathsf{M}^j(I_h)_s\mathsf{M}^l(I_h)_sd\mathsf{M}^k_s
+\int_{t_{h-1}}^{t_h}\mathsf{M}^i(I_h)_s\mathsf{M}^j(I_h)_s\mathsf{M}^k(I_h)_sd\mathsf{M}^l_s\\
&\quad+\int_{t_{h-1}}^{t_h}L(h)^{kl}_sd[\mathsf{M}^i,\mathsf{M}^j]_s
+\int_{t_{h-1}}^{t_h}L(h)^{jl}_sd[\mathsf{M}^i,\mathsf{M}^k]_s
+\int_{t_{h-1}}^{t_h}L(h)^{jk}_sd[\mathsf{M}^i,\mathsf{M}^l]_s\\
&\quad+\int_{t_{h-1}}^{t_h}L(h)^{il}_sd[\mathsf{M}^j,\mathsf{M}^k]_s
+\int_{t_{h-1}}^{t_h}L(h)^{ik}_sd[\mathsf{M}^j,\mathsf{M}^l]_s
+\int_{t_{h-1}}^{t_h}L(h)^{ij}_sd[\mathsf{M}^k,\mathsf{M}^l]_s\\
&\quad+[\mathsf{M}^i,\mathsf{M}^j](I_h)[\mathsf{M}^k,\mathsf{M}^l](I_h)
+[\mathsf{M}^i,\mathsf{M}^k](I_h)[\mathsf{M}^j,\mathsf{M}^l](I_h)
+[\mathsf{M}^j,\mathsf{M}^k](I_h)[\mathsf{M}^i,\mathsf{M}^l](I_h)
\end{align*}
for every $h$. Therefore, the desired result follows from Lemmas \ref{lemma:V3dV} and \ref{qv-dl}.
\end{proof}

\begin{lemma}\label{vv-lag}
Under the assumptions of of Lemma \ref{l-qv}, 
there is a universal constant $C>0$ such that
\begin{align*}
\max_{1\leq i,j,k,l\leq d}\left\|n\sum_{h=1}^{n-1}\mathsf{M}^i(I_h)\mathsf{M}^j(I_h)\int_{t_{h}}^{t_{h+1}}\mathsf{M}^l(I_{h+1})_sd\mathsf{M}^k_s\right\|_p
\leq C\frac{p^2}{\sqrt{n}}\max_{1\leq i\leq d}\sup_{0\leq s\leq 1}\|\Sigma^{ii}_s\|_{2p}^2
\end{align*}
for all $p\in[2,\infty)$ and $n\in\mathbb{N}$.
\end{lemma}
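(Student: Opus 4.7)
The plan is to recognize that the sum in question has a natural stochastic integral representation against $d\mathsf{M}^k$, and then apply the sharp Burkholder--Davis--Gundy inequality (Proposition \ref{sharp-BDG}) together with Hölder's inequality and a second round of BDG for each martingale factor.

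First, observe that for each $h=1,\dots,n-1$ the factor $\mathsf{M}^i(I_h)\mathsf{M}^j(I_h)$ is $\mathcal{F}_{t_h}$-measurable, while the inner stochastic integral $\int_{t_h}^{t_{h+1}}\mathsf{M}^l(I_{h+1})_s\,d\mathsf{M}^k_s$ is driven by increments of $B$ over $(t_h,t_{h+1}]$. Hence, if we define the $(\mathcal{F}_t)$-predictable process
\[
\xi_s:=\sum_{h=1}^{n-1}\mathsf{M}^i(I_h)\mathsf{M}^j(I_h)\mathsf{M}^l(I_{h+1})_s\,1_{(t_h,t_{h+1}]}(s),\qquad s\in[0,1],
\]
and write $d\mathsf{M}^k_s=\sigma^{k\cdot}_s\cdot dB_s$, the target quantity can be expressed as the single Itô integral $n\int_0^1\xi_s\,\sigma^{k\cdot}_s\cdot dB_s$. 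This reduction is the key step that makes the estimate essentially a one-shot BDG bound.

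Next, by Proposition \ref{sharp-BDG} and Proposition \ref{minkowski} (applied to the inner $L^{p/2}$ norm of the nonnegative sum $\sum_h\int_{I_{h+1}}\cdots ds$) I would obtain
\[
\left\|n\int_0^1\xi_s\sigma^{k\cdot}_s\cdot dB_s\right\|_p
\lesssim n\sqrt{p}\sqrt{\sum_{h=1}^{n-1}\int_{t_h}^{t_{h+1}}\left\|\mathsf{M}^i(I_h)\mathsf{M}^j(I_h)\mathsf{M}^l(I_{h+1})_s\sigma^{k\cdot}_s\right\|_{p,\ell_2}^2 ds}.
\]
Then Hölder's inequality (with four factors each measured in $L^{4p}$) gives
\[
\left\|\mathsf{M}^i(I_h)\mathsf{M}^j(I_h)\mathsf{M}^l(I_{h+1})_s\sigma^{k\cdot}_s\right\|_{p,\ell_2}
\leq\|\mathsf{M}^i(I_h)\|_{4p}\|\mathsf{M}^j(I_h)\|_{4p}\|\mathsf{M}^l(I_{h+1})_s\|_{4p}\|\sigma^{k\cdot}_s\|_{4p,\ell_2}.
\]

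The final step is to control each of the three martingale factors by a second application of Proposition \ref{sharp-BDG} together with Proposition \ref{minkowski}: for any $r,r'\in\{1,\dots,n\}$ with $r\le r'$,
\[
\|\mathsf{M}^i(I_r)_s\|_{4p}
\lesssim\sqrt{p}\left\|\sqrt{\int_{s\wedge t_{r-1}}^{s\wedge t_r}\Sigma^{ii}_u\,du}\right\|_{4p}
\leq\sqrt{\frac{p}{n}}\sup_{0\le u\le 1}\|\Sigma^{ii}_u\|_{2p}^{1/2},
\]
and similarly for $\mathsf{M}^j(I_h)$ and $\mathsf{M}^l(I_{h+1})_s$, while $\|\sigma^{k\cdot}_s\|_{4p,\ell_2}=\|\Sigma^{kk}_s\|_{2p}^{1/2}$. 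Multiplying these four bounds, summing over $h$ (there are at most $n$ terms each of length $1/n$), and taking the square root yields
\[
n\sqrt{p}\cdot\sqrt{n\cdot\frac{1}{n}\cdot\frac{p^3}{n^3}\max_i\sup_s\|\Sigma^{ii}_s\|_{2p}^4}
=\frac{p^2}{\sqrt{n}}\max_i\sup_s\|\Sigma^{ii}_s\|_{2p}^2,
\]
which is exactly the claimed bound. The calculation is otherwise routine; the only part requiring care is keeping track of the exponents so that each of the three $\mathsf{M}$-factors contributes a $\sqrt{p/n}\,\|\Sigma\|_{2p}^{1/2}$ and the $\sigma^{k\cdot}_s$ factor contributes $\|\Sigma\|_{2p}^{1/2}$, producing the overall $p^2/\sqrt n$ scaling.
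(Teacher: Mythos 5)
Your proof is correct and takes essentially the same route as the paper's: apply the sharp BDG inequality to the stochastic integral, pass the $L^p$ norm inside via Minkowski, use Hölder with all four factors in $L^{4p}$, and then apply BDG again to each of the three martingale-increment factors. The only cosmetic difference is that you first rewrite the sum as a single Itô integral against $dB$ before invoking BDG, whereas the paper applies BDG directly to the sum of stochastic integrals; the resulting estimates and the bookkeeping of the $p$- and $n$-exponents are identical.
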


\begin{proof}
Proposition \ref{minkowski}--\ref{sharp-BDG} yield
\begin{align*}
&\left\|n\sum_{h=1}^{n-1}\mathsf{M}^i(I_h)\mathsf{M}^j(I_h)\int_{t_{h}}^{t_{h+1}}\mathsf{M}^l(I_{h+1})_sd\mathsf{M}^k_s\right\|_p\\
&\lesssim n\sqrt{p}\left\|\sqrt{\sum_{h=1}^{n-1}\mathsf{M}^i(I_h)^2\mathsf{M}^j(I_h)^2\int_{t_{h}}^{t_{h+1}}\mathsf{M}^l(I_{h+1})_s^2d[\mathsf{M}^k,\mathsf{M}^k]_s}\right\|_p\\
&\leq n\sqrt{p\sum_{h=1}^{n-1}\left\|\mathsf{M}^i(I_h)\right\|_{4p}^2\left\|\mathsf{M}^j(I_h)\right\|_{4p}^2\int_{t_{h}}^{t_{h+1}}\left\|\mathsf{M}^l(I_{h+1})_s\right\|_{4p}^2\left\|\Sigma_s^{kk}\right\|_{2p}ds}\\
&\lesssim np^2\sqrt{\sum_{h=1}^{n-1}\left\|\sqrt{\int_{I_h}\Sigma^{ii}_sds}\right\|_{4p}^2\left\|\sqrt{\int_{I_h}\Sigma^{jj}_sds}\right\|_{4p}^2\int_{t_{h}}^{t_{h+1}}\left\|\sqrt{\int_{t_h}^s\Sigma^{ll}_sds}\right\|_{4p}^2\left\|\Sigma_s^{kk}\right\|_{2p}ds}\\
&\leq\frac{p^2}{\sqrt{n}}\max_{1\leq i\leq d}\sup_{0\leq s\leq 1}\|\Sigma^{ii}_s\|_{2p}^2.
\end{align*}
This completes the proof.
\end{proof}

\begin{lemma}\label{lemma:lag}
Under the assumptions of of Lemma \ref{l-qv}, 
there is a universal constant $C>0$ such that
\begin{align*}
&\max_{1\leq i,j,k,l\leq d}\left\|n\sum_{h=1}^{n-1}\mathsf{M}^i(I_h)\mathsf{M}^j(I_h)\mathsf{M}^k(I_{h+1})\mathsf{M}^l(I_{h+1})-n\sum_{h=1}^{n-1}[\mathsf{M}^i,\mathsf{M}^j](I_h)[\mathsf{M}^k,\mathsf{M}^l](I_{h+1})\right\|_p\\
&\leq\frac{C}{\sqrt{n}}\left(p^2\max_{1\leq i\leq d}\sup_{0\leq t\leq1}\|\Sigma^{ii}_t\|_{2p}^2
+p^{3/2}\max_{1\leq i\leq d}\sup_{0\leq t\leq 1}\|\Sigma_t^{ii}\|_{2p}^{3/2}
\max_{1\leq k\leq d}\sup_{0\leq u\leq v\leq 1}\|D_u\sigma^{k\cdot}_v\|_{4p,\ell_2}\right)
\end{align*}
for all $p\in[2,\infty)$ and $n\in\mathbb{N}$.
\end{lemma}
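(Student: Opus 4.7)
The plan is to expand the product via the martingale/quadratic-variation decomposition $\mathsf{M}^a(I_h)\mathsf{M}^b(I_h) = L(h)^{ab} + [\mathsf{M}^a,\mathsf{M}^b](I_h)$ applied to both the $(i,j)$ and $(k,l)$ pairs. After multiplication, the contribution $[\mathsf{M}^i,\mathsf{M}^j](I_h)[\mathsf{M}^k,\mathsf{M}^l](I_{h+1})$ cancels the subtracted term in the statement, leaving three cross pieces
\[
R_n^{(1)} := n\sum_{h=1}^{n-1} L(h)^{ij} L(h+1)^{kl},\quad
R_n^{(2)} := n\sum_{h=1}^{n-1} L(h)^{ij}[\mathsf{M}^k,\mathsf{M}^l](I_{h+1}),\quad
R_n^{(3)} := n\sum_{h=1}^{n-1} [\mathsf{M}^i,\mathsf{M}^j](I_h) L(h+1)^{kl}.
\]
The middle term $R_n^{(2)}$ is precisely what Lemma \ref{l-qv} bounds when $\nu = 1$, and this is the sole source of the $p^{3/2}\cdot\|D_u\sigma^{k\cdot}_v\|_{4p,\ell_2}$ contribution appearing in the target estimate.

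For the remaining two pieces, I would use It\^o's formula to expand
\[
L(h+1)^{kl} = \int_{I_{h+1}} \mathsf{M}^k(I_{h+1})_s\, d\mathsf{M}^l_s + \int_{I_{h+1}} \mathsf{M}^l(I_{h+1})_s\, d\mathsf{M}^k_s,
\]
and then exploit the fact that both $L(h)^{ij}$ and $[\mathsf{M}^i,\mathsf{M}^j](I_h)=\int_{I_h}\Sigma^{ij}_s\,ds$ are $\mathcal{F}_{t_h}$-measurable, so that they may be pulled inside the It\^o integrals over $I_{h+1}$. Summing over $h$ then collects $R_n^{(1)}$ and $R_n^{(3)}$ into single continuous-time It\^o integrals against $\mathsf{M}^l$ and $\mathsf{M}^k$, with predictable integrands
\[
\Psi^{(1)}_s := \sum_{h=1}^{n-1} L(h)^{ij}\mathsf{M}^k(I_{h+1})_s\,1_{I_{h+1}}(s),\qquad
\Psi^{(3)}_s := \sum_{h=1}^{n-1} [\mathsf{M}^i,\mathsf{M}^j](I_h)\mathsf{M}^k(I_{h+1})_s\,1_{I_{h+1}}(s),
\]
together with their $k\leftrightarrow l$ counterparts.

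A single application of the sharp Burkholder-Davis-Gundy inequality (Proposition \ref{sharp-BDG}), combined with Proposition \ref{minkowski} and H\"older, then produces
\[
\|R_n^{(\ell)}\|_p \lesssim n\sqrt{p}\left(\int_0^1 \|\Psi^{(\ell)}_s\|_{2p}^2\,\max_{1\le i\le d}\|\Sigma^{ii}_s\|_p\, ds\right)^{1/2}\qquad(\ell=1,3).
\]
On $I_{h+1}$ one estimates $\|\Psi^{(\ell)}_s\|_{2p}$ via Cauchy-Schwarz, using $\|L(h)^{ij}\|_{4p}\lesssim (p/n)\max_i\sup_t\|\Sigma^{ii}_t\|_{4p}$ (from inside the proof of Lemma \ref{l-qv}) for $\ell=1$, the trivial bound $\|[\mathsf{M}^i,\mathsf{M}^j](I_h)\|_{4p}\le(1/n)\max_i\sup_t\|\Sigma^{ii}_t\|_{4p}$ for $\ell=3$, and $\|\mathsf{M}^k(I_{h+1})_s\|_{4p}\lesssim\sqrt{p/n}\,\max_i\sup_t\|\Sigma^{ii}_t\|_{2p}^{1/2}$ by BDG. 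Plugging these in yields $\|R_n^{(1)}\|_p \lesssim (p^2/\sqrt{n})\max_i\sup_t\|\Sigma^{ii}_t\|_{2p}^2$ and $\|R_n^{(3)}\|_p \lesssim (p/\sqrt{n})\max_i\sup_t\|\Sigma^{ii}_t\|_{2p}^2$, both of which fit comfortably within the claimed bound.

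The main obstacle I anticipate is securing the $p^2$ (rather than $p^{5/2}$) rate for $R_n^{(1)}$. This is precisely why it is essential to collapse the summation into a single It\^o integral against $\mathsf{M}^l$ (and symmetrically against $\mathsf{M}^k$) and apply BDG just once, at the outer level: a cruder route that first bounds $L(h+1)^{kl}$ in $L^p$ and then invokes the discrete martingale-differences structure of $\sum_h L(h)^{ij}L(h+1)^{kl}$ with respect to $(\mathcal{F}_{t_h})$ would pick up an additional $\sqrt{p}$ factor from BDG applied to $L(h+1)^{kl}$ and overshoot the required rate.
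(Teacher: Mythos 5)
Your decomposition is the same as the paper's up to reordering. The paper expands only the second pair via It\^o's formula, i.e.\ $\mathsf{M}^k(I_{h+1})\mathsf{M}^l(I_{h+1})=L(h+1)^{kl}+[\mathsf{M}^k,\mathsf{M}^l](I_{h+1})$, multiplies by $\mathsf{M}^i(I_h)\mathsf{M}^j(I_h)$, and splits $\mathsf{M}^i(I_h)\mathsf{M}^j(I_h)=L(h)^{ij}+[\mathsf{M}^i,\mathsf{M}^j](I_h)$ only in front of the bracket term; the stochastic-integral terms are then dispatched to an auxiliary Lemma \ref{vv-lag} and the bracket term to Lemma \ref{l-qv} with $\nu=1$. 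Your $R_n^{(2)}$ is precisely that second piece, and $R_n^{(1)}+R_n^{(3)}=n\sum_h\mathsf{M}^i(I_h)\mathsf{M}^j(I_h)L(h+1)^{kl}$ is precisely what Lemma \ref{vv-lag} bounds after you expand $L(h+1)^{kl}$ via It\^o, so the two proofs coincide. Your remark on avoiding an extra $\sqrt{p}$ by collapsing to a continuous-time integral rather than using a discrete BDG on the martingale-difference sequence $L(h)^{ij}L(h+1)^{kl}$ is indeed the reason the rate is $p^2$ and not $p^{5/2}$, and is exactly how Lemma \ref{vv-lag} is set up.

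The one point where your write-up does not quite deliver the stated bound is the choice of H\"older exponents when you split $\mathsf{M}^i(I_h)\mathsf{M}^j(I_h)$ before the final estimate. Cauchy--Schwarz gives $\|\Psi^{(1)}_s\|_{2p}\le\|L(h)^{ij}\|_{4p}\|\mathsf{M}^k(I_{h+1})_s\|_{4p}$, and as you note $\|L(h)^{ij}\|_{4p}\lesssim (p/n)\max_i\sup_t\|\Sigma^{ii}_t\|_{4p}$. Together with the $\|\Sigma^{ll}_s\|_p$ factor from the outer BDG this produces
\[
\|R_n^{(1)}\|_p\lesssim \frac{p^2}{\sqrt n}\,\max_i\sup_t\|\Sigma^{ii}_t\|_{4p}\cdot\max_k\sup_t\|\Sigma^{kk}_t\|_{2p}^{1/2}\cdot\max_l\sup_t\|\Sigma^{ll}_t\|_{p}^{1/2},
\]
which is \emph{not} dominated by $\max_i\sup_t\|\Sigma^{ii}_t\|_{2p}^2$, so the claimed bound does not drop out of the numbers you wrote. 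The same issue affects $R_n^{(3)}$. You can repair this either by not splitting --- bound $\|\mathsf{M}^i(I_h)\|_{4p}\|\mathsf{M}^j(I_h)\|_{4p}\|\mathsf{M}^k(I_{h+1})_s\|_{4p}\|\Sigma^{ll}_s\|_{2p}^{1/2}$ as the paper's Lemma \ref{vv-lag} does, so every factor lands at the $\|\cdot\|_{2p}$ level --- or, if you insist on keeping $L(h)^{ij}$, by using a non-Schwarz H\"older split on $\|L(h)^{ij,2}\mathsf{M}^k(I_{h+1})_s^2\Sigma^{ll}_s\|_{p/2}$ with exponents $(p,2p,2p)$, giving $\|L(h)^{ij}\|_{2p}^2\|\mathsf{M}^k(I_{h+1})_s\|_{4p}^2\|\Sigma^{ll}_s\|_{2p}$ and hence $\|\Sigma^{ii}\|_{2p}^2$ as required. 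Either fix is cosmetic; the structure of your argument is sound.
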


\begin{proof}
By It\^o's formula we have
\begin{align*}
\mathsf{M}^i(I_h)\mathsf{M}^j(I_h)\mathsf{M}^k(I_{h+1})\mathsf{M}^l(I_{h+1})
&=\mathsf{M}^i(I_h)\mathsf{M}^j(I_h)\int_{t_{h}}^{t_{h+1}}\mathsf{M}^l(I_{h+1})_sd\mathsf{M}^k_s
+\mathsf{M}^i(I_h)\mathsf{M}^j(I_h)\int_{t_{h}}^{t_{h+1}}\mathsf{M}^k(I_{h+1})_sd\mathsf{M}^l_s\\
&\quad+L(h)^{ij}[\mathsf{M}^k,\mathsf{M}^l](I_{h+1})
+[\mathsf{M}^i,\mathsf{M}^j](I_h)[\mathsf{M}^k,\mathsf{M}^l](I_{h+1})
\end{align*}
for every $h$. Therefore, the desired result follows from Lemmas \ref{l-qv} and \ref{vv-lag}.
\end{proof}

\begin{proof}[Proof of Proposition \ref{prop:acov}]
Thanks to Lemma \ref{lemma:local}, throughout the proof we may assume $\mu=\mu(\nu)$ and $\sigma=\sigma(\nu)$ for all $n,\nu\in\mathbb{N}$. 

(a) According to Lemmas \ref{lemma:driftV3}, \ref{lemma:no-lag} and \ref{lemma:lag}, it suffices to show that
\begin{align}
&E\left[\max_{1\leq i,j,k,l\leq d}\left|n[\mathsf{M}^i,\mathsf{M}^j](I_n)[\mathsf{M}^k,\mathsf{M}^l](I_{n})\right|\right]=O(n^{-\varpi}),\label{acov:aim1}\\
&E\left[\max_{1\leq i,j,k,l\leq d}\left|n\sum_{h=1}^{n-1}[\mathsf{M}^i,\mathsf{M}^j](I_h)\left\{[\mathsf{M}^k,\mathsf{M}^l](I_{h+1})-[\mathsf{M}^k,\mathsf{M}^l](I_{h})\right\}\right|\right]=O(n^{-\varpi}).\label{acov:aim2}
\end{align}
\eqref{acov:aim1} is evident from assumptions. 
In the meantime, the Schwarz inequality and Proposition \ref{minkowski} yield
\begin{align}
&E\left[\max_{1\leq i,j,k,l\leq d}\left|n\sum_{h=1}^{n-1}[\mathsf{M}^i,\mathsf{M}^j](I_h)\left\{[\mathsf{M}^k,\mathsf{M}^l](I_{h+1})-[\mathsf{M}^k,\mathsf{M}^l](I_{h})\right\}\right|\right]\nonumber\\
&\leq n\sum_{h=1}^{n-1}E\left[\max_{1\leq i,j\leq d}\left|[\mathsf{M}^i,\mathsf{M}^j](I_h)\right|\max_{1\leq k,l\leq d}\left|[\mathsf{M}^k,\mathsf{M}^l](I_{h+1})-[\mathsf{M}^k,\mathsf{M}^l](I_{h})\right|\right]\nonumber\\
&\leq \sup_{0\leq t\leq1}\left\|\max_{1\leq i,j\leq d}\left|\Sigma^{ij}_t\right|\right\|_2\sup_{0< t\leq 1-\frac{1}{n}}\left\|\max_{1\leq k,l\leq d}\left|\Sigma^{kl}_{t+\frac{1}{n}}-\Sigma^{kl}_t\right|\right\|_2,\label{proof:modulus}
\end{align}
\if0
In the meantime, by the Schwarz inequality we have
\begin{align}
&\max_{1\leq i,j,k,l\leq d}\left|n\sum_{h=1}^{n-1}[\mathsf{M}^i,\mathsf{M}^j](I_h)\left\{[\mathsf{M}^k,\mathsf{M}^l](I_{h+1})-[\mathsf{M}^k,\mathsf{M}^l](I_{h})\right\}\right|\nonumber\\
&\leq n\max_{1\leq i,j\leq d}\sqrt{\sum_{h=1}^{n-1}\left|[\mathsf{M}^i,\mathsf{M}^j](I_h)\right|^2}\max_{1\leq k,l\leq d}\sqrt{\sum_{h=1}^{n-1}\left|[\mathsf{M}^k,\mathsf{M}^l](I_{h+1})-[\mathsf{M}^k,\mathsf{M}^l](I_{h})\right|^2}\nonumber\\
&\leq \sup_{0\leq t\leq1}\left\|\max_{1\leq i,j\leq d}\left|\Sigma^{ij}_t\right|\right\|_2\sup_{0\leq t\leq 1-\frac{1}{n}}\left\|\max_{1\leq k,l\leq d}\left|\Sigma^{kl}_{t+\frac{1}{n}}-\Sigma^{kl}_t\right|\right\|_2,\label{proof:modulus}
\end{align}
\fi
\tcr{so} \eqref{acov:aim2} also follows from assumptions. This completes the proof. 

(b) By assumptions we have $E[\max_{1\leq i,j,k,l\leq d}\left|n[\mathsf{M}^i,\mathsf{M}^j](I_n)[\mathsf{M}^k,\mathsf{M}^l](I_{n})\right|]=O(n^{-1})$. Moreover, from \eqref{proof:modulus} and assumptions, we also have
\[
E\left[\max_{1\leq i,j,k,l\leq d}\left|n\sum_{h=1}^{n-1}[\mathsf{M}^i,\mathsf{M}^j](I_h)\left\{[\mathsf{M}^k,\mathsf{M}^l](I_{h+1})-[\mathsf{M}^k,\mathsf{M}^l](I_{h})\right\}\right|\right]=O(n^{-\gamma}).
\]
Therefore, the desired result follows from Lemmas \ref{lemma:driftV3}, \ref{lemma:no-lag} and \ref{lemma:lag} as well as Lemma A.7 and Proposition A.1 of \cite{Koike2017stein}. 
\end{proof}

\subsection{Proof of Proposition \ref{prop:factor-test}}

An analogous argument to the proof of Theorem \ref{thm:rc-local} allows us to assume $\mu=\mu(\nu)$ and $\Sigma=\Sigma(\nu)$ for all $n,\nu\in\mathbb{N}$. 

Define the $\ul{d}^2\times d^2$ random matrix $\hat{\bs{X}}_n$ by
\[
\hat{\bs{X}}_n^{(i-1)\ul{d}+j,(k-1)d+l}=
\left\{
\begin{array}{cl}
~\wh{[Y^j,Y^d]}^n_1/\sqrt{\hat{\mathfrak{V}}_n^{ij}}  & \text{if }k=i,~l=d,  \\
~[Y^i,Y^d]_1/\sqrt{\hat{\mathfrak{V}}_n^{ij}}  & \text{if }k=j,~l=d, \\
~-\wh{[Y^d,Y^d]}^n_1/\sqrt{\hat{\mathfrak{V}}_n^{ij}}  & \text{if }k=l=d  \\
~-[Y^i,Y^j]_1/\sqrt{\hat{\mathfrak{V}}_n^{ij}} & \text{if }k=i, l=j\\
0 & \text{otherwise}.   
\end{array}
\right.
\]
for $i,j=1,\dots,\ul{d}$ and $k,l=1,\dots,d$. 
We also define the $\ul{d}^2\times d^2$ matrix $\ul{\Upsilon}_n$ by
\[
\ul{\Upsilon}_n^{(i-1)\ul{d}+j,(k-1)d+l}=
\left\{
\begin{array}{cl}
1  & \text{if }k\in\{i,j\}, l=d , \\
-1 & \text{if }k=i, l=j\text{ or }k=l=d, \\
0 & \text{otherwise}   
\end{array}
\right.
\]
for $i,j=1,\dots,\ul{d}$ and $k,l=1,\dots,d$. Then we set
\[
\Xi_n=
\left(
\begin{array}{c}
\bs{X}_n   \\ 
-\bs{X}_n
\end{array}
\right),\qquad
\hat{\Xi}_n=
\left(
\begin{array}{c}
\hat{\bs{X}}_n   \\ 
-\hat{\bs{X}}_n
\end{array}
\right),\qquad
\Upsilon_n=
\left(
\begin{array}{c}
\ul{\Upsilon}_n   \\ 
-\ul{\Upsilon}_n
\end{array}
\right). 
\]
Since we have
\[
\hat{\Xi}_nS_n=
\left(
\begin{array}{c}
\vectorize(T_n)   \\ 
-\vectorize(T_n)
\end{array}
\right),\qquad
\hat{\Xi}_nS_n^*=
\left(
\begin{array}{c}
\vectorize(T_n^*)   \\ 
-\vectorize(T_n^*)
\end{array}
\right)
\]
as well as all the diagonal entries of $\Xi_n\mathfrak{C}_n\Xi_n^\top$ are equal to 1 by the definition of $\bs{X}_n$, 
Lemma \ref{lemma:approx} and Proposition \ref{prop:comparison} imply that it suffices to prove the following equations:
\begin{align}
&\sup_{y\in(\infty,\infty]^{2\ul{d}^2}}|P(\Xi_nS_n\leq y)-P(\Xi_n\mathfrak{C}_n^{1/2}\zeta_n\leq y)|\to0,\label{factor-aim1}\\
&\sqrt{\log d}\|\hat{\Xi}_nS_n-\Xi_nS_n\|_{\ell_\infty}\to^p0,\label{factor-aim2}\\
&(\log d)^2\|\hat{\Xi}_n\hat{\mathfrak{C}}_n\hat{\Xi}_n^\top-\Xi_n\mathfrak{C}_n\Xi_n^\top\|_{\ell_\infty}\to^p0\label{factor-aim3}
\end{align}
as $n\to\infty$. 

We begin by proving \eqref{factor-aim1}. Since $\bs{X}_n=\bs{X}_n\circ\ul{\Upsilon}_n$ and $\opnorm{\ul{\Upsilon}_n}_\infty=4$, an application of Theorem \ref{thm:rc} implies that the desired result follows once we show that $\bs{X}_n\in\mathbb{D}_{2,\infty}(\mathbb{R}^{\ul{d}^2}\otimes \mathbb{R}^{d^2})$ and
\[
\sup_{n\in\mathbb{N}}\max_{1\leq i\leq\ul{d}^2,1\leq j\leq d^2}\left(\|\bs{X}_n^{ij}\|_p+\sup_{0\leq t\leq 1}\|D_t\bs{X}_n^{ij}\|_{p,\ell_2}
+\sup_{0\leq s,t\leq 1}\|D_{s,t}\bs{X}_n^{ij}\|_{p,\ell_2}
\right)<\infty
\]
for all $p\in[2,\infty)$. 
By Remark 15.87 of \cite{Janson1997}, we have $[Y^i,Y^j]_1\in\mathbb{D}_{2,\infty}$ and $D^q[Y^i,Y^j]_1=\int_0^1D^q\Sigma_t^{ij}dt$ for any $i,j=1,\dots,d$ and $q=1,2$. Therefore, by Corollary 15.80 of \cite{Janson1997}, the desired result follows once we show that $1/\sqrt{\mathfrak{V}_n^{ij}}\in\mathbb{D}_{2,\infty}$ for any $i,j=1,\dots,\ul{d}$ and 
\begin{equation}\label{v-half-moment}
\sup_{n\in\mathbb{N}}\max_{1\leq i,j\leq\ul{d}}\left(\left\|\frac{1}{\sqrt{\mathfrak{V}_n^{ij}}}\right\|_p
+\sup_{0\leq t\leq 1}\left\|D_t\left(\frac{1}{\sqrt{\mathfrak{V}_n^{ij}}}\right)\right\|_{p,\ell_2}
+\sup_{0\leq s,t\leq 1}\left\|D_{s,t}\left(\frac{1}{\sqrt{\mathfrak{V}_n^{ij}}}\right)\right\|_{p,\ell_2}
\right)<\infty
\end{equation}
for all $p\in[2,\infty)$. Note that we have
\[
\|D_t[Y^i,Y^j]_1\|_{p,\ell_2}\leq\sup_{0\leq u\leq1}\|D_t\Sigma_u^{ij}\|_{p,\ell_2},\qquad
\left\|D_{s,t}[Y^i,Y^j]_1\right\|_{p,\ell_2}\leq\sup_{0\leq u\leq1}\left\|D_{s,t}\Sigma_u^{ij}\right\|_{p,\ell_2}
\]
for all $i,j=1,\dots,d$, $p\in[2,\infty)$ and $s,t\in[0,1]$ by Proposition \ref{minkowski}. 
Therefore, Lemmas \ref{S-deriv}--\ref{C-deriv} and Corollary 15.80 of \cite{Janson1997} imply that $\mathfrak{V}_n^{ij}\in\mathbb{D}_{2,\infty}$ for any $i,j=1,\dots,\ul{d}$ and
\[
\sup_{n\in\mathbb{N}}\max_{1\leq i,j\leq\ul{d}}\left(\left\|\mathfrak{V}_n^{ij}\right\|_p
+\sup_{0\leq t\leq 1}\left\|D_t\mathfrak{V}_n^{ij}\right\|_{p,\ell_2}
+\sup_{0\leq s,t\leq 1}\left\|D_{s,t}\mathfrak{V}_n^{ij}\right\|_{p,\ell_2}
\right)<\infty
\]
for all $p\in[2,\infty)$. Now, since we can write $1/\sqrt{\mathfrak{V}_n^{ij}}=(\mathfrak{V}_n^{ij})^{5/2}(\mathfrak{V}_n^{ij})^{-3}$, Theorem 15.78 and Lemma 15.152 of \cite{Janson1997} as well as \eqref{v-inv-moment} imply that $1/\sqrt{\mathfrak{V}_n^{ij}}\in\mathbb{D}_{2,\infty}$ for any $i,j=1,\dots,\ul{d}$ and \eqref{v-half-moment} holds true for all $p\in[2,\infty)$. Hence we complete the proof of \eqref{factor-aim1}. 

Next we prove \eqref{factor-aim2}--\eqref{factor-aim3}. 
First, note that we have $\|S_n\|_{\ell_\infty}=O_p(n^\eta)$ as $n\to\infty$ for any $\eta>0$ by Corollary \ref{coro:rc} and \eqref{eq:hyper}. 
Since $\|[Y,Y]_1\|_{\ell_\infty}=O_p(n^\eta)$ as $n\to\infty$ for any $\eta>0$ by assumptions, this especially yields $\|\wh{[Y,Y]}^n_1\|_{\ell_\infty}=O_p(n^\eta)$ as $n\to\infty$ for any $\eta>0$. 
Next we verify
\begin{equation*}
\max_{1\leq i,j\leq \ul{d}}|\hat{\mathfrak{V}}_n^{ij}-\mathfrak{V}_n^{ij}|=O_p(n^{-\varpi})
\end{equation*} 
as $n\to\infty$ for any $\varpi\in(0,\gamma)$. In fact, by definition we have
\begin{align*}
&\max_{1\leq i,j\leq \ul{d}}|\hat{\mathfrak{V}}_n^{ij}-\mathfrak{V}_n^{ij}|\\
&\lesssim \left(\left\|\wh{[Y,Y]}^n_1\right\|_{\ell_\infty}+\left\|[Y,Y]_1\right\|_{\ell_\infty}\right)
\left(\left\|\hat{\mathfrak{C}}_n\right\|_{\ell_\infty}+\left\|\mathfrak{C}_n\right\|_{\ell_\infty}\right)
\left(\left\|\wh{[Y,Y]}^n_1-[Y,Y]_1\right\|_{\ell_\infty}
+\left\|\hat{\mathfrak{C}}_n-\mathfrak{C}_n\right\|_{\ell_\infty}\right).
\end{align*}
Since $\|\mathfrak{C}_n\|_{\ell_\infty}=O_p(n^\eta)$ for any $\eta>0$ by assumptions, the desired result follows from Proposition \ref{prop:acov} and the results noted above. 
In particular, it holds that $\max_{1\leq i,j\leq\ul{d}}|1/\sqrt{\hat{\mathfrak{V}}_n^{ij}}|=O_p(n^\eta)$ for any $\eta>0$ because $\max_{1\leq i,j\leq\ul{d}}|1/\sqrt{\mathfrak{V}_n^{ij}}|=O_p(n^\eta)$ for any $\eta>0$ by assumptions. 
Moreover, we have
\begin{align*}
\|\hat{\bs{X}}_n-\bs{X}_n\|_{\ell_\infty}
\leq\max_{1\leq i,j\leq \ul{d}}\left|\frac{1}{\sqrt{\hat{\mathfrak{V}}_n^{ij}}}\right|\left\|\wh{[Y,Y]}^n_1-[Y,Y]_1\right\|_{\ell_\infty}
+\max_{1\leq i,j\leq \ul{d}}\left|\frac{1}{\sqrt{\hat{\mathfrak{V}}_n^{ij}}}-\frac{1}{\sqrt{\mathfrak{V}_n^{ij}}}\right|\left\|[Y,Y]_1\right\|_{\ell_\infty},
\end{align*}
\tcr{and thus} it holds that $\|\hat{\bs{X}}_n-\bs{X}_n\|_{\ell_\infty}=O_p(n^{-\varpi})$ as $n\to\infty$ for any $\varpi\in(0,\gamma)$. Noting that we have $\|\bs{X}_n\|_{\ell_\infty}=O_p(n^\eta)$ for any $\eta>0$ by assumptions, this particularly implies that $\|\hat{\bs{X}}_n\|_{\ell_\infty}=O_p(n^\eta)$ for any $\eta>0$. 

Now since we have
\begin{align*}
\|\hat{\Xi}_nS_n-\Xi_nS_n\|_{\ell_\infty}
&\leq4\|\hat{\bs{X}}_n-\bs{X}_n\|_{\ell_\infty}\|S_n\|_{\ell_\infty}
\end{align*}
and
\begin{align*}
\|\hat{\Xi}_n\hat{\mathfrak{C}}_n\hat{\Xi}_n^\top-\Xi_n\mathfrak{C}_n\Xi_n^\top\|_{\ell_\infty}
&\leq16\left\{
\|\hat{\bs{X}}_n\|_{\ell_\infty}^2\|\hat{\mathfrak{C}}_n-\mathfrak{C}_n\|_{\ell_\infty}
+\left(\|\hat{\bs{X}}_n\|_{\ell_\infty}+\|\bs{X}_n\|_{\ell_\infty}\right)\|\mathfrak{C}_n\|_{\ell_\infty}\|\hat{\bs{X}}_n-\bs{X}_n\|_{\ell_\infty}
\right\},
\end{align*}
\eqref{factor-aim2}--\eqref{factor-aim3} follow from the results remarked above. Thus we complete the proof. \hfill\qed

\subsection{Proof of Corollary \ref{coro:testing}}

By construction both the Bonferroni-Holm and Romano-Wolf methods evidently satisfy condition \ref{monotone}. So it remains to check that they also satisfy \ref{max-quantile}. 
Since it holds that $\max_{\ell\in\mathcal{L}_n(\theta_n)}\mathsf{T}_n^\ell=\max_{\ell\in\mathcal{L}_n(\theta_n)}\max_{\lambda\in\Lambda^{\ell}_n}|\tilde{T}_n^\lambda|$, Proposition \ref{prop:factor-test} yields
\[
P\left(\max_{\ell\in\mathcal{L}_n(\theta_n)}\mathsf{T}_n^\ell >c_n^{\mathcal{L}_n(\theta_n)}(1-\alpha)\right)
-P\left(\max_{\ell\in\mathcal{L}_n(\theta_n)}\max_{k\in\mathcal{K}^\ell_n}\left|\tilde{\zeta}_n^k\right| >c_n^{\mathcal{L}_n(\theta_n)}(1-\alpha)\right)\to0
\]
as $n\to\infty$, where $\mathcal{K}^\ell_n:=\{(i-1)\ul{d}+j:(i,j)\in\Lambda^{\ell}_n\}$ and $\tilde{\zeta}_n:=\bs{X}_n\mathfrak{C}_n^{1/2}\zeta_n$. 
Now if we use the Bonferroni-Holm method, we have
\begin{align*}
P\left(\max_{\ell\in\mathcal{L}_n(\theta_n)}\max_{k\in\mathcal{K}^\ell_n}\left|\tilde{\zeta}_n^k\right| >c_n^{\mathcal{L}_n(\theta_n)}(1-\alpha)\right)
\leq\sum_{\ell\in\mathcal{L}_n(\theta_n)}\sum_{k\in\mathcal{K}^\ell_n}P\left(\left|\tilde{\zeta}_n^k\right| >q_{N(0,1)}\left(1-\frac{\alpha}{2\#[\bigcup_{\ell\in\mathcal{L}_n(\theta_n)}\mathcal{K}^\ell_n]}\right)\right)
=\alpha,
\end{align*} 
\tcr{so} condition \ref{max-quantile} is satisfied. 
Meanwhile, if we use the Romano-Wolf method, Propositions \ref{prop:quantile} and \ref{prop:factor-test} yield
\[
P\left(\max_{\ell\in\mathcal{L}_n(\theta_n)}\max_{k\in\mathcal{K}^\ell_n}\left|\tilde{\zeta}_n^k\right| >c_n^{\mathcal{L}_n(\theta_n)}(1-\alpha)\right)\to\alpha
\]
as $n\to\infty$, \tcr{so} condition \ref{max-quantile} is satisfied. Thus we complete the proof.\hfill\qed

\if0
\section{Additional simulation results}\label{appendix:simulate}

As noted in Section \ref{sec:simulation}, we conduct a similar simulation study to Section \ref{sec:simulation} while we change the volatility process from model \eqref{heston} to the following one (known as a continuous-time GARCH-type volatility process):
\begin{equation}\label{garch}
dv_t=\kappa(\theta-v_t)dt+\eta v_t\left(\rho dB^d_t+\sqrt{1-\rho^2}dB^{d+1}_t\right),
\end{equation}
where the parameters $\kappa,\theta,\eta,\rho$ are chosen in the same way as in Section \ref{sec:simulation}. 
The initial value $v_0$ is drawn from the stationary distribution of the process $(v_t)_{t\in[0,1]}$ as in Section \ref{sec:simulation}. Namely, $1/v_0$ is drawn from the gamma distribution with shape $1+2\kappa/\eta^2$ and rate $2\theta\kappa/\eta^2$ (cf.~Theorem 2.3 of \cite{Nelson1990}). 

The results are reported in Tables \ref{garch:fwer}--\ref{garch:power}. We find that the reported values are very close to the ones in Tables \ref{table:fwer}--\ref{table:power}.

\begin{table}[ht]
\caption{Family-wise error rates at the 5\% level (GARCH)} 
\label{garch:fwer}
\begin{center}
\begin{tabular}{lrrrrrr}
  \hline
rn & $n=26$ & $n=39$ & $n=78$ & $n=130$ & $n=195$ & $n=390$ \\ 
  \hline
  & \multicolumn{6}{c}{$d=30$}\\
$\rho_\gamma=0.25$ &  &  &  &  &  &  \\ 
  Holm & 0.020 & 0.016 & 0.016 & 0.028 & 0.038 & 0.062 \\ 
  RW & 0.043 & 0.028 & 0.023 & 0.034 & 0.044 & 0.068 \\ 
  $\rho_\gamma=0.50$ &  &  &  &  &  &  \\ 
  Holm & 0.019 & 0.015 & 0.018 & 0.028 & 0.041 & 0.059 \\ 
  RW & 0.046 & 0.029 & 0.026 & 0.039 & 0.049 & 0.070 \\ 
  $\rho_\gamma=0.75$ &  &  &  &  &  &  \\ 
  Holm & 0.018 & 0.014 & 0.016 & 0.025 & 0.036 & 0.049 \\ 
  RW & 0.051 & 0.033 & 0.033 & 0.042 & 0.058 & 0.073 \\ 
  & \multicolumn{6}{c}{$d=100$}\\
  $\rho_\gamma=0.25$ &  &  &  &  &  &  \\ 
  Holm & 0.018 & 0.009 & 0.006 & 0.010 & 0.020 & 0.041 \\ 
  RW & 0.072 & 0.026 & 0.012 & 0.016 & 0.026 & 0.047 \\ 
  $\rho_\gamma=0.50$ &  &  &  &  &  &  \\ 
  Holm & 0.018 & 0.009 & 0.006 & 0.012 & 0.020 & 0.039 \\ 
  RW & 0.073 & 0.032 & 0.014 & 0.020 & 0.030 & 0.050 \\ 
  $\rho_\gamma=0.75$ &  &  &  &  &  &  \\ 
  Holm & 0.015 & 0.007 & 0.006 & 0.008 & 0.015 & 0.025 \\ 
  RW & 0.076 & 0.040 & 0.025 & 0.026 & 0.040 & 0.060 \\ 
   \hline
\end{tabular}\vspace{5mm}

\parbox{12cm}{\small 
\textit{Note}. This table reports the family-wise error rates at the 5\% level for testing the hypotheses in \eqref{rs-test} simultaneously by the Holm and Romano-Wolf (RW) methods, respectively. The volatility process $\sigma_t$ is generated by model \eqref{garch}. The reported values are based on 10,000 Monte Carlo iterations. 999 bootstrap resamples are generated to implement the RW method.} 
\end{center}
\end{table}

\begin{table}[ht]
\caption{Average powers at the 5\% level (GARCH)} 
\label{garch:power}
\begin{center}
\begin{tabular}{lrrrrrr}
  \hline
 & $n=26$ & $n=39$ & $n=78$ & $n=130$ & $n=195$ & $n=390$ \\ 
  \hline
  & \multicolumn{6}{c}{$d=30$}\\
$\rho_\gamma=0.25$ &  &  &  &  &  &  \\ 
  Holm & 0.000 & 0.001 & 0.010 & 0.074 & 0.272 & 0.863 \\ 
  RW & 0.001 & 0.002 & 0.012 & 0.084 & 0.286 & 0.868 \\ 
  $\rho_\gamma=0.50$ &  &  &  &  &  &  \\ 
  Holm & 0.004 & 0.015 & 0.290 & 0.902 & 0.999 & 1.000 \\ 
  RW & 0.008 & 0.025 & 0.343 & 0.919 & 0.999 & 1.000 \\ 
  $\rho_\gamma=0.75$ &  &  &  &  &  &  \\ 
  Holm & 0.018 & 0.091 & 0.838 & 1.000 & 1.000 & 1.000 \\ 
  RW & 0.042 & 0.161 & 0.903 & 1.000 & 1.000 & 1.000 \\ 
  & \multicolumn{6}{c}{$d=100$}\\
  $\rho_\gamma=0.25$ &  &  &  &  &  &  \\ 
  Holm & 0.000 & 0.000 & 0.001 & 0.011 & 0.077 & 0.647 \\ 
  RW & 0.000 & 0.000 & 0.001 & 0.014 & 0.086 & 0.658 \\ 
  $\rho_\gamma=0.50$ &  &  &  &  &  &  \\ 
  Holm & 0.000 & 0.001 & 0.055 & 0.573 & 0.978 & 1.000 \\ 
  RW & 0.002 & 0.004 & 0.086 & 0.637 & 0.984 & 1.000 \\ 
  $\rho_\gamma=0.75$ &  &  &  &  &  &  \\ 
  Holm & 0.002 & 0.013 & 0.404 & 0.987 & 1.000 & 1.000 \\ 
  RW & 0.011 & 0.044 & 0.607 & 0.996 & 1.000 & 1.000 \\ 
   \hline
\end{tabular}\vspace{5mm}

\parbox{12cm}{\small 
\textit{Note}. This table reports the average powers at the 5\% level for testing the hypotheses in \eqref{rs-test} simultaneously by the Holm and Romano-Wolf (RW) methods, respectively. The volatility process $\sigma_t$ is generated by model \eqref{garch}. 
The reported values are based on 10,000 Monte Carlo iterations. 999 bootstrap resamples are generated to implement the RW method.} 
\end{center}
\end{table}

\clearpage
\fi

{
\section*{Acknowledgements}
\addcontentsline{toc}{section}{Acknowledgements}

The author wishes to thank the associate editor and the referee for their careful reading and valuable comments that substantially improved the original version of this paper. 
This work was supported by JST, CREST and JSPS KAKENHI Grant Numbers JP16K17105, JP17H01100, JP18H00836.
}

{\small
\renewcommand*{\baselinestretch}{1}\selectfont
\addcontentsline{toc}{section}{References}
\bibliography{base}
}

\end{document}